\newcommand*{\MRref}[2]{\linebreak[0] \href{http://www.ams.org/mathscinet-getitem?mr=#1}{MR \textbf{#1}}}
\DeclareMathOperator{\Hom}{Hom}
\DeclareMathOperator{\Prim}{Prim}
\DeclareMathOperator{\Ext}{Ext}
\DeclareMathOperator{\Tor}{Tor}
\DeclareMathOperator{\coker}{coker}
\DeclareMathOperator{\im}{im}
\DeclareMathOperator{\range}{range}
\newcommand*{\KK}{\textup{KK}}
\newcommand*{\K}{\textup{K}}
\newcommand*{\Homology}{\textup{H}}   
\newcommand*{\Ch}{\textup{Ch}}  
\newcommand*{\into}{\rightarrowtail}
\newcommand*{\prto}{\twoheadrightarrow}
\newcommand*{\cl}[1]{\overline{#1}} 
\newcommand*{\CONT}{\mathcal{C}}    
\newcommand*{\Nattrafo}{\mathcal{NT}} 
\newcommand*{\FK}{\textup{FK}}
\newcommand*{\Cstarcat}{\mathfrak{C^*alg}}
\newcommand*{\Cstarsep}{\mathfrak{C^*sep}}
\newcommand*{\KKcat}{\mathfrak{KK}}
\newcommand*{\Cat}{{\mathfrak{C}}}  
\newcommand*{\Tri}{{\mathfrak{T}}}  
\newcommand*{\Ab}{\mathfrak{Ab}}      
\newcommand*{\Mod}[1]{\mathfrak{Mod}\bigl(#1\bigr)} 
\newcommand*{\Modc}[1]{\mathfrak{Mod}\bigl(#1\bigr)_\textup{c}}
\newcommand*{\Modcsmallargument}[1]{\mathfrak{Mod}(#1)_\textup{c}}
\newcommand*{\CModsmallargument}[1]{\mathfrak{Mod}(#1)_{\mathrm{c}}}
\newcommand*{\CMod}[1]{\mathfrak{Mod}\bigl(#1\bigr)_{\mathrm{c}}}
\newcommand*{\op}{\mathrm{op}}
\newcommand*{\ID}{\mathrm{id}}
\newcommand*{\cyc}[1]{\mathbf{C}(#1)} 
\newcommand*{\C}{\mathbb{C}}
\newcommand*{\R}{\mathbb{R}}
\newcommand*{\Z}{\mathbb{Z}}
\newcommand*{\N}{\mathbb{N}}
\newcommand*{\Ideals}{\mathbb{I}}   
\newcommand*{\Open}{\mathbb{O}}     
\newcommand*{\Loclo}{\mathbb{LC}}   
\newcommand*{\nb}{\nobreakdash}  
\newcommand*{\Bootstrap}{{\mathcal B}}
\newcommand*{\Cstar}{\texorpdfstring{$C^*$\nb-}{C*-}}
\newcommand*{\Star}{\texorpdfstring{$^*$\nb-}{*-}}
\newcommand*{\blank}{\text{\textvisiblespace}}
\newcommand*{\inOb}{\mathrel{\in\in}}
\newcommand*{\defeq}{\mathrel{\vcentcolon=}}
\newcommand*{\inin}{\mathrel{{\in}{\in}}}
\newcommand*{\LC}{\mathbb{LC}}
\newcommand*{\NT}{\mathcal{NT}}
\newcommand*{\NTC}{{\mathcal{NT}^*}}
\newcommand*{\NTS}{\mathcal{NT}_\textup{6-term}}
\newcommand*{\NTSE}{\mathcal{NT}_\textup{even 6-term}}
\newcommand*{\NTCS}{\mathcal{NT}^*_\textup{6-term}}
\newcommand*{\NTCSE}{\mathcal{NT}^*_\textup{even 6-term}}
\newcommand*{\NTnil}{\mathcal{NT}_\textup{nil}}
\newcommand*{\NTss}{\mathcal{NT}_\textup{ss}}
\newcommand*{\Mss}{M_\textup{ss}}
\newcommand*{\sss}{\textup{ss}}
\newcommand*{\con}{*}
\newcommand*{\kk}{\mathfrak{KK}}
\newcommand*{\bigb}[1]{\bigl(#1\bigr)}
\newcommand*{\Bigb}[1]{\Bigl(#1\Bigr)}
\newcommand*{\id}{\textup{id}}
\newcommand*{\perdef}{\mathrel{\vcentcolon=}}
\newcommand*{\Rep}{\mathcal{R}}
\newcommand*{\gAb}{\mathfrak{Ab}^{\Z/2}}
\newcommand*{\Cst}{C^*}
\newcommand*{\Cuntz}{\mathcal O}
\newcommand*{\catI}{\mathfrak{I}}
\newcommand*{\CatA}{\mathcal{A}}
\newcommand*{\AIT}{\CatA_\catI\Tri}
\newcommand*{\perdeflongequi}{\mathrel{\vcentcolon\Longleftrightarrow}}
\newcommand*{\lrangle}{\mathopen\rangle}
\newcommand*{\rlangle}{\mathclose\langle}
\newcommand*{\epi}{\twoheadrightarrow}
\newcommand*{\embed}{\hookrightarrow}
\newcommand*{\mono}{\rightarrowtail}
\newcommand*{\opencup}{\mathrel{\tilde{{\cup}}}}
\newcommand*{\closedcup}{\mathrel{\bar{{\cup}}}}
\newcommand*{\Cont}{\mathcal C}
\newcommand*{\Boot}{\mathcal{B}}
\newcommand*{\KSYZ}{\K^*\bigl(S(Y,Z)\bigr)}
\numberwithin{equation}{section}
\theoremstyle{plain}
\newtheorem{theorem}[equation]{Theorem}
\newtheorem{proposition}[equation]{Proposition}
\newtheorem{lemma}[equation]{Lemma}
\newtheorem{corollary}[equation]{Corollary}
\newtheorem{property}{Property}
\newtheorem{observation}[equation]{Observation}
\theoremstyle{definition}
\newtheorem{definition}[equation]{Definition}
\theoremstyle{remark}
\newtheorem{remark}[equation]{Remark}
\newtheorem{warning}[equation]{Warning}
\newtheorem{example}[equation]{Example}
\title[UCT for $C^*$-algebras over finite topological spaces]{Universal coefficient theorems for $C^*$-algebras\\ over finite topological spaces}
\author{Rasmus Bentmann}
\address{Department of Mathematical Sciences\\University of
Copenhagen\\Universitetsparken 5\\2100 Copenhagen \O \\Denmark}
\email{bentmann@math.ku.dk}
\author{Manuel K\"{o}hler}
\address{Mathematisches Institut\\
  Georg-August Universit\"at G\"ottingen\\
  Bunsenstra{\ss}e 3--5\\
  37073 G\"ottingen\\
  Germany}
\email{koehler@uni-math.gwdg.de}
\subjclass[2000]{19K35, 46L35, 46L80, 46M18, 46M20}
\thanks{The first-named author was supported by the Danish National Research Foundation through the Centre for Symmetry and Deformation and by the Marie Curie Research Training Network EU-NCG. The second-named author was supported by the German National Merit Foundation}
\begin{document}

\begin{abstract}
We determine the class of finite $T_0$-spaces allowing for a universal coefficient theorem computing equivariant $\KK$-theory by filtrated $\K$-theory.
\end{abstract}
\maketitle

\section{Introduction}
The universal coefficient theorem of Rosenberg and Schochet~\cite{MR894590}
states that for separable \Cstar algebras $A$ and $B$ with $A$ being in a
certain bootstrap class there is a short exact sequence of $\Z/2$-graded
Abelian groups
\[ \Ext^1\bigl(\K_{\ast+1}(A),\K_{\ast}(B)\bigr) \into \KK_{\ast}(A,B) \prto \Hom\bigl(\K_{\ast}(A),\K_{\ast}(B)\bigr). \]
Apart from being very useful for computations of $\KK$-groups, it plays an 
important role in the classification of \Cstar algebras by $\K$-theoretic invariants.

The corresponding sequence for $A=B$ is an extension of rings with the product 
in $\Ext^1\bigl(\K_{\ast+1}(A),\K_{\ast}(A)\bigr)$ being zero. Therefore $\KK_{\ast}(A,A)$ is a 
nilpotent extension of $\Hom\bigl(\K_{\ast}(A),\K_{\ast}(A)\bigr)$; this shows that isomorphisms 
in $\K$-theory lift to isomorphisms in $\KK$-theory. Results by Kirchberg~\cite{MR1796912} and
Phillips~\cite{MR1745197} then show that every $\KK$-equivalence between~$A$ and~$B$
is induced by an actual \Star isomorphism of \Cstar algebras,
provided that $A$ and $B$ are stable, nuclear, separable, purely infinite and \emph{simple}.
Both facts together give the following strong classification 
result: \Cstar algebras $A$ with the above-mentioned properties are completely classified 
by the $\Z/2$-graded Abelian group $\K_{\ast}(A)$.

It is interesting to extend this result to the non-simple case. In~\cite{MR1796912}, Eberhard Kirchberg indicated a construction of an equivariant version $\KK(X)$ of bivariant $\K$-theory for \Cstar{}algebras over a given $T_0$-space~$X$ and proved a corresponding classification result: a \(\KK(X)\)-equivalence between two \Cstar algebras over~\(X\) lifts to an equivariant \Star isomorphism if both \Cstar algebras are stable, nuclear, separable, purely infinite and \emph{tight}---the notion of tightness generalises simplicity; its name was coined in~\cite{MR2545613}.

Our aim is therefore to compute $\KK_*(X;A,B)$ for a topological space~\(X\) and \Cstar algebras \(A\) and~\(B\) over~\(X\) by a universal coefficient theorem, that is, by an exact sequence of the form
\[
  \Ext^1_\Cat\bigl(\Homology_{*+1}(A),\Homology_*(B)\bigr) \into
  \KK_*(X;A,B) \prto
  \Hom_\Cat\bigl(\Homology_*(A),\Homology_*(B)\bigr)
\]
for some reasonably tractable homology theory~\(\Homology_*\) for \Cstar{}algebras over~\(X\), taking values in some Abelian category~\(\Cat\). Here~\(A\) is assumed to belong to the bootstrap class $\Bootstrap(X)$ introduced in~\cite{MR2545613}.
As in the non-equivariant case, a universal coefficient theorem of this form allows to lift an isomorphism \(\Homology_*(A)\cong \Homology_*(B)\) in~\(\Cat\) to a \(\KK(X)\)-equivalence \(A\simeq B\) if both \(A\) and~\(B\) belong to the bootstrap class \(\Bootstrap(X)\).

In~\cite{meyernestCalgtopspacfiltrKtheory}, Ralf Meyer and Ryszard Nest applied
their machinery of homological algebra in triangulated categories developed 
in~\cites{MR2193334,meyernesthomalgintricat} with the aim of deriving a UCT short 
exact sequence which computes $\KK(X;A,B)$ for a finite $T_0$-space $X$ 
by filtrated $\K$-theory (in the following denoted by $\FK$).
They obtain the desired short exact sequence
in the case of the totally ordered space $O_n$ with $n$ points, that is,
\[O_n = \{1,2,\ldots ,n\}, \ \ \ \tau_{O_n} = \bigl\{\emptyset,\{n\},\{n,n-1\}, \ldots ,X\bigr\}.\]
A \Cstar algebra $A$ over this space is essentially the same 
as a  \Cstar algebra $A$ together with a 
finite increasing chain of ideals
\[
\{0\} \triangleleft
I_n \triangleleft
I_{n-1} \triangleleft
\dotsb \triangleleft
I_{2} \triangleleft I_1 = A.
\]
On the other hand, Meyer and Nest give an example of a finite $T_0$-space $Y$ for which 
the following strong non-UCT statement holds: There are objects~$A$ and~$B$ in 
$\Bootstrap(Y)$ with isomorphic filtrated $\K$-theory which are not $\KK(Y)$-equivalent.

The aim of this article is to give a complete answer to the following question: given a finite $T_0$-space~$X$,
is there a UCT short exact sequence which computes $\KK(X;A,B)$ by filtrated $\K$-theory?
The assumption of the separation axiom~$T_0$ is not a loss of generality here, since all
that matters is the lattice of open subsets of~$X$ (see~\cite{MR2545613}*{\S2.5}).

In order to describe the general form of those finite $T_0$-spaces for which there is such a UCT short exact 
sequence, we have to introduce some notation.
For topological spaces $X$ and $Y$ and $x \in X$, $y \in Y$, let us denote by
$ X \bigvee_{x=y} Y$
the quotient space of the disjoint union $X\sqcup Y$ by the equivalence relation generated by $x \sim y$.
\begin{definition}
	\label{def:typeA}
Let $X$ be finite $T_0$-space. We say that $X$ is of \emph{type}~(A)---A~for accordion---if~$X$ is of the form
\[
O_{n_1} \bigvee_{ n_1 = n_2 }O_{n_2} \bigvee_{ 1 = 1} O_{n_3} \  \ldots \ O_{n_{m-1}}\bigvee_{n_{m-1} = n_{m} }O_{n_{m}}
\]
for $m\in 2\N_{>0}$, $n_i \in \N_{>0}$ and $n_i>1$ for $2\leq i\leq m-1$.
\end{definition}

To get an alternative description of type (A) spaces recall from~\cite{MR2193334} how finite spaces can be visualised as directed graphs:

\begin{definition} Let $X$ be a finite $T_0$-space. Define $\Gamma(X) = (V,E)$ by 
$V \coloneqq X$, and  $(x, y) \in E$ if and only if $x\neq y,\; x \in \overline{\{y\}}$
and $\bigb{x \in \overline{\{z\}}, \ z \in \overline{\{y\}} \Rightarrow z=x \text{ or } z=y}$.
\end{definition}

The graph of a space of type (A) looks as follows (see also Figure~\ref{fig:genform} on page~\pageref{fig:genform}):
\[\bullet \leftarrow \cdots \leftarrow \bullet \rightarrow \cdots \rightarrow \bullet \leftarrow \cdots \leftarrow \bullet \rightarrow \cdots \ \ \  \cdots \leftarrow \bullet \rightarrow \cdots \rightarrow \bullet. \]
In particular, every connected $T_0$-space with at most three points is of type~(A).
A~characterisation of type~(A) spaces in terms of unoriented edge degrees of the associated graphs is given in Lemma~\ref{lem: Char of type A}.

The main result of this paper now reads as follows; it is a consequence of Theorem~\ref{thm:final}, see also \S\ref{The UCT criterion}.
\begin{theorem}
Let $X$ be finite $T_0$-space. The following statements are equivalent:
\begin{enumerate}[label=\textup{(\roman*)}]
\item $X$ is a disjoint union of spaces of type \textup{(A)}.
\item  Let $A, B \inin \Bootstrap(X)$. Then $\FK(A) \cong \FK(B)$ implies that $A$ is $\KK(X)$-equiv\-a\-lent to $B$.
\item Let $A$ and $B$ be separable \Cstar algebras over $X$. Suppose $A\inin\Bootstrap(X)$. Then there is a natural short exact UCT sequence
\[
  \Ext^1_{\Nattrafo(X)}\bigl(\FK(A)[1],\FK(B)\bigr) \into
  \KK_*(X;A,B) \prto
  \Hom_{\Nattrafo(X)}\bigl(\FK(A),\FK(B)\bigr).
\]
\end{enumerate}
Here the subscript $\Nattrafo(X)$ denotes that $\Ext^1$ and $\Hom$ are taken in the category $\CMod{\Nattrafo(X)}$, the target category of~$\FK$.
\end{theorem}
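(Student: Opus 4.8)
The plan is to prove the cyclic chain of implications \textup{(i)}~$\Rightarrow$~\textup{(iii)}~$\Rightarrow$~\textup{(ii)}~$\Rightarrow$~\textup{(i)}, with \textup{(iii)} as the strongest statement and \textup{(ii)} a formal consequence of it. The engine throughout is the homological algebra in triangulated categories of Meyer and Nest, applied to $\KK(X)$, the stable homological functor $\FK\colon\KK(X)\to\CMod{\Nattrafo(X)}$, and the homological ideal $\catI_{\FK}\defeq\ker\FK$; recall that $\Bootstrap(X)$ is the localising subcategory of $\KK(X)$ generated by the $\catI_{\FK}$-projective objects. The whole weight of the theorem sits in \textup{(i)}~$\Rightarrow$~\textup{(iii)}, and the hard part there is a homological computation that is sensitive to the combinatorics of~$X$.

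For \textup{(i)}~$\Rightarrow$~\textup{(iii)} I would first dispose of disjoint unions: a \Cstar algebra over $X_1\sqcup X_2$ is just a pair of \Cstar algebras over $X_1$ and over $X_2$, so $\KK(X_1\sqcup X_2)$, the functor $\FK$, and $\Nattrafo(X_1\sqcup X_2)$ all split as products and the desired sequence is the product of the two. Hence we may take $X=O_{n_1}\bigvee_{n_1=n_2}O_{n_2}\bigvee_{1=1}\dotsb O_{n_m}$ a single accordion. Meyer and Nest established \textup{(iii)} for each block $O_n$; the task is to propagate this along the wedge decomposition $X=X'\bigvee_{x=y}O_{n_m}$. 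Since a \Cstar algebra over $X$ is exactly a pair of \Cstar algebras over $X'$ and over $O_{n_m}$ with matching fibres over the glued point, $\KK(X)$ is glued from $\KK(X')$ and $\KK(O_{n_m})$ along $\KK$ of the common stratum; I would use this to: \emph{(a)} compute $\Nattrafo(X)$ inductively from $\Nattrafo(X')$ and $\Nattrafo(O_{n_m})$, and exhibit the projective generators of $\CMod{\Nattrafo(X)}$ as $\FK$ of explicit \Cstar algebras over $X$ built from the blocks; \emph{(b)} verify, again by induction over the decomposition, that $\FK$ is the universal $\catI_{\FK}$-exact stable homological functor, i.e.\ that the $\catI_{\FK}$-projective objects of $\KK(X)$ are the retracts of those explicit algebras; and \emph{(c)} show that every object of $\CMod{\Nattrafo(X)}$ of the form $\FK(A)$ with $A\inin\Bootstrap(X)$ admits a projective resolution of length at most one. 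Given \emph{(a)}--\emph{(c)}, the abstract universal coefficient theorem of Meyer--Nest yields the natural short exact sequence of~\textup{(iii)}. I expect step \emph{(c)} --- and the inductive bookkeeping in \emph{(a)}, \emph{(b)} that feeds it --- to be the main obstacle: it is precisely here that ``accordion'' is used, because a wedge of two path-like blocks at a single point is again path-like, whereas for any other connected finite $T_0$-space the corresponding homological structure acquires a branch point or a cycle and the length-one bound in \emph{(c)} breaks down.

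The implication \textup{(iii)}~$\Rightarrow$~\textup{(ii)} is the usual nilpotence argument. Let $A,B\inin\Bootstrap(X)$ with an isomorphism $\varphi\colon\FK(A)\xrightarrow{\ \cong\ }\FK(B)$. Lift $\varphi$ and $\varphi^{-1}$ through the surjections of the UCT sequences for $(A,B)$ and $(B,A)$ to $\hat\varphi\in\KK_0(X;A,B)$ and $\hat\psi\in\KK_0(X;B,A)$. Then $\hat\psi\hat\varphi-\id_A$ lies in the kernel of $\KK_0(X;A,A)\to\Hom_{\Nattrafo(X)}\bigl(\FK(A),\FK(A)\bigr)$, which by \textup{(iii)} is the image of $\Ext^1_{\Nattrafo(X)}\bigl(\FK(A)[1],\FK(A)\bigr)$; a length-one $\catI_{\FK}$-projective resolution of $A$ shows this ideal squares to zero, so $\hat\psi\hat\varphi$ and likewise $\hat\varphi\hat\psi$ are invertible and $\hat\varphi$ is a $\KK(X)$-equivalence.

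For \textup{(ii)}~$\Rightarrow$~\textup{(i)} I argue contrapositively. After reducing to the connected case, let $X$ be connected and not of type~(A). By Lemma~\ref{lem: Char of type A} the graph $\Gamma(X)$ is not a path, hence contains a vertex of unoriented degree at least three, or an (even-length) cycle --- which, in the absence of a branch vertex, is all of $\Gamma(X)$. In the first case, any three edges at such a vertex span a four-point \emph{order-convex}, hence locally closed, subspace $Y\subseteq X$ with $\Gamma(Y)=\mathrm K_{1,3}$; in the second case $X$ is itself a cyclic space. For each such minimal non-accordion space one constructs, by hand, \Cstar algebras $A_0,B_0$ in its bootstrap class with $\FK(A_0)\cong\FK(B_0)$ but $A_0\not\simeq B_0$ in the corresponding equivariant $\KK$-category --- the four-point non-accordions being handled as in the example of Meyer and Nest, and the cyclic spaces analogously, the obstruction to a $\KK$-equivalence being recorded by a non-vanishing $\Ext^2$ of filtrated $\K$-theory modules. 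In the branch-vertex case one then extends $A_0,B_0$ by zero along $Y\hookrightarrow X$: this keeps them in $\Bootstrap(X)$, leaves their filtrated $\K$-theories isomorphic, and any $\KK(X)$-equivalence between the extensions would restrict, via the retraction of the extension functor, to a $\KK(Y)$-equivalence between $A_0$ and $B_0$; so \textup{(ii)} fails for~$X$. The two delicate points here are that every connected non-accordion genuinely contains (or is) one of the minimal bad spaces compatibly with $\FK$, and that the higher $\Ext$-class really obstructs $\KK(X)$-equivalence of actual \Cstar algebras and not merely the degeneration of the spectral sequence.
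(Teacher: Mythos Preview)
Your cycle of implications and the nilpotence argument for \textup{(iii)}~$\Rightarrow$~\textup{(ii)} are fine and match the paper. The two substantive steps, however, diverge from the paper in ways worth flagging.

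For \textup{(i)}~$\Rightarrow$~\textup{(iii)} the paper does \emph{not} induct on the wedge decomposition. Instead it computes the category $\NTC(W)$ for an arbitrary type~(A) space $W$ with $n$ points directly (via the topological $\K$-theory of the spaces $S(Y,Z)$), and then exhibits an \emph{ungraded} isomorphism of categories $\NTC(W)\cong\NTC(O_n)$ which moreover preserves the notion of exact module. Since Meyer--Nest already verified the three structural properties (nilpotent radical with semisimple complement, free Hom-groups, and the kernel description needed for the length-one bound) for $\NTC(O_n)$, these transfer to $\NTC(W)$, and Theorem~\ref{the:UCT_conditional} gives the UCT sequence. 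Your inductive gluing scheme may well be made to work, but note that $\NT(X)$ for a wedge $X=X'\bigvee O_{n_m}$ is not built in any simple way from $\NT(X')$ and $\NT(O_{n_m})$: the objects of $\NTC(X)$ include connected locally closed subsets that straddle the glue point, and the morphism groups between such subsets must be computed afresh. The paper's global comparison with $O_n$ sidesteps this.

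Your argument for \textup{(ii)}~$\Rightarrow$~\textup{(i)} contains a genuine gap. You assert that three edges at a branch vertex span a four-point \emph{order-convex} subset $Y$ with $\Gamma(Y)=K_{1,3}$. This is true when all three edges point the same way (the $X_1$ and $X_2$ configurations), but \emph{false} in general for the mixed-orientation configurations $X_3$ and $X_4$. For instance, take $X=\{1,2,3,4,5\}$ with edges $1\to 3$, $2\to 3$, $3\to 4$, $1\to 5$, $5\to 4$; then $\{1,2,3,4\}$ carries the $X_3$ graph, but $4\prec 5\prec 1$ with $5\notin\{1,2,3,4\}$, so the set is not convex and hence not locally closed. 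The paper handles this by passing to the locally closed hull $LC(X_3)\subset X$ and constructing an explicit continuous retraction $\pi_3\colon LC(X_3)\to X_3$ (and dually for $X_4$); the counterexample is then transported via Proposition~\ref{embedding}(ii) (functoriality along a retraction), not via extension-by-zero from a locally closed subspace. Similarly, in the cyclic case $X$ need not literally be one of the spaces $C_n$ or $S$: it is an undirected cycle whose directed structure may have ``through'' vertices, and the paper again constructs a retraction $X\to C_n$ (or $X\to S$ when there is a single source--sink pair) rather than identifying $X$ with a model space. Your proposal needs this retraction mechanism in both the $X_3/X_4$ and the cyclic cases.
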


\section{\texorpdfstring{$C^*$}{C*}-algebras over topological spaces}
Throughout this article, $X$ denotes a finite $T_0$-space.
In the following, we introduce  \Cstar algebras over~$X$
along the lines of~\cite{MR2545613}. The definition of a \Cstar algebra
over a topological space actually works in greater generality.

\subsection{Basic notions}
For a \Cstar algebra $A$ denote by $\Prim(A)$ its primitive ideal space.
A \emph{\Cstar{}algebra over~\(X\)} is a pair \((A,\psi)\)
consisting of a \Cstar{}al\-ge\-bra~\(A\) and a continuous map
\(\psi\colon \Prim(A)\to X\). 

Let $\Open(X)$ denote the set of open
subsets of~\(X\), partially ordered by~\(\subseteq\) and \(\Ideals(A)\)
the set of closed \Star{}ideals in~\(A\), partially ordered 
by~\(\subseteq\). The partially ordered sets $\left(\Open(X),\subseteq \right)$ and 
$\left(\Ideals(A),\subseteq \right)$ are complete lattices,
that is, any subset has both an infimum and
a supremum.  A continuous map \(\psi\colon \Prim(A)\to X\) induces a map 
$\psi^{\ast} \colon  \Open(X) \rightarrow \Ideals(A)$ which 
commutes with infima and suprema. 
By~\cite{MR2545613}*{Lemma~2.25}, this correspondence gives an equivalent
description of a \Cstar algebra over $X$ as a pair $(A,\psi^{\ast})$ where
\[\psi^{\ast} :  \Open(X) \rightarrow \Ideals(A), \ \ \ U \mapsto A(U)\]
commutes with infima and suprema.

A \Star{}homomorphism \(f\colon A\to B\) between two
\Cstar{}algebras over~\(X\) is \emph{\(X\)\nb-equivariant} if
\(f\bigl(A(U)\bigr)\subseteq B(U)\) for all \(U\in\Open(X)\).
The category of \Cstar{}algebras over~\(X\)
  with \(X\)\nb-equivariant \Star{}homomorphisms is denoted by \(\Cstarcat(X)\),
its  full subcategory consisting of all separable
  \Cstar{}algebras over~\(X\) is denoted by \(\Cstarsep(X)\).

A subset \(Y\subseteq X\) is \emph{locally closed} if and only
if \(Y=U\setminus V\) for open subsets \(V,U\in\Open(X)\) with
\(V\subseteq U\).  Then we define \(A(Y)\defeq A(U)/A(V)\) for
a \Cstar{}algebra~\(A\) over~\(X\); this does not depend on the
choice of \(U\) and~\(V\) by~\cite{MR2545613}*{Lemma~2.16}.
We write \(\Loclo(X)\) for the set of locally closed subsets of~\(X\).
By \(\Loclo(X)^*\) we denote the set of connected, non-empty locally closed
  subsets of~\(X\).

We write \(x\inOb\Cat\) for objects of a category~\(\Cat\) as opposed to morphisms.

\subsection{Functoriality}
A continuous map
  \(f\colon X\to Y\) induces a functor
  \[
  f_*\colon \Cstarcat(X) \to \Cstarcat(Y)
  \]
  which is given by \( (A,\psi) \mapsto (A,f \circ \psi) \).
We have \(g_*f_*=(gf)_*\) for composable continuous maps~$f$ and~$g$. 

 If \(f\colon X\to Y\) is the embedding of a subset with the
subspace topology, we also write
\(i_X^Y\) instead of \(f_*\) and call it \emph{extension}.

A locally closed subset \(Y \in \Loclo(X) \) induces the \emph{restriction} functor
\[
r_X^Y\colon \Cstarcat(X)\to\Cstarcat(Y)
\]
given by \((r_X^Y B)(Z)\defeq
B(Z)\) for all \(Z\in\Loclo(Y)\subseteq\Loclo(X)\). We have \(r_Y^Z \circ r_X^Y =r_X^Z\) if \(Z\subseteq
Y\subseteq X\) and \(r_X^X=\ID\).

Induction and restriction are related by \( r_X^Y\circ i_Y^X = \ID \) and various adjointness relations; see~\cite{MR2545613}*{Definition~2.19 and Lemma~2.20} for a discussion.

\subsection{Specialisation preorder}
There is the \emph{specialisation preorder} on~\(X\), defined by
 \(x\preceq y\) \(\iff\) \(\cl{\{x\}} \subseteq \cl{\{y\}}\). A subset $Y \subseteq X$ is locally closed if and only it is \emph{convex} with respect to $\preceq$, that is, if and only if
 $x \preceq y \preceq z$ and $x,z \in Y$ implies $y \in Y$ for all $x,y,z\in X$.
 A subset $Y \subseteq X$ has a \emph{locally closed hull} $LC(Y)$ defined as 
\[LC(Y) \coloneqq \{x \in X \mid \exists y_1, y_2 \in Y \colon y_1 \preceq x \preceq y_2 \}.\]

\begin{lemma}
We have $LC\bigb{LC(Y)} = LC(Y)$. Moreover, $LC(Y)$ is the smallest locally closed subset of~$X$ containing~$Y$.
\end{lemma}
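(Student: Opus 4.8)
The plan is to prove the two assertions --- idempotency $LC(LC(Y)) = LC(Y)$ and minimality --- in turn, both by unwinding the definition of $LC$ and exploiting that $\preceq$ is a preorder (reflexive and transitive).

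First I would establish that $Y \subseteq LC(Y)$: for any $y \in Y$, reflexivity of $\preceq$ gives $y \preceq y \preceq y$, so $y$ qualifies as an element of $LC(Y)$ with the witnesses $y_1 = y_2 = y$. Next, for idempotency, the inclusion $LC(Y) \subseteq LC(LC(Y))$ is then just the previous observation applied to the set $LC(Y)$ in place of $Y$. For the reverse inclusion $LC(LC(Y)) \subseteq LC(Y)$, take $x \in LC(LC(Y))$; by definition there are $z_1, z_2 \in LC(Y)$ with $z_1 \preceq x \preceq z_2$. Applying the definition of $LC(Y)$ to $z_1$ and $z_2$ yields $y_1, y_1' , y_2, y_2' \in Y$ with $y_1 \preceq z_1 \preceq y_1'$ and $y_2 \preceq z_2 \preceq y_2'$. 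Transitivity of $\preceq$ then gives $y_1 \preceq z_1 \preceq x$ and $x \preceq z_2 \preceq y_2'$, hence $y_1 \preceq x \preceq y_2'$ with $y_1, y_2' \in Y$, so $x \in LC(Y)$.

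For the minimality statement I would argue in two steps. First, $LC(Y)$ is itself locally closed: by the characterisation recalled in the excerpt, a subset is locally closed if and only if it is convex with respect to $\preceq$, so I must check that $x \preceq w \preceq z$ with $x, z \in LC(Y)$ forces $w \in LC(Y)$. Choosing witnesses $y_1 \preceq x$ and $z \preceq y_2$ with $y_1, y_2 \in Y$ and using transitivity gives $y_1 \preceq x \preceq w$ and $w \preceq z \preceq y_2$, so $y_1 \preceq w \preceq y_2$ and $w \in LC(Y)$. Second, if $Z$ is any locally closed subset with $Y \subseteq Z$, then for $x \in LC(Y)$ pick $y_1, y_2 \in Y \subseteq Z$ with $y_1 \preceq x \preceq y_2$; convexity of $Z$ immediately yields $x \in Z$. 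Hence $LC(Y) \subseteq Z$, so $LC(Y)$ is the smallest locally closed set containing $Y$.

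I do not expect a genuine obstacle here: the whole argument is a bookkeeping exercise in the reflexivity and transitivity of the preorder together with the convexity characterisation of local closedness, all of which are supplied in the preceding paragraph. The only point requiring a modicum of care is keeping the chains of witnesses straight when composing the defining inequalities --- in particular noticing that in $LC(LC(Y)) \subseteq LC(Y)$ one uses the \emph{outer} witness on one side and the \emph{outer} witness on the other, discarding the two inner ones.
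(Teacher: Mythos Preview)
Your proof is correct and follows essentially the same approach as the paper's: the paper proves $LC(LC(Y))\subseteq LC(Y)$ by exactly the same chain-of-witnesses argument, and for the minimality statement simply remarks that it is ``obvious'' from the convexity characterisation of locally closed subsets. Your version spells out the convexity of $LC(Y)$ and the containment in any locally closed $Z\supseteq Y$ more explicitly, but there is no difference in substance.
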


\begin{proof}
Obviously $Y \subseteq LC(Y)$. Let $y \in LC\bigb{LC(Y)}$. Then there are $y_1,y_2 \in LC(Y)$ such that 
$y_1 \preceq y \preceq y_2$. By definition there are $z_1, z_2, z_3, z_4 \in Y$ such that $z_1 \preceq y_1 \preceq z_2$,  $z_3 \preceq y_2 \preceq z_4$. Hence $z_1\preceq y_1 \preceq y \preceq y_2 \preceq z_4$ and therefore $y \in LC(Y)$.
Using the characterization of locally closed subsets as convex subsets, the second statement is obvious.
 \end{proof}

A map $f: X_1 \rightarrow X_2$ between two finite topological spaces is continuous if and only if it is \emph{monotone} with respect to $\preceq$, that is, if 
 $ x \preceq y \Rightarrow f(x)\preceq f(y)$.
 
Note that $\preceq$ is
 a partial order if and only if $X$ is $T_0$.  By~\cite{MR2545613}*{Corollary~2.33}, this yields a bijection
 of $T_0$-topologies and partial orders on a given finite set. The preimage of a partial order~$\preceq$ is called the Alexandrov topology associated to~$\preceq$ and denoted by~$\tau_\preceq$.

\subsection{Representation as finite directed graphs}
\label{elementary notions of graph theory}
We describe a well-known way to represent finite $T_0$-spaces via finite \emph{directed acyclic graphs}. Several examples can be found in~\cite{MR2545613}*{\S2.8}.

To establish notation, we first collect a few elementary notions of graph theory:
a \emph{directed graph} is a tuple $\Gamma = (V,E)$, where $V$ is a set and $E \subseteq (V \times V) \setminus \Delta(V)$; elements of $V$ are called \emph{vertices} and elements of $E$ are called \emph{edges}. We will also write $E(\Gamma)$ and $V(\Gamma)$ to denote the edges and vertices associated to $\Gamma$. Hence we do neither allow loops nor multiple edges to exist.
A graph $(V',E')$ is a \emph{subgraph} of $(V,E)$ if and only if $V' \subseteq V$ and $E' = \{ (a,b) \in E \mid a,b \in V'\}$. 

A \emph{directed path} $\rho$ is a sequence $\rho = (v_i)_{i = 0,\ldots,n}$ such that $(v_i,v_{i+1}) \in E$ for $i =1,\ldots,n$ with all $(v_i)_{i = 1,\ldots,n}$ being pairwise distinct. The \emph{length} of $\rho = (v_i)_{i = 0,\ldots,n}$ is $n$. We say that $\rho$ is a path \emph{from $a$ to $b$} if $v_0 =a$ and $v_n =b$.

A \emph{directed cycle} is a directed path of length greater than $1$ such that $v_0 = v_n$. For two paths $\rho_1 = (v_i)_{i =0,\ldots,n}$ and $\rho_2 = (w_i)_{i =0,\ldots,m}$ we define sets
\[\rho_1 \cap \rho_2 \defeq \{v_i\mid i =0,\ldots,n\} \cap \{ w_i\mid i =0,\ldots,m\}\]
 and 
\[ \rho_1 \cup \rho_2 \defeq \{v_i\mid  i =0,\ldots,n\} \cup \{ w_i\mid i =0,\ldots,m\}.\]
An edge $(v_0,v_1)$ is called \emph{outgoing edge of $v_0$} and \emph{incoming edge of $v_1$}. The \emph{unoriented degree} $d(v)$ of $v \in V$ is defined as
\[ d(v) \defeq \#\{ e \in E\mid  e\text{ outgoing edge of }v\} + \#\{ e \in E\mid e\text{ incoming edge of }v\},\]
while the \emph{oriented degree} $d_o(v)$ of $v \in V$ is defined as
\[
d_o(v) \defeq \#\{ e \in E\mid e\text{ outgoing edge of }v\} - \#\{ e \in E\mid e\text{ incoming edge of }v\}.
\]
An \emph{undirected path} is a sequence $(v_i)_{i = 0,\ldots,n}$ such that for $i =1,\ldots,n$ either $(v_i,v_{i+1}) \in E$ or $(v_{i+1},v_i) \in E$ with all $(v_i)_{i = 1,\ldots,n}$ being pairwise distinct. We say that $\rho$ is an undirected path \emph{from $a$ to $b$} if $v_0 =a$ and $v_n =b$.

A \emph{cycle} is an undirected path $\rho = (v_i)_{i = 0,\ldots,n}$ of length greater than $0$ such that $v_0 =v_n$. 
A directed graph is called \emph{acyclic} if it has no cycles.

To a partial order~\(\preceq\) on~\(X\), we associate a finite directed acyclic graph $\Gamma(X)$ as follows. We write $x\prec y$ to denote that $x\preceq y$ and $x\neq y$.

\begin{definition}
	\label{def:Hasse}
Let $\Gamma(X)$ be the directed graph 
with vertex set~\(X\) and with an
edge \(x\leftarrow y\) if and only if \(x\prec y\) and there
is no \(z\in X\) with \(x\prec z\prec y\).
\end{definition}
In other words, $\Gamma(X)$ is the Hasse diagram corresponding to the specialisation order on~$X$.

We can recover the partial order from this graph by letting \(x\preceq y\) if and only if the graph contains a directed path from $y$ to $x$. This is the \emph{reachability relation} on the vertex set of $\Gamma(X)$, which makes sense for every finite directed acyclic graph.

Note that we cannot obtain every finite acyclic directed graph in this way. In fact, a finite directed acyclic graph is of the form $\Gamma(X)$ for some $T_0$-space $X$ if and only if it is \emph{transitively reduced}, that is, if it is (isomorphic to) the graph associated to its reachability relation (see, for instance,~\cite{AhoGareyUllman}).

For later reference, we list restrictions on $\Gamma(X)$ in the following lemma which follows directly from the definitions.

\begin{lemma}
\label{restriction}
The directed graph $\Gamma(X)$ is acyclic. Let $x, y$ be vertices in $\Gamma(X)$. If $\rho_1$ and $\rho_2$ are two distinct directed paths from $x$ to $y$. Then $\rho_1$ and $\rho_2$ have length at least~$2$.
\end{lemma}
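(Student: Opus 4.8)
\emph{Proof plan.} The plan is to translate both assertions into the language of the partial order~$\preceq$ and then argue purely by strict monotonicity. The key observation I would record first is the dictionary between edges and the order: by Definition~\ref{def:Hasse}, an edge $x\leftarrow y$ is a directed edge from~$y$ to~$x$, existing precisely when $x\prec y$ and no $z$ satisfies $x\prec z\prec y$; equivalently, a directed edge from~$a$ to~$b$ exists if and only if $b\prec a$ and $b$ is covered by~$a$. Consequently, for any directed path $(v_i)_{i=0,\ldots,n}$ the chain $v_0\succ v_1\succ\dotsb\succ v_n$ is strictly decreasing, since each edge strictly decreases in~$\preceq$ and $\preceq$ is a genuine partial order because $X$ is~$T_0$. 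In particular a directed path from~$x$ to~$y$ forces $y\preceq x$, with $y\prec x$ as soon as the length is positive.

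For acyclicity I would then note that a directed cycle is a directed path of positive length with $v_0=v_n$; the displayed chain would give $v_0\succ v_n=v_0$, contradicting irreflexivity of~$\prec$. Hence $\Gamma(X)$ has no directed cycles.

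For the statement about two distinct paths I would eliminate the small lengths in turn. First, two distinct directed paths from~$x$ to~$y$ force $x\neq y$: if $x=y$, then any directed path from~$x$ to~$x$ of positive length would be a cycle, which is excluded, so the only such path is the trivial one $(x)$, and two distinct paths cannot exist; thus both paths have length at least~$1$. Next I would rule out length~$1$. Since $\Gamma(X)$ carries no multiple edges, there is at most one directed path of length~$1$ from~$x$ to~$y$, namely the covering edge, so two distinct paths cannot both have length~$1$; it remains to show that if one path, say $\rho_1=(x,y)$, has length~$1$, then no distinct second path exists. The existence of this edge means $x$ covers~$y$, so there is no~$z$ with $y\prec z\prec x$. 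But any directed path $\rho_2=(x=w_0,w_1,\ldots,w_m=y)$ distinct from~$\rho_1$ must have $m\geq 2$, and the strictly decreasing chain $x=w_0\succ w_1\succ\dotsb\succ w_m=y$ then exhibits $w_1$ with $y\prec w_1\prec x$ (here $w_1\neq y$ because $m\geq 2$ and the vertices are distinct), contradicting the covering property. Hence no path of length~$1$ can coexist with a distinct one, and so both $\rho_1$ and~$\rho_2$ have length at least~$2$.

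I expect no genuine obstacle here: the lemma is a direct unwinding of the definitions. The only points requiring care are the direction convention for edges and the separate treatment of the degenerate lengths~$0$ and~$1$; the crux, such as it is, is the covering condition, which is exactly what forbids a second length-$1$ short-cut once a longer path between the same endpoints is present.
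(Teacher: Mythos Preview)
Your argument is correct and is precisely the unwinding of the definitions the paper has in mind; the paper does not supply a separate proof but simply states that the lemma ``follows directly from the definitions.'' (One caveat: the paper's literal definition of \emph{acyclic} as ``no undirected cycles'' appears to be a slip---with that reading the lemma would already fail for the space~$S$ of \S\ref{sec:counterexamples}---and your reading ``no directed cycles'' is the intended one.)
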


Let $S$ be a finite set. If $\Gamma$ is a directed graph with vertex set $S$, then we can define a preorder on $S$ by setting $s_1 \preceq_{\Gamma} s_2$ if and only if there is a directed path from $s_2$ to $s_1$. Note that $\preceq_{\Gamma}$ is a partial order if and only if $\Gamma$ is acyclic.
Let $E(S)$ be the set of acyclic directed graphs with vertex set~$S$ having the following property: if $\rho_1$ and $\rho_2$ are two distinct directed paths in $\Gamma$ from $x$ to $y$, then $\rho_1$ and $\rho_2$ have length at least~$2$. It is easy to check that ${\preceq} \mapsto \Gamma(S,\tau_{\preceq})$ and $\Gamma \mapsto {\preceq_{\Gamma}}$ yield inverse bijections between the set of partial orders on $S$ and the set $E(S)$.

\begin{lemma}
\label{lem: graphs + connectedness}
A finite $T_0$-space~$X$ is connected if and only if $\Gamma(X)$ is connected as an undirected graph.
\end{lemma}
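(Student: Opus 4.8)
The plan is to prove both implications by contraposition, using the fact recorded above that $\preceq$ is precisely the reachability relation of $\Gamma(X)$ (so $x\preceq y$ holds iff $\Gamma(X)$ contains a directed path from $y$ to $x$), together with the standard observation that a topological space is disconnected iff it admits a partition into two nonempty open subsets, equivalently a nonempty proper clopen subset.

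The key intermediate step is to show that every connected component $C$ of the undirected graph underlying $\Gamma(X)$ is clopen in $X$. For closedness, fix $y\in C$ and let $x\in\overline{\{y\}}$, so that $x\preceq y$; if $x\ne y$ there is a directed path from $y$ to $x$ in $\Gamma(X)$, which is in particular an undirected path, so $x$ lies in the component of $y$, i.e.\ $x\in C$. Hence $\overline{\{y\}}\subseteq C$ for every $y\in C$, and since closure distributes over the finite union $C=\bigcup_{y\in C}\{y\}$ we obtain $\overline{C}=C$. As the finitely many components partition $X$, the set $X\setminus C$ is a union of components, hence closed, so $C$ is open as well.

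Granting this, both directions follow quickly. If $\Gamma(X)$ is disconnected, then for a connected component $C$ the sets $C$ and $X\setminus C$ are nonempty and clopen, so $X$ is disconnected. Conversely, suppose $\Gamma(X)$ is connected but $X=U\sqcup V$ with $U,V$ nonempty open subsets (hence clopen, since each is the complement of the other); pick $u\in U$, $v\in V$ and an undirected path $u=v_0,v_1,\dots,v_n=v$ in $\Gamma(X)$. There is an index $i$ with $v_i\in U$ and $v_{i+1}\in V$ (or vice versa), and these two vertices are joined by an edge of $\Gamma(X)$; that edge gives either $v_{i+1}\in\overline{\{v_i\}}$ or $v_i\in\overline{\{v_{i+1}\}}$. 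Since $U$ and $V$ are both closed, either alternative places a point of one of them inside the other, contradicting $U\cap V=\emptyset$. Hence $X$ is connected.

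I do not anticipate a real obstacle; the statement is elementary. The one thing to take care of is keeping the topological ingredients (closures, clopen-ness) cleanly separated from the combinatorial ones (directed versus undirected paths in $\Gamma(X)$). Isolating the claim that connected components of $\Gamma(X)$ are clopen subsets of $X$ achieves exactly this, and it is the only place where the passage between directed and undirected paths is needed.
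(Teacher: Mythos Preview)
Your proof is correct and follows essentially the same approach as the paper's: both arguments hinge on the fact that the undirected-graph components of $\Gamma(X)$ are clopen in $X$ (you isolate this as an explicit intermediate claim, the paper establishes it inline for the component of a chosen point $x_0$), and both handle the other implication by finding an edge of an undirected path crossing a clopen partition and deriving a contradiction from closures.
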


\begin{proof}
Assume first that $X$ is connected. Let $x_0 \in X$ and set 
\[ X_1 \defeq \{x \in X\mid \text{there is an undirected path from $x_0$ to $x$ in $\Gamma(X)$}\}. \]
Note that if $y \in \overline{\{x\}}$ then there is an undirected path from $x$ to $y$. Hence, if $x \in X_1$, then $\overline{\{x\}} \subseteq X_1$, therefore $\bigcup_{x \in X_1}\overline{\{x\}} = X_1$ and $X_1$ is closed. On the other hand, if $x \notin X_1$, then $ \overline{\{x\}} \subseteq X \setminus X_1$, hence
$X_1 = \bigcap_{x \notin X_1} X \setminus \overline{\{x\}}$ is open. Since $X$ is connected and $X_1$ is nonempty, we have $X =X_1$.

Now assume that $\Gamma(X)$ is connected as a graph and that $X =X_1 \sqcup X_2$ can be written as a disjoint union of nonempty clopen subsets $X_1$ and $X_2$. Let $x_i \in X_i$, $i =1,2$, and let $\rho$ be an undirected path from $x_1$ to $x_2$. We find neighbouring vertices $y_1$ and $y_2$ on the path $\rho$  such that $y_i \in X_i$ for $i=1,2$. Without loss of generality we may assume that $y_2 \in \overline{\{y_1\}}$. Since $X_1$ is closed we have $y_2 \in \overline{\{y_1\}} \subseteq X_1$ which is a contradiction.
\end{proof}

\section{Filtrated K-theory}
\subsection{Equivariant KK-theory}
As explained in~\cite{MR2545613}*{\S3.1}, there is a version of bivariant $\K$-theory for \Cstar algberas over $X$. Let $A, B \in \in \Cstarsep(X)$. A cycle in $\KK_*(X;A,B)$ is given by a cycle $(E,T)$ for $\KK_*(A,B)$ which is $X$-\emph{equivariant}, that is, $A(U)\cdot E \subseteq E \cdot B(U)$ for all $ U \in \mathbb{O}(X)$.
There is also a Kasparov product
\[\KK_*(X;A,B)\otimes \KK_*(X;B,C) \rightarrow \KK_*(X;A,C).\] 
Thus we may define the category $\KKcat(X)$ whose objects are separable \Cstar algebras over $X$ and morphisms from $A$ to $B$ are given by $\KK_0(X;A,B)$. As shown in~\cite{MR2545613}*{\S3.2}, $\KKcat(X)$ carries all basic structures we would expect from a bivariant $\K$-theory. In particular, it is additive, has countable coproducts, exterior products, satisfies Bott periodicity and has six-term exact sequences for \emph{se\-mi-split} extensions of \Cstar algebras over $X$.

Moreover, $\KKcat(X)$ carries the structure of a triangulated category~(\cite{MR2545613}*{\S3.3}); the suspension functor is given by the exterior product with $\CONT_{0}(\R)$ and a sequence $SB\rightarrow C\rightarrow A\rightarrow B$ is an exact triangle if and only if it is isomorphic to a mapping cone triangle $SB'\rightarrow C_{\phi}\rightarrow A'\rightarrow B'$ for some $X$-equivariant \Star homomorphism $\phi\colon A'\rightarrow B'$.

The \emph{bootstrap class} \(\Bootstrap(X)\) defined in~\cite{MR2545613}*{\S4} is the
localising subcategory of \(\KKcat(X)\) generated by the
objects \(i_x\C\) for all \(x\in X\).  That is, it is the
smallest class of objects containing these generators that is
closed under suspensions, \(\KK(X)\)-equivalence, se\-mi-split
extensions and countable direct sums. Here \(i_x\C\perdef i_{\{x\}}^X\C\),
where $\C$ is regarded as a \Cstar algebra over the one-point space
in the obvious way.

\subsection{The definition of filtrated K-theory}
We recall the definition of filtrated $\K$-theory from~\cite{meyernestCalgtopspacfiltrKtheory}*{\S4}.
  For each locally closed subset \(Y\subseteq X\), one defines a
  functor
  \[
  \FK(X)_Y\colon \KKcat(X)\to\Ab^{\Z/2},\qquad
  \FK(X)_Y(A) \defeq \K_*\bigl(A(Y)\bigr).
  \]
These functors are
\emph{stable} and \emph{homological}, that is, they intertwine
the suspension on \(\KKcat(X)\) with the translation functor on
\(\Ab^{\Z/2}\) and they map exact triangles to long exact sequences.

Let~\(\Nattrafo(X)\) be the \(\Z/2\)-graded category whose
  object set is~\(\Loclo(X)\) and whose morphism space \(Y \to Z\)
  is \(\Nattrafo_*(X)(Y,Z)\), the \(\Z/2\)-graded Abelian group of all natural transformations
  \(\FK_Y\Rightarrow \FK_Z\).

A \emph{module} over~\(\Nattrafo(X)\) is an additive, grading preserving
  functor \(G\colon \Nattrafo(X)\to\Ab^{\Z/2}\).
  Let \(\Mod{\Nattrafo(X)}\) be the category of
  \(\Nattrafo(X)\)\nb-modules.  The morphisms in
  \(\Mod{\Nattrafo(X)}\) are the natural transformations of
  functors or, equivalently, families of grading preserving
  group homomorphisms \(G_Y\to G'_Y\) that commute with the
  action of~\(\Nattrafo(X)\).  Let \(\CMod{\Nattrafo(X)}\) be the
  full subcategory of countable modules. 

\emph{Filtrated K\nb-theory} is the functor
  \[
  \FK(X)=\bigl(\FK(X)_Y\bigr)_{Y\in\Loclo(X)}\colon
  \KKcat(X) \to \CModsmallargument{\Nattrafo},
  \;
  A \mapsto
  \Bigl(\K_*\bigl(A(Y)\bigr)\Bigr)_{Y\in\Loclo(X)}.
  \]
To keep notation short, we often write $\NT$ for \(\Nattrafo(X)\) and $\FK$ for $\FK(X)$.
  
\begin{remark}
	\label{rem:connected}
Restriction to \emph{connected, non-empty} locally closed subsets
of~$X$ does not lose any relevant information: since~$X$ is finite,
every subset of~$X$ is the finite union of its connected components.
Moreover, this decomposition $Y=\bigsqcup_{i\in\pi_0(Y)} Y_i$ into
connected components corresponds to a biproduct decomposition
$Y\cong\bigoplus_{i\in\pi_0(Y)} Y_i$ in $\NT$ yielding a canonical
isomorphism
\[
G(Y)\cong\bigoplus_{i\in\pi_0(Y)} G(Y_i)\qquad
\textup{for all $Y\in\LC(X)$ and $G\in\Modcsmallargument{\NT}$.}
\]
In particular, the empty subset of~$X$ is a zero object in~$\NT$.

It follows that, denoting by $\NTC$ the full subcategory of $\NT$
consisting of connected, non-empty locally closed subsets
of~$X$, we have a canonical equivalence of categories
\[
\Upsilon\colon\Modcsmallargument{\NT}\to\Modcsmallargument{\NTC},
\]
which is just given by composing an $\NT$-module $M\colon\NT\to\gAb$
with the inclusion $\NTC\hookrightarrow\NT$. A pseudo-inverse $\Upsilon^{-1}$ is given by taking direct sums
over connected components of objects, that is, by
$\Upsilon^{-1}(G)(Y)\perdef\bigoplus_{i\in\pi_0(Y)} G(Y_i)$
on objects $Y\in\LC(X)$, and a similar direct sum operation on morphisms.
Hence, we can minimise our calculations by replacing filtrated $\K$-theory
with the reduced version $\FK^\con\perdef\Upsilon\circ\FK$.
\end{remark}

\subsection{Functoriality}
The canonical functor $\Cstarsep(X) \rightarrow \KKcat(X)$ is the universal split-exact, \Cstar stable functor~(\cite{MR2545613}*{Theorem~3.7}). Using this universal property, we may extend the functoriality results for $\Cstarcat(X)$ in the space variable to $\KKcat(X)$: a~continuous map \(f\colon X\to Y\) induces a functor $f_*\colon \KKcat(X) \to \KKcat(Y)$, in particular this yields an extension functor $i_X^Y$ for a subspace $X \subseteq Y$. Similarly, for $Y \in \Loclo(X)$ the restriction functor descends to a functor $r_X^Y\colon \KKcat(X) \rightarrow \KKcat(Y)$.

Our next aim is to construct an algebraic variant of $f_{\ast}$, that is, a functor
\[
f_{\ast}\colon\CMod{\Nattrafo(X)}\rightarrow\CMod{\Nattrafo(Y)}
\]
such that 
\begin{equation}
	\label{eq:functoriality}
\begin{split}
\begin{xy}
\xymatrix{
 \KKcat(X) \ar[rr]^-{\FK(X)} \ar[d]_{f_{\ast}} & &\CMod{\Nattrafo(X)} \ar[d]^{f_{\ast}}\\
\KKcat(Y) \ar[rr]^-{\FK(Y)} &  & \CMod{\Nattrafo(Y)}
}
\end{xy}
\end{split}
\end{equation}
commutes. Let us do so by first constructing a functor $f^{\ast}\colon\Nattrafo(Y) \rightarrow \Nattrafo(X)$.

For $Z \in \in \Nattrafo(Y) = \Loclo(Y)$ set $f^{\ast}(Z) = f^{-1}(Z)$. A morphism $\tau \in \Nattrafo(Y)(Z,Z')$ is a natural transformation $\tau \colon \FK(Y)_Z \rightarrow \FK(Y)_{Z'}$, that is, a collection $$\{ \tau_{A} \}_{A \in \in \KKcat(Y) }$$ of morphisms of Abelian groups 
\[\tau_{A}\colon \FK(Y)_Z(A) = \K_{\ast}\bigb{A(Z)} \rightarrow \K_{\ast}\bigb{A(Z')}= \FK(Y)_{Z'}(A )\] 
that is natural with respect to morphisms in $\Cstarcat(Y)$. 
For $B \in \in \KKcat(X)$ and $Z \in \Loclo (Y)$ we have 
\[\FK(Y)_Z(f_{\ast}B) = \K_{\ast}\Bigb{B\bigb{f^{-1}(Z)}} = \FK(X)_{f^{-1}(Z)}(B).\]
Hence $\tau_{f_{\ast}B}$ is also a morphism from $\FK(X)_{f^{-1}(Z)}(B)$ to $\FK(X)_{f^{-1}(Z')}(B)$ and it makes sense to define
\[f^{\ast}(\tau) \coloneqq \{ \tau_{f_{\ast}B} \}_{B \in \in \KKcat_X }.\]
We therefore have constructed an additive, grading preserving functor
\[ f^{\ast} \colon \Nattrafo(Y) \rightarrow \Nattrafo(X). \]
This gives rise to an additive, grading preserving functor
\[ f_{\ast} \colon \CMod{\Nattrafo(X)} \rightarrow \CMod{\Nattrafo(Y)}, \quad f_{\ast}(M) \coloneqq M \circ f^{\ast}.\]

\begin{lemma}
Let $X$, $Y$, $f$ and $f_{\ast}$ be as above. The diagram~\eqref{eq:functoriality} commutes.
\end{lemma}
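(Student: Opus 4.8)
The goal is to prove that the two composite functors $f_*\circ\FK(X)$ and $\FK(Y)\circ f_*$ from $\KKcat(X)$ to $\CMod{\Nattrafo(Y)}$ coincide. Since the algebraic functors $f^*$ and $f_*$ of the preceding paragraphs were tailored precisely for this purpose, the argument amounts to unwinding definitions. Both sides of~\eqref{eq:functoriality} are functors, so it suffices to compare them on objects---where we must identify the resulting $\Nattrafo(Y)$-modules---and on morphisms.

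The starting point is the pointwise identity $(f_*B)(Z)=B\bigl(f^{-1}(Z)\bigr)$, valid for every $B\inOb\KKcat(X)$ and every $Z\in\Loclo(Y)$. For open $V\subseteq Y$ it is immediate from the construction of $f_*$: passing from $\psi$ to $f\circ\psi$ corresponds to precomposing $\psi^*\colon\Open(X)\to\Ideals(B)$ with the lattice homomorphism $\Open(Y)\to\Open(X)$, $V\mapsto f^{-1}(V)$. As $f^{-1}$ commutes with finite intersections and with set differences, it carries $\Loclo(Y)$ into $\Loclo(X)$, and the identity extends to an arbitrary $Z=V'\setminus V$ via the quotient description $B(Z)=B(V')/B(V)$ together with its independence of the chosen presentation (\cite{MR2545613}*{Lemma~2.16}). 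Applying $\K_*$ and recalling $f^*(Z)=f^{-1}(Z)$, the underlying $\Z/2$-graded Abelian groups of $f_*\bigl(\FK(X)(B)\bigr)=\FK(X)(B)\circ f^*$ and of $\FK(Y)(f_*B)$ then agree: both equal $\K_*\bigl(B(f^{-1}(Z))\bigr)$ at each object $Z$. The $\Nattrafo(Y)$-module structures agree as well, for a morphism $\tau\in\Nattrafo(Y)(Z,Z')$ acts on $\FK(Y)(f_*B)$ through its component $\tau_{f_*B}$, while on $\FK(X)(B)\circ f^*$ it acts through $f^*(\tau)$ evaluated at $B$; and $f^*(\tau)=\{\tau_{f_*C}\}_{C\inOb\KKcat(X)}$ by definition, so the two actions coincide. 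This gives the equality on objects.

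For naturality in $B$, take $\phi\in\KKcat(X)(B,B')$. Both $f_*\bigl(\FK(X)(\phi)\bigr)$ and $\FK(Y)(f_*\phi)$ are morphisms in $\CMod{\Nattrafo(Y)}$ and are therefore determined by their components at the objects $Z\in\Loclo(Y)$; the first has component $\FK(X)_{f^{-1}(Z)}(\phi)$ at $Z$, the second $\FK(Y)_Z(f_*\phi)$. So it remains to check that $\FK(Y)_Z\circ f_*=\FK(X)_{f^{-1}(Z)}$ as functors $\KKcat(X)\to\Ab^{\Z/2}$, and this is the one point of the argument that deserves a little care. On $\Cstarsep(X)$ the equality is, after applying $\K_*$, just the identity $(f_*A)(Z)=A(f^{-1}(Z))$ from the previous step, which is plainly natural for $X$-equivariant \Star{}homomorphisms. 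Both sides are functors on $\KKcat(X)$ whose restrictions to $\Cstarsep(X)$ are stable, split-exact, and equal, so the uniqueness clause in the universal property of the functor $\Cstarsep(X)\to\KKcat(X)$ (\cite{MR2545613}*{Theorem~3.7}), which underlies every functor occurring in diagram~\eqref{eq:functoriality}, forces them to agree on all of $\KKcat(X)$. Apart from this step, the proof is a formal diagram chase with no real obstacle.
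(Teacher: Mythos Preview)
Your proof is correct and follows essentially the same approach as the paper: both reduce the verification on $\KKcat(X)$-morphisms to the case of \Star{}homomorphisms via the universal property of $\KK(X)$ from~\cite{MR2545613}*{Theorem~3.7}, and both establish the object-level identity $(f_*B)(Z)=B\bigl(f^{-1}(Z)\bigr)$ directly. You are somewhat more explicit than the paper about why the $\Nattrafo(Y)$-module structures agree and about extending the identity from open to locally closed~$Z$, but the architecture of the argument is the same.
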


\begin{proof}
Recall that there is a canonical functor $\KK(X)\colon \Cstarcat(X) \rightarrow \KKcat(X)$. 
By the universal property of $\KK(X)$ (see~\cite{MR2545613}*{Theorem~3.7}) we see that it suffices to check that 
\[  f_{\ast} \circ \FK(X) \circ \KK(X) = \FK ^Y\circ f_{\ast}\circ \KK(X).\]
On objects there is no difference anyway:
let $A\in \in \KKcat(X)$ and $Z \in \Loclo(Y)$. Then 
\[ f_{\ast} \circ \FK(X)(A)(Z) =\K_{\ast}\bigb{A(f^{-1}(Z)} = \FK ^Y\circ f_{\ast}(A)(Z).\]
Let $\phi\colon A \rightarrow B$ be a morphism of \Cstar algebras over $X$ and $Z \in \Loclo(Y)$. Passing to subquotients, $\phi$ induces a \Star homomorphism $\phi(Z')\colon A(Z') \rightarrow B(Z')$ for all $Z' \in \Loclo(X)$.

The push-forward $f_{\ast}(\phi)\colon f_{\ast}(A) \rightarrow f_{\ast}(B)$ is a morphism of \Cstar algebras over $Y$ which is given by $\phi$ as a \Star homomorphism from $A$ to $B$ if we forget the structure over $X$ (or $Y$). Note that $f_{\ast}(\phi)(Z) = \phi\bigb{f^{-1}(Z)}$ as \Star homomorphisms.
Now the equalities
\begin{multline*}
f_{\ast} \circ \FK(X)\circ \KK(X)(\phi)(Z) =  f_{\ast} \circ \FK(X)([\phi])(Z)\\
 = \FK(X)([\phi])\bigb{f^{-1}(Z)} = \K_{\ast}\Bigb{\phi\bigb{f^{-1}(Z)}}
\end{multline*} and
\begin{multline*}
\FK(Y)\circ f_{\ast}\circ \KK(X)(\phi)(Z) = \FK(Y)([f_{\ast}(\phi)])(Z)\\
= \K_{\ast}\bigb{f_{\ast}(\phi)(Z)} = \K_{\ast}\Bigb{\phi\bigb{f^{-1}(Z)}}
\end{multline*}
give the desired result.
\end{proof}

\subsection{Canonical transformations and relations}
\label{sec:relationsNT}
In this section we describe certain canonical elements and relations in the
category~$\NT$. Moreover, we provide indecomposability criteria for these
canonical transformations assuming that there are no ``hidden'' relations.
The results we establish will be used for concrete computations
in later chapters.

\begin{proposition}
	\label{pro:trans}
Let $U$ be a relatively open subset of a locally closed subset $Y$
of~$X$. Then there are the following natural transformations:
\begin{enumerate}[label=\textup{(\roman*)}]
\item
an even transformation
\[
i_U^Y\colon\FK_U\Rightarrow\FK_Y
\]
induced by the inclusion $A(U)\embed A(Y)$;
\item
an even transformation
\[
r_Y^{Y\setminus U}\colon\FK_Y\Rightarrow\FK_{Y\setminus U}
\]
incuced by the projection $A(Y)\epi A(Y\setminus U)$;
\item
an odd transformation
\[
\delta_{Y\setminus U}^U\colon\FK_{Y\setminus U}\Rightarrow\FK_U
\]
defined as the six-term sequence boundary map
$$
\K_*\bigb{A(Y\setminus U)}\to\K_{*+1}\bigb{A(U)}.
$$
\end{enumerate}
\end{proposition}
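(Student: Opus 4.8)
The plan is to extract all three transformations from a single natural short exact sequence of \Cstar algebras attached to the pair $U\subseteq Y$, and then to promote naturality from \Star homomorphisms to all of $\KKcat(X)$ via the universal property of $\KK(X)$. First I would note that $U$ and $Y\setminus U$ are themselves locally closed in~$X$, so that $\FK_U$ and $\FK_{Y\setminus U}$ are defined: if $U=Y\cap W$ with $W\in\Open(X)$, then $U$ and $Y\setminus U=Y\cap(X\setminus W)$ are locally closed, since the intersection of a locally closed subset with an open or a closed subset is again locally closed. Then I would invoke the defining exact sequences for subquotients of \Cstar algebras over~$X$ (\cite{MR2545613}): since $U$ is relatively open in~$Y$ with relative complement $Y\setminus U$, the algebra $A(U)$ is an ideal of $A(Y)$ with $A(Y)/A(U)\cong A(Y\setminus U)$, so there is a short exact sequence
\[
A(U)\mono A(Y)\epi A(Y\setminus U)
\]
which is functorial in $A\in\Cstarsep(X)$, since an $X$-equivariant \Star homomorphism restricts and passes to subquotients, yielding a morphism of extensions.

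Applying $\K_*$, the inclusion gives the even map $\K_*\bigb{A(U)}\to\K_*\bigb{A(Y)}$, the quotient map gives the even map $\K_*\bigb{A(Y)}\to\K_*\bigb{A(Y\setminus U)}$, and the six-term exact sequence in $\K$-theory --- valid for \emph{arbitrary} extensions of \Cstar algebras, with no semi-splitting required --- supplies the odd boundary map $\K_*\bigb{A(Y\setminus U)}\to\K_{*+1}\bigb{A(U)}$. Naturality of all three maps with respect to morphisms of extensions makes $i_U^Y$, $r_Y^{Y\setminus U}$ and $\delta_{Y\setminus U}^U$ into natural transformations between the relevant functors on $\Cstarsep(X)$. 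Since each of $\FK_U$, $\FK_Y$, $\FK_{Y\setminus U}$ is split-exact and \Cstar stable, the universal property of $\KK(X)$ (\cite{MR2545613}*{Theorem~3.7}) --- applied to natural transformations by regarding $A\mapsto\bigb{\eta_A\colon F(A)\to G(A)}$ as a split-exact, \Cstar stable functor into the arrow category of $\Ab^{\Z/2}$, in the same spirit as the proof of the preceding lemma --- upgrades them uniquely to natural transformations of the functors $\FK_U,\FK_Y,\FK_{Y\setminus U}\colon\KKcat(X)\to\Ab^{\Z/2}$, that is, to the asserted morphisms in $\NT$.

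The main obstacle is precisely this last passage, and chiefly the boundary transformation $\delta_{Y\setminus U}^U$: it cannot be obtained by restricting the connecting map of an exact triangle in $\KKcat(X)$, because the extension $A(U)\mono A(Y)\epi A(Y\setminus U)$ need not be semi-split and hence need not be (isomorphic to) a mapping cone triangle over~$X$. Carrying out the whole construction inside $\K$-theory, where the six-term sequence is unconditional, and transporting naturality through the universal property circumvents this; verifying that the universal property does apply to natural transformations is the only mildly delicate point, and it is routine.
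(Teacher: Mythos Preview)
Your proof is correct and follows essentially the same approach as the paper, which simply invokes the naturality of the six-term sequence in $\K$-theory associated to the ideal $A(U)\triangleleft A(Y)$. You have been more careful than the paper in spelling out the passage from naturality on $\Cstarsep(X)$ to naturality on $\KKcat(X)$ via the universal property of $\KK(X)$, and in flagging that the extension need not be semi-split; these are legitimate points that the paper's one-line proof leaves implicit.
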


\begin{proof}
This is a consequence of the naturality 
of the six-term sequence in $\K$-theory associated to the ideal $A(U)\triangleleft A(Y)$.
\end{proof}

\begin{definition}
The natural transformations introduced in Proposition~\ref{pro:trans}
are called \emph{canonical} transformations in $\NT$.
We call $i_U^Y$ an \emph{extension transformation},
$r_Y^{Y\setminus U}$ a \emph{restriction transformation} and
$\delta_{Y\setminus U}^U$ a \emph{boundary transformation}.
\end{definition}

In all cases that have been investigated so far, the category $\NT$ is generated by these
canonical transformations. The absence of a general proof for
this phenomenon motivates the following definition.

\begin{definition}
Let $\NTS$ be the subcategory of $\NT$ generated by all canonical transformations, that is, by the set of morphisms
\[
\bigcup_{\genfrac{}{}{0pt}{}{Y\subset X\textup{\;locally closed,}}
	{U\subset Y\textup{\; relatively open}}}
	\left\lbrace i_U^Y, r_Y^{Y\setminus U},
	\delta_{Y\setminus U}^U\right\rbrace.
\]
Let $\NTSE$ be the subcategory of $\NTS$ generated by all \emph{even}
canonical transformations, that is, by the set of
morphisms
\[
\bigcup_{\genfrac{}{}{0pt}{}{Y\subset X\textup{\;locally closed,}}
	{U\subset Y\textup{\; relatively open}}}
	\left\lbrace i_U^Y, r_Y^{Y\setminus U}\right\rbrace.
\]
According to our previous convention, the respective full subcategories
with object set $\LC(X)^*$ are denoted by $\NTCS$ and $\NTCSE$.
Similarly, $\NT^\con_\textup{even}$ is the subcategory of $\NTC$
generated by even transformations.
\end{definition}

\begin{warning}
The subcategory $\NTSE$ of $\NTS$ need not exhaust the whole even part
of $\NTS$. However, this is true if any product of two odd natural
transformations vanishes. This fails to be true for the four-point
space~$S$ defined in \S\ref{sec:counterexamples} which was investigated
in~\cite{Rasmus}*{\S6.2}.
\end{warning}

The manifest elements of $\NT$ we have just discussed fulfill some
canonical relations, which we present in this section.
The following proposition investigates compositions of even
six-term sequence maps, that is, compositions in~$\NTSE$.

\begin{proposition}
	\label{pro:evenrels}
Let $Y$ be a locally closed subset of $X$.
\begin{enumerate}[label=\textup{(\roman*)}]
\item
Let $U$ be a relatively open subset of $Y$ and let $V$ be a relatively
open subset of $U$. Then $V$ is relatively open in $Y$ and
\[
i_U^Y\circ i_V^U = i_V^Y.
\]
Moreover, $i_Y^Y=\id_Y$.
\item
Let $C$ be a relatively closed subset of $Y$ and let $D$ be a relatively
closed subset of $C$. Then $D$ is relatively closed in $Y$ and
\[
r_C^D\circ r_Y^C = r_Y^D.
\]
Moreover, $r_Y^Y=\id_Y$.
\item
Let $U$ be a relatively open subset of $Y$ and let $C$ be a relatively
closed subset of $Y$. Then $U\cap C$ is relatively closed in $U$ and
relatively open in $C$, and
\[
r_Y^C\circ i_U^Y = i_{U\cap C}^C\circ r_U^{U\cap C} = \sum_{Z\in\pi_0(U\cap C)} i_Z^C\circ r_U^Z.
\]
In particular, if~$U$ and~$C$ are disjoint, then $i_Y^C\circ r_U^Y=0$.
\end{enumerate}
\end{proposition}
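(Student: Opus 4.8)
The plan is to reduce everything to the naturality of the six-term exact sequence in $\K$-theory, by realising each of the three asserted identities through a single commuting diagram of ideals and quotients to which the six-term sequence functor applies. First I would recall that for a locally closed subset $Y$, the assignment $Z\mapsto A(Z)$ for relatively open $Z\subseteq Y$ is a lattice homomorphism $\Open(Y)\to\Ideals(A(Y))$ (this is the content of the equivalent description of \Cstar algebras over a space via $\psi^*$), so that for nested relatively open sets $V\subseteq U\subseteq Y$ the inclusions $A(V)\hookrightarrow A(U)\hookrightarrow A(Y)$ are genuine inclusions of ideals whose composite is $A(V)\hookrightarrow A(Y)$. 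Part~(i) is then immediate: $i_U^Y\circ i_V^U$ and $i_V^Y$ are induced by these inclusions on $\K$-theory, and $i_Y^Y$ is induced by the identity. Part~(ii) is dual: for nested relatively closed $D\subseteq C\subseteq Y$ the quotient maps $A(Y)\epi A(C)\epi A(D)$ compose to $A(Y)\epi A(D)$, and passing to $\K_*$ gives $r_C^D\circ r_Y^C=r_Y^D$ and $r_Y^Y=\id$.

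For part~(iii) I would argue as follows. Write $C=Y\setminus W$ for a relatively open $W\subseteq Y$. Then $U\cap C=U\setminus W$ is relatively open in $C$ (since $C\setminus(U\cap C)=C\cap W$ is relatively open in $C$) and relatively closed in $U$ (since $U\setminus(U\cap C)=U\cap W$ is relatively open in $U$); these are exactly the lattice-theoretic facts needed. Now consider the commuting square of \Cstar algebras
\[
\begin{xy}
\xymatrix{
A(U) \ar@{->}[r] \ar@{->>}[d] & A(Y) \ar@{->>}[d]\\
A(U\cap C) \ar@{->}[r] & A(C)
}
\end{xy}
\]
in which the horizontal maps are the inclusions $i_U^Y$, $i_{U\cap C}^C$ and the vertical maps are the quotient projections $r_U^{U\cap C}$, $r_Y^C$; that this square commutes is again a statement about the lattice $\Ideals(A)$, namely that the image of the ideal $A(U)$ in the quotient $A(C)=A(Y)/A(W)$ is $A(U\cap C)=A(U)/A(U\cap W)$. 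Applying the functor $\K_*$ gives $r_Y^C\circ i_U^Y=i_{U\cap C}^C\circ r_U^{U\cap C}$, which is the first equality. The second equality, rewriting $i_{U\cap C}^C\circ r_U^{U\cap C}$ as $\sum_{Z\in\pi_0(U\cap C)} i_Z^C\circ r_U^Z$, is just the biproduct decomposition of $U\cap C$ into its connected components in $\NT$ described in Remark~\ref{rem:connected}, under which $r_U^{U\cap C}$ becomes the tuple $(r_U^Z)_{Z}$ and $i_{U\cap C}^C$ becomes the sum $\sum_Z i_Z^C$. Finally, if $U$ and $C$ are disjoint then $U\cap C=\emptyset$, which is a zero object, so the composite vanishes; and this can equally be read off directly from the square, since then $A(U)$ maps to $0$ in $A(C)$.

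The only genuinely delicate point is keeping straight which subsets are relatively open in which — i.e.\ the bookkeeping that $W$ relatively open in $Y$ forces $U\cap W$ relatively open in $U$, $C\cap W$ relatively open in $C$, etc.\ — and correspondingly that the claimed square of ideals actually commutes; this is where I would be careful to invoke the lattice-homomorphism property of $\psi^*$ and the well-definedness of $A(Z)$ for locally closed $Z$ (\cite{MR2545613}*{Lemmas~2.16 and~2.25}) rather than hand-wave. Once that is pinned down, everything is a formal consequence of applying the homological functor $\K_*$ to commuting diagrams of extensions, exactly as in the proof of Proposition~\ref{pro:trans}, so I would phrase the whole argument as "apply naturality of the six-term sequence to the evident diagram" and relegate the relative-openness verifications to a sentence each.
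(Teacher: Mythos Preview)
Your approach is correct and essentially identical to the paper's: reduce (i), (ii) and the first equality in (iii) to identities among the underlying \Star homomorphisms of ideals and quotients, then invoke the biproduct decomposition of Remark~\ref{rem:connected} and the fact that $\emptyset$ is a zero object for the remaining assertions in (iii). One small slip: your parenthetical justification that $U\cap C$ is open in $C$ is garbled (you compute $C\setminus(U\cap C)$ incorrectly and then argue the wrong direction), but the claim itself is immediate from the subspace topology since $U$ is open in $Y$.
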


\begin{proof}
Assertions~(i) and~(ii) and the first equation in~(iii) follow from
respective identities on the \Cstar algebraic level.
The remaining assertions in~(iii) follow from the biproduct decomposition
in Remark~\ref{rem:connected} and the fact that the empty set is a zero object.
\end{proof}

\begin{definition}
A morphism $Y\to Z$ in a category $\Cat$ is called \emph{indecomposable}
if it cannot be written as a composite $Y\to W\to Z$ except for the
trivial ways involving identity morphisms.
\end{definition}

\begin{definition}
	\label{def:boundaries}
Let $Y\subset X$ be a subset. Since $X$ is finite there is a smallest
open subset $\widetilde Y$ of~$X$ containing~$Y$. This set is given
by the intersection of all open subsets of~$X$ containing~$Y$.

We define the boundary operations corresponding to the usual
and to the above closure operation by
\begin{equation*}
\overline\partial Y\perdef\overline Y\setminus Y\qquad\textup{and}\qquad
\widetilde\partial Y\perdef \widetilde Y\setminus Y.
\end{equation*}
\end{definition}

\begin{proposition}
	\label{pro:evenindecomp}
Let $Y$ be a connected, locally closed subset of~$X$. Suppose that the relations
in $\NTCSE$ are spanned by the canonical ones listed in
Proposition~\textup{\ref{pro:evenrels}.}
\begin{enumerate}[label=\textup{(\roman*)}]
\item
The natural transformation $i_U^Y$ for an open subset~$U$ of~$Y$ is
indecomposable in $\NTCSE$ if and only if $Y$ is of the form
\[
U\opencup y\perdef U\cup\left\lbrace x\in X\mid \textup{$x\succeq y$,
but $x\nsucc u$ for all $u\in U$}\right\rbrace 
\]
for a maximal element~$y$ of~$\overline\partial U$.
\item
The natural transformation $r_Y^C$ for a closed subset~$C$ of~$Y$ is
indecomposable in $\NTCSE$ if and only if~$Y$ is of the form
\[
C\closedcup y\perdef C\cup\left\lbrace x\in X\mid \textup{$x\preceq y$,
but $x\nprec c$ for all $c\in C$}\right\rbrace 
\]
for a minimal element~$y$ of~$\widetilde\partial C$.
\end{enumerate}
\end{proposition}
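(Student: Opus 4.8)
The plan is to prove both indecomposability statements by the same strategy; by symmetry (passing to the opposite order, which interchanges open/closed, $i$ with $r$, and $\overline\partial$ with $\widetilde\partial$) it suffices to treat~(i). I would first observe that the chain of relations in Proposition~\ref{pro:evenrels}(i) shows that for any open $U\subseteq Y$ one may factor $i_U^Y$ through every intermediate open set: if $U\subseteq U'\subseteq Y$ with $U$ open in $U'$ and $U'$ open in $Y$, then $i_U^Y=i_{U'}^Y\circ i_U^{U'}$. Hence a \emph{necessary} condition for $i_U^Y$ to be indecomposable is that there is no such intermediate connected set $U'$ with $U\subsetneq U'\subsetneq Y$ — i.e.\ $Y\setminus U$ must be, in a suitable sense, ``one atom'' over $U$. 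I would make this precise: the relatively open subsets $U'$ of $Y$ with $U\subseteq U'$ correspond to the upward-closed (in the induced order on $Y$) subsets of $Y\setminus U$ together with $U$; adding a \emph{maximal} element $y$ of $\overline\partial U=\overline U\setminus U$ to $U$ (together with everything forced to keep the set locally closed and to make it the smallest such, which is exactly the cluster $\{x\in X: x\succeq y,\ x\nsucc u\ \forall u\in U\}$ appearing in the statement of $U\opencup y$) gives the smallest connected relatively open set strictly containing $U$. This identifies the candidate sets $Y=U\opencup y$.

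Next, for the ``if'' direction I must show that when $Y=U\opencup y$ for a maximal $y\in\overline\partial Y$ — wait, $y\in\overline\partial U$ — the transformation $i_U^Y$ genuinely cannot be written as a nontrivial composite $\FK_U\Rightarrow\FK_W\Rightarrow\FK_Y$ in $\NTCSE$. Here I use the hypothesis that the relations in $\NTCSE$ are spanned by the canonical ones. Under that hypothesis every morphism in $\NTCSE$ is a $\Z$-linear combination of composites of the generators $i$ and $r$, and the canonical relations let me put any such composite into a normal form: using Proposition~\ref{pro:evenrels}(iii) I can push all restriction transformations to the left of all extension transformations, so every morphism $\FK_U\Rightarrow\FK_Y$ in $\NTCSE$ is a sum of composites $i_{W}^{Y}\circ r_{U}^{W}$ where $W$ runs over connected components of sets of the form $U\cap C$, hence over connected locally closed subsets that are simultaneously relatively closed in $U$ and relatively open in $Y$. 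The point is then purely order-theoretic: the only connected locally closed $W$ with $W$ relatively closed in $U$ \emph{and} relatively open in $Y=U\opencup y$ are $W=U$ itself (giving the trivial factorisation $i_U^Y=i_U^Y\circ\id$) and $W=Y$ (giving $i_U^Y=\id\circ r_U^Y$, but $r_U^Y=0$ since $U$ and $Y\setminus U$ are disjoint, by the last sentence of Proposition~\ref{pro:evenrels}(iii)); any proper intermediate $W$ would furnish a proper open set between $U$ and $Y$, contradicting the minimality of $U\opencup y$. Therefore the only way to express $i_U^Y$ as a composite through $\NTCSE$ is the trivial one, which is indecomposability.

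For the ``only if'' direction I argue the contrapositive: if $Y$ is connected, locally closed, $U$ open in $Y$, but $Y$ is \emph{not} of the form $U\opencup y$ for any maximal $y\in\overline\partial U$, then either $U=Y$ (in which case $i_U^Y=\id_Y$ is excluded as ``trivial''), or $\overline\partial U\neq\emptyset$ and one can pick a maximal element $y$ of $\overline\partial U$ and form $U':=U\opencup y$, which by construction satisfies $U\subsetneq U'\subseteq Y$, is connected (it is $U$ with one ``cluster over $y$'' attached, which is connected since $U$ is connected and $y$ is a specialisation of something in $U$), is relatively open in $Y$, and — crucially — is \emph{properly} contained in $Y$ because $Y\neq U\opencup y$. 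Then $i_U^Y=i_{U'}^Y\circ i_U^{U'}$ with neither factor an identity, so $i_U^Y$ is decomposable. The main obstacle I anticipate is the bookkeeping in the normal-form argument of the previous paragraph — verifying carefully that, modulo the canonical relations, \emph{every} morphism of $\NTCSE$ between the relevant objects really is a sum of $i\circ r$ composites indexed by connected locally closed sets sandwiched appropriately, and that connectedness of $W$ together with the combinatorial description of $U\opencup y$ leaves exactly the two extreme choices $W\in\{U,Y\}$; the rest is elementary order theory about the Hasse diagram $\Gamma(X)$.
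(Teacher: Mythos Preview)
Your proposal has two genuine gaps, one in each direction.

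\textbf{The ``only if'' direction is backwards.} You argue by contrapositive: if $Y\neq U\opencup y$, pick a maximal $y\in\overline\partial U$, set $U'\perdef U\opencup y$, and factor $i_U^Y=i_{U'}^Y\circ i_U^{U'}$. But for this you need $U'\subseteq Y$ and $U'$ open in $Y$, which you assert ``by construction''. This fails in general. Take $X=\{1,2,3\}$ with $1\prec 2$ and $1\prec 3$, let $Y=\{1,2\}$ and $U=\{2\}$. Then $\overline\partial U=\{1\}$, and $U\opencup 1=\{1,2,3\}\not\subseteq Y$. The correct inclusion is the reverse: the paper shows (after establishing that $y$ is a least element of $Y$) that $Y\subseteq U\opencup y$ and that $Y$ is \emph{closed} in $U\opencup y$, so the relevant nontrivial factorisation is $i_U^Y=r_{U\opencup y}^Y\circ i_U^{U\opencup y}$, through a set \emph{larger} than $Y$, not an intermediate open set.

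\textbf{The ``if'' direction misses the mixed factorisations.} Your normal-form argument describes the morphisms $U\to Y$ as combinations of $i_W^Y\circ r_U^W$ with $W$ closed in $U$ and open in $Y$; but indecomposability asks about factorisations $i_U^Y=\beta\circ\alpha$ through an \emph{arbitrary} object $V$. In particular you never exclude $i_U^Y=r_V^Y\circ i_U^V$ with $Y\subsetneq V$, $U$ open in $V$, $Y$ closed in $V$. (Your case ``$W=Y$'' is vacuous since $W\subseteq U$.) The paper handles this separately: if $V\supsetneq Y=U\opencup y$ is connected locally closed with $Y$ closed in $V$, one produces $w\in V\setminus Y$ with $w\succ u$ for some $u\in U$, so $U$ is not open in $V$ and the factorisation is impossible. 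Your argument needs this step; the normal form for $\Hom(U,Y)$ alone does not supply it.
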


\begin{proof}
We prove (i). The second assertion follows in an analogous manner or
by considering the dual partially ordered set of~$X$.

Suppose that $i_U^Y$ is indecomposable in $\NTCSE$. Then $U$ is a
maximal connected proper open subset of $Y$ because otherwise $i_U^Y$
could be written as a composition of two proper extension
transformations, that is, extension transformations which are not
identity transformations.

We choose a minimal element~$y$ of
$\overline U\cap Y$. We may assume $y\in\overline\partial U$ because
otherwise~$U$ were a proper clopen subset of~$Y$ contradicting
connectedness of $Y$. Moreover, $y$
is a maximal element of $\overline\partial U$. To see this, assume
that there is $z\in\overline\partial U$ with $z\succ y$. Then $z\in Y$
because~$Y$ is locally closed. Hence $U\cup(\widetilde{\{z\}}\cap Y)$
is a \emph{proper} connected open subset of $Y$ containing~$U$ as a
proper subset. This contradicts our previous observation that~$U$ is a
maximal connected proper open subset of~$Y$.

We claim that $y$ is a least element of $Y$. Assume, conversely, that
there is $w\in Y$ with $w\nsucc y$. Then
$U\cup(\widetilde{\{y\}}\cap Y)$ is a \emph{proper} connected open
subset of $Y$ containing $U$ as a proper subset, which again yields a
contradiction. For this reason and since~$Y$ contains~$U$ as an open
subset, we have $Y\subset U\opencup y$.

Now we observe that~$Y$ is closed in $U\opencup y$---this holds for
every connected locally closed subset of $U\opencup y$ containing~$U$.
Hence $i_U^Y=r_{U\opencup y}^Y\circ i_U^{U\opencup y}$, and the
indecomposability of $i_U^Y$ implies $Y=U\opencup y$.

For the converse implication, let $Y=U\opencup y$ for a maximal element
$y$ of~$\overline\partial U$. Then $U$ is a maximal connected proper
open subset of~$Y$ and hence $i_U^Y$ does not decompose as the composite
of two proper extension transformations. On the other hand, $i_U^Y$ does
not decompose as $r_W^Y\circ i_U^W$ with $Y\subsetneq W$ either. To see
this, we assume the opposite: let $W$ be a connected locally closed
subset of $X$ containing $Y$ as a proper closed subset. Since $Y$ cannot
be open in $W$, there are $w\in W\setminus Y$ and $y'\in Y$ with
$w\succ y'$. Consequently, we either have $w\succ u$ for some $u\in U$,
or $w\succ y$. But, since $w\not\in Y=U\opencup y$, the inequality
$w\succ y$ implies $w\succ u$ for some $u\in U$ as well. This follows
from the definition of $U\opencup y$. Thus~$U$ is not
open in~$W$---a contradiction.
\end{proof}

In the following, we examine the category $\NTS$,
so that boundary transformations come into play.

\begin{definition}
A \emph{boundary pair} in $\NT$ is a pair $(U,C)$ of disjoint subsets
$U,C\in\LC(X)$ such that
\begin{itemize}
\item
$U\cup C$ is locally closed,
\item
$U$ is relatively open in $U\cup C$,
\item
$C$ is relatively closed in $U\cup C$.
\end{itemize}
\end{definition}

The third condition is of course redundant since it is equivalent
to the second one. Since local closedness is preserved under finite
intersections, $U$ and $C$ are locally closed. For each boundary pair
we have the natural transformation $\delta_C^U$ defined in
Proposition~\ref{pro:trans}.

We begin by investigating compositions of boundary transformations with \emph{even}
six-term sequence transformations.

\begin{proposition}
	\label{pro:oddrels}
Let $(U,C)$ be a boundary pair in $\NT$ and define $Y=U\cup C$.
\begin{enumerate}[label=\textup{(\roman*)}]
\item
Let $C'\subset C$ be a relatively open subset. Then $U\cup C'$ is
relatively open in $U\cup C$, the set $C'$ is relatively closed in
$U\cup C'$, and we have
\[
\delta_C^U\circ i_{C'}^C = \delta_{C'}^U.
\]
\item
Let $U'\subset U$ be a relatively closed subset. Then $U'\cup C$ is
relatively closed in $U\cup C$, the set $U'$ is relatively
open in $U'\cup C$, and
\[
r_U^{U'}\circ\delta_C^U = \delta_C^{U'}.
\]
\item
Let $U'$ be a subset of $U$ with the property that $U'\cup C$ is
relatively open in $U\cup C$. Then $U'$ is relatively open in $U$ and
in $U'\cup C$, and we have
\[
i_{U'}^U\circ\delta_C^{U'} = \delta_C^U.
\]
\item
Let $C'$ be a subset of $C$ with the property that $U\cup C'$ is
relatively closed in $U\cup C$. Then $C'$ is relatively closed in $C$
and in $U\cup C'$, and
\[
\delta_{C'}^U\circ r_C^{C'} = \delta_C^U.
\]
\end{enumerate}
\end{proposition}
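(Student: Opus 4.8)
The plan is to verify each of the four identities by reducing, as in the proof of Proposition~\ref{pro:evenrels}, to the naturality of the six-term exact sequence together with the elementary identities already established for extension and restriction transformations. In each case the first task is purely point-set topological: one must check that the subsets named in the statement are of the asserted type (relatively open, relatively closed, locally closed). This follows from the fact that local closedness is stable under finite intersections and that relative openness (resp.\ relative closedness) is transitive, exactly as recorded in Proposition~\ref{pro:evenrels}(i)--(ii); for instance in~(i), $U\cup C'$ is the preimage of the relatively open set $C'\cup\{\text{everything above }U\}$ under a suitable comparison, but more simply $U\cup C' = (U\cup C)\setminus(C\setminus C')$ with $C\setminus C'$ relatively closed in $U\cup C$, hence $U\cup C'$ is relatively open in $Y$; and $C'$ is relatively closed in $U\cup C'$ because $C'=C\cap(U\cup C')$ is the intersection of the relatively closed set $C$ with $U\cup C'$. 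The analogous bookkeeping handles~(ii)--(iv).

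For the homological content I would argue diagrammatically. Fix $A\inOb\KKcat(X)$ and consider the relevant extensions of $\Cst$-algebras obtained by taking subquotients of $A$; naturality of the $K$-theory six-term sequence for a morphism of extensions gives a ladder of long exact sequences, and the claimed relation is the commutativity of one square in that ladder. Concretely, for~(i) the inclusion $A(U\cup C')\hookrightarrow A(U\cup C)$ together with the identity on $A(U)$ and the induced map $A(C')\hookrightarrow A(C)$ forms a morphism from the extension $A(U)\rightarrowtail A(U\cup C')\twoheadrightarrow A(C')$ to $A(U)\rightarrowtail A(U\cup C)\twoheadrightarrow A(C)$; the boundary maps commute with the vertical maps, which is precisely $\delta_C^U\circ i_{C'}^C=\delta_{C'}^U$. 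For~(ii) one uses instead the morphism of extensions induced by the quotient map $A(U)\twoheadrightarrow A(U')$ and $A(U'\cup C)\hookrightarrow A(U\cup C)$ covering the identity on $A(C)$. Assertions~(iii) and~(iv) are of the same nature: in~(iii) one compares the extension with ideal $A(U')$ inside $A(U'\cup C)$ to the one with ideal $A(U)$ inside $A(U\cup C)$ via the inclusions, obtaining $i_{U'}^U\circ\delta_C^{U'}=\delta_C^U$; in~(iv) one compares $A(U)\rightarrowtail A(U\cup C')\twoheadrightarrow A(C')$ with $A(U)\rightarrowtail A(U\cup C)\twoheadrightarrow A(C)$ via the identity on $A(U)$, the inclusion $A(U\cup C')\hookrightarrow A(U\cup C)$ and the quotient $A(C)\twoheadrightarrow A(C')$, giving $\delta_{C'}^U\circ r_C^{C'}=\delta_C^U$.

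The only genuine subtlety, and the step I expect to require the most care, is to make sure that in each case we are genuinely looking at a \emph{morphism of semi-split extensions} of $\Cst$-algebras over the relevant subspace—so that the six-term sequences and their naturality are available in $\KKcat(X)$ and not merely in the category of abelian groups—and that the two composites in question are literally induced by the two composable sides of a commuting square of $\Cst$-algebra morphisms (up to the canonical identifications $A(Z)=(r_X^ZA)(Z)$ and the subquotient identifications of Lemma~2.16 of~\cite{MR2545613}). Once the correct morphism of extensions is written down in each of the four cases, the identities are immediate from the naturality statement of Proposition~\ref{pro:trans}; there is no further computation to perform.
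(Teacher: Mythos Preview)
Your approach is exactly the paper's: the proof there is the single sentence ``This follows from the fact that $\K$-theoretic boundary maps are natural with respect to morphisms of extensions,'' and you have correctly unpacked what that means in each case.

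Two small slips to fix. In~(ii), since $U'\cup C$ is relatively \emph{closed} in $U\cup C$, the middle map of your morphism of extensions is the \emph{quotient} $A(U\cup C)\twoheadrightarrow A(U'\cup C)$, not an inclusion; likewise in~(iv) the middle map is the quotient $A(U\cup C)\twoheadrightarrow A(U\cup C')$. With the arrows corrected the naturality squares read off the stated identities immediately. Also, your closing worry about se\-mi-splitness is unnecessary here: the transformations $\delta_C^U$ are defined via the ordinary $\K$-theory six-term sequence for an ideal inclusion, which exists and is natural for arbitrary extensions of $\Cst$-algebras by excision; no $\KK$-lift is required at this point.
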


\begin{proof}
This follows from the fact that $\K$-theoretic boundary
maps are natural with respect to morphisms of extensions.
\end{proof}

It is, however, not true that every morphism of extensions decomposes
as a composition of pullbacks and pushouts as above. To see this, consider
the morphism
\begin{equation}
    \label{eq:extensionmorphism}
\begin{split}
\xymatrix{
A(U)\ar@{ >->}[r]\ar@{ >->}[d] & A(Y)\ar@{->>}[r]\ar@{=}[d] & A(C)\ar@{->>}[d] \\
A(U')\ar@{ >->}[r] & A(Y)\ar@{->>}[r] & A(C')
}
\end{split}
\end{equation}
for appropriate boundary pairs $(U,C)$ and $(U',C')$ in~$X$.
This morphism need not split into pullbacks and pushouts because
$U\cup C'$ need not be locally closed, and the union~$U'\cup C$ need
not be disjoint. We phrase the relation corresponding to the above
morphism in the following proposition.

\begin{proposition}
	\label{pro:oddrel2}
Let $(U,C)$ and $(U',C')$ be boundary pairs in $\NT$ with $U\cup C=U'\cup C'$,
and such that $U$ is an open subset of $U'$ and $C'$ is a closed subset
of~$C$. Then
\[
i_U^{U'}\circ\delta_C^U=\delta_{C'}^{U'}\circ r_C^{C'}.
\]
In particular, $i_U^{U\cup C}\circ\delta_C^U=0$
and $\delta_{C}^{U}\circ r_{U\cup C}^{C}$
for every boundary pair $(U,C)$ in~$\NT$.
\end{proposition}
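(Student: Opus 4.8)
The plan is to realise both composites as the two ways around the ``boundary square'' attached to the morphism of extensions~\eqref{eq:extensionmorphism} and then to quote naturality of the $\K$-theoretic six-term sequence, exactly as in the proof of Proposition~\ref{pro:oddrels}.

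First I would dispose of the locally-closed-set bookkeeping. Put $Y\perdef U\cup C=U'\cup C'$. Since $(U,C)$ and $(U',C')$ are boundary pairs, we have disjoint decompositions $Y=U\sqcup C=U'\sqcup C'$, whence $C=Y\setminus U$ and $C'=Y\setminus U'$; combined with the hypothesis $U\subseteq U'$ this gives $C'\subseteq C$ and, crucially, $C\setminus C'=U'\setminus U$. As $U$ is open in $U'$ and $C'$ is closed in $C$, both $A(U')/A(U)$ and $A(C)/A(C')$ are canonically the algebra $A(U'\setminus U)=A(C\setminus C')$. With these identifications one checks, for every $A\inin\KKcat(X)$, that the two rows of~\eqref{eq:extensionmorphism} are precisely the extensions $A(U)\mono A(Y)\epi A(C)$ and $A(U')\mono A(Y)\epi A(C')$ associated to the two boundary pairs, that its middle vertical arrow is $\id_{A(Y)}$, that its left vertical arrow is the inclusion $A(U)\embed A(U')$ defining $i_U^{U'}$, and that its right vertical arrow is the quotient $A(C)\epi A(C')$ defining $r_C^{C'}$. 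The left square commutes because $A(U)\subseteq A(U')\subseteq A(Y)$; the right square commutes because quotienting $A(Y)$ first by $A(U)$ and then by the ideal $A(U')/A(U)$ of $A(C)$ agrees, via the third isomorphism theorem, with quotienting directly by $A(U')$. Hence~\eqref{eq:extensionmorphism} really is a morphism of extensions of \Cstar algebras over~$X$.

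Next I would apply naturality of the six-term exact sequence in $\K$-theory to this morphism of extensions --- the only analytic input, already invoked in Proposition~\ref{pro:oddrels}. The induced ladder commutes; its square joining the two boundary maps reads, after the identifications above,
\[
i_U^{U'}\circ\delta_C^U=\delta_{C'}^{U'}\circ r_C^{C'}.
\]
Since each map involved is natural in~$A$, this is an identity of morphisms in~$\NT$, which is the assertion. The two ``in particular'' statements then follow by feeding degenerate boundary pairs into what has just been proved: taking $(U',C')=(Y,\emptyset)$ satisfies the hypotheses and gives $i_U^{U\cup C}\circ\delta_C^U=\delta_\emptyset^{Y}\circ r_C^\emptyset=0$, because $\FK_\emptyset=0$ by Remark~\ref{rem:connected} forces $\delta_\emptyset^{Y}=0$; symmetrically, applying the proposition with first boundary pair $(\emptyset,Y)$ and second boundary pair $(U,C)$ yields $\delta_C^U\circ r_{U\cup C}^C=i_\emptyset^U\circ\delta_Y^\emptyset=0$ (so the displayed ``in particular'' should read $\delta_C^U\circ r_{U\cup C}^C=0$).

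The one step deserving genuine care is the verification that~\eqref{eq:extensionmorphism} is a morphism of extensions --- concretely, the set-theoretic arithmetic $C\setminus C'=U'\setminus U$ and the resulting identification of the right vertical arrow with $r_C^{C'}$. Everything after that is formal naturality of the boundary map, and the ``in particular'' clauses are immediate specialisations.
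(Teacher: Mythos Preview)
Your proof is correct and follows essentially the same approach as the paper: naturality of the $\K$-theoretic boundary map with respect to the morphism of extensions~\eqref{eq:extensionmorphism} for the main identity, and specialisation to the degenerate boundary pairs $(U\cup C,\emptyset)$ and $(\emptyset,U\cup C)$ for the two ``in particular'' vanishing statements. Your treatment is simply more explicit in checking that~\eqref{eq:extensionmorphism} really is a morphism of extensions, and you correctly observe that the second ``in particular'' clause should read $\delta_C^U\circ r_{U\cup C}^C=0$.
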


\begin{proof}
 The first assertion follows from the naturality of the boundary map in $\K$-theory with respect to the morphism of extensions~\eqref{eq:extensionmorphism}. For the second assertion, consider the boundary pairs $(U\cup C,\emptyset)$ and $(\emptyset,U\cup C)$ and use that~$\emptyset$ is a zero object.
\end{proof}

\begin{corollary}
	\label{pro:vanishingboundarycomp}
Let $Y,Z\in\LC(X)$ such that $W\perdef Y\cap Z$ is open in $Y$ and
closed in~$Z$. If $Y\cup Z$ is locally closed then
\[
\delta_W^{Z\setminus W}\circ\delta_{Y\setminus W}^W=0.
\]
\end{corollary}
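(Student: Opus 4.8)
The plan is to shrink $Y\cup Z$ to a locally closed subset on which $(Z,Y\setminus W)$ becomes a genuine boundary pair, and then transport the statement back along an extension transformation. Write $L:=Y\cup Z$, let $N$ be the closure of $Y\setminus W$ taken inside $L$, and set
\[
D:=N\cap Z,\qquad U:=(Z\setminus W)\setminus D,\qquad Z':=W\cup U=Z\setminus D .
\]
Here $D$ measures the failure of $Y\setminus W$ to be closed in $L$. The first step is to record three elementary set-theoretic facts:
\begin{enumerate}[label=\textup{(\roman*)}]
\item $D\cap W=\emptyset$, so $D\subseteq Z\setminus W$ and $N=(Y\setminus W)\sqcup D$: if some $w\in W$ lay in $N$, then $w\preceq a$ for an $a\in Y\setminus W$, and openness of $W$ in $Y$ would force $a\in W$;
\item $D$ is relatively closed in $Z$ (hence $U$ is relatively open in $Z\setminus W$, and $Z'$ is relatively open in $Z$): immediate since $N$ is closed in $L$ and $Z$ carries the subspace topology;
\item $L\setminus D=Y\cup U$ is convex in $X$, hence locally closed, and $Y\setminus W$ is relatively closed in it.
\end{enumerate}

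Item (iii) is the crux, and it is the only place where the hypothesis ``$Y\cup Z$ locally closed'' (together with local closedness of $Y$ and of $Y\setminus W$) really enters. For the convexity, suppose $p\preceq q\preceq r$ with $p,r\in L\setminus D$ and $q\in D$. Since $N$ is a down-set in $L$ and $q\in N$, also $p\in N=(Y\setminus W)\sqcup D$; as $p\notin D$ we get $p\in Y\setminus W$. Choosing $a\in Y\setminus W$ with $q\preceq a$ yields $p\preceq q\preceq a$ with $p,a\in Y\setminus W$, so $q\in Y\setminus W$ by convexity of the locally closed set $Y\setminus W$---contradicting $q\in D$ together with~(i). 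That $Y\setminus W$ is relatively closed in $L\setminus D$ follows from the identity $N\cap(L\setminus D)=Y\setminus W$.

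Granting (i)--(iii), $(U,W)$ is a boundary pair with $U\cup W=Z'$, $(Z',Y\setminus W)$ is a boundary pair with union $Y\cup U$, and $W$ is relatively closed in $Z'$. Then I would simply chain the canonical relations. Proposition~\ref{pro:oddrels}(iii), applied to the boundary pair $(Z\setminus W,W)$ and the relatively open subset $Z'=U\cup W$ of $Z$, gives $\delta_W^{Z\setminus W}=i_U^{Z\setminus W}\circ\delta_W^{U}$. Proposition~\ref{pro:oddrels}(ii), applied to the boundary pair $(Z',Y\setminus W)$ and the relatively closed subset $W$ of $Z'$, gives $\delta_{Y\setminus W}^{W}=r_{Z'}^{W}\circ\delta_{Y\setminus W}^{Z'}$. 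Finally, the ``in particular'' clause of Proposition~\ref{pro:oddrel2}, applied to the boundary pair $(U,W)$ (whose union is $Z'$), gives $\delta_W^{U}\circ r_{Z'}^{W}=0$. Composing these three identities,
\[
\delta_W^{Z\setminus W}\circ\delta_{Y\setminus W}^{W}
= i_U^{Z\setminus W}\circ\bigl(\delta_W^{U}\circ r_{Z'}^{W}\bigr)\circ\delta_{Y\setminus W}^{Z'}
= 0 .
\]
When $Y\setminus W$ is already closed in $Y\cup Z$ (for instance when $Y\cup Z$ is totally ordered) one has $D=\emptyset$, $U=Z\setminus W$, $Z'=Z$, and the whole argument collapses to the one-line observation that $(Z,Y\setminus W)$ is itself a boundary pair. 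I expect the main obstacle to be exactly item~(iii): verifying that cutting $D$ out of $Y\cup Z$ preserves convexity, which quietly rests on the local closedness of $Y\setminus W$ and of $Y\cup Z$ in $X$.
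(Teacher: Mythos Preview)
Your proof is correct. It is also, in a precise sense, more careful than the paper's own two-line argument. The paper simply writes $\delta_{Y\setminus W}^W = r_Z^W \circ \delta_{Y\setminus W}^Z$ (Proposition~\ref{pro:oddrels}(ii)) and then uses $\delta_W^{Z\setminus W}\circ r_Z^W=0$ (Proposition~\ref{pro:oddrel2}). This presupposes that $(Z, Y\setminus W)$ is a boundary pair, i.e.\ that $Y\setminus W$ is already closed in $Y\cup Z$---exactly your case $D=\emptyset$.

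That tacit assumption is not automatic under the stated hypotheses. For instance, take $X=\{a,b,w,q,r\}$ with the order generated by $b\prec a$, $b\prec w$, $w\prec r$, $q\prec r$, $q\prec a$; set $Y=\{a,b,w\}$, $Z=\{q,r,w\}$, $W=\{w\}$. Then $Y$, $Z$, $Y\cup Z=X$ are all locally closed, $W$ is open in~$Y$ and closed in~$Z$, yet $q\prec a$ shows $Y\setminus W=\{a,b\}$ is \emph{not} closed in~$X$, so $(Z,Y\setminus W)$ fails to be a boundary pair and $\delta_{Y\setminus W}^{Z}$ is undefined. Your construction gives $D=\{q\}$, $U=\{r\}$, $Z'=\{w,r\}$, and the argument goes through.

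So: the paper's route is the streamlined special case $D=\emptyset$ (which is what actually occurs in the only application, Lemma~\ref{lem:odd compositions} for type~(A) spaces), while your shrinking step $Z\rightsquigarrow Z'=Z\setminus D$ supplies the missing reduction in full generality. The verification of item~(iii)---convexity of $L\setminus D$ via convexity of $Y\setminus W$---is exactly the point the paper glosses over.
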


\begin{proof}
By Proposition~\ref{pro:oddrels}(ii) we have
$\delta_{Y\setminus W}^W=r_Z^W\circ\delta_{Y\setminus W}^Z$.
Hence, by Proposition~\ref{pro:oddrel2},
\[
\delta_W^{Z\setminus W}\circ\delta_{Y\setminus W}^W
=\left(\delta_W^{Z\setminus W}\circ r_Z^W\right)
\circ\delta_{Y\setminus W}^Z=0.
\qedhere
\]
\end{proof}

\begin{proposition}
	\label{pro:vanishingsum}
Let $Y,Z\in\LC(X)$.
\begin{enumerate}[label=\textup{(\roman*)}]
\item
Let $Z$ be a proper open subset of~$Y$.
Let $C_1,\ldots,C_k$ be the connected components of~$Y\setminus Z$.
Then
\[
\sum_{j=1}^k \delta_{C_j}^Z\circ r_Y^{C_j} = 0.
\]
\item
Let $Y$ be a proper closed subset of~$Z$.
Let $C_1,\ldots,C_k$ be the connected components of~$Z\setminus Y$.
Then
\[
\sum_{j=1}^k i_{C_j}^Z\circ\delta_Y^{C_j} = 0.
\]
\end{enumerate}
\end{proposition}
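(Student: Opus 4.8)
The plan is to deduce both identities from the naturality of the $\K$-theoretic six-term exact sequence applied to a single extension, exploiting the additivity of $\FK$ over connected components recorded in Remark~\ref{rem:connected}. I will prove~(i); assertion~(ii) follows by the same argument applied to the opposite partial order (equivalently, by passing to the dual finite space), which interchanges the roles of open and closed subsets and of the transformations $r$ and $i$ while preserving boundary maps up to the usual sign.

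For~(i), set $C\perdef Y\setminus Z=\bigsqcup_{j=1}^k C_j$, so that $(Z,C)$ is a boundary pair with $Z$ relatively open in $Y=Z\cup C$ and $C$ relatively closed. The associated semi-split extension of \Cstar algebras $A(Z)\mono A(Y)\epi A(C)$ induces a six-term exact sequence in $\K$-theory, naturally in $A$. The composite transformation in question is $\delta_C^Z\circ r_Y^C\colon\FK_Y\Rightarrow\FK_Z$, where $r_Y^C$ is the restriction transformation induced by $A(Y)\epi A(C)$ and $\delta_C^Z$ is the boundary transformation of this extension. But the composite of two consecutive maps in the six-term exact sequence vanishes, so $\delta_C^Z\circ r_Y^C=0$ as a natural transformation $\FK_Y\Rightarrow\FK_Z$. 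It therefore remains to identify $\delta_C^Z\circ r_Y^C$ with $\sum_{j=1}^k \delta_{C_j}^Z\circ r_Y^{C_j}$. By the biproduct decomposition $C\cong\bigoplus_{j=1}^k C_j$ in $\NT$ from Remark~\ref{rem:connected}, the restriction transformation factors as $r_Y^C=\sum_{j} \iota_j\circ r_Y^{C_j}$, where $\iota_j$ is the coproduct inclusion $C_j\hookrightarrow C$ (which on the level of $\FK$ is the corresponding summand inclusion); dually, $r_C^{C_j}\circ\iota_j=\id_{C_j}$ and $r_C^{C_j}\circ\iota_{j'}=0$ for $j\neq j'$, by Proposition~\ref{pro:evenrels}(iii) applied to the disjoint clopen pieces of $C$. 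Combining this with Proposition~\ref{pro:oddrels}(i), which gives $\delta_C^Z\circ i_{C_j}^C=\delta_{C_j}^Z$ (here $i_{C_j}^C$ is exactly the inclusion $\iota_j$, since $C_j$ is clopen in $C$), we obtain
\[
\delta_C^Z\circ r_Y^C
=\sum_{j=1}^k \delta_C^Z\circ i_{C_j}^C\circ r_Y^{C_j}
=\sum_{j=1}^k \delta_{C_j}^Z\circ r_Y^{C_j},
\]
which is the claimed relation.

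The one point requiring a little care is the bookkeeping with the connected-component decomposition: one must check that the $\NT$-morphism $r_Y^{C_j}$ appearing in the statement really is $r_C^{C_j}\circ r_Y^C$ (so that the identity $\sum_j i_{C_j}^C\circ r_C^{C_j}=\id_C$ may be inserted), and that $i_{C_j}^C$ coincides with the coproduct inclusion of the summand $C_j$; both are immediate from the definitions of the canonical transformations and from Remark~\ref{rem:connected}, since $C_j$ is simultaneously open and closed in $C$. No genuine obstacle arises: the content is entirely in the vanishing of consecutive maps in the six-term exact sequence, and everything else is the formal manipulation of biproducts in the additive category $\NT$.
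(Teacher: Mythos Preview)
Your proof is correct and follows essentially the same approach as the paper: both arguments set $C\perdef Y\setminus Z$, use the biproduct decomposition $\sum_j i_{C_j}^C\circ r_Y^{C_j}=r_Y^C$ together with $\delta_C^Z\circ i_{C_j}^C=\delta_{C_j}^Z$ to identify the sum with $\delta_C^Z\circ r_Y^C$, and then invoke the vanishing of this composite (the paper cites Proposition~\ref{pro:oddrel2}, which packages the same six-term sequence fact you use directly). The only cosmetic difference is that the paper handles~(ii) by the parallel direct computation rather than by appealing to duality.
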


\begin{proof}
Let $C\perdef Y\setminus Z$. Then $A(C)=\prod_{j=1}^k A(C_j)$ for every
$\Cst$-algebra~$A$ over~$X$ and we
get
\[
\sum_{j=1}^k \delta_{C_j}^Z\circ r_Y^{C_j}
=\delta_C^Z\circ \sum_{j=1}^k i_{C_j}^C\circ r_Y^{C_j}
=\delta_C^Z\circ r_Y^C
=0
\]
using Proposition~\ref{pro:oddrel2}. The second assertion follows analogously.
\end{proof}

\begin{definition}
	\label{def:canonical relations}
Restricting to \emph{connected, non-empty} locally closed closed subsets of~$X$, Propositions~\ref{pro:evenrels}, \ref{pro:oddrels}, \ref{pro:oddrel2} and~\ref{pro:vanishingsum} establish various relations in the category~$\NTC$. These are referred to as \emph{canonical relations in~$\NTC$.}
\end{definition}

\begin{remark}
Since we have ruled out the empty set as an object of the category~$\NTC$, the last relations of Propositions~\ref{pro:evenrels} and~\ref{pro:oddrel2}, respectively, become independent from the other relations in these propositions.
\end{remark}

\begin{remark}
In \S\ref{sec:Proof_of_ungraded_iso} we will see that, for a finite space~$W$ of type~(A), all relations in the category~$\NTC(W)$ follow from these canonical relations. This is also true for all four-point spaces considered in \S\ref{sec:counterexamples}.
\end{remark}

In the following we make some definitions in order to describe the
boundary pairs $(U,C)$ that correspond to \emph{indecomposable}
boundary transformations $\delta_C^U$ in $\NTCS$.

\begin{definition}
\emph{A boundary pair in $\NTC$} is a boundary pair $(U,C)$ in $\NT$
such that $U$, $C$ and $U\cup C$ are connected.
\end{definition}

\begin{definition}
	\label{def:boundarysubpair}
For two boundary pairs $(U,C)$ and $(U',C')$ in $\NTC$ we say that $(U',C')$ is an
\emph{extension} of $(U,C)$ if
\begin{itemize}
\item
$U$ is a relatively closed subset of $U'$,
\item
$C$ is a relatively open subset of~$C'$.
\end{itemize}
\end{definition}

\begin{example}
Let $X=\{1,2,3,4\}$ with Alexandrov topology given by the partial order generated by $1\prec 2\prec 3\prec 4$:
\[
\xymatrix{
4\ar[r] & 3\ar[r] & 2\ar[r] & 1.
}
\]
Then $\bigb{\{3,4\},\{1,2\}}$ is an extension of
$\bigb{\{3\},\{2\}}$.
\end{example}

\begin{lemma}
For an extension $(U',C')$ of $(U,C)$ we have the relation
\[
\delta_{C}^{U}=r_{U'}^{U}\circ\delta_{C'}^{U'}\circ i_{C}^{C'}.
\]
\end{lemma}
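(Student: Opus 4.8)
The plan is to obtain the asserted identity by composing two instances of the naturality relations for boundary transformations collected in Proposition~\ref{pro:oddrels}. The only point needing attention is to check that the intermediate pair $(U',C)$ is again a boundary pair, so that the transformation $\delta_C^{U'}$ makes sense; everything else is a direct quotation of that proposition.

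First I would record that $(U',C)$ is a boundary pair in $\NT$. Since $C\subseteq C'$ is relatively open and $U'$ is disjoint from $C'$ and hence from $C$, we may apply Proposition~\ref{pro:oddrels}(i) to the boundary pair $(U',C')$ with the relatively open subset $C$ of $C'$. This shows that $U'\cup C$ is relatively open (hence locally closed) in $U'\cup C'$ and that $C$ is relatively closed in $U'\cup C$, while $U'$ remains relatively open in $U'\cup C$ because it is relatively open in the larger set $U'\cup C'$. The same application of Proposition~\ref{pro:oddrels}(i) delivers the first relation
\[
\delta_{C'}^{U'}\circ i_C^{C'}=\delta_C^{U'}.
\]
If one wishes to stay strictly inside $\NTC$ rather than $\NT$, note in addition that $U'\cup C$ is connected, being the union of the connected sets $U'$ and $U\cup C$, which meet in the non-empty set~$U$.

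Next I would apply Proposition~\ref{pro:oddrels}(ii) to the boundary pair $(U',C)$ together with the relatively closed subset $U$ of~$U'$ (relative closedness of~$U$ in~$U'$ is the first defining property of an extension of boundary pairs). This gives
\[
r_{U'}^U\circ\delta_C^{U'}=\delta_C^U.
\]
Combining the two displayed relations yields
\[
r_{U'}^U\circ\delta_{C'}^{U'}\circ i_C^{C'}=r_{U'}^U\circ\delta_C^{U'}=\delta_C^U,
\]
which is exactly the claim.

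The hard part, to the extent there is one, is purely the bookkeeping of relative openness and closedness required to see that $(U',C)$ is a genuine boundary pair; once that is in place, the proof is a two-step composition of known relations, and no further input is needed.
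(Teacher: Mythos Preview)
Your proof is correct and follows exactly the approach indicated in the paper, which simply cites Proposition~\ref{pro:oddrels}(i) and~(ii). Your added verification that the intermediate pair $(U',C)$ is a boundary pair is a helpful elaboration of what the paper leaves implicit.
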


\begin{proof}
This follows immediately from Proposition~\ref{pro:oddrels}(i) and (ii).
\end{proof}

\begin{definition}
A boundary pair in $\NTC$ is called \emph{complete} if it
has no proper extension in $\NTC$.
\end{definition}

\begin{proposition}
	\label{pro:complete}
A boundary pair $(U,C)$ in $\NTC$ is complete if and only if~$U$ is open and~$C$ is closed.
\end{proposition}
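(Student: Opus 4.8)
The plan is to prove the two implications separately. The reverse implication is short; the forward one goes through the contrapositive and requires constructing an explicit proper extension, which is where essentially all the work lies.

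\textbf{$(\Leftarrow)$.} Assume $U$ is open and $C$ is closed in~$X$ and let $(U',C')$ be an extension of $(U,C)$ in~$\NTC$. By definition $U$ is relatively closed in~$U'$; since~$U$ is open in~$X$ it is also relatively open in~$U'$, so~$U$ is a non-empty clopen subset of the connected set~$U'$ and hence $U=U'$. Dually~$C$ is relatively open and (being closed in~$X$) relatively closed in the connected set~$C'$, so $C=C'$. Thus $(U,C)$ has no proper extension.

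\textbf{$(\Rightarrow)$.} I argue by contraposition: if~$U$ is not open, or~$C$ is not closed, then $(U,C)$ admits a proper extension. By passing to the opposite partial order on~$X$ (which exchanges the two roles) it is enough to treat the first case, keeping~$C$ fixed and enlarging~$U$. Put
\[
\widehat Y\perdef LC\bigl(\widetilde U\cup C\bigr)
\qquad\text{and}\qquad
U'\perdef\widehat Y\setminus C .
\]
The one structural fact I would use repeatedly is the defining property of a boundary pair that no element of~$C$ lies above an element of~$U$, i.e.\ $\neg(u\preceq c)$ for all $u\in U$ and $c\in C$, together with convexity of $U\cup C$, of~$U$ and of~$C$. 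With this in hand I would verify, in order:
\begin{enumerate}[label=\textup{(\alph*)}]
\item $\widetilde U\cap C=\emptyset$ (an element of $\widetilde U\cap C$ would lie above some $u\in U$); consequently $U\subseteq\widetilde U\subseteq U'$, and $U\subsetneq U'$ because $\widetilde U\supsetneq U$ as~$U$ is not open.
\item $C$ is relatively closed in~$\widehat Y$: if $x\in\widehat Y$ with $x\preceq c\in C$, choose $p\in\widetilde U\cup C$ with $p\preceq x$; the possibility $p\in\widetilde U$ would force $u\preceq c$ for some $u\in U$, so $p\in C$, and then $x\in C$ by convexity of~$C$. Hence $U'=\widehat Y\setminus C$ is relatively open in $U'\cup C=\widehat Y$.
\item $U$ is relatively closed in~$U'$: if $x\in U'$ with $x\preceq u\in U$, pick $p\in\widetilde U\cup C$ with $p\preceq x$; if $p\in\widetilde U$ then $x\in U$ by convexity of~$U$, and if $p\in C$ then $x\in U\cup C$ by convexity of $U\cup C$, so $x\in U$ since $x\notin C$.
\end{enumerate}
Disjointness of $U'$ and~$C$ and local closedness of $U'\cup C=\widehat Y$ are immediate, so~(a)--(c) show that $(U',C)$ is a proper extension of $(U,C)$ as soon as $U'$, $C$ and~$\widehat Y$ are connected.

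\textbf{Main obstacle: connectedness of~$U'$.} Connectedness of~$C$ is assumed, and connectedness of~$\widehat Y$ follows because~$\widetilde U$ is connected (it is the union of the up-sets $\widetilde{\{u\}}$ for $u\in U$, each containing a point of the connected set~$U$), whence $\widetilde U\cup C$ is connected and $LC$ of a connected set is connected. For~$U'$ the key point is that~(b) says $C$ is \emph{downward closed} inside~$\widehat Y$, so $U'=\widehat Y\setminus C$ is \emph{upward closed} inside~$\widehat Y$. Thus for $x\in U'$, writing $x\preceq r$ with $r\in\widetilde U\cup C$ (necessarily $r\in\widetilde U$, since $r\in C$ would give $x\in C$), any maximal chain from~$x$ to~$r$ lies in~$\widehat Y$ by convexity, lies in~$U'$ by upward closedness, and is a path in~$\Gamma(X)$; and~$r$ is joined to~$U$ inside the connected set~$\widetilde U\subseteq U'$. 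Hence every point of~$U'$ is joined to~$U$ within~$U'$, so~$U'$ is connected. This upward closedness of~$U'$ is precisely what rescues the argument: the obvious candidates $U'=\widetilde U$ or $U'=\widehat Y\setminus\overline C$ generally fail to be connected, or to contain~$U$ as a relatively closed subset. The "$C$ not closed" case is the order-dual, with $\widehat Y=LC(U\cup\overline C)$ and $C'=\widehat Y\setminus U$.
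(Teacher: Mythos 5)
Your proof is correct, and the easy implication coincides with the paper's. For the converse, however, your construction of the proper extension is genuinely different from the one in the paper: there, a non-open $U$ is enlarged \emph{minimally}, by choosing a single point $b\in\widetilde\partial U$ and setting $U'\perdef U\cup\bigl(\overline{\{b\}}\cap\widetilde\partial Y\bigr)$ with $Y=U\cup C$, whereas you enlarge \emph{maximally}, setting $U'\perdef LC(\widetilde U\cup C)\setminus C$. Both yield valid proper extensions. The paper's choice keeps $U'$ small and concrete and makes the verification that $(U',C)$ is a boundary pair a short convexity check, but it leaves the connectedness of its $U'$ implicit (it does hold, via a chain from $b$ down into $U$, but this is not spelled out). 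Your choice trades some extra bookkeeping---passing to the locally closed hull and running the convexity arguments (a)--(c)---for a transparent connectedness argument: your $U'$ contains the connected open hull $\widetilde U$ and is upward closed in $\widehat Y$, so every point climbs along a saturated chain into $\widetilde U$. Your repeated use of the structural fact that no point of $C$ dominates a point of $U$ (which is exactly the relative openness of $U$ in $U\cup C$) is the right input throughout, and the order-duality reduction for the case that $C$ fails to be closed is the same in both arguments.
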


\begin{proof}
Suppose that~$U$ is open and~$C$ is closed. Let $(V,D)$ be an extension
of~$(U,C)$. Then~$U$ is clopen in~$V$ and~$C$ is clopen in~$D$.
Since $V$ and $D$ are connected we get $U=V$ and $C=D$.

Conversely, let $(U,C)$ be complete. Assume that~$U$ is not open, so
that there is $b\in\widetilde\partial U$. Define $Y\perdef U\cup C$
and
$U'\perdef U\cup\bigb{\overline{\{b\}}\cap\widetilde\partial Y}\supsetneq U$.
We show that $(U',C)$ is an extension of $(U,C)$.

Recall that a subset of~$X$ is locally closed if and only if it is
convex with respect to the specialisation preorder.
The union
$U'\cup C=Y\cup\bigb{\overline{\{b\}}\cap\widetilde\partial Y}$ is
convex because $Y$ and $\overline{\{b\}}\cap\widetilde\partial Y$ are
convex, and if
$Y\ni y\prec x\prec z\in\overline{\{b\}}\cap\widetilde\partial Y$
for some $x\in X$ then $y\prec x\prec b$ and thus
$x\in\overline{\{b\}}\cap\widetilde Y\subset U'\cup C$. Note that the
situation
$Y\ni y\succ x\succ z\in\overline{\{b\}}\cap\widetilde\partial Y$
is impossible because~$Y$ is convex.

The subset $C\subset U'\cup C$ is closed. Otherwise, there were $c\in C$
and $z\in\overline{\{b\}}\cap\widetilde\partial Y$ with $c\succ z$.
Since $z\in\widetilde\partial Y$ there were $y\in Y$ with $z\succ y$,
and we get the contradiction $c\succ y$.

Up to now we have shown that $(U',C)$ is a boundary pair in~$X$.
It remains to show that $U$ is closed in $U'$. This is equivalent to
$\overline{\{b\}}\cap\widetilde\partial Y$ being open in~$U'$.
To see this, consider $z\in\overline{\{b\}}\cap\widetilde\partial Y$
and $w\in U'$ with $w\succ z$. Since $z\in\widetilde\partial Y$ there is
$y\in Y$ with $z\succ y$. Now $w\succ z\succ y$ implies $w\not\in Y$
since~$Y$ is convex. Consequently
$w\in \overline{\{b\}}\cap\widetilde\partial Y$.

This proves that $(U',C)$ is an extension of $(U,C)$. Finally, if~$C$ is
not closed, we can construct an extension $(U,C')$ of $(U,C)$ in a
similar fashion.
\end{proof}

\begin{definition}
	\label{def:subboundarypair}
For two boundary pairs $(U,C)$ and $(U',C')$ in $\NTC$ we say that $(U',C')$ is a
\emph{sub-boundary pair} of $(U,C)$ if
\begin{itemize}
\item
$U'$ is a (relatively open) subset of $U$,
\item
$C'$ is a (relatively closed) subset of~$C$,
\item
$U'\cup C$ is relatively open in $U\cup C$,
\item
$U\cup C'$ is relatively closed in $U\cup C$.
\end{itemize}
In fact, the assumptions that $U'$ be relatively open in
$U$ and that $C'$ be relatively closed in $C$ are redundant.
\end{definition}

\begin{example}
Let $X=\{1,2,3,4\}$ with Alexandrov topology given by the partial order generated by $1\prec 3$, $1\prec 4$ and
$2\prec 4$:
\[
\xymatrix{
3\ar[r]  & 1 &   4\ar[l]\ar[r] & 2. 
}
\]
Then $\bigb{\{4\},\{1\}}$ is a sub-boundary pair of
$\bigb{\{2,4\},\{1,3\}}$.
\end{example}

\begin{lemma}
For a sub-boundary pair $(U',C')$ in $(U,C)$ we have the relation
\[
\delta_C^U=i_{U'}^U\circ\delta_{C'}^{U'}\circ r_C^{C'}.
\]
\end{lemma}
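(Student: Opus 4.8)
The plan is to prove the relation $\delta_C^U = i_{U'}^U \circ \delta_{C'}^{U'} \circ r_C^{C'}$ for a sub-boundary pair $(U',C')$ of $(U,C)$ by reducing it, just as in the proof of the analogous lemma for extensions, to the naturality of the six-term $\K$-theory boundary map. The key point is to exhibit a morphism of extensions of $\Cst$-algebras over $X$ whose induced diagram on $\K$-theory yields exactly this factorisation, and then invoke the already-established compositional relations of Propositions~\ref{pro:oddrels} and~\ref{pro:oddrel2}.

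First I would unwind what the hypotheses give us. Since $(U',C')$ is a sub-boundary pair of $(U,C)$, the set $U' \cup C$ is relatively open in $Y \perdef U \cup C$ and $U \cup C'$ is relatively closed in $Y$; moreover $U'$ is relatively open in $U$ and $C'$ is relatively closed in $C$ (these follow from the other conditions, as noted in Definition~\ref{def:subboundarypair}). I would then factor the passage from $(U,C)$ to $(U',C')$ through an intermediate boundary pair, say $(U', C)$: one checks that $(U',C)$ is itself a boundary pair with $U'$ relatively open in $U$ and $U'\cup C$ relatively open in $Y$, so that Proposition~\ref{pro:oddrels}(iii) applies and gives $i_{U'}^U \circ \delta_C^{U'} = \delta_C^U$. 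Next, passing from $(U',C)$ to $(U',C')$: here $C'$ is relatively open in $C$ and $U'\cup C'$ sits inside $U'\cup C$ appropriately, so Proposition~\ref{pro:oddrels}(i) applies in the form $\delta_C^{U'} \circ i_{C'}^C = \delta_{C'}^{U'}$. Combining, $\delta_C^U = i_{U'}^U \circ \delta_C^{U'}$; but I still need to turn the $i_{C'}^C$ into $r_C^{C'}$, which requires instead using part~(iv) of Proposition~\ref{pro:oddrels}, $\delta_{C'}^{U'} \circ r_C^{C'} = \delta_C^{U'}$, valid because $U'\cup C'$ is relatively closed in $U'\cup C$ (which follows from $U\cup C'$ being relatively closed in $Y$ together with $U'\cup C$ being relatively open). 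Substituting $\delta_C^{U'} = \delta_{C'}^{U'}\circ r_C^{C'}$ into $\delta_C^U = i_{U'}^U\circ\delta_C^{U'}$ then gives precisely the desired relation.

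The main obstacle I anticipate is verifying the various relative openness and relative closedness conditions needed to apply parts (i)--(iv) of Proposition~\ref{pro:oddrels} to the intermediate pair $(U',C)$, since the definition of a sub-boundary pair only directly asserts openness/closedness of $U'\cup C$ and $U\cup C'$ inside $Y$, and one has to deduce the corresponding statements for $U'$ inside $U$, for $C'$ inside $C$, for $U'\cup C'$ inside $U'\cup C$, and so on. These are all routine manipulations with the characterisation of locally closed subsets as convex subsets of the specialisation preorder, using that intersections of locally closed sets are locally closed and that $U\cap C = \emptyset$; for instance $U'\cup C' = (U'\cup C)\cap(U\cup C')$ shows it is locally closed, and its relative closedness in $U'\cup C$ follows from that of $U\cup C'$ in $Y$. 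Once these bookkeeping facts are in place the proof is a two-line chain of substitutions. Alternatively, one could give a direct argument via naturality of the $\K$-theory boundary map applied to the evident morphism of the two extensions $A(U)\into A(U\cup C)\epi A(C)$ and $A(U')\into A(U'\cup C')\epi A(C')$, but routing through Proposition~\ref{pro:oddrels} keeps the argument purely formal and parallel to the treatment of extensions of boundary pairs.
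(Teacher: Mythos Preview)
Your argument is correct and takes essentially the same approach as the paper: both proofs factor $\delta_C^U$ by applying Proposition~\ref{pro:oddrels}(iii) and~(iv) in succession, with the only difference being the order---you pass through the intermediate boundary pair $(U',C)$, while the paper passes through $(U,C')$. The auxiliary openness/closedness verifications you flag (in particular that $U'\cup C'$ is relatively closed in $U'\cup C$, via $U'\cup C' = (U'\cup C)\cap(U\cup C')$) are exactly what is needed and are correct; the brief detour through part~(i) in your write-up is a false start you should simply delete, since $C'$ is closed, not open, in $C$.
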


\begin{proof}
By assumption $U'\cup C$ is open in $U\cup C$. This implies that
$U'\cup C'$ is open in $U\cup C'$ and thus
$\delta_{C'}^U=i_{U'}^U\circ\delta_{C'}^{U'}$ by
Proposition~\ref{pro:oddrels}(iii). The second step follows from
Proposition~\ref{pro:oddrels}(iv).
\end{proof}

\begin{definition}
A boundary pair in $\NTC$ is called \emph{reduced} if it has
no proper sub-boundary pair in $\NTC$.
\end{definition}

\begin{proposition}
	\label{pro:reduced}
A boundary pair $(U,C)$ in $\NTC$ is reduced if and only if
$\overline{U}\supset C$ and $\widetilde{C}\supset U$.
\end{proposition}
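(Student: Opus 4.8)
The plan is to characterise reducedness of a boundary pair $(U,C)$ in $\NTC$ by showing that the failure of either condition $\overline{U}\supset C$ or $\widetilde{C}\supset U$ allows us to construct a proper sub-boundary pair, while conversely these two containments force every sub-boundary pair to coincide with $(U,C)$. Since the argument is entirely about the specialisation order, I would phrase everything in terms of convexity, $\preceq$, and the closure operations $\overline{\blank}$, $\widetilde{\blank}$ of Definition~\ref{def:boundaries}; the proof should be dual in the two variables, so I would do one direction carefully and indicate that the other follows by passing to the opposite partial order (which swaps the roles of $U$ and $C$, of open and closed, and of $\overline\partial$ and $\widetilde\partial$).

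For the ``only if'' direction, suppose $(U,C)$ is reduced. I would first argue $C\subset\overline{U}$: if not, pick $c\in C\setminus\overline U$ and set $C'\perdef C\cap\overline{\{c\}}$ — actually more carefully, $C'\perdef C\setminus\bigl(\widetilde{\{c\}}\cap U\cup C\bigr)$-type constructions as in the proof of Proposition~\ref{pro:complete}; the point is to shrink $C$ to a proper relatively closed subset $C'$ with $U\cup C'$ still relatively closed in $U\cup C$ and $U\cup C'$, $C'$ connected, contradicting reducedness. Concretely, if some point $c\in C$ has no $u\in U$ with $u\preceq c$, then removing $c$ (and the part of $C$ it ``supports'' upward) from $C$ yields a proper sub-boundary pair; connectedness has to be monitored, but since $X$ is finite and we only need \emph{some} proper sub-boundary pair one can peel off a single maximal such $c$. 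Dually, $\widetilde{C}\supset U$ follows by the opposite-order argument (shrinking $U$ using a point $u\in U$ not below any $c\in C$).

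For the ``if'' direction, assume $\overline{U}\supset C$ and $\widetilde{C}\supset U$ and let $(U',C')$ be a sub-boundary pair of $(U,C)$; I want $U'=U$ and $C'=C$. Using the hypothesis $C\subset\overline U$: every $c\in C$ is a specialisation of some $u\in U$. If $c\notin C'$, then since $U'\cup C'$ is relatively closed in $U\cup C'$ — wait, the relevant condition is that $U\cup C'$ is relatively closed in $U\cup C$ and $U'\cup C$ is relatively open in $U\cup C$. From $C\subset\overline U$ and $U'\subset U$ open in $U\cup C$... the cleanest route: $U'\cup C$ open in $U\cup C=U'\cup C$ forces $U'=U$ once I show $C$ contains no point of $\widetilde\partial U'$ inside $U\cup C$, which uses $C\subset\overline U$; and dually $U\cup C'$ closed in $U\cup C$ with $\widetilde C\supset U$ forces $C'=C$. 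I would spell this out by taking $u\in U\setminus U'$, using that $U'$ is open in $U\cup C$ so $U\setminus U'$ meets $\widetilde\partial U'$, picking such a point, and using $\widetilde C\supset U$ to locate a $c\in C$ above it, contradicting that $U'\cup C$ is open in $U\cup C$ — here one must be careful that the ``above'' point lies in $U\cup C$, which holds by local closedness.

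The main obstacle I anticipate is bookkeeping the connectedness requirements in the ``only if'' direction: a boundary pair in $\NTC$ requires $U$, $C$, and $U\cup C$ all connected, so when I shrink $C$ to $C'$ to contradict reducedness I must ensure $C'$ and $U\cup C'$ stay connected and nonempty. I expect this to work by choosing the offending point $c$ (or the set to remove) maximally, so that what remains is a ``lower'' piece that stays glued to $U$ through $\overline U$; but verifying it may require a small separate lemma or an explicit appeal to the finiteness of $X$ and the Hasse-graph picture from Section~\ref{elementary notions of graph theory}. The even/odd relation lemmas for sub-boundary pairs and extensions are already in place (the two unnamed lemmas preceding this proposition), so no new homological input is needed — the whole proof is order-combinatorial.
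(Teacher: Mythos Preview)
Your overall strategy is sound and the duality observation is correct, but you are missing the clean constructions that make both directions short; as written, several of your steps are muddled or harder than necessary.

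For the ``if'' direction, the paper's argument is essentially two lines and you are circling it without landing. Given a sub-boundary pair $(V,D)$ of $(U,C)$, the definition already hands you that $V\cup C$ is open in $Y=U\cup C$. Since $\widetilde C\cap Y$ is the smallest relatively open subset of~$Y$ containing~$C$, this immediately gives $V\cup C\supset\widetilde C\cap Y$; the hypothesis $\widetilde C\supset U$ then yields $\widetilde C\cap Y\supset U$, so $V\cup C\supset U$ and hence $V=U$. Dually $U\cup D\supset\overline U\cap Y\supset C$ forces $D=C$. No point-chasing with $\widetilde\partial U'$ is needed, and your worry about whether ``the `above' point lies in $U\cup C$'' simply does not arise.

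For the ``only if'' direction, your point-removal idea (peel off a maximal $c\in C\setminus\overline U$) can probably be pushed through, but it is exactly where the connectedness bookkeeping you flag becomes painful. The paper avoids this entirely by a global construction: if $\widetilde C\not\supset U$, set $U'\perdef U\cap\widetilde C$. Then $U'\cup C=(U\cup C)\cap\widetilde C$ is locally closed (intersection of locally closed sets), is open in $U\cup C$ (because $\widetilde C$ is open in~$X$), contains~$C$ as a closed subset, and is connected since it sits between the connected set~$C$ and~$\widetilde C$. This gives a proper sub-boundary pair $(U',C)$ in one stroke; dually, if $\overline U\not\supset C$ one takes $C'\perdef C\cap\overline U$. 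So the ``small separate lemma'' you anticipate is unnecessary once you use the right candidate.

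In short: replace your ad hoc shrinking by intersecting with $\widetilde C$ (respectively $\overline U$), and replace your ``if'' argument by the one-line containment $\widetilde C\cap Y\subset V\cup C$.
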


\begin{proof}
Suppose that $\overline{U}\supset C$ and $\widetilde{C}\supset U$, and
let $(V,D)$ be a sub-boundary pair of $(U,C)$. Set $Y\perdef U\cup C$.
Then, by definition, $V\cup C$ is open in $Y$ and hence
$V\cup C\supset \widetilde C\cap Y\supset U$. This shows $V=U$.
Analogously, $U\cup D\supset\overline U\cap Y\supset C$, so that
$C=D$.

Conversely, let $(U,C)$ be reduced. Assume that
$\widetilde C\not\supset U$. Define $U'\perdef U\cap\widetilde C$.
Then $\emptyset\neq U'\subsetneq U$.
We will show that $(U',C)$ is a sub-boundary pair of~$(U,C)$---this
yields a contradiction to the reducedness of~$(U,C)$.

The set $U'\cup C=Y\cap\widetilde C$ is locally closed as a finite intersection
of locally closed subsets and connected because $C$ is connected and
$C\subset Y$. Since $C$ is closed in $Y$ it is also closed in the subset
$U'\cup C$. This shows that $(U',C)$ is a boundary pair.

The subset $U'\cup C=Y\cap\widetilde C$ is open in $Y$ because
$\widetilde C$ is open in~$X$. Hence $(U',C)$ is a sub-boundary pair of
$(U,C)$.

Assuming, on the other hand, that $\overline U\not\supset C$, we find
the sub-boundary pair $(U,C\cap\overline U)$ of $(U,C)$.
\end{proof}

\begin{corollary}
	\label{cor:oddindecomp}
Let $(U,C)$ be a boundary pair in $\NTC$. Suppose that the relations in $\NTCS$ are
spanned by the canonical ones listed in Definition~\ref{def:canonical relations}.
Then the natural transformation~$\delta_C^U$
is indecomposable in $\NTCS$ if and only if~$U$ is open, $C$ is closed,
$\overline U\supset C$ and $\widetilde C\supset U$.
\end{corollary}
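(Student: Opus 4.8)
The plan is to first rephrase the four conditions intrinsically: by Propositions~\ref{pro:complete} and~\ref{pro:reduced}, the pair $(U,C)$ has $U$ open, $C$ closed, $\overline U\supseteq C$ and $\widetilde C\supseteq U$ if and only if it is simultaneously \emph{complete} and \emph{reduced}. So the assertion becomes: under the standing hypothesis that the relations in $\NTCS$ are the canonical ones, $\delta_C^U$ is indecomposable in $\NTCS$ if and only if $(U,C)$ is complete and reduced. Note that the decomposability half below uses only the unconditional relations established above; the hypothesis on the relations is needed only for the indecomposability half.

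For ``indecomposable $\Rightarrow$ complete and reduced'' I would argue by contraposition. If $(U,C)$ is not complete it admits a \emph{proper} extension $(U',C')$ in $\NTC$; then $U\subseteq U'$ and $C\subseteq C'$ with $(U',C')\neq(U,C)$, so $U\subsetneq U'$ or $C\subsetneq C'$. The extension relation $\delta_C^U=r_{U'}^U\circ\delta_{C'}^{U'}\circ i_C^{C'}$ then displays $\delta_C^U$ as $r_{U'}^U\circ\bigl(\delta_{C'}^{U'}\circ i_C^{C'}\bigr)$, a composite through $U'$, when $U\subsetneq U'$, and as $\bigl(r_{U'}^U\circ\delta_{C'}^{U'}\bigr)\circ i_C^{C'}$, a composite through $C'$, when $C\subsetneq C'$. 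In each case the flanking even transformation has distinct source and target, hence is not an identity, and the complementary factor is odd, hence not an identity; so the decomposition is nontrivial. If instead $(U,C)$ is not reduced, I would run the identical argument with a proper sub-boundary pair $(U',C')$ and the sub-boundary relation $\delta_C^U=i_{U'}^U\circ\delta_{C'}^{U'}\circ r_C^{C'}$.

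For ``complete and reduced $\Rightarrow$ indecomposable'' I would suppose $\delta_C^U=\alpha\circ\beta$ in $\NTCS$ with intermediate object $W$ and aim to force $\alpha$ or $\beta$ to be an identity. The key observations are that, by the hypothesis, each morphism space of $\NTCS$ is spanned as a $\Z$-module by composites of canonical transformations subject only to the canonical relations, and that every canonical relation (Definition~\ref{def:canonical relations}) equates $\Z$-linear combinations of words containing the same number of boundary transformations modulo~$2$; hence $\NTCS$ is $\Z/2$-graded ($i,r$ even, $\delta$ odd) and $\delta_C^U$ is homogeneous of odd degree. Expanding $\alpha$ and $\beta$ into words in the generators, one fuses adjacent extensions and adjacent restrictions by Proposition~\ref{pro:evenrels}, and absorbs every extension or restriction adjacent to a boundary map into the corresponding boundary pair by Propositions~\ref{pro:oddrels} and~\ref{pro:oddrel2}; the objective is to reduce the factorization $\alpha\circ\beta$ to one of the standard shapes $r_{U''}^U\circ\delta_{C''}^{U''}\circ i_C^{C''}$ or $i_{U''}^U\circ\delta_{C''}^{U''}\circ r_C^{C''}$ with intermediate object $W\in\{C'',U''\}$. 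In the first shape $(U'',C'')$ is an extension of $(U,C)$, in the second a sub-boundary pair; completeness and reducedness then force $(U'',C'')=(U,C)$, so $W\in\{C,U\}$ and one of $\alpha,\beta$ is an identity.

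I expect the reduction in the last paragraph to be the main obstacle. The delicate points are: words in which several boundary maps appear (the $\Z/2$-grading only controls their parity, and a composite of two consecutive boundary maps vanishes only when a suitable union is locally closed, cf.\ Corollary~\ref{pro:vanishingboundarycomp}); the vanishing relations $i_U^{U\cup C}\circ\delta_C^U=0$ and $i_Y^C\circ r_U^Y=0$ for disjoint $U,C$, together with the sum relations of Proposition~\ref{pro:vanishingsum}, which could a priori equate $\delta_C^U$ to more complicated words and thereby hide a nontrivial factorization; and the bookkeeping with sums over connected components in Proposition~\ref{pro:evenrels}(iii), which is harmless here only because every object of $\NTCS$ is connected. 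Carrying out this normalization rigorously --- equivalently, pinning down a normal form for odd morphisms in $\NTCS$ --- is where the real work lies.
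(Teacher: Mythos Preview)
Your overall approach matches the paper's: both recast the four conditions as ``$(U,C)$ is complete and reduced'' via Propositions~\ref{pro:complete} and~\ref{pro:reduced}, and your contrapositive argument for the easy direction (not complete or not reduced $\Rightarrow$ decomposable) is exactly what the paper intends.

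For the hard direction, however, the paper is considerably more economical than the normalization program you outline. Rather than attempting to reduce an arbitrary factorization $\alpha\circ\beta$ to a standard shape, the paper makes a single structural observation: among the canonical relations of Definition~\ref{def:canonical relations}, only those from Proposition~\ref{pro:oddrels} rewrite a bare boundary transformation $\delta_C^U$ as a nontrivial composite, and each such rewriting is precisely a witness that $(U,C)$ admits a proper extension or a proper sub-boundary pair. The remaining relations are of no help for decomposing $\delta_C^U$: those of Proposition~\ref{pro:evenrels} involve only even transformations; those of Proposition~\ref{pro:oddrel2} equate two \emph{composites}, neither side a single $\delta$, or assert that some composite vanishes; and those of Proposition~\ref{pro:vanishingsum} are sum relations among composites. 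The paper simply records that the relations in Propositions~\ref{pro:oddrel2} and~\ref{pro:vanishingsum} ``cannot be used to decompose the boundary transformation corresponding to a boundary pair,'' and this single sentence dispatches the delicate points you list --- multiple boundary maps, vanishing and sum relations, connected components --- without any normal-form machinery. Your concerns are legitimate if one insists on full rigor, but at the level the paper operates they are absorbed into that one remark.
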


\begin{proof}
Under the assumption that the relations in $\NTCS$ are spanned by the
canonical ones, the natural transformation $\delta_C^U$ is indecomposable
if and only if the boundary pair $(U,C)$ is complete and reduced.
Hence the assertion follows from Propositions~\ref{pro:complete}
and~\ref{pro:reduced}. Notice that the relations in
Propositions~\ref{pro:oddrel2} and~\ref{pro:vanishingsum} cannot be used
to decompose the boundary transformation corresponding to a boundary pair.
\end{proof}

\subsection{The representability theorem and its consequences}
	\label{sec:representability}

The representability theorem is a powerful tool. It enables us to
describe the category~$\NT$ by computing plain topological $\K$-groups.
We follow~\cite{meyernestCalgtopspacfiltrKtheory}*{\S2.1}.

\begin{theorem}[Representability Theorem~\cite{meyernestCalgtopspacfiltrKtheory}*{Theorem~2.5}]
	\label{thm:representability}
Let $Y$ be a locally closed
subset of~$X$. The functor $\FK_Y$ is representable; more precisely, there are
a separable \Cstar algebra over~$X$ such that $\Rep_Y(Y)$ is unital and a natural isomorphism
\[
\KK_*(X;\Rep_Y,\blank)\cong\FK_Y
\]
given by 
\[
\KK_*(X;\Rep_Y,A)\to\FK_Y(A),\quad f\mapsto f_*\bigb{[1_{\Rep_Y(Y)}]}
\]
for all $A\inin\kk(X)$.
Here $[1_{\Rep_Y(Y)}]$ is the class of the unit
element~$1_{\Rep_Y(Y)}$ of $\Rep_Y(Y)$ in
$\FK_Y(\Rep_Y)$, and $f_*=\FK_Y(f)$.
\end{theorem}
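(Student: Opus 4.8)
Following~\cite{meyernestCalgtopspacfiltrKtheory}*{\S2.1}, the idea is to build~$\Rep_Y$ by an induction over the sizes of~$X$ and~$Y$ from mapping cones of the morphisms corresponding to the canonical transformations of Proposition~\ref{pro:trans}, and then to deduce the representing isomorphism from the six\nb-term exact sequences by a five lemma argument. The point to keep in mind is that the statement concerns \emph{all} $A\inin\kk(X)$, not only objects of the bootstrap class; so the proof may only use that $\KK_*(X;{-},A)$ is homological, together with adjunctions that hold already on the level of \Cstar algebras.

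\emph{Reduction in the ambient space.} For $U\in\Open(X)$ the extension functor~$i_U^X$ is left adjoint to the restriction~$r_X^U$ as functors between categories of \Cstar algebras; by the universal property of~$\KK(X)$ this descends to a natural isomorphism $\KK_*(X;i_U^XB,A)\cong\KK_*(U;B,r_X^UA)$ valid for all separable objects. Since $\FK(X)_Y(A)=\FK(U)_Y(r_X^UA)$ whenever $Y\subseteq U$, representability of~$\FK(U)_Y$ over~$U$ yields that of~$\FK(X)_Y$ over~$X$, with $\Rep_Y=i_U^X\Rep_Y^{(U)}$. Taking $U=\widetilde Y$ we may henceforth assume $X=\widetilde Y$, in which case~$Y$ is automatically closed in~$X$.

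\emph{Construction and representability.} We argue by induction on~$\#X$ and, for fixed~$X$, on~$\#Y$, proving the full statement --- including that $\Rep_Y$ may be chosen with $\Rep_Y(Y)$ unital and the isomorphism given by $f\mapsto f_*\bigb{[1_{\Rep_Y(Y)}]}$; these extra data are needed to set up the comparison of long exact sequences in the inductive step. If $X$ is a point and $Y=X$, put $\Rep_X\defeq\C$. If $Y=X$ has a least element~$y$, a direct look at Kasparov cycles shows that $X$\nb-equivariance against~$i_y\C$ is vacuous, whence $\KK_*(X;i_y\C,A)\cong\K_*(A)=\FK_X(A)$ naturally for all~$A$; put $\Rep_X\defeq i_y\C$, so that $\Rep_X(X)=\C$. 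In the remaining cases pick a minimal element $y\in Y$. Then $\{y\}$ is closed in~$X$ (because~$Y$ is), $Y\setminus\{y\}$ is relatively open in~$Y$, and the extension $A(Y\setminus\{y\})\into A(Y)\prto A(\{y\})$ produces the six\nb-term exact sequence linking $\FK_{Y\setminus\{y\}}$, $\FK_Y$, $\FK_{\{y\}}$ via the canonical transformations $i_{Y\setminus\{y\}}^Y$, $r_Y^{\{y\}}$, $\delta_{\{y\}}^{Y\setminus\{y\}}$. By the inductive hypothesis $\FK_{Y\setminus\{y\}}$ and $\FK_{\{y\}}$ are represented by objects $\Rep_{Y\setminus\{y\}}$, $\Rep_{\{y\}}$ with the representing isomorphism holding for all~$A$. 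Yoneda's lemma turns $\delta_{\{y\}}^{Y\setminus\{y\}}$ into a morphism between suspensions of~$\Rep_{Y\setminus\{y\}}$ and~$\Rep_{\{y\}}$; let $\Rep_Y$ be the mapping cone of a suitable shift of it, so that one obtains an exact triangle in~$\KKcat(X)$ with vertices $\Rep_{\{y\}}$, $\Rep_Y$, $\Rep_{Y\setminus\{y\}}$ whose connecting maps are the Yoneda duals of the three canonical transformations. Applying the homological functor $\KK_*(X;{-},A)$ and comparing the resulting long exact sequence term by term with the $\K$\nb-theory six\nb-term sequence of $A(Y\setminus\{y\})\into A(Y)\prto A(\{y\})$ --- the comparison ladder commuting because the triangle was built to realise exactly this six\nb-term sequence --- the five lemma gives $\KK_*(X;\Rep_Y,A)\cong\FK_Y(A)$ for all~$A$. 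Separability of~$\Rep_Y$ is preserved at each step, and each~$\Rep_Y$ in fact lies in $\Bootstrap(X)$.

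\emph{Unitality and the explicit form.} That $\Rep_Y(Y)$ may be chosen unital and the isomorphism has the asserted shape is carried through the induction. It is clear in the base cases, where $\Rep_Y(Y)=\C$. In the inductive step one applies the exact functor $B\mapsto B(Y)$ to the defining exact triangle: this presents $\Rep_Y(Y)$ as a mapping cone built from the unital algebras $\Rep_{\{y\}}(\{y\})$ and $\Rep_{Y\setminus\{y\}}(Y\setminus\{y\})$, and choosing the model with care retains a unital summand; by naturality the distinguished class --- the image of $\id_{\Rep_Y}$ under the representing isomorphism --- is then $[1_{\Rep_Y(Y)}]$. The main obstacles I foresee are (i)~organising the induction so that these mapping cone constructions are non\nb-circular and the resulting exact triangles genuinely realise the $\K$\nb-theoretic six\nb-term sequences with commuting comparison ladders, and (ii)~the unitality bookkeeping just indicated; the rest is formal once the adjunctions and the homological properties of~$\FK$ are in hand.
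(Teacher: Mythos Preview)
The paper does not give its own proof of this theorem; it is quoted from Meyer--Nest and what the paper records (in Definition~\ref{def:repobjects} and the surrounding discussion) is the explicit order-complex model $\Rep_Y(Z)=\Cont_0\bigl(S(Y,Z)\bigr)$, so that in particular $\Rep_Y(Y)=\Cont\bigl(\Ch(Y)\bigr)$ is manifestly unital because $\Ch(Y)$ is compact. Your inductive mapping-cone strategy is thus a different route from the one the paper relies on; the explicit model has the advantage that all later computations of $\NT_*(Y,Z)\cong\K^*\bigl(S(Y,Z)\bigr)$ use it directly, whereas a mapping-cone construction would not hand you these $\K$-groups.

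Your induction, as written, has a gap. Take $Y=\{y\}$ a singleton with $y$ the least element of~$X$ and $\#X>1$ (e.g.\ $X=\{1\prec 2\}$, $Y=\{1\}$). Then $\widetilde{\{y\}}=X$, so the reduction step does nothing; neither base case applies since $\#X>1$ and $Y\neq X$; and the inductive step degenerates because $Y\setminus\{y\}=\emptyset$. This is fixable---for instance, in this situation decompose $X$ rather than $Y$: both $\Rep_X=i_y\C$ (your second base case) and $\Rep_{X\setminus\{y\}}$ (by induction on~$\#X$, since $X\setminus\{y\}$ is open and hence its own $\widetilde{\;}$-hull) are available, and the exact triangle dual to $\FK_{X\setminus\{y\}}\to\FK_X\to\FK_{\{y\}}$ produces $\Rep_{\{y\}}$. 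But the case analysis as you wrote it is incomplete.

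The unitality argument is also not convincing. A mapping cone of unital algebras is typically non-unital, so ``choosing the model with care retains a unital summand'' would need real work. What survives for free in your approach is only the weaker statement that the image of $\id_{\Rep_Y}$ under the isomorphism is a distinguished class $\xi_Y\in\K_0\bigl(\Rep_Y(Y)\bigr)$ with $f\mapsto f_*(\xi_Y)$ implementing the natural isomorphism; that $\xi_Y=[1]$ for a unital $\Rep_Y(Y)$ is a feature of the order-complex model, not an automatic consequence of the inductive construction.
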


Let $\Ch(X)$ denote the order complex corresponding to the specialisation
preorder on~$X$ as defined in~\cite{meyernestCalgtopspacfiltrKtheory}*{\S2}.
This order complex comes with two functions $m,M\colon\Ch(X)\to X$ with the property
that the map
\[
(m,M)\colon\Ch(X)\to X^\op\times X\]
is continuous.
Here $X^\op$ denotes the topological space whose underlying set is $X$
and whose open subsets are the closed subsets of~$X$.

The primitive ideal space of the commutative $\Cst$-algebra
\[
\Rep\perdef\Cont\bigl(\Ch(X)\bigr)
\]
is $\Ch(X)$. Hence the map $(m,M)$ turns $\Rep$ into a $\Cst$-algebra
over~$X^\op\times X$.
For locally closed subsets $Y$, $Z$ of $X$, we define
\[
S(Y,Z)\perdef m^{-1}(Y)\cap M^{-1}(Z)\subset\Ch(X).
\]
This is a locally closed subset of $\Ch(X)$.

\begin{definition}
	\label{def:repobjects}
Let $Y$ be a locally closed subset of~$X$.
We define $\Rep_Y$ to be the restriction of $\Rep$ to $Y^\op\times X$,
regarded as a $\Cst$-algebra over $X$ via the coordinate projection
$Y^\op\times X\to X$.
More explicitly, we have
\begin{equation}
	\label{eq:defrepobjects}
\Rep_Y(Z)=\Rep(Y^\op\times Z)=\Cont_0\bigl(S(Y,Z)\bigr).
\end{equation}
\end{definition}

\begin{lemma}[\cite{meyernestCalgtopspacfiltrKtheory}*{Lemma~2.14}]
If $Y,Z\in\LC(X)$, then
\[
S(Y,Z)=\Ch(\widetilde Y\cap\overline Z)\setminus\bigb{\Ch(\widetilde Y
\cap\overline\partial Z)\cup\Ch(\widetilde\partial Y\cap\overline Z)}.
\]
\end{lemma}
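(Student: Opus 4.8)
The plan is to unwind both sides into explicit descriptions as sets of chains and compare them directly. Recall that the points of $\Ch(X)$ are the non-empty chains $c\subseteq X$ for the specialisation order~$\preceq$, that $m(c)=\min c$ and $M(c)=\max c$ --- which is precisely what makes $(m,M)$ continuous into $X^\op\times X$ --- and that for any $A\subseteq X$ the order complex $\Ch(A)$ of the induced sub-poset identifies with the subset $\{c\in\Ch(X)\mid c\subseteq A\}$ of $\Ch(X)$. Since closed subsets of~$X$ are down-sets and open subsets are up-sets for~$\preceq$, we have $\overline Z=\{x\mid x\preceq z\text{ for some }z\in Z\}$ and $\widetilde Y=\{x\mid x\succeq y\text{ for some }y\in Y\}$, so that $\overline\partial Z=\overline Z\setminus Z$ and $\widetilde\partial Y=\widetilde Y\setminus Y$ are disjoint from $Z$ and $Y$ respectively. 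Under these identifications, the asserted equality becomes the following statement about a chain $c$: one has $\min c\in Y$ and $\max c\in Z$ if and only if $c\subseteq\widetilde Y\cap\overline Z$ while $c\not\subseteq\widetilde\partial Y\cap\overline Z$ and $c\not\subseteq\widetilde Y\cap\overline\partial Z$.

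For the inclusion ``$\subseteq$'', suppose $\min c\in Y$ and $\max c\in Z$. Every $x\in c$ satisfies $\min c\preceq x\preceq\max c$, hence $x\in\widetilde Y$ (as $x\succeq\min c\in Y$) and $x\in\overline Z$ (as $x\preceq\max c\in Z$); thus $c\subseteq\widetilde Y\cap\overline Z$. As $\min c\in Y$ we get $\min c\notin\widetilde\partial Y$, so $c\not\subseteq\widetilde\partial Y$ and in particular $c\not\subseteq\widetilde\partial Y\cap\overline Z$; symmetrically $\max c\notin\overline\partial Z$ gives $c\not\subseteq\widetilde Y\cap\overline\partial Z$. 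This direction uses nothing about $Y$, $Z$ beyond their being subsets. For the reverse inclusion, assume $c\subseteq\widetilde Y\cap\overline Z$ with $c\not\subseteq\widetilde\partial Y\cap\overline Z$ and $c\not\subseteq\widetilde Y\cap\overline\partial Z$. Since $c\subseteq\overline Z$, the first non-containment forces some $x\in c$ with $x\notin\widetilde\partial Y$; as $x\in\widetilde Y=\widetilde\partial Y\sqcup Y$ this gives $x\in Y$. Now $\min c\in\widetilde Y$ yields $y'\in Y$ with $y'\preceq\min c$, and $\min c\preceq x$, so $y'\preceq\min c\preceq x$ with $y',x\in Y$; convexity of~$Y$ (i.e.\ local closedness) gives $\min c\in Y$. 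Symmetrically, using $c\subseteq\widetilde Y$ together with the second non-containment one finds $x'\in c\cap Z$, and $x'\preceq\max c\preceq z'$ for some $z'\in Z$, so local closedness of~$Z$ gives $\max c\in Z$. Hence $c\in m^{-1}(Y)\cap M^{-1}(Z)=S(Y,Z)$.

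I expect the only genuine subtlety to be this reverse inclusion, where one must upgrade ``$c$ meets $Y$'' and ``$c$ meets $Z$'' to ``$\min c\in Y$'' and ``$\max c\in Z$''; this is exactly the point at which the hypothesis that $Y$ and $Z$ be locally closed enters, via convexity for~$\preceq$, and dropping it breaks the argument. Everything else is careful but routine bookkeeping with up-sets, down-sets and the elementary combinatorics of the order complex; alternatively, one may simply invoke \cite{meyernestCalgtopspacfiltrKtheory}*{Lemma~2.14} verbatim.
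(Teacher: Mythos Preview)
Your proof is correct. The paper itself does not give a proof of this lemma at all; it simply imports the statement from \cite{meyernestCalgtopspacfiltrKtheory}*{Lemma~2.14} and uses it. Your argument is the natural direct verification: unwind $S(Y,Z)=m^{-1}(Y)\cap M^{-1}(Z)$ in terms of chains, use that $\widetilde Y$ is the up-closure and $\overline Z$ the down-closure, and invoke convexity of locally closed subsets for the reverse inclusion. You have also correctly identified the one non-formal step, namely the use of local closedness to pass from ``$c$ meets $Y$'' to ``$\min c\in Y$'' (and dually for~$Z$).
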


An application of the Yoneda Lemma yields graded Abelian group isomorphisms
\[
\NT_*(Y,Z)\cong\KK_*(X;\Rep_Z,\Rep_Y)\cong\FK_Z(\Rep_Y)
\cong\K_*\bigb{\Rep_Y(Z)}
\cong\K^*\bigb{S(Y,Z)}.
\]
However, it is not obvious how to express the composition of natural
transformations
\[
\NT_*(Y,Z)\times\NT_*(W,Y)\to\NT_*(W,Z)
\]
directly in terms of these topological $\K$-groups. In principle, it is of course
always possible to lift elements back to the respective $\KK$-groups
and then compose them.

We have identified natural transformations
$\FK_Y\Rightarrow\FK_Z$ with $\KK(X)$-morphisms $\Rep_Z\to\Rep_Y$ and
with classes of vector bundles over the topological space~$S(Y,Z)$.
Now we explicitly describe the $\FK$- and $\KK(X)$-elements corresponding
under the above identifications to compositions of the natural transformations
introduced in Proposition~\ref{pro:trans}.

Let $Y\in\LC(X)$ and let $U\subset Y$ be an open subset. Then
$U^\op\times Z$ is a closed subset of $Y^\op\times Z$ and
$(Y\setminus U)^\op\times Z$ is an open subset of $Y^\op\times Z$
for every $Z\in\LC(X)$. By~\eqref{eq:defrepobjects}, we have an
extension of $\Cst$-algebras
$\Rep_{Y\setminus U}(Z)\mono\Rep_Y(Z)\epi\Rep_U(Z)$
for every $Z\in\LC(X)$. This, in turn, is nothing but an extension
$\Rep_{Y\setminus U}\mono\Rep_Y\epi\Rep_U$
of $\Cst$-algebras over $X$. Since $\Rep_Y$ is commutative and therefore
nuclear, this extension is se\-mi-split and hence has a class in
$\KK_1(X;R_U,R_{Y\setminus U})$ which produces an exact triangle
\begin{equation}
	\label{eq:exacttrianglerepobjects}
\xymatrix{
\Sigma\Rep_U\ar[r] & \Rep_{Y\setminus U}\ar[r] & \Rep_Y\ar[r] &\Rep_U
}
\end{equation}
in $\kk(X)$.

We recall a lemma from~\cite{meyernestCalgtopspacfiltrKtheory} which we will refine in the remainder of this section.
\begin{lemma}[\cite{meyernestCalgtopspacfiltrKtheory}*{Lemma~2.19}]
	\label{lem:correspondences}
Let $Y\in\LC(X)$, let $U\in\Open(Y)$, and set $C\perdef Y\setminus U$.
In the notation of Proposition~\textup{\ref{pro:trans}} and within the
meaning of the above correspondences,
\begin{enumerate}[label=\textup{(\roman*)}]
\item
the transformation $i_U^Y\colon\FK_U\Rightarrow\FK_Y$ corresponds to
the class of \[\Rep_Y\epi\Rep_U\] in $\KK_0(X;\Rep_Y,\Rep_U)$ and to the
class of the trivial rank-one vector bundle in
$\K^0\bigb{S(U,Y)}=\K^0\bigb{\Ch(U)}$;
\item
the transformation
$r_Y^{C}\colon\FK_Y\Rightarrow\FK_{C}$
corresponds to the class of \[\Rep_{C}\mono\Rep_Y\] in
$\KK_0(X;\Rep_{C},\Rep_Y)$ and to the class of the trivial
rank-one vector bundle in
$\K^0\bigb{S(Y,C)}=\K^0\bigb{\Ch(C)}$;
\item
the transformation
$\delta_{C}^U\colon\FK_{C}\Rightarrow\FK_U$
corresponds to the class of the extension
\[\Rep_{C}\mono\Rep_Y\epi\Rep_U\] in $\KK_1(X;\Rep_U,\Rep_C)$
and to the class $f^*(\upsilon)$ in
$\K^1\bigb{S(C,U)}=\K^1\Bigb{\Ch(Y)\setminus\bigb{\Ch(U)\sqcup\Ch(C)}}$,
where $\upsilon$ denotes a generator of the group
$\K^1\bigb{\left(0,1\right)}\cong\Z$ and $f$ is a continuous map
$\Ch(Y)\to\left[0,1\right]$ with $f^{-1}(0)=\Ch(U)$ and $f^{-1}(1)=\Ch(C)$.
\end{enumerate}
\end{lemma}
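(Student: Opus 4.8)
The plan is to reduce the whole statement to a single computation by combining the Yoneda lemma with the Representability Theorem~\ref{thm:representability}. The latter provides natural isomorphisms $\KK_*(X;\Rep_P,\blank)\cong\FK_P$ carrying $f$ to $f_*\bigb{[1_{\Rep_P(P)}]}$; hence, by Yoneda, a natural transformation $\tau\colon\FK_P\Rightarrow\FK_Q$ is encoded on the one hand by the unique class in $\KK_*(X;\Rep_Q,\Rep_P)$ whose composition with these isomorphisms realises~$\tau$, and on the other hand by the universal element $\tau_{\Rep_P}\bigb{[1_{\Rep_P(P)}]}\in\FK_Q(\Rep_P)\cong\K^*\bigb{S(P,Q)}$. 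So for each of $i_U^Y$, $r_Y^C$ and $\delta_C^U$ I must do two things: exhibit the asserted morphism between representing objects and check, by naturality of the transformation, that it induces the transformation; and compute the universal element and match it with the stated vector bundle.

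For the even transformations~(i) and~(ii) the morphisms are genuine \Star homomorphisms: since~$U$ is open in~$Y$, the set~$U^\op$ is closed in~$Y^\op$, so $\Rep_U$ is the restriction of $\Rep_Y$ to the closed subspace $U^\op\times X$, giving $\Rep_Y\epi\Rep_U$; dually, $C^\op$ is open in~$Y^\op$, so $\Rep_C$ is the corresponding ideal of $\Rep_Y$, giving $\Rep_C\mono\Rep_Y$. By naturality, $i_U^Y$ and $r_Y^C$ are determined by their universal elements: $(i_U^Y)_{\Rep_U}\bigb{[1_{\Rep_U(U)}]}$ is the image of the unit under the ideal inclusion $\Cont_0\bigb{S(U,U)}\mono\Cont_0\bigb{S(U,Y)}$, hence the indicator function of $S(U,U)\subset S(U,Y)$, while $(r_Y^C)_{\Rep_Y}\bigb{[1_{\Rep_Y(Y)}]}$ is the image of the unit under the restriction map $\Cont_0\bigb{S(Y,Y)}\epi\Cont_0\bigb{S(Y,C)}$, hence --- since $S(Y,C)$ is compact --- the unit again. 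The key geometric fact is
\[
S(U,U)=S(U,Y)=\Ch(U)\qquad\textup{and}\qquad S(Y,C)=S(C,C)=\Ch(C),
\]
which holds because $m(\sigma)\preceq M(\sigma)$ for every chain~$\sigma$, because a relatively open (resp.\ relatively closed) subset of~$Y$ is upward (resp.\ downward) closed for the specialisation order, and because locally closed subsets are convex. Granting this, both universal elements are the class of the trivial rank-one bundle (over $\Ch(U)$ and over $\Ch(C)$, respectively), which is~(b); and by the analogous computation the two \Star homomorphisms induce $i_U^Y$ and $r_Y^C$ on all of $\KKcat(X)$, which is~(a).

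For the boundary transformation I argue from the exact triangle~\eqref{eq:exacttrianglerepobjects}: its connecting morphism $\Sigma\Rep_U\to\Rep_C$ is by construction the class $\partial\in\KK_1(X;\Rep_U,\Rep_C)$ of the semi-split extension $\Rep_C\mono\Rep_Y\epi\Rep_U$. Applying the cohomological functors $\KK_*(X;\blank,A)$ to this triangle and inserting the representability isomorphisms produces, for every~$A$, the six-term exact sequence of the extension $A(U)\mono A(Y)\epi A(C)$, whose even-degree maps are $i_U^Y$ and $r_Y^C$ by the previous paragraph and whose connecting map is simultaneously the $\K$-theory boundary map $\delta_C^U$ and the Kasparov product with~$\partial$; this gives the first half of~(iii). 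For the topological half I evaluate $\delta_C^U$ on $\Rep_C$ and on $[1_{\Rep_C(C)}]$: this is the $\K$-theory boundary map of the extension $\Cont_0\bigb{S(C,U)}\mono\Cont_0\bigb{S(C,Y)}\epi\Cont_0\bigb{S(C,C)}$ applied to the unit, where $S(C,C)$ is compact, $S(C,Y)=S(C,C)\sqcup S(C,U)$, and $S(C,U)=\Ch(Y)\setminus\bigb{\Ch(U)\sqcup\Ch(C)}$, again by the $m\preceq M$ argument together with up/down-closedness and convexity. Restricting the map $f\colon\Ch(Y)\to[0,1]$ with $f^{-1}(0)=\Ch(U)$ and $f^{-1}(1)=\Ch(C)$ to $S(C,Y)$ gives a map into $(0,1]$ with $f^{-1}(1)=S(C,C)$ and $f^{-1}\bigb{(0,1)}=S(C,U)$, which exhibits the above extension as the pullback along~$f$ of the universal extension $\Cont_0\bigb{(0,1)}\mono\Cont_0\bigb{(0,1]}\epi\C$. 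Since the boundary map of this universal extension sends $[1]$ to a generator~$\upsilon$ of $\K^1\bigb{(0,1)}$, naturality of the $\K$-theory boundary map yields $\delta_C^U\leftrightarrow f^*(\upsilon)$, completing~(iii).

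The genuinely non-trivial inputs are the small lattice lemma --- $S(U,U)=S(U,Y)=\Ch(U)$, $S(Y,C)=S(C,C)=\Ch(C)$, $S(C,Y)=S(C,C)\sqcup S(C,U)$ --- which all fall out of $m\preceq M$, the up/down-closedness of~$U$ and~$C$ in~$Y$, and convexity of locally closed sets, and the identification of the universal boundary map with~$\upsilon$. I expect the main obstacle to be the bookkeeping in~(iii): one must fix compatible signs and $\Z/2$-gradings so that the triangle~\eqref{eq:exacttrianglerepobjects}, the six-term sequence defining $\delta_C^U$, and the pullback picture genuinely match up, and one must use throughout the \emph{specific} isomorphisms $\NT_*(Y,Z)\cong\KK_*(X;\Rep_Z,\Rep_Y)\cong\K^*\bigb{S(Y,Z)}$ fixed just before the lemma, so that all three statements refer to the same identifications.
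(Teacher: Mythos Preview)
The paper does not prove this lemma; it merely recalls it from~\cite{meyernestCalgtopspacfiltrKtheory}*{Lemma~2.19}. So there is no in-paper proof to compare against, and your task is to supply one. Your argument is essentially correct and is the natural one: Yoneda plus the Representability Theorem reduce everything to evaluating each transformation on the unit of the representing object, and the small lattice identities $S(U,U)=S(U,Y)=\Ch(U)$, $S(Y,C)=\Ch(C)$, $S(C,Y)=\Ch(C)\sqcup S(C,U)$, $S(C,U)=\Ch(Y)\setminus\bigl(\Ch(U)\sqcup\Ch(C)\bigr)$ follow exactly as you say from $m(\sigma)\preceq M(\sigma)$, up/down-closedness of~$U$ and~$C$ in~$Y$, and convexity.

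One step deserves a little more care. In the first half of~(iii) you assert that applying $\KK_*(X;\blank,A)$ to the triangle~\eqref{eq:exacttrianglerepobjects} and inserting the representability isomorphisms produces \emph{the} six-term sequence of $A(U)\mono A(Y)\epi A(C)$, so that its connecting map is simultaneously the Kasparov product with~$\partial$ and the $\K$-theory boundary~$\delta_C^U$. Having two long exact sequences with the same objects and the same even maps does not by itself force the odd maps to agree, so you should justify this identification. The cleanest way is to bypass the comparison of sequences entirely and compute directly: under the isomorphism $\KK_1(X;\Rep_U,\Rep_C)\cong\K_1\bigl(\Rep_C(U)\bigr)$, the class~$\partial$ is sent to $\FK_U(\partial)\bigl([1_{\Rep_U(U)}]\bigr)$, which is the boundary map of the extension $\Rep_C(U)\mono\Rep_Y(U)\epi\Rep_U(U)$ applied to the unit; restricting your map~$f$ to $S(Y,U)\subset f^{-1}\bigl([0,1)\bigr)$ exhibits this extension as pulled back from $\Cont_0\bigl((0,1)\bigr)\mono\Cont_0\bigl([0,1)\bigr)\epi\C$, giving $f^*(\upsilon)$ again. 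Since this agrees with your computation of the Yoneda element of~$\delta_C^U$, both halves of~(iii) follow at once.
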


%

\begin{lemma}
	\label{lem:intersection_decomposition}
Let $Y$ and $Z$ be locally closed subsets of~$X$, and let
$C\subseteq Y\cap Z$ be closed in~$Y$ and open in~$Z$. The transformation
$i_C^Z\circ r_Y^C\colon\FK_Y\Rightarrow\FK_Z$ corresponds to the class
of the composition
$\Rep_Z\epi\Rep_C\mono\Rep_Y$ in $\KK_0(X;\Rep_Z,\Rep_Y)$ and to the
class $[\xi_C]$ in $\K^0\bigb{S(Y,Z)}$ which is induced by 
rank-one trivial vector bundle~$\xi_C$ on the compact open subspace
$\Ch(C)\subset S(Y,Z)$.
\end{lemma}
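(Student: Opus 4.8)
The plan is to prove the two asserted correspondences in turn: first the one in $\KKcat(X)$, and then to transport the resulting class to $\K^0\bigb{S(Y,Z)}$ by means of Theorem~\ref{thm:representability}.

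For the $\KKcat(X)$-statement I would combine Lemma~\ref{lem:correspondences} with the behaviour of the Yoneda isomorphism $\NT_*(Y,Z)\cong\KK_*(X;\Rep_Z,\Rep_Y)$ under composition. Unwinding the Yoneda formula $f\mapsto f\circ(\blank)$ shows that if $\sigma\colon\FK_Y\Rightarrow\FK_C$ corresponds to $s\in\KK_*(X;\Rep_C,\Rep_Y)$ and $\tau\colon\FK_C\Rightarrow\FK_Z$ corresponds to $t\in\KK_*(X;\Rep_Z,\Rep_C)$, then $\tau\circ\sigma\colon\FK_Y\Rightarrow\FK_Z$ corresponds to the $\KKcat(X)$-product $s\circ t\in\KK_*(X;\Rep_Z,\Rep_Y)$. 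Since $C$ is closed in~$Y$, Lemma~\ref{lem:correspondences}(ii) identifies $r_Y^C$ with $[\Rep_C\mono\Rep_Y]$; since $C$ is open in~$Z$, Lemma~\ref{lem:correspondences}(i) identifies $i_C^Z$ with $[\Rep_Z\epi\Rep_C]$. Composing, $i_C^Z\circ r_Y^C$ corresponds to $[\Rep_C\mono\Rep_Y]\circ[\Rep_Z\epi\Rep_C]=[\Rep_Z\epi\Rep_C\mono\Rep_Y]$, as claimed.

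Next I would push this class forward along $\KK_0(X;\Rep_Z,\Rep_Y)\cong\FK_Z(\Rep_Y)=\K_0\bigb{\Rep_Y(Z)}=\K^0\bigb{S(Y,Z)}$, which by Theorem~\ref{thm:representability} sends a morphism $g$ to $\FK_Z(g)\bigb{[1_{\Rep_Z(Z)}]}$. By functoriality of $\FK_Z$, applied to $g=[\Rep_Z\epi\Rep_C\mono\Rep_Y]$, this is the image of $[1_{\Rep_Z(Z)}]$ under the composite of the $\K_*$-maps of the two $Z$-components $\Rep_Z(Z)\epi\Rep_C(Z)$ and $\Rep_C(Z)\mono\Rep_Y(Z)$. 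By Lemma~\ref{lem:correspondences}(i) one has $S(Z,Z)=\Ch(Z)$ and $S(C,Z)=\Ch(C)$, so $\Rep_Z(Z)=\Cont\bigb{\Ch(Z)}$ and $\Rep_C(Z)=\Cont_0\bigb{S(C,Z)}=\Cont\bigb{\Ch(C)}$, where $\Ch(C)$ is compact since $\Rep_C(C)$ is unital. The first component is a unital surjection---concretely, restriction along the closed embedding $\Ch(C)\hookrightarrow\Ch(Z)$ coming from the quotient map $\Rep_Z\epi\Rep_C$---so it carries $[1_{\Rep_Z(Z)}]$ to the class of the trivial rank-one bundle $\xi_C$ on~$\Ch(C)$. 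The second component is the ideal inclusion $\Cont_0\bigb{S(C,Z)}\mono\Cont_0\bigb{S(Y,Z)}$ induced by $\Rep_C\mono\Rep_Y$; as $C$ is closed in~$Y$, the subspace $S(C,Z)=\Ch(C)$ is open in $S(Y,Z)$, and it is compact, hence clopen since $S(Y,Z)$ is locally compact Hausdorff. Consequently the induced map on $\K$-theory sends the class of $\xi_C$ to precisely the class $[\xi_C]\in\K^0\bigb{S(Y,Z)}$ induced by $\xi_C$ on the compact open subspace $\Ch(C)$. Chaining the two steps gives the second assertion.

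No ingredient beyond Lemma~\ref{lem:correspondences} and Theorem~\ref{thm:representability} is needed. The only points that require care---and which I regard as the main (minor) obstacle---are getting the variance of the Yoneda correspondence right on composites, and the bookkeeping of the $S(\blank,\blank)$-spaces: that $S(C,Z)=\Ch(C)$ sits closedly inside $S(Z,Z)=\Ch(Z)$ via $\Rep_Z\epi\Rep_C$ and openly inside $S(Y,Z)$ via $\Rep_C\mono\Rep_Y$, and that it is compact.
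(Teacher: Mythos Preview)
Your proof is correct and uses essentially the same ingredients as the paper: Lemma~\ref{lem:correspondences} for the $\KKcat(X)$-class, and the representability theorem for the $\K$-theory class. The only (cosmetic) difference is that for the second step the paper evaluates the natural transformation $i_C^Z\circ r_Y^C$ directly on the representing object~$\Rep_Y$, tracking $[1_{\Rep_Y(Y)}]\in\FK_Y(\Rep_Y)$ through $(r_Y^C)_{\Rep_Y}\colon\Cont\bigb{\Ch(Y)}\epi\Cont\bigb{\Ch(C)}$ and then $(i_C^Z)_{\Rep_Y}\colon\Cont\bigb{\Ch(C)}\mono\Cont_0\bigb{S(Y,Z)}$, whereas you first pass to the $\KK$-morphism $g=[\Rep_Z\epi\Rep_C\mono\Rep_Y]$ and then compute $\FK_Z(g)\bigb{[1_{\Rep_Z(Z)}]}$; these two routes are interchanged by Yoneda and yield the same element of $\K^0\bigb{S(Y,Z)}$.
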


\begin{proof}
It is a consequence of Lemma~\ref{lem:correspondences} that
$i_C^Z\circ r_Y^C$ corresponds to the composition
$\Rep_Z\epi\Rep_C\mono\Rep_Y$.
Since $(r_Y^C)_{\Rep_Y}\colon\Rep_Y(Y)\to\Rep_Y(C)$ is the restriction
$\Cont\bigb{\Ch(Y)}\epi\Cont\bigb{\Ch(C)}$ and
$(i_C^Z)_{\Rep_Y}\colon\Rep_Y(C)\to\Rep_Y(Z)$ is the embedding
$\Cont\bigb{\Ch(C)}\mono\Cont_0\bigb{S(Y,Z)}$, the trivial rank-one
bundle on $\Ch(Y)$ is restricted to $\Ch(C)$ and then extended by 0
to $S(Y,Z)$.
\end{proof}

Let~$Y$ and~$Z$ be locally closed subsets of~$X$. Since the property of
being relatively closed in~$Y$ and relatively open in~$Z$ is preserved
under finite unions, there is a \emph{maximal} subset $R(Y,Z)$ of
$Y\cap Z$ with this property.

\begin{corollary}
	\label{cor:evengen}
The monomials $i_C^Z\circ r_Y^C$, where $Y$ and $Z$ are locally closed
subsets of $X$, and $C$ is a connected component of $R(Y,Z)$, form a
$\Z$-basis of the category $\NTSE$.
\end{corollary}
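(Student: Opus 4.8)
The plan is to prove Corollary~\ref{cor:evengen} by combining the explicit topological description from Lemma~\ref{lem:intersection_decomposition} with the $\K$-theoretic computation $\NT_*(Y,Z)\cong\K^*\bigb{S(Y,Z)}$ coming from the Yoneda lemma. First I would recall that $\NTSE$ is the subcategory of $\NT$ generated by the even transformations $i_U^Y$ and $r_Y^C$, so every morphism in $\NTSE$ is a $\Z$-linear combination of monomials in these generators. Using the relations in Proposition~\ref{pro:evenrels}, any such monomial can be brought into a normal form: compositions of extension transformations collapse to a single $i$, compositions of restriction transformations collapse to a single $r$, and a restriction followed by an extension (the ``wrong order'') is rewritten by Proposition~\ref{pro:evenrels}(iii) as a sum of terms of the shape $i_Z^C\circ r_Y^Z$ with $C$ ranging over connected components of an intersection. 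So every element of $\NTSE(Y,Z)$ is a $\Z$-linear combination of monomials $i_C^Z\circ r_Y^C$ where $C\subseteq Y\cap Z$ is closed in $Y$ and open in $Z$; moreover, by Proposition~\ref{pro:evenrels}(iii) again, such a $C$ decomposes into its connected components, each of which is again closed in~$Y$ and open in~$Z$, so we may take $C$ connected. Finally, a connected subset that is closed in $Y$ and open in $Z$ is contained in the maximal such subset $R(Y,Z)$, hence is a union of connected components of $R(Y,Z)$; connectedness forces it to be a single connected component. This shows the listed monomials \emph{span} $\NTSE(Y,Z)$.

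For linear independence, I would invoke the identification $\NT_*(Y,Z)\cong\K^*\bigb{S(Y,Z)}$ and Lemma~\ref{lem:intersection_decomposition}: the monomial $i_C^Z\circ r_Y^C$ corresponds to the class $[\xi_C]\in\K^0\bigb{S(Y,Z)}$ of the trivial rank-one bundle supported on the compact open subspace $\Ch(C)\subset S(Y,Z)$. The key geometric point is that, as $C$ ranges over the connected components $C_1,\dots,C_r$ of $R(Y,Z)$, the subspaces $\Ch(C_i)$ are pairwise disjoint compact open subsets of $S(Y,Z)$. Indeed $\Ch$ is functorial and sends disjoint unions to disjoint unions, and each $\Ch(C_i)$ is open in $S(Y,Z)$ because $C_i$ is open in $Z$ (so $\Ch(C_i)$ is open in $\Ch(\overline Z)$ hence in $S(Y,Z)$) and compact because $C_i$ is also closed in $Y$ (so $\Ch(C_i)$ is closed in the compact $\Ch(\overline Z)$, intersected appropriately). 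It then follows that $\K^0\bigb{S(Y,Z)}$ splits as a direct sum with one summand $\K^0\bigb{\Ch(C_i)}\cong\Z$ for each $i$ — coming from the clopen pieces $\Ch(C_i)$ — plus a complementary summand, and under this splitting $[\xi_{C_i}]$ is the standard generator of the $i$\nobreakdash-th copy of $\Z$. Hence the classes $[\xi_{C_1}],\dots,[\xi_{C_r}]$ are $\Z$-linearly independent, and together with the spanning statement this proves they form a $\Z$-basis of $\NTSE(Y,Z)$ for each pair $(Y,Z)$; since composition of these basis elements is again of the same form (by the normal-form argument), the collection forms a $\Z$-basis of the category $\NTSE$.

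I expect the main obstacle to be the geometric claim that the $\Ch(C_i)$ are pairwise disjoint clopen subsets of $S(Y,Z)$ and that they actually sit \emph{inside} $S(Y,Z)$ with the asserted $\K$\nobreakdash-theory splitting. This requires carefully unwinding the formula $S(Y,Z)=\Ch(\widetilde Y\cap\overline Z)\setminus\bigb{\Ch(\widetilde Y\cap\overline\partial Z)\cup\Ch(\widetilde\partial Y\cap\overline Z)}$ from the cited lemma and checking that a connected component $C$ of $R(Y,Z)$ — being open in $Z$ and closed in $Y$ — satisfies $\widetilde C\cap\overline\partial Z\cap\text{(relevant set)}$ conditions making $\Ch(C)$ both contained in $S(Y,Z)$ and clopen there. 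One also needs to check the compatibility of the Yoneda identification with the clopen decomposition of $\K$\nobreakdash-theory, i.e.\ that the abstract isomorphism $\NT_*(Y,Z)\cong\K^*\bigb{S(Y,Z)}$ carries the monomial basis to the geometric basis of bump functions on the clopen pieces; this is exactly what Lemma~\ref{lem:intersection_decomposition} provides, so the argument reduces to the point-set topology of the order complex. Once that is in place, the linear-independence and spanning statements combine formally to give the basis claim.
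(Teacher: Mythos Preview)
Your proposal is correct and takes essentially the same approach as the paper: the spanning argument via the normal form from Proposition~\ref{pro:evenrels} is identical, and independence is deduced from Lemma~\ref{lem:intersection_decomposition} in both. The paper's independence step is slightly cleaner: rather than arguing via a clopen decomposition of $\K^0\bigb{S(Y,Z)}$ (where your assertion $\K^0\bigb{\Ch(C_i)}\cong\Z$ need not hold for general finite~$X$, though this is harmless since each $[\xi_{C_i}]$ is still nonzero in its own summand), it simply applies the dimension map $\K^0\to H^0_c$, under which the classes $[\xi_{C_i}]$ go to independent characteristic functions.
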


\begin{proof}
Every morphism in $\NTSE$ is a $\Z$-linear combination of monomials in
composable extension and restriction transformations. The relations
given in Proposition~\ref{pro:evenrels} show that such a monomial
can be rewritten as $i_D^Z\circ r_Y^D$ for locally closed subsets
$D$, $Y$ and $Z$ of $X$, such that $D$ is a closed subset of $Y$ and
an open subset of $Z$. In this case, $D$ is a clopen subset of
$Y\cap Z$, and therefore a union of connected components of $R(Y,Z)$.
Hence $i_D^Z\circ r_Y^D$ is the sum of the transformations
$i_C^Z\circ r_Y^C$, where $C$ runs through the connected components
of $R(Y,Z)$ contained in~$D$.

The independence of the above-mentioned transformations is a
consequence of Lemma~\ref{lem:intersection_decomposition} and
the existence of a natural surjective homomorphism from~$\K^0$
onto the zeroth cohomology group with compact support, carrying
the trivial rank-one bundles on compact connected components to
independent generators: the dimension function.
\end{proof}

\begin{corollary}
	\label{cor:evengenerators}
Let $Y$ and $Z$ be locally closed subsets of~$X$, let
$Y\cap Z$ be closed in~$Y$ and open in~$Z$, and let~$n$ be the
number of connected components of $Y\cap Z$. If
$\K^0\bigb{S(Y,Z)}\cong\Z^n$ then $\NT_0(Y,Z)$ is generated by the
natural transformations $i_C^Z\circ r_Y^C$ with
$C\in\pi_0\bigb{Y\cap Z}$.
\end{corollary}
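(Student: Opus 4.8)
The plan is to compute $\NT_0(Y,Z)$ topologically. By the Yoneda Lemma there is a natural isomorphism $\NT_0(Y,Z)\cong\K^0\bigb{S(Y,Z)}$, and by Lemma~\ref{lem:intersection_decomposition} each transformation $i_C^Z\circ r_Y^C$, for $C$ a connected component of $Y\cap Z$, is carried to the class $[\xi_C]\in\K^0\bigb{S(Y,Z)}$ which is the extension by zero of the trivial rank-one bundle on the compact open subspace $\Ch(C)\subset S(Y,Z)$. (Lemma~\ref{lem:intersection_decomposition} does apply to such a~$C$: as~$X$ is finite, $C$ is clopen in $Y\cap Z$, which by hypothesis is closed in~$Y$ and open in~$Z$, so~$C$ is closed in~$Y$ and open in~$Z$.) It therefore suffices to prove that, if $\K^0\bigb{S(Y,Z)}\cong\Z^n$ with $n=\#\pi_0(Y\cap Z)$, then the classes $[\xi_C]$, $C\in\pi_0(Y\cap Z)$, generate $\K^0\bigb{S(Y,Z)}$.

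First I would note that $\Ch(Y\cap Z)=\bigsqcup_{C\in\pi_0(Y\cap Z)}\Ch(C)$ is a \emph{clopen} subspace of $S(Y,Z)$: it is open and compact, being a finite union of compact open sets, and hence also closed, since $S(Y,Z)\subseteq\Ch(X)$ is Hausdorff. Writing $S(Y,Z)=\Ch(Y\cap Z)\sqcup S'$ as a disjoint union of clopen subsets, we obtain
\[
\K^0\bigb{S(Y,Z)}\cong\Bigl(\bigoplus_{C\in\pi_0(Y\cap Z)}\K^0\bigb{\Ch(C)}\Bigr)\oplus\K^0(S'),
\]
and since each $\Ch(C)$ is non-empty, compact and connected---connected because~$C$ is, so that its order complex is connected---the rank homomorphism $\K^0\bigb{\Ch(C)}\to\Z$ is defined and surjective, carrying the trivial line bundle to~$1$.

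The decisive step is then elementary. Let $\phi\colon\K^0\bigb{S(Y,Z)}\to\bigoplus_{C\in\pi_0(Y\cap Z)}\Z$ be the composite of the projection onto $\bigoplus_C\K^0\bigb{\Ch(C)}$ with the rank maps in each summand. By construction $\phi\bigb{[\xi_C]}=e_C$, the $C$-th standard basis vector, so~$\phi$ is surjective; since source and target are free abelian of rank~$n$, the surjection~$\phi$ is an isomorphism. Hence $\bigl\{[\xi_C]\bigr\}_{C\in\pi_0(Y\cap Z)}=\bigl\{\phi^{-1}(e_C)\bigr\}$ is a $\Z$-basis of $\K^0\bigb{S(Y,Z)}$, and transporting this back along the Yoneda isomorphism together with Lemma~\ref{lem:intersection_decomposition} shows that the transformations $i_C^Z\circ r_Y^C$, $C\in\pi_0(Y\cap Z)$, generate $\NT_0(Y,Z)$.

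The only point requiring care is the construction of~$\phi$, that is, the observation that $\Ch(Y\cap Z)$ is clopen in $S(Y,Z)$ with each $\Ch(C)$ connected; this is exactly what upgrades the abstract identification $\K^0\bigb{S(Y,Z)}\cong\Z^n$ to the assertion that the particular $n$-element family $\{[\xi_C]\}$ already spans the whole group (a priori one only gets that it spans a rank-$n$ subgroup, which need not be everything). Equivalently, one can package~$\phi$ in terms of the dimension function $\K^0\to H^0_c$ used in the proof of Corollary~\ref{cor:evengen}.
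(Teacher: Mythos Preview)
Your proof is correct and follows essentially the same route as the paper's: both use the identification $\NT_0(Y,Z)\cong\K^0\bigb{S(Y,Z)}$ together with Lemma~\ref{lem:intersection_decomposition} to reduce the claim to showing that the classes $[\xi_C]$ generate $\K^0\bigb{S(Y,Z)}$. The paper's proof is a two-line sketch that simply asserts this (writing ``$\K^0\bigb{S(Y,Z)}=\K^0\bigb{\Ch(Y\cap Z)}$ is generated by the~$\xi_C$''), whereas you supply the actual argument---the clopen decomposition $S(Y,Z)=\Ch(Y\cap Z)\sqcup S'$ and the observation that the surjection $\phi\colon\Z^n\to\Z^n$ must be an isomorphism---which is precisely what is needed to justify the paper's assertion.
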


\begin{proof}
This follows from the observation that, in the above situation, the group
$\K^0\bigb{S(Y,Z)}=\K^0\bigb{\Ch(Y\cap Z)}$ is generated by the
classes of the trivial rank-one bundles~$\xi_C$ on
$\Ch(C)\subset\Ch(Y\cap Z)$ with $C\in\pi_0\bigb{Y\cap Z}$.
\end{proof}

\begin{warning}
	\label{war:notgenerator}
It is in general not true that the group $\NT_1(C,U)$ for a boundary
pair~$(U,C)$ is generated by~$\delta_C^U$ once it is isomorphic to~$\Z$;
a counterexample is given in~\cite{Rasmus}*{3.3.19}.
\end{warning}

\begin{lemma}
  \label{lem:oddgenerators}
Let $(U,C)$ be a boundary pair in $\NT$, and let $U',C'\in\LC(X)$ such that
$U$ is an open subset of~$U'$ and $C$ is a closed subset of~$C'$.
The transformation
$i_U^{U'}\circ\delta_C^U\circ r_{C'}^C\colon\FK_{C'}\Rightarrow\FK_{U'}$
corresponds to the composition
\[
\xymatrix{\Rep_{U'}\ar@{->>}[r] & \Rep_U\ar[r]|\circ &
\Rep_C\ar@{ >->}[r] & \Rep_{C'}}
\]
and to the class
$\left(i_{S(C,U)}^{S(C',U')}\circ f\right)^*(\upsilon)$ in
$\K^1\bigb{S(C',U')}$, where $f$ is defined as in
Lemma~\textup{\ref{lem:correspondences}(iii).}
\end{lemma}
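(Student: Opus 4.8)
We combine Lemma~\ref{lem:correspondences} with a direct inspection of the representing object~$\Rep_{C'}$, along the lines of the proof of Lemma~\ref{lem:intersection_decomposition}.

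First consider the assertion about $\KK(X)$-morphisms. Since $U$ is open in~$U'$ and $C$ is closed in~$C'$, parts~(i) and~(ii) of Lemma~\ref{lem:correspondences} identify $i_U^{U'}$ with the class of $\Rep_{U'}\epi\Rep_U$ and $r_{C'}^C$ with the class of $\Rep_C\mono\Rep_{C'}$, while part~(iii) identifies $\delta_C^U$ with the $\KK_1(X)$-class of the extension $\Rep_C\mono\Rep_{U\cup C}\epi\Rep_U$. The Yoneda isomorphism $\NT_*(Y,Z)\cong\KK_*(X;\Rep_Z,\Rep_Y)$ turns composition of natural transformations into composition of $\KK(X)$-morphisms in the reverse order, so $i_U^{U'}\circ\delta_C^U\circ r_{C'}^C$ corresponds to the composite $\Rep_{U'}\epi\Rep_U\to\Rep_C\mono\Rep_{C'}$, which proves the first claim.

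For the second claim we evaluate the induced isomorphism $\NT_*(C',U')\cong\K^*\bigb{S(C',U')}$. By the representability theorem this isomorphism sends a natural transformation $\sigma\colon\FK_{C'}\Rightarrow\FK_{U'}$ to $\sigma_{\Rep_{C'}}\bigb{[1_{\Rep_{C'}(C')}]}$, where $[1_{\Rep_{C'}(C')}]$ is the class of the trivial rank-one bundle in $\K^0\bigb{\Ch(C')}$, the degree-zero part of $\FK_{C'}(\Rep_{C'})$, and the target is $\FK_{U'}(\Rep_{C'})=\K^*\bigb{S(C',U')}$, into whose odd part the class of our transformation falls. We trace $[1_{\Rep_{C'}(C')}]$ through $\sigma_{\Rep_{C'}}=(i_U^{U'})_{\Rep_{C'}}\circ(\delta_C^U)_{\Rep_{C'}}\circ(r_{C'}^C)_{\Rep_{C'}}$. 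First, $(r_{C'}^C)_{\Rep_{C'}}$ is induced by the restriction homomorphism $\Cont\bigb{\Ch(C')}\epi\Cont\bigb{\Ch(C)}=\Rep_{C'}(C)$, using that $S(C',C)=\Ch(C)$ by Lemma~\ref{lem:correspondences}(ii); it carries the trivial bundle on~$\Ch(C')$ to the trivial bundle on~$\Ch(C)$. Next, $(\delta_C^U)_{\Rep_{C'}}$ is the $\K$-theoretic index map of the extension $\Cont_0\bigb{S(C',U)}\mono\Cont_0\bigb{S(C',U\cup C)}\epi\Cont\bigb{\Ch(C)}$ obtained by applying $\Rep_{C'}$ to $A(U)\mono A(U\cup C)\epi A(C)$; the $X$-equivariant $*$-homomorphism $\Rep_C\mono\Rep_{C'}$ of Lemma~\ref{lem:correspondences}(ii), restricted to the relevant subquotients, induces a morphism from the analogous extension for~$\Rep_C$ to this one which is the identity on the common quotient $\Cont\bigb{\Ch(C)}$, so by naturality of the index map together with Lemma~\ref{lem:correspondences}(iii) the class $(\delta_C^U)_{\Rep_{C'}}\bigb{[1_{\Ch(C)}]}$ is the extension by zero of $f^*(\upsilon)\in\K^1\bigb{S(C,U)}$ along the open inclusion $S(C,U)\hookrightarrow S(C',U)$, where $S(C,U)=\Ch(U\cup C)\setminus\bigb{\Ch(U)\sqcup\Ch(C)}$ and $f$ is the map of Lemma~\ref{lem:correspondences}(iii). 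Finally, $(i_U^{U'})_{\Rep_{C'}}$ is induced by the inclusion $\Cont_0\bigb{S(C',U)}\mono\Cont_0\bigb{S(C',U')}$, that is, by the extension by zero along the open inclusion $S(C',U)\hookrightarrow S(C',U')$ coming from $U$ being open in~$U'$. Composing these three steps, and using transitivity of extension by zero together with the fact (immediate from continuity of $m$ and~$M$) that $S(C,U)$ is open in $S(C',U')$, we conclude that $i_U^{U'}\circ\delta_C^U\circ r_{C'}^C$ corresponds to the extension by zero of $f^*(\upsilon)$ along $S(C,U)\hookrightarrow S(C',U')$, which is the class $\bigb{i_{S(C,U)}^{S(C',U')}\circ f}^*(\upsilon)$ in the notation of the statement.

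The step requiring the most care is the passage from $(\delta_C^U)_{\Rep_{C'}}$ to $(\delta_C^U)_{\Rep_C}$: one must check that $\Rep_C\mono\Rep_{C'}$, applied to the subquotients $U$, $U\cup C$ and $C$, really defines a morphism of the two extensions which is the identity on $\Cont\bigb{\Ch(C)}$---this rests on the identity $S(C,C)=S(C',C)=\Ch(C)$, valid because $C$ is closed in~$C'$---and that the spaces $S(C,U)$, $S(C',U)$, $S(C,U\cup C)$ and $S(C',U')$ are related by open inclusions, so that all extension-by-zero maps are defined and behave functorially. Organising this bookkeeping, whose ingredients all come from continuity of $m\colon\Ch(X)\to X^\op$ and $M\colon\Ch(X)\to X$, is the technical heart of the argument; the remaining assertions are then straightforward consequences of Lemma~\ref{lem:correspondences}.
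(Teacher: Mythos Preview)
Your proof is correct and follows essentially the same approach as the paper. The paper packages the second claim into a single commutative diagram
\[
\xymatrix{
& \K^0\bigb{S(C,C)}\ar@{=}[d]\ar[r]^\delta & \K^1\bigb{S(C,U)}\ar[d]^i & \\
\K^0\bigb{S(C',C')}\ar[r]^r & \K^0\bigb{S(C',C)}\ar[r]^\delta & \K^1\bigb{S(C',U)}\ar[r]^i & \K^1\bigb{S(C',U')}
}
\]
and then chases $[\xi_{C'}]$ along it, whereas you spell out the same chase in prose; the key step in both arguments is the naturality of the boundary map with respect to the morphism $\Rep_C\mono\Rep_{C'}$, together with the identity $S(C,C)=S(C',C)=\Ch(C)$ and the openness of $S(C,U)$ in $S(C',U)$.
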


\begin{proof}
The first assertion follows from Lemma~\ref{lem:correspondences}.
For the second assertion, note that $S(C,U)$ is open in~$S(C',U)$.
This follows from the definition $S(Y,Z)\perdef m^{-1}(Y)\cap M^{-1}(Z)$
because $m$ is continuous as a map from~$\Ch(X)$ to~$X^\op$.
We get the following commutative diagram:
\[
\xymatrix{
&
\K^0\bigb{S(C,C)}\ar@{=}[d]\ar[r]^\delta &
\K^1\bigb{S(C,U)}\ar[d]^i &
\\
\K^0\bigb{S(C',C')}\ar[r]^r &
\K^0\bigb{S(C',C)}\ar[r]^\delta &
\K^1\bigb{S(C',U)}\ar[r]^i &
\K^1\bigb{S(C',U')}.
}
\]
The class $[\xi_{C'}]\in\K^0\bigb{S(C',C')}$ is mapped to $[\xi_{C}]\in\K^0\bigb{S(C,C)}$, to $f^*(\upsilon)\in\K^1\bigb{S(C,U)}$ and, finally, to
$\left(i_{S(C,U)}^{S(C',U')}\right)^*\bigb{f^*(\upsilon)}\in\K^1\bigb{S(C',U')}$.
\end{proof}

\begin{corollary}
	\label{cor:oddgenerators}
Let $(U,C)$ be a boundary pair in $\NT$, and let $U',C'\in\LC(X)$ such
that~$U$ is an open subset of~$U'$ and $C$ is a closed subset of~$C'$.
Assume that $\K^1\bigb{S(C',U')}\cong\Z$ and further that the composition
\[
\xymatrix{
\K^0\bigb{S(C,C)}\ar[r]^\delta & \K^1\bigb{S(C,U)}\ar[r]^i &
\K^1\bigb{S(C',U')}
}
\]
maps the class of the trivial rank-one bundle in $\K^0\bigb{S(C,C)}$ to
a generator of $\K^1\bigb{S(C',U')}$. Then $\NT_1(C',U')$ is generated
by the composition $i_U^{U'}\circ\delta_C^U\circ r_{C'}^C$.

The above condition is fulfilled, in particular, when
$\K^0\bigb{S(C,C)}$ is isomorphic to~$\Z$ and the groups
$\K^1\bigb{S(C,C)\cup S(C,U)}$ and
$\K^1\bigb{S(C',U')\setminus S(C,U)}$ vanish.
\end{corollary}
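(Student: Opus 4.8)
The plan is to combine Lemma~\ref{lem:oddgenerators} with the Yoneda identification $\NT_1(C',U')\cong\K^1\bigb{S(C',U')}$. By hypothesis this group is isomorphic to~$\Z$, so it suffices to show that the transformation $i_U^{U'}\circ\delta_C^U\circ r_{C'}^C$ corresponds to a generator under this identification. By Lemma~\ref{lem:oddgenerators} it corresponds to $\bigb{i_{S(C,U)}^{S(C',U')}\circ f}^*(\upsilon)$, and the commutative diagram in the proof of that lemma exhibits this class as the image of the trivial rank-one bundle class $[\xi_{C'}]\in\K^0\bigb{S(C',C')}$ under the composite $\K^0\bigb{S(C',C')}\xrightarrow{r}\K^0\bigb{S(C,C)}\xrightarrow{\delta}\K^1\bigb{S(C,U)}\xrightarrow{i}\K^1\bigb{S(C',U')}$, where the first arrow is the restriction~$r_{C'}^C$. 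Since this restriction carries $[\xi_{C'}]$ to the class $[\xi_C]$ of the trivial rank-one bundle in $\K^0\bigb{S(C,C)}$, the class corresponding to $i_U^{U'}\circ\delta_C^U\circ r_{C'}^C$ is precisely the image of $[\xi_C]$ under $\delta$ followed by~$i$; by assumption this is a generator, which proves the first assertion.

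For the supplementary criterion I would verify the hypothesis on $\delta$ followed by~$i$ using six-term exact sequences in topological $\K$-theory. First, $S(C,U)$ is open in $S(C',U')$: as noted in the proof of Lemma~\ref{lem:oddgenerators}, $S(C,U)$ is open in $S(C',U)$ because $m$ is continuous into~$X^\op$ and $C$ is closed in~$C'$, and $S(C',U)$ is open in $S(C',U')$ because $M$ is continuous into~$X$ and $U$ is open in~$U'$. Hence $S(C',U')\setminus S(C,U)$ is relatively closed, and the six-term exact sequence of the extension $\Cont_0\bigb{S(C,U)}\mono\Cont_0\bigb{S(C',U')}\epi\Cont_0\bigb{S(C',U')\setminus S(C,U)}$ shows that $i\colon\K^1\bigb{S(C,U)}\to\K^1\bigb{S(C',U')}$ is surjective once $\K^1\bigb{S(C',U')\setminus S(C,U)}=0$. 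Likewise, $S(C,C)\cup S(C,U)=S(C,U\cup C)$, with $S(C,U)$ open and $S(C,C)$ closed in it because $U$ is open and $C$ is closed in $U\cup C$ and $M$ is continuous into~$X$; its six-term exact sequence shows that $\delta\colon\K^0\bigb{S(C,C)}\to\K^1\bigb{S(C,U)}$ is surjective once $\K^1\bigb{S(C,C)\cup S(C,U)}=0$. Thus the composite $\delta$ followed by~$i$ is surjective. Finally, $S(C,C)$ is compact since $\Rep_C(C)=\Cont_0\bigb{S(C,C)}$ is unital by Theorem~\ref{thm:representability}; as $\K^0\bigb{S(C,C)}\cong\Z$, the dimension homomorphism shows that $[\xi_C]$ generates this group. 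Consequently the image of $[\xi_C]$ under the surjective composite generates $\K^1\bigb{S(C',U')}$, which is exactly the condition required for the first part.

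I expect the only genuine work to be bookkeeping: checking that the various subsets of $\Ch(X)$ in play are open, resp.\ closed, in one another so that the six-term exact sequences and the commutative diagram of Lemma~\ref{lem:oddgenerators} are available, and matching the connecting and inclusion-induced maps there with the transformations $\delta$ and~$i$ appearing in the statement. No idea beyond Lemma~\ref{lem:oddgenerators} and exactness of $\K$-theory is needed.
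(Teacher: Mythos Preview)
Your proof is correct and follows the same approach as the paper's. The paper's proof is a two-sentence affair---``direct consequence of Lemma~\ref{lem:oddgenerators}'' for the main assertion and ``follows from the exactness of the six-term sequence'' for the addendum---and you have simply unpacked both of these, including the verification that $[\xi_C]$ generates $\K^0\bigb{S(C,C)}$ and the openness/closedness checks needed to invoke the six-term sequences.
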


\begin{proof}
The main assertion is a direct consequence of Lemma~\ref{lem:oddgenerators}. The addendum follows from the exactness of the six-term sequence in $\K$-theory.
\end{proof}

\section{The UCT criterion}
\label{The UCT criterion}

Theorem 4.8 in~\cite{meyernestCalgtopspacfiltrKtheory} shows what is actually
needed to obtain a UCT short exact sequence which computes
$\KK(X, \blank, \blank)$ in terms of filtrated $\K$-theory:
\begin{theorem}
  \label{the:UCT_conditional}
  Let \(A,B\inOb\KKcat(X)\).  Suppose that
  \(\FK(X)(A)\inOb\CMod{\Nattrafo(X)}\) has a projective resolution
  of length~\(1\) and that \(A\inOb\Bootstrap(X)\).  Then there
  are natural short exact sequences
  \begin{multline*}
  \Ext^1_{\Nattrafo(X)}\bigl(\FK(X)(A)[j+1],\FK(B)\bigr)
  \into \KK_j(X;A,B) \\
  \prto \Hom_{\Nattrafo(X)}\bigl(\FK(X)(A)[j],\FK(X)(B)\bigr)
  \end{multline*}
  for \(j\in\Z/2\), where \(\Hom_{\Nattrafo(X)}\) and
  \(\Ext^1_{\Nattrafo(X)}\) denote the morphism and extension groups
  in the Abelian category \(\CMod{\Nattrafo(X)}\).
\end{theorem}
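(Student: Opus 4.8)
\emph{Proof proposal.}
The plan is to run the Meyer--Nest machinery of relative homological algebra in the triangulated category \(\KKcat(X)\) with respect to the homological ideal \(\Ideal\perdef\ker\FK\), as developed in \cites{MR2193334,meyernesthomalgintricat} and applied in \cite{meyernestCalgtopspacfiltrKtheory}; the Representability Theorem~\ref{thm:representability} is the input that makes it run. First I would extract from representability the relevant properties of the objects \(\Rep_Y\). For \(Y\in\LC(X)\), Theorem~\ref{thm:representability} together with the Yoneda lemma identifies \(\FK(\Rep_Y)\) with the representable---hence free, hence projective---\(\NT\)-module \(\NT(Y,\blank)\), and gives a natural isomorphism \(\KK_*(X;\Rep_Y,B)\cong\FK(B)(Y)\cong\Hom_{\NT}\bigl(\FK(\Rep_Y),\FK(B)\bigr)\). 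Passing to countable direct sums of suspensions of the \(\Rep_Y\), every such object \(P\) is \(\Ideal\)-projective, one has \(\KK_*(X;P,B)\cong\Hom_{\NT}\bigl(\FK(P),\FK(B)\bigr)\) for every \(B\) (so in particular \(\Ideal(P,B)=0\)), and \(\FK\) restricts to an essentially surjective functor from these objects onto the projective objects of \(\CMod{\NT}\): every projective module is a retract of a sum of free modules, these are hit by sums of the \(\Rep_Y\), and the corresponding idempotents split in \(\KKcat(X)\) since \(\KK_0(X;P,P)\cong\Hom_{\NT}\bigl(\FK(P),\FK(P)\bigr)\) and \(\KKcat(X)\) is idempotent complete. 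Finally I would invoke the two structural facts from \cite{meyernestCalgtopspacfiltrKtheory}: each \(\Rep_Y\) lies in \(\Bootstrap(X)\), and \(\Bootstrap(X)\) is exactly the localising subcategory of \(\KKcat(X)\) generated by the \(\Ideal\)-projectives.

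The second step is to produce a length-one \(\Ideal\)-projective resolution of \(A\) in \(\KKcat(X)\). Pick a projective resolution \(0\to P_1\xrightarrow{f}P_0\to\FK(A)\to0\) in \(\CMod{\NT}\); by the above, write \(P_i=\FK(\tilde P_i)\) for \(\Ideal\)-projective \(\tilde P_i\), lift \(f\) to \(\tilde f\in\KK_0(X;\tilde P_1,\tilde P_0)\cong\Hom_{\NT}(P_1,P_0)\), and complete \(\tilde f\) to an exact triangle \(\tilde P_1\xrightarrow{\tilde f}\tilde P_0\to A'\to\Sigma\tilde P_1\). Running the long exact \(\FK\)-sequence of this triangle and using injectivity of \(f\), the connecting maps vanish and we obtain an isomorphism \(\FK(A')\cong\coker f=\FK(A)\). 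To promote this to a \(\KK(X)\)-equivalence---the point at which the hypothesis \(A\inOb\Bootstrap(X)\) is needed---I would lift the projection \(P_0\to\FK(A)\) to \(g\in\KK_0(X;\tilde P_0,A)\); then \(g\circ\tilde f\in\Ideal(\tilde P_1,A)=0\), so \(g\) factors through the triangle as some \(h\colon A'\to A\), and \(\FK(h)\) is the isomorphism above. Now \(A'\) belongs to the localising subcategory generated by the \(\Ideal\)-projectives, which, as noted above, equals \(\Bootstrap(X)\); hence the cone of \(h\) lies in \(\Bootstrap(X)\) and has vanishing \(\FK\). Any such object \(C\) is zero: \(\KK_*(X;P,C)\cong\Hom_{\NT}\bigl(\FK(P),0\bigr)=0\) for every \(\Ideal\)-projective \(P\), and this vanishing propagates along exact triangles and countable direct sums, giving \(\KK_*(X;C,C)=0\) and thus \(C=0\). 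Therefore \(h\) is invertible and \(A\) fits into an exact triangle \(\tilde P_1\to\tilde P_0\to A\to\Sigma\tilde P_1\) with \(\tilde P_0,\tilde P_1\) \(\Ideal\)-projective.

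Finally I would apply the cohomological functor \(\KK_j(X;\blank,B)\) to this triangle. Since suspensions of \(\Ideal\)-projectives are \(\Ideal\)-projective, the above gives \(\KK_j(X;\tilde P_i,B)\cong\Hom_{\NT}\bigl(\FK(\tilde P_i)[j],\FK(B)\bigr)=\Hom_{\NT}\bigl(P_i[j],\FK(B)\bigr)\), under which the map induced by \(\tilde f\) becomes \(\bigl(f[j]\bigr)^*\). Using two-periodicity of \(\KK_*(X;\blank,B)\), the long exact sequence breaks into short exact sequences
\[
\coker\bigl((f[j+1])^*\bigr)\into\KK_j(X;A,B)\prto\ker\bigl((f[j])^*\bigr).
\]
For each \(i\), the complex \(0\to P_1[i]\xrightarrow{f[i]}P_0[i]\to\FK(A)[i]\to0\) is again a projective resolution, so left exactness of \(\Hom_{\NT}\) identifies the quotient term with \(\Hom_{\NT}\bigl(\FK(A)[j],\FK(B)\bigr)\), while the sub-term is \(\Ext^1_{\NT}\bigl(\FK(A)[j+1],\FK(B)\bigr)\) by the definition of \(\Ext^1\) from a length-one resolution. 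This is the asserted sequence; naturality in \(B\) is clear, and naturality in \(A\) follows from uniqueness up to chain homotopy of projective resolutions.

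The step that carries all the weight, and the one I would expect to need the most care, is the passage from \(\FK(A')\cong\FK(A)\) to the \(\KK(X)\)-equivalence \(A'\simeq A\): this is precisely where \(A\inOb\Bootstrap(X)\) enters, through the two facts recalled above, namely that the \(\Ideal\)-projectives \(\Rep_Y\) lie in \(\Bootstrap(X)\) and generate it as a localising subcategory. Everything else is formal relative homological algebra in triangulated categories on top of the Representability Theorem.
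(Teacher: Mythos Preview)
The paper does not give its own proof of this statement; it is quoted verbatim as Theorem~4.8 of \cite{meyernestCalgtopspacfiltrKtheory}, and the reader is referred there. Your proposal is a correct and faithful outline of precisely that Meyer--Nest argument: representability furnishes enough \(\Ideal\)-projectives (for \(\Ideal=\ker\FK\)) with projective filtrated \(\K\)-theory, \(\Bootstrap(X)\) coincides with the localising subcategory they generate, a length-one projective resolution of \(\FK(A)\) lifts to an exact triangle \(\tilde P_1\to\tilde P_0\to A\to\Sigma\tilde P_1\), and applying \(\KK_*(X;\blank,B)\) gives the UCT sequence.
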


Since we are interested in determining the class of spaces that allow for a UCT
short exact sequence for filtrated $\K$-theory, it makes sense to view
the crucial assumption in the theorem above as a property of the space~$X$.

\begin{definition}
Let~$X$ be a finite $T_0$-space. We say that \emph{$UCT(X)$ holds}
if for all $A \inOb\Bootstrap(X)$, \(\FK(X)(A)\inOb\CMod{\Nattrafo(X)}\)
has a projective resolution of length~\(1\).
\end{definition}

Let us also mention an important conclusion which can be drawn
from the existence of a UCT short exact sequence.

\begin{corollary}[\cite{meyernestCalgtopspacfiltrKtheory}*{Corollary~4.9}]
  \label{cor:lift_iso}
  Let \(A,B\inOb\Bootstrap(X)\) and suppose that both
  \(\FK(A)\) and \(\FK(B)\) have projective resolutions of
  length~\(1\) in \(\CModsmallargument{\Nattrafo}\). Then any homomorphism
  \(\FK(A)\to\FK(B)\) in \(\CModsmallargument{\Nattrafo}\) lifts to an
  element in \(\KK_0(X;A,B)\), and an isomorphism
  \(\FK(A)\cong\FK(B)\) lifts to an isomorphism in
  \(\Bootstrap(X)\).
\end{corollary}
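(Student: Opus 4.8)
The assertion is a short formal consequence of Theorem~\ref{the:UCT_conditional} together with the triangulated structure of~$\KKcat(X)$, so the plan is essentially a bookkeeping argument built on that theorem.

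For the first claim I would simply apply Theorem~\ref{the:UCT_conditional} to the pair~$(A,B)$ in degree~$j=0$: since $\FK(A)$ has a projective resolution of length~$1$ and $A\inOb\Bootstrap(X)$, there is a short exact sequence
\[
\Ext^1_{\Nattrafo(X)}\bigl(\FK(A)[1],\FK(B)\bigr)\into\KK_0(X;A,B)\prto\Hom_{\Nattrafo(X)}\bigl(\FK(A),\FK(B)\bigr),
\]
and surjectivity of the right-hand map says exactly that every homomorphism $\varphi\colon\FK(A)\to\FK(B)$ in $\CModsmallargument{\Nattrafo}$ is of the form $\FK(f)$ for some $f\in\KK_0(X;A,B)$.

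For the second claim I would start with an isomorphism $\varphi\colon\FK(A)\xrightarrow{\;\cong\;}\FK(B)$, pick $f\in\KK_0(X;A,B)$ with $\FK(f)=\varphi$, regard it as a morphism $A\to B$ in the triangulated category~$\KKcat(X)$, and complete it to an exact triangle $A\xrightarrow{\;f\;}B\to C\to\Sigma A$. Since $\Bootstrap(X)$ is a localising, hence triangulated, subcategory of~$\KKcat(X)$ containing~$A$ and~$B$, the cone~$C$ again lies in $\Bootstrap(X)$. Applying the homological, stable functor~$\FK$ to the triangle gives a long exact sequence of $\Nattrafo(X)$-modules in which $\FK(f)=\varphi$ and $\FK(\Sigma f)=\varphi[1]$ are isomorphisms; a short diagram chase (using that $\varphi$ is both mono and epi) forces $\FK(C)=0$. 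The zero module $\FK(C)$ trivially has a projective resolution of length~$1$, and $C\inOb\Bootstrap(X)$, so Theorem~\ref{the:UCT_conditional} applied to the pair~$(C,C)$ yields $\KK_0(X;C,C)\cong\Hom_{\Nattrafo(X)}\bigl(\FK(C),\FK(C)\bigr)=0$ (both outer terms of the exact sequence vanish since $\FK(C)=0$). Hence $\id_C=0$ in $\KK_0(X;C,C)$, so~$C$ is a zero object of~$\KKcat(X)$; and a morphism in a triangulated category whose cone is zero is an isomorphism (apply $\KK_*(X;\blank,D)$ to the triangle to see that $f$ induces bijections $\KK_*(X;B,D)\to\KK_*(X;A,D)$ for every~$D$, then invoke Yoneda). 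Thus~$f$ is an isomorphism in~$\KKcat(X)$, and since $A,B\inOb\Bootstrap(X)$ it is an isomorphism in~$\Bootstrap(X)$ lifting~$\varphi$.

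The genuinely substantial content lies entirely in Theorem~\ref{the:UCT_conditional}; the only step here requiring an idea rather than mechanical work is feeding the mapping cone~$C$ of a lift of~$\varphi$ back into the conditional UCT, thereby upgrading the formal vanishing $\FK(C)=0$ to the honest vanishing $C\cong 0$. (An alternative, more computational route avoids cones: one checks that $\ker\bigl(\FK\colon\KK_0(X;A,A)\to\Hom_{\Nattrafo(X)}(\FK(A),\FK(A))\bigr)$, which by Theorem~\ref{the:UCT_conditional} equals $\Ext^1_{\Nattrafo(X)}(\FK(A)[1],\FK(A))$, is a \emph{square-zero} two-sided ideal --- the UCT filtration is multiplicative and the product of two of its first-step elements lands in the next filtration step, which is governed by $\Ext^2$ and hence vanishes for resolutions of length~$\le1$ --- so that any lift of~$\id_{\FK(A)}$ has the form $\id_A+x$ with $x^2=0$ and is therefore invertible; composing a lift of~$\varphi$ with a lift of~$\varphi^{-1}$ and running the same argument on the $B$-side shows the lift of~$\varphi$ is a $\KK(X)$-equivalence.)
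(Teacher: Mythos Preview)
The paper does not give its own proof of this corollary; it simply cites the result from Meyer--Nest. Your argument is correct, and both routes you outline are standard. Your primary route via the mapping cone is perfectly valid: the key point that $\FK(C)=0$ lets you feed~$C$ back into Theorem~\ref{the:UCT_conditional} is exactly right, and the resulting $\KK_0(X;C,C)=0$ forces $C\cong 0$.

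It is worth noting that the alternative you sketch at the end---the square-zero ideal argument---is in fact the approach indicated in the paper's own introduction (for the non-equivariant case, but the reasoning is identical): the UCT sequence for $A=B$ exhibits $\KK_*(X;A,A)$ as a nilpotent extension of $\Hom_{\Nattrafo(X)}\bigl(\FK(A),\FK(A)\bigr)$, and nilpotent extensions reflect invertibility. So your ``alternative'' is closer to what the authors have in mind, while your cone argument trades the multiplicativity of the UCT filtration (which needs a word of justification) for a second invocation of the theorem itself. Both approaches are clean; the cone argument is arguably more self-contained here since it uses only the statement of Theorem~\ref{the:UCT_conditional} and nothing about how the UCT interacts with the Kasparov product.
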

 
As indicated in the introduction, the possibility of lifting isomorphisms in filtrated $\K$-theory to isomorphisms in $\KKcat(X)$ is one of the main reasons why one is interested in a UCT short exact sequence. On the other hand, the impossibility of lifting isomorphisms in $\FK(X)$ can of course be viewed as an obstruction to the existence of a UCT short exact sequence.

\begin{definition}
Let $X$ be a finite $T_0$-space. We say that \emph{$\neg UCT(X)$ holds} if there
are $A,B\in \in \Boot(X)$ such that $A \ncong B$ in $ \KKcat(X)$ and $\FK(X)(A) \cong \FK(X)(B)$ in $\CMod{\Nattrafo(X)}$.
\end{definition}

It is clear that there is no finite $T_0$-space such that both $UCT(X)$ and $\neg UCT(X)$ hold. Moreover,
as suggested by the notation, we will show that for every such~$X$ either $UCT(X)$ or $\neg UCT(X)$ holds.
We may restrict attention to connected spaces:

\begin{lemma}
\label{lem: UCT + connectedness}
Let $X$ be a finite $T_0$-space which is a disjoint union of topological spaces $X_1, \ldots, X_n$. Then
$UCT(X)$ holds if and only if $UCT(X_i)$ holds for $i =1, \ldots ,n$. Similarly, $\neg UCT(X)$ holds if and only if $\neg UCT(X_i)$ holds for all~$X_i$.
\end{lemma}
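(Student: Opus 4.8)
\textbf{Proof plan for Lemma~\ref{lem: UCT + connectedness}.}
The plan is to reduce everything to the analogous statement about the category $\CMod{\Nattrafo(X)}$, namely that it splits as a product of the categories $\CMod{\Nattrafo(X_i)}$, and that this splitting is compatible with $\FK$ and with the notion of having a projective resolution of length~$1$. First I would observe that since the $X_i$ are the connected components of~$X$, every locally closed subset $Y\in\LC(X)$ decomposes as $Y=\bigsqcup_i (Y\cap X_i)$ with $Y\cap X_i\in\LC(X_i)$, and there are no natural transformations between $\FK_Y$ and $\FK_Z$ when $Y$ and $Z$ meet different components (this follows, e.g., from the representability theorem, since $\Rep_Y$ decomposes as a direct sum over components and the relevant $\KK(X)$-groups vanish across components, or more directly from Remark~\ref{rem:connected}). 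Hence $\Nattrafo(X)\cong\prod_i\Nattrafo(X_i)$ as $\Z/2$-graded categories, and consequently $\CMod{\Nattrafo(X)}\cong\prod_i\CMod{\Nattrafo(X_i)}$ as Abelian categories; under this equivalence a module $M$ corresponds to the tuple $(M|_{\Nattrafo(X_i)})_i$ of its restrictions. In this product category, kernels, cokernels, projectivity and exactness are all computed componentwise, so $M$ has a projective resolution of length~$1$ if and only if each restriction $M|_{\Nattrafo(X_i)}$ does.

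Next I would handle the $\Cst$-algebra side. A separable $\Cst$-algebra $A$ over~$X$ decomposes canonically as $A=\bigoplus_i A(X_i)=\bigoplus_i (i_{X_i}^X)^*\! A$ since the $X_i$ are clopen, and this is an isomorphism in $\KKcat(X)$; in particular $\FK(X)(A)$ corresponds under the equivalence above to the tuple $\bigl(\FK(X_i)(r_X^{X_i}A)\bigr)_i$. I would also need the statement that the bootstrap classes match up: $A\inOb\Bootstrap(X)$ if and only if $r_X^{X_i}A\inOb\Bootstrap(X_i)$ for all~$i$. The forward direction follows because $r_X^{X_i}$ is triangulated, commutes with countable direct sums, and sends the generators $i_x\C$ for $x\in X_i$ to $i_x\C$ and the other generators to~$0$; the reverse direction follows because $i_{X_i}^X$ has the same properties and sends generators of $\Bootstrap(X_i)$ to generators of $\Bootstrap(X)$, together with the decomposition $A\simeq\bigoplus_i i_{X_i}^X r_X^{X_i}A$.

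Putting these together: $UCT(X)$ asks that $\FK(X)(A)$ have a length-$1$ projective resolution for every $A\inOb\Bootstrap(X)$. Given the two reductions, this holds iff for every $A\inOb\Bootstrap(X)$ and every~$i$ the restriction $\FK(X_i)(r_X^{X_i}A)$ has a length-$1$ projective resolution. Since $r_X^{X_i}$ maps $\Bootstrap(X)$ onto (in fact, bijectively up to $\KK$-equivalence, via $i_{X_i}^X$, onto all of) $\Bootstrap(X_i)$, the latter condition is exactly $\bigwedge_i UCT(X_i)$. For the $\neg UCT$ statement I would argue similarly: if $\neg UCT(X_i)$ holds, witnessed by $A_i,B_i\inOb\Bootstrap(X_i)$, then $i_{X_i}^X A_i$ and $i_{X_i}^X B_i$ (extended by zero elsewhere, i.e.\ forming the obvious objects over~$X$) have isomorphic $\FK(X)$ but are not $\KK(X)$-equivalent, because applying $r_X^{X_i}$ recovers $A_i,B_i$; conversely, if $\neg UCT(X)$ is witnessed by $A\ncong B$ with $\FK(X)(A)\cong\FK(X)(B)$, then since $\FK(X)(A)\cong\FK(X)(B)$ componentwise we get $\FK(X_i)(r_X^{X_i}A)\cong\FK(X_i)(r_X^{X_i}B)$ for all~$i$, and since $A\simeq\bigoplus_i i_{X_i}^X r_X^{X_i}A\not\simeq\bigoplus_i i_{X_i}^X r_X^{X_i}B\simeq B$ there must be at least one index~$i$ with $r_X^{X_i}A\not\simeq r_X^{X_i}B$, which witnesses $\neg UCT(X_i)$.

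The main obstacle is establishing the two ``matching'' facts cleanly---that $\Nattrafo(X)$ (hence $\CMod{\Nattrafo(X)}$) genuinely splits as a product indexed by $\pi_0(X)$, and that $\Bootstrap(X)$ splits correspondingly under restriction/extension. Both are essentially formal consequences of the clopen decomposition $X=\bigsqcup X_i$ and the adjunction $r_X^{X_i}\dashv i_{X_i}^X$ with $r_X^{X_i}i_{X_i}^X=\ID$, but one has to be a little careful to check that no natural transformations ``mix'' components and that projectivity in the product category is componentwise; once those are in place, everything else is a short diagram chase. The rest---decomposing $A$, tracking $\FK$ through the equivalence, and the bookkeeping for $\neg UCT$---is routine.
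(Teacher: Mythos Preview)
Your approach is essentially the paper's: the lemma is recorded there as an immediate consequence of the product decompositions $\kk(X)\cong\prod_i\kk(X_i)$ and $\CMod{\Nattrafo(X)}\cong\prod_i\CMod{\Nattrafo(X_i)}$ compatible with filtrated $\K$-theory, and your write-up simply spells out what the paper leaves as a one-line remark. One small point worth flagging: for the $\neg UCT$ clause you (correctly) establish $\neg UCT(X)\Longleftrightarrow\exists\,i\colon\neg UCT(X_i)$, which is what the product decomposition actually yields and what is used later in the paper (in the proof of Theorem~\ref{thm:final}), whereas the lemma as printed says ``for all~$X_i$''---this appears to be a slip in the statement rather than a defect in your argument.
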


\begin{proof}
This is a consequence of the natural product decompositions $\kk(X)\cong\prod_{i=1}^n\kk(X_i)$ and $\CMod{\Nattrafo(X)}\cong\prod_{i=1}^n\CMod{\Nattrafo(X_i)}$, which are compatible with filtrated $\K$-theory.
\end{proof}

The next proposition roughly tells us that, if $X$ has a subspace for which there is no UCT, then there cannot exist a UCT for $X$ either.

\begin{proposition}
    \label{embedding}
Let $X$ be a finite $T_0$-space.
\begin{enumerate}[label=\textup{(\roman*)}]
\item
Let $Y\in\Loclo(X)$ such that $\neg UCT(Y)$ holds. Then $\neg UCT(X)$ holds as well.
\item
Let~$Y$ be a finite $T_0$-space and $f\colon X \rightarrow Y$, $g\colon Y \rightarrow X$ continuous maps with $ f \circ g = \ID_Y$. Suppose that $\neg UCT(Y)$ holds. Then $\neg UCT(X)$ holds as well.
\end{enumerate}
\end{proposition}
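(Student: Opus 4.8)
The plan is to prove both statements by transporting a pair of witnessing objects for $\neg UCT$ along the relevant functors, checking that these functors preserve the bootstrap class, filtrated $\K$-theory (in a compatible way), and the (non-)existence of $\KK$-equivalences.

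For part (i), suppose $\neg UCT(Y)$ holds for $Y\in\Loclo(X)$, witnessed by $A,B\inin\Bootstrap(Y)$ with $A\ncong B$ in $\KKcat(Y)$ but $\FK(Y)(A)\cong\FK(Y)(B)$. I would apply the extension functor $i_Y^X\colon\KKcat(Y)\to\KKcat(X)$. First, $i_Y^X$ sends $\Bootstrap(Y)$ into $\Bootstrap(X)$: it is triangulated and commutes with countable direct sums (being induced by the universal property of $\KK$), and it sends the generators $i_y\C$ of $\Bootstrap(Y)$ to the generators $i_x\C$ of $\Bootstrap(X)$ for the corresponding points $x\in Y\subseteq X$, so it maps a generating set of the localising subcategory $\Bootstrap(Y)$ into $\Bootstrap(X)$. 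Next I need that $\FK(X)(i_Y^X A)\cong\FK(X)(i_Y^X B)$; this follows from the compatibility diagram~\eqref{eq:functoriality} applied to the inclusion $f\colon Y\hookrightarrow X$, which gives $\FK(X)\circ i_Y^X\cong f_*\circ\FK(Y)$ as functors, so an isomorphism $\FK(Y)(A)\cong\FK(Y)(B)$ in $\CMod{\Nattrafo(Y)}$ yields an isomorphism after applying the additive functor $f_*\colon\CMod{\Nattrafo(Y)}\to\CMod{\Nattrafo(X)}$. Finally I must show $i_Y^X A\ncong i_Y^X B$ in $\KKcat(X)$: here I use the restriction functor $r_X^Y\colon\KKcat(X)\to\KKcat(Y)$ together with the relation $r_X^Y\circ i_Y^X=\ID_{\KKcat(Y)}$ recorded in the excerpt; thus a $\KK(X)$-equivalence $i_Y^X A\simeq i_Y^X B$ would restrict to a $\KK(Y)$-equivalence $A\simeq B$, a contradiction. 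Hence $A'\perdef i_Y^X A$ and $B'\perdef i_Y^X B$ witness $\neg UCT(X)$.

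For part (ii), the argument is the same in spirit but uses the retraction $f\circ g=\ID_Y$ in place of the inclusion/restriction pair. Let $A,B\inin\Bootstrap(Y)$ witness $\neg UCT(Y)$ and set $A'\perdef g_* A$, $B'\perdef g_* B$ in $\KKcat(X)$. The functor $g_*\colon\KKcat(X)\to\KKcat(Y)$... rather, $g_*\colon\KKcat(Y)\to\KKcat(X)$ sends generators $i_y\C$ to $i_{g(y)}\C$ (since $g_*\circ i_{\{y\}}^Y = i_{\{g(y)\}}^X$ when $g(y)$ has the subspace structure, or more carefully: $g_*$ is triangulated and cocontinuous and maps the generating objects of $\Bootstrap(Y)$ into $\Bootstrap(X)$), so $A',B'\inin\Bootstrap(X)$. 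The compatibility diagram~\eqref{eq:functoriality} for $g$ gives $\FK(X)\circ g_*\cong g_*\circ\FK(Y)$, so $\FK(X)(A')\cong\FK(X)(B')$ follows by applying the additive functor $g_*\colon\CMod{\Nattrafo(Y)}\to\CMod{\Nattrafo(X)}$ to the given isomorphism. To see $A'\ncong B'$ in $\KKcat(X)$, apply $f_*\colon\KKcat(X)\to\KKcat(Y)$ and use $f_*\circ g_*=(f\circ g)_*=\ID_{\KKcat(Y)}$ (from $g_*f_*=(gf)_*$ on spaces, passed through the universal property of $\KK$): a $\KK(X)$-equivalence $A'\simeq B'$ would push forward to $A\simeq B$ in $\KKcat(Y)$, contradicting $\neg UCT(Y)$. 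Note part (i) is essentially the special case of part (ii) with $f=r_X^Y$-underlying map and $g$ the inclusion, except that the inclusion $Y\hookrightarrow X$ is not a continuous map admitting a continuous retraction in general—which is exactly why (i) needs the separate argument via the \emph{restriction} functor $r_X^Y$ rather than a pushforward, the point being that $r_X^Y$ is not of the form $h_*$ for a continuous map $h$.

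The main obstacle I expect is the bookkeeping around preservation of the bootstrap class: one must argue carefully that a triangulated, coproduct-preserving functor which sends a \emph{generating} set of a localising subcategory into another localising subcategory in fact sends the whole subcategory there. This is the standard fact that the preimage of a localising subcategory under such a functor is localising, so it is not deep, but it should be stated cleanly. A secondary subtlety is making sure the isomorphism $\FK(X)(A')\cong\FK(X)(B')$ is genuinely an isomorphism of $\Nattrafo(X)$-modules and not merely a levelwise isomorphism of graded abelian groups; this is handled by the fact that $f_*$ (resp.\ $g_*$) on module categories is an honest additive functor and the natural isomorphism of diagram~\eqref{eq:functoriality} is an isomorphism of functors into $\CMod{\Nattrafo(X)}$, so it transports module isomorphisms to module isomorphisms.
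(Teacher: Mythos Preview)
Your proposal is correct and follows essentially the same approach as the paper's proof: both parts transport the witnessing pair along $i_Y^X$ (resp.\ $g_*$), use the functoriality diagram~\eqref{eq:functoriality} for filtrated $\K$-theory, the identity $r_X^Y\circ i_Y^X=\ID$ (resp.\ $f_*\circ g_*=\ID$) to rule out a $\KK$-equivalence, and the fact that these functors send generators $i_y\C$ to generators $i_{g(y)}\C$ to preserve the bootstrap class. Your additional remarks on localising subcategories and the module-level nature of the isomorphism are correct elaborations of points the paper leaves implicit.
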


\begin{proof}
By assumption there are $A,B\inOb \Bootstrap(Y)$ such that $A\ncong B$ in $\KKcat(Y)$ and $\FK(Y)(A)\cong\FK(Y)(B)$. As already noted above, we have $r_X^Y\circ i_Y^X =\ID_Y$ (see also~\cite{MR2545613}*{Lemma~2.20(c)}); therefore $i_Y^X(A) \ncong i_Y^X(B)$ in $\KKcat(X)$. Recall that $i_Y^X$ is just $\iota_{\ast}$ for the embedding $\iota\colon Y \hookrightarrow X$. Hence $\FK(X)\bigb{i_Y^X(A)} = \iota_{\ast}\bigb{\FK(Y)(A)} \cong \iota_{\ast}\bigb{\FK(Y)(B)} = \FK(X)\bigb{i_Y^X(B)}$. The bootstrap $\Bootstrap(Y)$ is generated by $i_y\C, y \in Y$, and $i_Y^X \circ i_y\C =i_y \C $; therefore $ i_Y^X \Bootstrap(Y) \subseteq \Bootstrap(X)$. This shows the first statement.

To prove the second statement let $A, B \inOb \Bootstrap(Y)$ such that $A \ncong B$ in $\KKcat(Y)$ and $\FK(Y)(A) \cong \FK(Y)(B)$. Since $f_{\ast} \circ g_{\ast} =\ID_{\KKcat(Y)}$ we have that $g_{\ast} (A) \ncong g_{\ast} (B)$. $g_{\ast}  \circ \FK(Y) =\FK(X) \circ g_{\ast} $ implies $\FK(X)\bigb{g_{\ast} (A)} \cong \FK(X)\bigb{g_{\ast} (B)}$. Since $g_{\ast} i_y \C = i_{g(y)} \C$, we have $g_{\ast} \Bootstrap(Y) \subseteq \Bootstrap(X)$.
\end{proof}

\section{Positive results}
	\label{sec:positive results}
Let~$W$ be a finite $T_0$-space of type~(A).
In this section we show that $UCT(W)$ holds, that is,
we prove that the filtrated $\K$-module $\FK(A)$ has a projective resolution of length 1 in $\CMod{\Nattrafo(W)}$ for every $A\inOb\Bootstrap(W)$. This proof was given in~\cite{Rasmus}; it relies on methods developed in~\cite{meyernestCalgtopspacfiltrKtheory}: our argument is based on a comparison of the category $\NTC(W)$ and the category $\NTC(O_n)$ associated to the totally ordered space $O_n$ of the same cardinality as~$W$ (see Theorem~\ref{thm:ungraded_iso}).

\begin{definition}
	\label{def:freemodule}
For $Y\in\LC(W)^*$ we define the \emph{free $\NTC(W)$-module on $Y$} by
\[
P_Y(Z)\perdef\NT_*(Y,Z)\qquad\textup{for all $Z\in\LC(W)^*$.}
\]
An $\NTC(W)$-module is called \emph{free} if it is isomorphic to a direct
sum of degree-shifted free modules $P_Y[j]$, $j\in\Z/2$.
\end{definition}

\begin{definition}
	\label{def:exactmodule}
An $\NT(W)$-module $M$ is called \emph{exact} if the $\Z/2$-graded chain complexes
\[
\cdots\to M(U)\xrightarrow{i_U^Y} M(Y)\xrightarrow{r_Y^{Y\setminus U}}
M(Y\setminus U)\xrightarrow{\delta_{Y\setminus U}^U} M(U)[1]\to\cdots
\]
are exact for all $U,Y\in\LC(W)$ with~$U$ open in~$Y$.

An $\NTC(W)$-module $M$ is called \emph{exact} if the corresponding
$\NT(W)$-module $\Upsilon^{-1}(M)$ is exact (see Remark~\ref{rem:connected}).
\end{definition}

\begin{definition}
Let $\NTnil\subset\NTC$ be the ideal generated by all
natural transformations between different objects.

Let $\NTss\subset\NTC(W)$ be the subgroup spanned by all identity
transformations $\id_Y^Y$, that is,
$\NTss\perdef\bigoplus_{Y\in\LC(W)^*}\Z\cdot\id_Y^Y$.
This is, in fact, a se\-mi-simple subring of $\NTC$.
\end{definition}


\begin{definition}
	\label{def:semisimplepart}
Let $M$ be an $\NTC$-module. We define
\[
\NTnil\cdot M\perdef\{x\cdot m\mid x\in\NTnil,\: m\in M\},\qquad
\Mss\perdef M/\NTnil\cdot M.
\]
An $\NTC$-module is called \emph{entry-free} if $M(Y)$ is a free Abelian group
for all $Y\in\LC(W)^*$.
\end{definition}

\begin{lemma}
	\label{lem:projmodules}
Let $M$ be an $\NTC(W)$-module. The following assertions are equivalent:
\begin{enumerate}[label=\textup{(\arabic*)}]
\item
$M$ is a free $\NTC(W)$-module.
\item
$M$ is a projective $\NTC(W)$-module.
\item
$\Mss(Y)$ is a free Abelian group for all $Y\in\LC(W)^*$ and
$$
\Tor_1^{\NTC(W)}(\NTss,M)=0.
$$
\item
$M$ is entry-free and exact.
\end{enumerate}
\end{lemma}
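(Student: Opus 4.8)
The plan is to prove the four equivalences in a cycle, using the structure of the category $\NTC(W)$ together with the general homological algebra available for a small $\Z/2$-graded category over the ring $\NTss$. The key structural facts to exploit are: $\NTss$ is a semisimple subring of $\NTC(W)$ (it splits off the ``nil'' part $\NTnil$), so every module over $\NTss$ is projective; the free modules $P_Y$ represent evaluation at~$Y$ via the Yoneda lemma, so $\Hom_{\NTC(W)}(P_Y,M)\cong M(Y)$; and, for a space~$W$ of type~(A), the canonical relations of Definition~\ref{def:canonical relations} generate all relations, which will pin down $P_Y{}_\sss$ and the exact sequences governing free modules.

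First I would establish $(1)\Rightarrow(2)$: free modules are projective because $\Hom_{\NTC(W)}(P_Y,\blank)$ is evaluation at~$Y$, an exact functor on $\CMod{\NTC(W)}$, and projectivity passes to direct sums and degree shifts. For $(2)\Rightarrow(3)$: if $M$ is projective it is a direct summand of a free module $\bigoplus_k P_{Y_k}[j_k]$; applying the right-exact functor $\blank_\sss = \NTss\otimes_{\NTC(W)}\blank$ shows $M_\sss$ is a direct summand of $\bigoplus_k (P_{Y_k})_\sss[j_k]$, and one computes $(P_Y)_\sss(Z)=\NT_*(Y,Z)/(\NTnil\cdot\NT_*(Y,\blank))(Z)$, which by the Yoneda description and the generation-by-canonical-transformations input is the free Abelian group $\Z\cdot\id_Y$ concentrated at $Z=Y$ — hence $M_\sss(Y)$ is free Abelian for all~$Y$. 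The vanishing of $\Tor_1^{\NTC(W)}(\NTss,M)$ is immediate from projectivity of~$M$. For $(3)\Rightarrow(4)$: entry-freeness will follow because the filtration of $M$ by powers of $\NTnil$ has subquotients that are, up to degree shifts, induced from the $\NTss$-modules $M_\sss$, which are entry-free by hypothesis, so $M(Y)$ is an extension of free Abelian groups, hence free. Exactness of~$M$ I would derive from the vanishing of $\Tor_1^{\NTC(W)}(\NTss,M)$ by identifying the relevant $\Tor$ groups with the homology of the six-term complexes of Definition~\ref{def:exactmodule} — this uses a projective (free) resolution of $\NTss$ as a right $\NTC(W)$-module built from the exact triangles~\eqref{eq:exacttrianglerepobjects} among the $\Rep$-objects, whose existence for type~(A) spaces relies again on the identification $\NTC(W)\cong\NTC(O_n)$.

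Finally, $(4)\Rightarrow(1)$ is the substantial step: given an entry-free exact module $M$, I would construct a free module $P$ and a homomorphism $P\to M$ inducing an isomorphism $P_\sss\to M_\sss$, by choosing, for each $Y$, a $\Z$-basis of a complement of $\NTnil\cdot M$ in $M(Y)$ and lifting it to a map $\bigoplus P_Y[j]\to M$. One then shows this map is an isomorphism: surjectivity follows from Nakayama-type reasoning with the nilpotent ideal $\NTnil$ (it is nilpotent because $W$ is finite and the poset is bounded, so long composites of transformations between distinct objects vanish), and injectivity follows by comparing the associated graded pieces with respect to the $\NTnil$-adic filtration, using exactness of both $P$ and~$M$ to match them degree by degree. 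The main obstacle I anticipate is precisely this last comparison of associated graded modules — it is where the type~(A) hypothesis enters essentially (through Theorem~\ref{thm:ungraded_iso}, i.e. the comparison of $\NTC(W)$ with $\NTC(O_n)$), and where one must verify that the six-term exact sequences defining exactness, together with the known structure of $\NT_*(Y,Z)$, force the comparison map to be bijective on every entry. The other steps are formal homological algebra over a small category with a semisimple ``diagonal'' subring; this structural step is the one that genuinely uses the geometry of accordion spaces.
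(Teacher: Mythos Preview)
Your implications $(1)\Rightarrow(2)\Rightarrow(3)$ are fine and match the paper. The trouble lies in the route $(3)\Rightarrow(4)\Rightarrow(1)$, which the paper does \emph{not} take: it instead closes the first cycle via $(3)\Rightarrow(1)$ directly (a Nakayama argument together with the long exact $\Tor$-sequence associated to $\ker f\rightarrowtail P\twoheadrightarrow M$), and handles $(4)$ by proving $(1)\Rightarrow(4)$ and $(4)\Rightarrow(3)$ separately.

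Your $(3)\Rightarrow(4)$ has two gaps. First, the entry-freeness argument is incorrect: the subquotients $\NTnil^j M/\NTnil^{j+1}M$ of the $\NTnil$-adic filtration are quotients of $(\NTnil^j/\NTnil^{j+1})\otimes_{\NTss}M_\sss$, not isomorphic to shifts of $M_\sss$, and a quotient of a free Abelian group can acquire torsion; so freeness of $M_\sss(Y)$ alone does not force $M(Y)$ to be free. Second, identifying $\Tor_1^{\NTC}(\NTss,M)$ with the homology of the six-term complexes requires an explicit free resolution of $\NTss$ whose differentials are precisely those complexes; you assert such a resolution exists but do not build it, and this is not a formality.

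Your $(4)\Rightarrow(1)$ also has a gap that hides the real content of the lemma. To construct the comparison map $P\to M$ with $P$ free and $P_\sss\cong M_\sss$, you need $M_\sss(Y)$ to be a free Abelian group; but condition~$(4)$ only gives that $M(Y)$ is free, and $M_\sss(Y)=M(Y)/(\NTnil\cdot M)(Y)$ could a priori have torsion. The paper's Property~\ref{ass:kernel} is exactly what bridges this: it says that for exact $M$ one has $(\NTnil\cdot M)(Y)=\ker\bigl(\nu\colon M(Y)\to M(Z)\bigr)$ for some natural transformation $\nu$, so $M_\sss(Y)$ embeds in $M(Z)$ and is therefore free. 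This property is where the type-(A) hypothesis genuinely enters; without invoking it (or an equivalent), neither your freeness of $M_\sss$ nor your injectivity argument (which is in any case too vague as stated) goes through.
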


\begin{lemma}
	\label{lem:projresolutions}
Let $M$ be a countable $\NTC(W)$-module.
The following assertions are equivalent:
\begin{enumerate}[label=\textup{(\arabic*)}]
\item
$M=\FK^\con(A)$ for some $A\inin\kk(W)$.
\item
$M$ is exact.
\item
$\Tor_2^{\NTC(W)}(\NTss,M)=0$ and $\Tor_1^{\NTC(W)}(\NTss,M)=0$.
\item
$\Tor_2^{\NTC(W)}(\NTss,M)=0$ and $\Tor_1^{\NTC(W)}(\NTss,M)$ is a free Abelian group. 
\item
$M$ has a free resolution of length \textup{1} in $\Modc{\NTC(W)}$.
\item
$M$ has a projective resolution of length \textup{1} in $\Modc{\NTC(W)}$.
\item
$M$ has a projective resolution of finite length in $\Modc{\NTC(W)}$.
\end{enumerate}
\end{lemma}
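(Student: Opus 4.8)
The plan is to prove the chain of equivalences in Lemma~\ref{lem:projresolutions} by combining the representability-theoretic machinery recalled in \S\ref{sec:representability} with Lemma~\ref{lem:projmodules}, and by exploiting the special structure of type~(A) spaces via the comparison with $O_n$ announced in Theorem~\ref{thm:ungraded_iso}. The core of the argument is the equivalence (1)$\Leftrightarrow$(2): that a countable $\NTC(W)$-module arises as $\FK^\con(A)$ for some $A\inin\kk(W)$ precisely when it is exact. The implication (1)$\Rightarrow$(2) is the easy direction: for $A\inin\kk(W)$, the complexes in Definition~\ref{def:exactmodule} are, entry by entry, the six-term exact sequences in $\K$-theory associated to the ideals $A(U)\triangleleft A(Y)$, so $\FK^\con(A)$ is exact by construction. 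For the converse I would build a ``phantom-free'' object realising a given exact module: the objects $\Rep_Y$ of Definition~\ref{def:repobjects} and the exact triangles~\eqref{eq:exacttrianglerepobjects} give a supply of free modules $P_Y$ (by the Yoneda identification $\NT_*(Y,Z)\cong\KK_*(X;\Rep_Z,\Rep_Y)$ and Lemma~\ref{lem:projmodules}), and one uses a projective (equivalently free) resolution to present the module, then realises the presentation by a mapping cone in $\kk(W)$. The key input making the resolution have length one is precisely that $W$ is of type~(A): via Theorem~\ref{thm:ungraded_iso} the homological algebra of $\NTC(W)$ is governed by that of $\NTC(O_n)$, for which the relevant Tor-vanishing is known from~\cite{meyernestCalgtopspacfiltrKtheory}.

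Next I would nail down the homological-algebra equivalences (3)$\Leftrightarrow$(4)$\Leftrightarrow$(5)$\Leftrightarrow$(6)$\Leftrightarrow$(7) essentially formally, once (2)$\Leftrightarrow$(3) is in place. For (2)$\Leftrightarrow$(3): exactness of $M$ should be reinterpreted as the vanishing of the ``derived exactness obstructions'', which are computed by $\Tor_2^{\NTC(W)}(\NTss,M)$ and $\Tor_1^{\NTC(W)}(\NTss,M)$. Concretely, choose a free resolution $P_\bullet\to M$; applying $\NTss\otimes_{\NTC(W)}-$ and using Lemma~\ref{lem:projmodules}(3) (the criterion for a module to be free in terms of $\Tor_1(\NTss,-)$ and freeness of the semisimple part) lets one identify the failure of $M$ to have a length-one free resolution with these two Tor-groups. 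The implication (7)$\Rightarrow$(6)$\Rightarrow$(5) is trivial or follows from Lemma~\ref{lem:projmodules}(1)$\Leftrightarrow$(2); (5)$\Rightarrow$(3) is a direct Tor-computation from a length-one free resolution; (3)$\Rightarrow$(4) is immediate since $0$ is a free Abelian group; and (4)$\Rightarrow$(3) uses that a free Abelian group is projective, so the resolution can be truncated, forcing $\Tor_1$ to vanish as well. The passage (3)$\Rightarrow$(5) is where one actually constructs the resolution: take $P_0\epi M$ free with $P_0$ surjecting onto $M$, let $K$ be the kernel; then $\Tor_2(\NTss,M)=0$ forces $\Tor_1(\NTss,K)=0$, and $\Tor_1(\NTss,M)=0$ together with a dimension-shift argument gives that $K_{\mathrm{ss}}$ is free Abelian, so $K$ is free by Lemma~\ref{lem:projmodules}(3), yielding the length-one resolution $K\mono P_0\epi M$.

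For (2)$\Rightarrow$(1), the realisation step, I would proceed as follows. Given an exact countable module $M$, pick a free resolution of length one $0\to P_1\xrightarrow{\varphi} P_0\to M\to 0$ in $\Modc{\NTC(W)}$ (which exists by the equivalences just established, reading (2) through (3) into (5)). Each $P_j$ is a countable direct sum of degree-shifted $P_Y$'s; by the representability isomorphism $P_Y\cong\FK^\con(\Rep_Y)$, lift $P_j$ to an object $\hat P_j\inin\kk(W)$ with $\FK^\con(\hat P_j)\cong P_j$, and lift $\varphi$ to a $\KK(W)$-morphism $\hat\varphi\colon\hat P_1\to\hat P_0$ (possible because $\FK^\con$ is full on these free objects, again by Yoneda). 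Set $A$ to be the mapping cone of $\hat\varphi$; the long exact $\FK^\con$-sequence of the cone triangle, together with the fact that $\FK^\con(\hat\varphi)=\varphi$ is injective, collapses to give $\FK^\con(A)\cong\coker\varphi=M$, and $A\inin\Bootstrap(W)$ since it is built from the generators $i_x\C$.

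\textbf{The main obstacle} I expect is controlling the homological dimension, i.e.\ ensuring that the free resolution really has length one rather than merely finite length. This is exactly the point where the hypothesis that $W$ is of type~(A) is indispensable: for a general finite $T_0$-space $\NTC(X)$ can have higher global dimension on exact modules, and the four-point counterexample space $S$ shows length-one resolutions can fail. The plan is to reduce this to $O_n$: Theorem~\ref{thm:ungraded_iso} provides an equivalence (after forgetting the grading, or on the nose in a suitable sense) between $\NTC(W)$-modules and $\NTC(O_n)$-modules compatible with exactness and with $\Tor$ against the semisimple subring, and for $O_n$ the vanishing $\Tor_{\ge 2}^{\NTC(O_n)}(\NTss,M)=0$ for exact $M$ is established in~\cite{meyernestCalgtopspacfiltrKtheory}. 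Transporting this back along the comparison equivalence gives condition~(3) for every exact $\NTC(W)$-module, and the rest of the lemma follows. A secondary technical point to be careful about is the bookkeeping with the $\Z/2$-grading (degree shifts $[j]$ in the free modules and in the Tor-groups) and the countability constraints, but these are routine given the corresponding care taken in~\cite{meyernestCalgtopspacfiltrKtheory}.
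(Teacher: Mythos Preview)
Your overall architecture is close to the paper's, and your treatment of (1)$\Rightarrow$(2), (2)$\Rightarrow$(1) via mapping cones (which is essentially the citation to \cite{meyernestCalgtopspacfiltrKtheory}*{Theorem~4.11}), and (3)$\Rightarrow$(5) is fine. But two of your claimed implications are wrong as stated.

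First, you write that ``(7)$\Rightarrow$(6)$\Rightarrow$(5) is trivial or follows from Lemma~\ref{lem:projmodules}(1)$\Leftrightarrow$(2)''. The step (6)$\Rightarrow$(5) is indeed just the fact that projective $=$ free here, but (7)$\Rightarrow$(6) is \emph{not} trivial: a projective resolution of finite length has no reason a priori to have length~$1$. The paper handles this by proving (7)$\Rightarrow$(2) instead: given a finite projective resolution $0\to P_m\to\cdots\to P_0\to M$, the syzygies $Z_j=\ker(P_j\to P_{j-1})$ fit into short exact sequences $Z_j\rightarrowtail P_j\twoheadrightarrow Z_{j-1}$, and since each $P_j$ is exact and $Z_m=0$, the two-out-of-three property gives exactness of each $Z_j$ and hence of $M$. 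One then closes the loop via (2)$\Rightarrow$(5)$\Rightarrow$(6).

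Second, your argument for (4)$\Rightarrow$(3) (``a free Abelian group is projective, so the resolution can be truncated, forcing $\Tor_1$ to vanish'') does not work: $\Tor_1^{\NTC}(\NTss,M)$ being a free Abelian group says nothing about its vanishing. The paper does not attempt (4)$\Rightarrow$(3) directly; it proves (4)$\Rightarrow$(5). Choosing a free cover $P\twoheadrightarrow M$ with kernel $K$, the assumption $\Tor_2^{\NTC}(\NTss,M)=0$ gives $\Tor_1^{\NTC}(\NTss,K)=0$, and the long exact sequence~\eqref{eq:torsequence} exhibits $K_\sss$ as an extension of the free Abelian group $\Tor_1^{\NTC}(\NTss,M)$ by a subgroup of $P_\sss$, so $K_\sss$ has free entries. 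Then Lemma~\ref{lem:projmodules}(3) forces $K$ to be free, giving the length-one resolution. One recovers (3) afterwards from (5) via Proposition~\ref{pro:vanishingtor}.

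Relatedly, your route (2)$\Rightarrow$(3)$\Rightarrow$(5) leaves the step (2)$\Rightarrow$(3) vague (``should be reinterpreted as the vanishing of derived exactness obstructions''). The paper's direct argument for (2)$\Rightarrow$(5) is cleaner and avoids this: choose a free cover $P\twoheadrightarrow M$, note that the kernel $K$ is exact by two-out-of-three and entry-free as a submodule of the entry-free module $P$, and then invoke the characterisation (4) in Lemma~\ref{lem:projmodules} (entry-free and exact $\Leftrightarrow$ free) to conclude that $K$ is free. You never mention this characterisation, and it is exactly what makes the argument go through without first establishing the $\Tor$-vanishing.
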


\begin{remark}
In~\cite{meyernestCalgtopspacfiltrKtheory}, Meyer and Nest prove these lemmas for the special case of the totally ordered space $W=O_n$.
In Lemma~\ref{lem:projresolutions}, we have replaced condition (3) from \cite{meyernestCalgtopspacfiltrKtheory}*{Theorem~4.14} by two conditions which we are able to prove equivalent to the rest. We remark that (4) and (5) in Lemma~\ref{lem:projresolutions} are equivalent even for underlying spaces that only have Property~\ref{ass:semi} below.
\end{remark}

An investigation of the proofs in~\cite{meyernestCalgtopspacfiltrKtheory} shows that the only properties of the category $\NTC(O_n)$ Meyer and Nest actually use are the following (we formulate these properties for our general type~(A) space~$W$ as underlying space, because we will show in Theorem~\ref{thm:ungraded_iso} that they are indeed present in this generality):

\begin{property}
	\label{ass:semi}
The ideal $\NTnil$ is nilpotent and the ring $\NTC(W)$ decomposes as the
se\-mi-di\-rect product
\[
\NTC(W)=\NTnil\rtimes\NTss.
\]
\end{property}
This se\-mi-di\-rect product decomposition just means that $\NTnil$ is an ideal, $\NTss$ is a subring, and $\NTC(W)=\NTnil\oplus\NTss$ as Abelian groups. Notice that in this case we have $\Mss=\NTss\otimes_{\NTC(W)} M$.

\begin{property}
	\label{ass:free}
The Abelian group $\NT_*(W)(Y,Z)$ is free for all $Y,Z\in\LC(W)^*$.
\end{property}

\begin{property}
	\label{ass:kernel}
For every $Y\in\LC(W)^*$ there is $Z\in\LC(W)$ and a natural
transformation $\nu\in\NT_*(W)(Y,Z)$ such that
\[
(\NTnil\cdot M)(Y)=\ker\bigb{\nu\colon M(Y)\to M(Z)}
\]
holds for every exact $\NTC(W)$-module $M$.
\end{property}
Here, $M$ is regarded as an $\NT(W)$-module in the canonical way
described in Remark~\ref{rem:connected}, so that
the action of $\nu\in\NT_*(W)(Y,Z)$ is well-defined also if $Z$ is not connected.

In the following, we prove Lemma~\ref{lem:projmodules} and Lemma~\ref{lem:projresolutions} using only the properties listed above. Afterwards we will see that the category $\NTC(W)$ has these properties if $W$ is of type (A). We often abbreviate $\NTC(W)$ by $\NTC$ in the following.

\begin{proof}[Proof of Lemma~\textup{\ref{lem:projmodules}} using Properties~\textup{\ref{ass:semi}}, \textup{\ref{ass:free}}, \textup{\ref{ass:kernel}}]
Let $Y\in\LC(W)^*$. Yoneda's Lemma implies $\Hom_\NTC(P_Y,M)\cong M(Y)$
for all $\NTC$-modules $M$. This shows that the functor
$\Hom_\NTC(P_Y,\blank)$ is exact, which means that $P_Y$ is projective.
Since projectivity is preserved by direct sums, every free
$\NTC$-module is projective, that is, (1)$\Longrightarrow$(2).

If $M$ is a projective $\NTC$-module, then
$\Mss=\NTss\otimes_\NTC M$ is a projective $\NTss$-module. Since
$\NTss\cong\Z^{\LC(W)^*}$, this shows that $M_\sss(Y)$ is a projective
and thus free Abelian group for every $Y\in\LC(W)^*$. We have
$\Tor_1^\NTC(\NTss,M)=0$ because $M$ is projective. Altogether, we get
(2)$\Longrightarrow$(3).

Now we prove (3)$\Longrightarrow$(1). For this we need the following
proposition.
\begin{proposition}
	\label{pro:Nakayama}
In the presense of Property~\textup{\ref{ass:semi},}
let $M$ be an $\NTC$-module with $\Mss=0$. Then $M=0$.
\end{proposition}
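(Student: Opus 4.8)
This is a Nakayama-type statement, and the plan is to reduce it to the nilpotence half of Property~\ref{ass:semi} by a straightforward iteration. First I would unwind the definitions. By Property~\ref{ass:semi} the subgroup $\NTnil\subset\NTC(W)$ is a two-sided ideal, so for any $\NTC$-module $M$ one gets a submodule $\NTnil\cdot M$, where $(\NTnil\cdot M)(Y)$ is the subgroup of $M(Y)$ generated by all elements $M(\nu)(m)$ with $\nu\in\NTnil_*(Z,Y)$ and $m\in M(Z)$; accordingly $\Mss=M/\NTnil\cdot M$ is again an $\NTC$-module (in fact an $\NTss$-module). The hypothesis $\Mss=0$ thus says precisely that $M(Y)=(\NTnil\cdot M)(Y)$ for every $Y\in\LC(W)^*$, i.e.\ $M=\NTnil\cdot M$.

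Next I would iterate this identity. For $n\ge 1$ define the submodule $\NTnil^n\cdot M\subseteq M$ by letting $(\NTnil^n\cdot M)(Y)$ be generated by the elements $M(\nu_1\circ\cdots\circ\nu_n)(m)$, ranging over composable chains $\nu_n,\ldots,\nu_1$ of morphisms in $\NTnil$ with target $Y$ and over $m$ in the source of $\nu_n$. Associativity of composition in $\NTC(W)$ yields $\NTnil\cdot(\NTnil^n\cdot M)=\NTnil^{n+1}\cdot M$, so substituting $M=\NTnil\cdot M$ repeatedly gives, by induction on $n$, that $M=\NTnil^n\cdot M$ for every $n\ge 1$.

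Finally I would invoke nilpotence: by Property~\ref{ass:semi} there is an $N$ with $\NTnil^N=0$, meaning every $N$-fold composite of morphisms of $\NTnil$ vanishes; hence $M(\nu_1\circ\cdots\circ\nu_N)=M(0)=0$ for all such chains, so $(\NTnil^N\cdot M)(Y)=0$ for each $Y$, and therefore $M=\NTnil^N\cdot M=0$.

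I do not expect a genuine obstacle here. The only point requiring a little care is the bookkeeping with $\NTnil$ as a ``ring with several objects''---in particular verifying that $\Mss=0$ is equivalent to $M=\NTnil\cdot M$ and that $\NTnil\cdot(\NTnil^n\cdot M)=\NTnil^{n+1}\cdot M$---after which the argument is the classical one-line proof that a module killed by passing to the quotient by a nilpotent ideal must already vanish.
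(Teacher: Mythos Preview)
Your proposal is correct and follows exactly the same approach as the paper: from $\Mss=0$ deduce $M=\NTnil\cdot M$, iterate to $M=\NTnil^j\cdot M$ for all $j$, and conclude $M=0$ by nilpotence of $\NTnil$. The paper's proof is just the one-line version of what you wrote.
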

\begin{proof}
If $\Mss=0$ then $M=\NTnil\cdot M$ and hence $M=\NTnil^j\cdot M$ for
all $j\in\N$. This implies $M=0$ since $\NTnil$ is nilpotent.
\end{proof}
The module $\Mss$ is free over $\NTss\cong\Z^{\LC(W)^*}$ because $\Mss(Y)$
is free for all $Y\in\LC(W)^*$. Hence $P\perdef\NT\otimes_{\NTss}\Mss$
is a free $\NTC$-module. Since $\Mss$ is free over $\NTss$, the
projection $M\twoheadrightarrow\Mss=M/\NTnil\cdot M$ splits by an
$\NTss$-module homomorphism. This induces an $\NTC$-module homomorphism
$f\colon P\to M$ (by tensoring over $\NTss$ with the identity on
$\NTC$ and composing with the multiplication map from
$\NTC\otimes_{\NTss} M$ to $M$). We will show that $f$ is invertible,
which implies that $M\cong P$ is free over $\NTC$.

We have an isomorphism $P_\sss\cong\NTss\otimes_\NTC P\cong\Mss$, which is
induced by $f\colon P\to M$. Using the right-exactness
of the functor $M\mapsto\Mss=\NTss\otimes_{\NTC} M$, we find
$\coker(f)_\sss=\coker(f_\sss)=0$ and hence $\coker(f)=0$ by
Proposition~\ref{pro:Nakayama}. Therefore, $f$ is surjective.
Since $P$ is projective
the extension $\ker(f)\rightarrowtail P\twoheadrightarrow M$ induces
the following long exact $\Tor$-sequence:
\begin{equation}
	\label{eq:torsequence}
0\to\Tor_1^\NTC(\NTss,M)\to\ker(f)_\sss\to P_\sss
\xrightarrow{\cong}\Mss\to 0.
\end{equation}
Notice that $\Tor_1^\NTC(\NTss,P)=0$ since~$P$ is projective.
Therefore, the assumption $\Tor_1^\NTC(\NTss,M)=0$ implies $\ker(f)_\sss=0$,
and hence $\ker(f)=0$ by Proposition~\ref{pro:Nakayama}. Therefore, $f$ is
invertible.

Up to now we have shown the equivalence of the first three conditions
using only Property~\ref{ass:semi}. The implication
(1)$\Longrightarrow$(4) follows from Property~\ref{ass:free} and from
the fact that free modules are exact. This can be seen as follows:
let $U$ be an open subset of a locally closed subset $Y$ of $W$.
We have the exact triangle~\ref{eq:exacttrianglerepobjects}
\[
\xymatrix{
\Sigma\Rep_U\ar[r] & \Rep_{Y\setminus U}\ar[r] & \Rep_Y\ar[r] &\Rep_U,
}
\]
which induces the long exact sequence
\begin{multline*}
\cdots\to\KK_*(W;\Rep_U,A)\to\KK_*(W;\Rep_Y,A)\\
\to\KK_*(W;\Rep_{Y\setminus U},A)
\to\KK_{*+1}(W;\Rep_U,A)\to\cdots
\end{multline*}
for all $A\in\kk(W)$.
In particular, when $A=\Rep_V$ for some $V\in\LC(W)^*$, by the
Representability Theorem~\ref{thm:representability} and Yoneda's
Lemma this sequence translates to the sequence
\[
\cdots\to\NT_*(V,U)\to\NT_*(V,Y)\to\NT_*(V,{Y\setminus U})
\to\NT_{*+1}(V,U)\to\cdots,
\]
proving the desired exactness. Notice that exactness is preserved
by direct sums and degree-shifting, so that indeed every free
$\NTC$-module is exact.

We complete the proof by showing (4)$\Longrightarrow$(3). By
Property~\ref{ass:kernel}, $\Mss(Y)$ is isomorphic to a subgroup of
$M(Z)$ for some $Z\in\LC(W)$ and hence a free Abelian group by
assumption because $M(Z)=\bigoplus_{C\in\pi_0(Z)} M(C)$.
The assertion now follows from Proposition~\ref{pro:vanishingtor}.
\end{proof}

\begin{proposition}
	\label{pro:vanishingtor}
Let $M$ be an exact $\NTC$-module. If Properties~\textup{\ref{ass:semi}}
and~\textup{\ref{ass:kernel}} are fulfilled, then $\Tor_1^\NTC(\NTss,M)=0$.
\end{proposition}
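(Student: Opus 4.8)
The plan is to compute $\Tor_1^{\NTC}(\NTss,M)$ from a short free presentation of~$\NTss$ and then to use exactness of~$M$ to recognise the relevant $1$\nb-cycles as boundaries. First, by Property~\ref{ass:semi} the decomposition $\NTC=\NTnil\rtimes\NTss$ identifies~$\NTss$ with the quotient ring $\NTC/\NTnil$, giving a short exact sequence of right $\NTC$\nb-modules $0\to\NTnil\to\NTC\to\NTss\to 0$. Since $\NTC$ is free as a right module over itself, $\Tor_1^{\NTC}(\NTC,M)=0$, so the long exact $\Tor$\nb-sequence collapses to a natural isomorphism $\Tor_1^{\NTC}(\NTss,M)\cong\ker\bigb{\NTnil\otimes_{\NTC}M\xrightarrow{\mu}M}$, where~$\mu$ is induced by $\NTnil\embed\NTC$ and has image the submodule $\NTnil\cdot M$. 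The whole statement thus reduces to injectivity of~$\mu$ for exact~$M$.

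Next I would make $\NTnil\otimes_{\NTC}M$ computable. Using Property~\ref{ass:semi} again, $\NTnil$ is an $\NTss$\nb-sub-bimodule of~$\NTC$ with $\NTC=\NTss\oplus\NTnil$ as $\NTss$\nb-bimodules, and by Property~\ref{ass:free} together with $\NTss\cong\Z^{\LC(W)^*}$ the ideal~$\NTnil$ is free as a right $\NTss$\nb-module (on the canonical basis, sorted by source object). Hence the normalised bar resolution of $\NTss$ relative to the subring $\NTss\subset\NTC$ has the free right $\NTC$\nb-modules $\NTnil^{\otimes_{\NTss}n}\otimes_{\NTss}\NTC$ as terms, and tensoring it with~$M$ over~$\NTC$ turns $\otimes_{\NTC}$ into $\otimes_{\NTss}$ and presents $\Tor_*^{\NTC}(\NTss,M)$ as the homology of
\[
\cdots\to\NTnil\otimes_{\NTss}\NTnil\otimes_{\NTss}M\xrightarrow{\partial_2}\NTnil\otimes_{\NTss}M\xrightarrow{\partial_1}M,
\]
with $\partial_1(x\otimes m)=x\cdot m$ and $\partial_2(x\otimes y\otimes m)=xy\otimes m-x\otimes(y\cdot m)$. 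The differential $\partial_1$ is then assembled from the actions of the canonical (and any further) transformations on~$M$, and its image at an object~$Z$ is exactly $(\NTnil\cdot M)(Z)$, in agreement with the first step.

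The remaining step, and the heart of the matter, is to show $\ker\partial_1\subseteq\im\partial_2$ for exact~$M$, i.e.\ $\Tor_1^{\NTC}(\NTss,M)=0$. Here exactness enters through the long exact six-term sequences of Definition~\ref{def:exactmodule}: for each open subset $U\subset Y$ these let one rewrite the kernels of extension, restriction and boundary transformations occurring in $\ker\partial_1$ as images of the neighbouring transformations, which is exactly what is needed to lift a $\partial_1$\nb-cycle into $\NTnil\otimes_{\NTss}\NTnil\otimes_{\NTss}M$. The contributions of those transformations in~$\NTnil$ which happen to be endomorphisms are not visible in any single six-term sequence, and there Property~\ref{ass:kernel} — which pins down $(\NTnil\cdot M)(Y)=\ker\bigb{\nu\colon M(Y)\to M(Z)}$ by a \emph{single} transformation~$\nu$ — does the work. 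Since infinitely many objects and both parities must be handled simultaneously, I expect to organise this as an induction on the nilpotency length of~$\NTnil$ (Property~\ref{ass:semi}), in the Nakayama spirit of Proposition~\ref{pro:Nakayama}: filter by the powers $\NTnil^{j}M$, treat the successive quotients, and assemble.

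The main obstacle is precisely this last step. The abstract differential $\partial_2$ records products $xy$ in~$\NTC$ and therefore depends on the relations of the category, whereas the only structural inputs allowed are Properties~\ref{ass:semi}, \ref{ass:free} and~\ref{ass:kernel}; the delicate point is to push the bookkeeping of $\ker\partial_1\subseteq\im\partial_2$ through — matching $\partial_2$ against the six-term exactness of~$M$ and against the single-kernel description of Property~\ref{ass:kernel}, and dealing with the endomorphism part of $\NTnil$ — using nothing beyond those three properties and no ad hoc knowledge of $\NTC(W)$.
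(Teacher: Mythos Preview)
Your first reduction, $\Tor_1^{\NTC}(\NTss,M)\cong\ker\bigb{\NTnil\otimes_{\NTC}M\to M}$, is correct, but from there you try to compute this kernel via the relative bar complex and you yourself flag the final step as the ``main obstacle'': you do not actually establish $\ker\partial_1\subseteq\im\partial_2$ from Properties~\ref{ass:semi} and~\ref{ass:kernel}, and the bookkeeping you outline (induction on nilpotency depth, matching six-term exactness against~$\partial_2$) is not carried out. As written the proposal is a plan with a genuine gap at the decisive place. Note also that the statement assumes only Properties~\ref{ass:semi} and~\ref{ass:kernel}, whereas you invoke Property~\ref{ass:free} to argue that the bar complex is a projective resolution.

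The paper bypasses the bar complex entirely by resolving~$M$ rather than~$\NTss$. Choose an epimorphism $f\colon P\epi M$ with~$P$ projective; by Property~\ref{ass:semi} projective implies free, and free modules are exact, so $K\defeq\ker f$ is exact by two-out-of-three. The $\Tor$-sequence~\eqref{eq:torsequence} reads $0\to\Tor_1^{\NTC}(\NTss,M)\to K_\sss\to P_\sss$, so everything reduces to injectivity of $K_\sss\to P_\sss$. This is precisely what Property~\ref{ass:kernel} gives once you read it functorially: for an exact module~$N$ and each~$Y$, the quotient $N_\sss(Y)=N(Y)/(\NTnil\cdot N)(Y)$ has $(\NTnil\cdot N)(Y)=\ker\bigb{\nu\colon N(Y)\to N(Z)}$, so $N_\sss(Y)$ is naturally identified with the image $\nu\bigb{N(Y)}\subset N(Z)$. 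Applying this to both~$K$ and~$P$, the map $K_\sss(Y)\to P_\sss(Y)$ becomes the restriction of the injection $K(Z)\hookrightarrow P(Z)$ to subgroups, hence is itself injective. No bar differentials and no induction are needed; the whole force of Property~\ref{ass:kernel} is that it turns $(\blank)_\sss$ into ``image under a fixed~$\nu$'' for exact modules, and images preserve monomorphisms.
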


\begin{proof}
Choose an epimorphism $f\colon P\to M$ with a projective
$\NTC$-module~$P$. We get the long exact
sequence~\eqref{eq:torsequence}. We have seen that any projective
$\NTC$-module is free and thus exact. Hence $P$ is exact. By the
two-out-of-three property, $\ker(f)$ is exact as well. Using
Property~\ref{ass:kernel}, we identify $\ker(f)_\sss(Y)$ and
$P_\sss(Y)$ with subgroups of $\ker(f)(Z)$ and $P(Z)$ for some $Z\in\LC(W)$.
Therefore, the injectivity of the map $\ker(f)(Z)\to P(Z)$ implies
the injectivity of the map $\ker(f)_\sss(Y)\to P_\sss(Y)$. This shows
that $\ker(f)_\sss\to P_\sss$ is a monomorphism and hence that
$\Tor_1^\NTC(\NTss,M)=0$ by~\eqref{eq:torsequence}.
\end{proof}

\begin{proof}[Proof of Lemma~\textup{\ref{lem:projresolutions}} using Lemma~\textup{\ref{lem:projmodules}} and Properties \textup{\ref{ass:semi}}, \textup{\ref{ass:free}}, \textup{\ref{ass:kernel}}]
The exactness of the six-term sequence yields (1)$\Longrightarrow$(2). The implication
(5)$\Longrightarrow$(1) follows from~\cite{meyernestCalgtopspacfiltrKtheory}*{Theorem~4.11},
and the implications (3)$\Longrightarrow$(4) and (5)$\Longrightarrow$(6)$\Longrightarrow$(7) are
trivial. We will complete the proof by showing (7)$\Longrightarrow$(2),
(2)$\Longrightarrow$(5), and (4)$\Longrightarrow$(5)$\Longrightarrow$(3).

For (7)$\Longrightarrow$(2), let $0\to P_m\to\cdots\to P_0\to M$ be a
projective resolution. Define
$Z_j\perdef\ker(P_j\to P_{j-1})=\im(P_{j+1}\to P_j)$.
Then $P_j/Z_j\cong\im(P_j\to P_{j-1})=Z_{j-1}$, yielding the short
exact sequences $Z_j\rightarrowtail P_j\twoheadrightarrow Z_{j-1}$.
Starting with $Z_m=0$, the two-out-of-three property applied to the
extensions $Z_j\rightarrowtail P_j\twoheadrightarrow Z_{j-1}$
inductively implies the exactness of $Z_j$ for $j=m-1, m-2, \ldots, 0$.
Hence $M\cong P_0/Z_0$ is exact as well.

In order to prove (2)$\Longrightarrow$(5), we choose an epimorphism
$P\to M$ with a countable free $\NTC$-module~$P$, and set
$K\perdef\ker(P\to M)$. By the two-out-of-three property, $K$ is exact.
Since~$P$ is a free $\NTC$-module, its entries are free Abelian groups
by Lemma~\ref{lem:projmodules}. This property is inherited by the
submodule~$K$. Hence $K$ is free, again by Lemma~\ref{lem:projmodules},
and $0\to K\to P\twoheadrightarrow M$ is a free resolution of length~1.

Now we show (4)$\Longrightarrow$(5). Choose an epimorphism
$P\to M$ with a countable free $\NTC$-module~$P$, and set
$K\perdef\ker(P\to M)$. Since $K$ is a first syzygy of~$M$, the
assumption $\Tor_2^\NTC(\NTss,M)=0$ implies $\Tor_1^\NTC(\NTss,K)=0$.
The long exact sequence~\eqref{eq:torsequence} shows that
$K_\sss$ is an extension of free Abelian groups and thus
has free entries itself. By Lemma~\ref{lem:projmodules}, the $\NTC$-module~$K$ is free, and
$0\to K\to P\twoheadrightarrow M$ is a free resolution of length~1.

Finally, we prove (5)$\Longrightarrow$(3). We have already established
the implication (5)$\Longrightarrow$(2). Hence~$M$ is exact, and
Proposition~\ref{pro:vanishingtor} shows that $\Tor_1^\NTC(\NTss,M)=0$.
The $\NTC$-module $\Tor_2^\NTC(\NTss,M)$ also vanishes because, by (4),
the flat dimension of~$M$ is at most~1.
\end{proof}

We now introduce ungraded $\NT$-modules and use them to formulate a
central observation.

\begin{definition}
Let $\mathfrak{Mod}^\mathrm{ungr}\bigb{\NTC(W)}_\textup{c}$ denote the category of \emph{ungraded}, countable $\NTC(W)$-modules, that is, of additive functors from $\NTC(W)$ to the category of countable Abelian groups.
As in Remark~\ref{rem:connected}, there is a forgetful functor
\[
\Upsilon\colon\mathfrak{Mod}^\mathrm{ungr}\bigb{\NT(W)}_\textup{c}\to\mathfrak{Mod}^\mathrm{ungr}\bigb{\NTC(W)}_\textup{c}
\]
and a pseudo-inverse $\Upsilon^{-1}$.
An ungraded module $M\inin\mathfrak{Mod}^\mathrm{ungr}\bigb{\NT(W)}_\textup{c}$ is called \emph{exact} if the chain complexes
\[
\cdots\to M(U)\xrightarrow{i_U^Y} M(Y)\xrightarrow{r_Y^{Y\setminus U}}
M(Y\setminus U)\xrightarrow{\delta_{Y\setminus U}^U} M(U)\to\cdots
\]
are exact for all $U,Y\in\LC(W)$ with~$U$ open in~$Y$.

An ungraded module $M\inin\mathfrak{Mod}^\mathrm{ungr}\bigb{\NTC(W)}_\textup{c}$ is called \emph{exact} if $\Upsilon^{-1}(M)$ is an exact $\NT(W)$-module.
\end{definition}

As mentioned above, Meyer and Nest verified Properties~\ref{ass:semi}, \ref{ass:free} and~\ref{ass:kernel} for the special case of the totally ordered space $O_n$. The key observation made in~\cite{Rasmus} allowing to generalise this to a general space of type (A) is the following:
\begin{theorem}
	\label{thm:ungraded_iso}
Let~$W$ be a finite $T_0$-space of type \textup{(A)}. Let~$n$ be
the number of points in~$W$, and let~$O_n$ denote the totally
ordered space with~$n$ points. There is an \textup{(}ungraded\textup{)} isomorphism
$\Phi\colon\NTC(W)\to\NTC(O_n)$, and
\[
\Phi^*\colon\mathfrak{Mod}^\mathrm{ungr}\bigb{\NTC(O_n)}_\textup{c}
\to\mathfrak{Mod}^\mathrm{ungr}\bigb{\NTC(W)}_\textup{c}
\]
restricts to a bijective correspondence between exact ungraded
$\NTC(O_n)$-modules and exact ungraded $\NTC(W)$-modules.
Moreover, the isomorphism~$\Phi$ restricts to isomorphisms
from $\NTss(W)$ onto $\NTss(O_n)$ and from
$\NTnil(W)$ onto $\NTnil(O_n)$.
\end{theorem}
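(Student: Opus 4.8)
The plan is to present both $\NTC(W)$ and $\NTC(O_n)$ explicitly by generators and relations---in each case the indecomposable canonical transformations of \S\ref{sec:relationsNT} modulo the canonical relations of Definition~\ref{def:canonical relations}---and then to match the two presentations by a combinatorial ``straightening'' bijection; for $O_n$ the required facts are those of Meyer and Nest, and the work lies in establishing their analogues for the accordion space~$W$. First I would record the combinatorics of objects. Since $W$ is of type~(A), its Hasse diagram $\Gamma(W)$ is a path as an undirected graph, partitioned into maximal monotone (``up'' and ``down'') runs; a subset of $W$ is connected and locally closed precisely when it is a connected subpath, so $\LC(W)^*$ is canonically identified with the set of subintervals of an $n$-vertex path, and the same for~$O_n$. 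Straightening the accordion---reflecting those portions of the path that lie inside down-runs---produces a bijection $\Phi_0\colon\LC(W)^*\to\LC(O_n)^*$ which by design will interchange extension and restriction transformations, and fix boundary transformations, along the reflected parts.

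The second step, which I expect to be the main obstacle, is to show that every morphism group of $\NTC(W)$ is a free Abelian group spanned by monomials in the canonical transformations of Proposition~\ref{pro:trans}; in particular that $\NTCS=\NTC$ for $W$ and that Property~\ref{ass:free} holds. Via the representability theorem~\ref{thm:representability} and Lemma~\ref{lem:correspondences} this reduces to controlling the topological $\K$-theory of the subsets $S(Y,Z)\subset\Ch(W)$ cut out by the maps $m,M$, for \emph{all} pairs $Y,Z\in\LC(W)^*$: one wants $\K^0\bigb{S(Y,Z)}$ to be free of rank $\#\pi_0\bigb{R(Y,Z)}$ so that Corollaries~\ref{cor:evengen} and~\ref{cor:evengenerators} apply, and, in the boundary cases, the refined vanishing hypotheses of Corollary~\ref{cor:oddgenerators}. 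Warning~\ref{war:notgenerator} shows that the bare isomorphism type of these $\K$-groups does not suffice, so a genuine combinatorial analysis of the order complexes of accordion posets, and of the pieces $S(Y,Z)$ cut out of them, is needed here; this is the hard part of the argument. Granting it, the analogous statements for $O_n$ are those of~\cite{meyernestCalgtopspacfiltrKtheory}.

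With the bases in hand one reads off that every relation among canonical transformations of $\NTC(W)$ is a consequence of the canonical relations, so both $\NTC(W)$ and $\NTC(O_n)$ are presented by their indecomposable canonical transformations modulo the canonical relations. I would then define $\Phi$ by letting $\Phi_0$ act on objects and sending each indecomposable canonical transformation of $\NTC(W)$ to the indecomposable canonical transformation of $\NTC(O_n)$ of the same parity determined by $\Phi_0$---along the reflected parts of the path an extension transformation of $W$ matched with a restriction transformation of $O_n$ and vice versa, and boundary transformations with boundary transformations. The canonical relations of Propositions~\ref{pro:evenrels}, \ref{pro:oddrels}, \ref{pro:oddrel2} and~\ref{pro:vanishingsum} occur in ``dual'' pairs that are exchanged precisely by interchanging the roles of open and closed subsets; hence this matching carries the canonical relations of $\NTC(W)$ bijectively onto those of $\NTC(O_n)$, and $\Phi$ is a well-defined isomorphism of ungraded rings. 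By construction $\Phi$ sends identities to identities and transformations between distinct objects to transformations between distinct objects, so it restricts to the asserted isomorphisms $\NTss(W)\xrightarrow{\sim}\NTss(O_n)$ and $\NTnil(W)\xrightarrow{\sim}\NTnil(O_n)$.

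Finally, for the module statement: an ungraded $\NTC$-module $M$ is exact if and only if, for every relatively open $U\subset Y$, the three-periodic complex on $M(U),M(Y),M(Y\setminus U)$ built from $i_U^Y$, $r_Y^{Y\setminus U}$ and $\delta_{Y\setminus U}^U$---the sequence associated to the exact triangle~\eqref{eq:exacttrianglerepobjects} (compare Definition~\ref{def:exactmodule})---is exact. Under $\Phi$ the three objects $U,Y,Y\setminus U$ go to three objects of $\NTC(O_n)$ and the three transformations to the three canonical transformations of the corresponding relatively open/closed decomposition among them, up to the cyclic order in which they are listed and up to interchanging the ``open'' and ``closed'' halves; neither affects exactness of a periodic complex, and as $(U,Y)$ runs over all admissible pairs for $W$ the resulting data runs bijectively over all admissible pairs for $O_n$. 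Hence $M$ is an exact $\NTC(O_n)$-module if and only if $\Phi^*M$ is an exact $\NTC(W)$-module; since $\Phi^*$ is an equivalence of categories it therefore restricts to the claimed bijective correspondence between exact ungraded modules.
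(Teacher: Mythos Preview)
Your high-level plan—present both categories by indecomposable canonical transformations modulo canonical relations and then match the presentations—is exactly the paper's, and the hard analytic step (computing all $\K^*\bigb{S(Y,Z)}$ for accordion spaces to verify $\NTCS=\NTC$ and Property~\ref{ass:free}) is correctly identified. The gap is in the matching itself.

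The ``straightening'' bijection $\Phi_0$ on objects does not underlie the isomorphism~$\Phi$, and the assertion that indecomposables can be matched ``of the same parity'' via~$\Phi_0$ fails already for $n=4$. Take $W_4$ with order $1\prec 2\prec 3\succ 4$. Under the interval-matching bijection, the indecomposable restriction $r_{34}^4\colon 34\to 4$ in $\NTC(W_4)$ would have to go to an indecomposable $34\to 4$ in $\NTC(O_4)$; but in $O_4$ the set $\{4\}$ is open in $\{3,4\}$, so $R(34,4)=\emptyset$ and there is no nonzero even transformation $34\to 4$ at all. More fundamentally, the actual isomorphism is \emph{not} parity-preserving: comparing Figures~\ref{fig:indecomposables_ordered_4} and~\ref{fig:indecomposables_accordion_4} (Example~\ref{exa:four_point_spaces}), the boundary transformation $\delta_4^3$ in $\NTC(W_4)$ corresponds to the extension $i_4^{34}$ in $\NTC(O_4)$, and the object bijection sends, for instance, $1234\in\LC(O_4)^*$ to $123\in\LC(W_4)^*$ and $3$ to $34$---a genuinely non-local permutation that cannot be read off from reflecting down-runs. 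The paper obtains the correct bijection by a global construction: it defines a ``successor'' operation on indecomposable transformations (Definition~\ref{def:successor}, justified by Lemma~\ref{obs:successor}) and shows that iterating it arranges all $n^2-1$ indecomposables into a single long cyclic chain~\eqref{eq:longchain}; matching these chains for $W$ and $O_n$ positionwise gives~$\Phi$.

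The exactness claim has a related problem. You assert that $\Phi$ carries each six-term triple $(U,Y,Y\setminus U)$ to another such triple up to permutation, but this cannot be checked directly from any explicit description of~$\Phi_0$ on objects (and the paper notes explicitly that $\Phi$ need not send boundary pairs to boundary pairs). The paper instead proves a categorical characterisation: a composable pair $V\xrightarrow{\mu}Y\xrightarrow{\eta}Z$ consists of two successive six-term transformations (up to sign) if and only if $\mu$ is a six-term transformation, $\eta\circ\mu=0$, and $\eta$ is universal among transformations out of~$Y$ annihilating~$\mu$ (Lemma~\ref{lem:universality_of_successor}, Corollary~\ref{cor:universality_of_successor}). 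This property is preserved by any ring isomorphism, and that is what transports exactness.
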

We postpone the proof of Theorem~\ref{thm:ungraded_iso} to \S\ref{sec:Proof_of_ungraded_iso}.
Combining Theorem~\ref{the:UCT_conditional} and Lemma~\ref{lem:projresolutions} we obtain the desired UCT:

\begin{theorem}
	\label{theorem: positive}
Let $W$ be a finite $T_0$-space of type \textup{(A)}. Then $UCT(W)$ holds.
\end{theorem}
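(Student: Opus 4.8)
The plan is to deduce Theorem~\ref{theorem: positive} from the conditional UCT Theorem~\ref{the:UCT_conditional}. By definition, $UCT(W)$ means that $\FK(W)(A)$ has a projective resolution of length~$1$ in $\CMod{\NT(W)}$ for every $A\inOb\Bootstrap(W)$. Passing through the equivalence $\Upsilon$ of Remark~\ref{rem:connected}, it suffices to show that $\FK^\con(A)$ has a projective resolution of length~$1$ in $\Modc{\NTC(W)}$. Now by Lemma~\ref{lem:projresolutions}, condition~(6) -- the existence of such a resolution -- is equivalent to condition~(1), namely that the module is of the form $\FK^\con(A)$ for some $A\inin\kk(W)$. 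Since $\Bootstrap(W)\subseteq\kk(W)$, this is exactly what we need.

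However, Lemma~\ref{lem:projresolutions} (as well as Lemma~\ref{lem:projmodules}, on which its proof rests) was proved \emph{conditionally}, assuming Properties~\ref{ass:semi}, \ref{ass:free} and~\ref{ass:kernel} for the category $\NTC(W)$. So the substance of the argument is to verify these three properties for a space~$W$ of type~(A). This is where Theorem~\ref{thm:ungraded_iso} enters: it provides an ungraded ring isomorphism $\Phi\colon\NTC(W)\xrightarrow{\sim}\NTC(O_n)$ carrying $\NTss(W)$ onto $\NTss(O_n)$ and $\NTnil(W)$ onto $\NTnil(O_n)$, and restricting to a bijection between exact ungraded modules on either side. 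Since Meyer and Nest verified Properties~\ref{ass:semi}, \ref{ass:free} and~\ref{ass:kernel} for $\NTC(O_n)$ (see~\cite{meyernestCalgtopspacfiltrKtheory}), transporting these along $\Phi$ yields the same properties for $\NTC(W)$: nilpotence of $\NTnil$ and the semidirect product decomposition (Property~\ref{ass:semi}) follow because $\Phi$ respects both $\NTss$ and $\NTnil$; freeness of $\NT_*(W)(Y,Z)$ (Property~\ref{ass:free}) follows because $\Phi$ is a group isomorphism on each morphism group; and the kernel description (Property~\ref{ass:kernel}) transports because $\Phi$ matches exact modules with exact modules and identifies $\NTnil\cdot M$ on both sides. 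A minor point to address is that $\Phi$ is only an \emph{ungraded} isomorphism, but Properties~\ref{ass:semi}--\ref{ass:kernel} and the relevant parts of Lemmas~\ref{lem:projmodules}--\ref{lem:projresolutions} are statements that only involve the ungraded module structure (or are compatible with forgetting and reintroducing the $\Z/2$-grading via the degree-shift), so this causes no difficulty.

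Assembling the pieces: Theorem~\ref{thm:ungraded_iso} gives Properties~\ref{ass:semi}, \ref{ass:free}, \ref{ass:kernel} for $\NTC(W)$; with these in hand, Lemmas~\ref{lem:projmodules} and~\ref{lem:projresolutions} hold for $W$ as stated; applying Lemma~\ref{lem:projresolutions}, (1)$\Rightarrow$(6) shows that every $\FK^\con(A)$ with $A\inin\kk(W)$ -- in particular every $A\inOb\Bootstrap(W)$ -- has a length-$1$ projective resolution in $\Modc{\NTC(W)}$; translating back through $\Upsilon$ gives the same for $\FK(W)(A)$ in $\CMod{\NT(W)}$, which is precisely $UCT(W)$. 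The main obstacle in the whole programme is the construction of the isomorphism $\Phi$ in Theorem~\ref{thm:ungraded_iso} (deferred to \S\ref{sec:Proof_of_ungraded_iso}); granting that, the present theorem is a straightforward bookkeeping combination of the preceding lemmas. One should take care that the verification of Property~\ref{ass:kernel} via $\Phi$ really does yield, for each $Y$, a morphism $\nu$ \emph{in $\NTC(W)$} (not merely the abstract existence of a kernel), but this is immediate from transporting the corresponding $O_n$-transformation back along $\Phi^{-1}$.
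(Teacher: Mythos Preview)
Your proposal is correct and follows the same approach as the paper's own proof: reduce to verifying Properties~\ref{ass:semi}, \ref{ass:free}, \ref{ass:kernel} for $\NTC(W)$, and obtain these by transporting along the ungraded isomorphism~$\Phi$ of Theorem~\ref{thm:ungraded_iso} from the Meyer--Nest result for $\NTC(O_n)$. The paper's proof is a two-sentence version of exactly this; your additional remarks on the ungraded-versus-graded issue and on how each property transports are accurate elaborations rather than a different argument.
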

\begin{proof}
It remains to verify Properties~\ref{ass:semi}, \ref{ass:free} and~\ref{ass:kernel} for the category $\NTC(W)$. This follows from Theorem~\ref{thm:ungraded_iso} once it is done for $\NTC(O_n)$---this has been accomplished in~\cite{meyernestCalgtopspacfiltrKtheory}.
\end{proof}

\section{Proof of Theorem~\ref{thm:ungraded_iso}}
	\label{sec:Proof_of_ungraded_iso}
We introduce a more explicit notation for the type (A) space $W$ which involves certain parameters, namely, an even natural number $m$ and natural numbers $n_1,\ldots,n_m$. We number the underlying set of $W$ as
\begin{equation*}
\begin{split}
\Bigl\{
1^0=1^1, &2^1, \ldots, (n_1-1)^1, n_1^1=n_2^2, (n_2-1)^2, \ldots, 2^2, 1^2=1^3,\\
         &2^3, \ldots, (n_3-1)^3, n_3^3=n_4^4, (n_4-1)^4, \ldots, 2^4, 1^4=1^5,\\
         &\qquad\qquad\qquad\qquad\vdots\\
         &2^{m-1}, \ldots, (n_{m-1}-1)^{m-1}, n_{m-1}^{m-1}=n_m^m,\\
         &\qquad\qquad\qquad\qquad\qquad\qquad\qquad(n_m-1)^m, \ldots, 2^m, 1^m=1^{m+1}
\Bigr\},
\end{split}
\end{equation*}
such that the specialisation order corresponding to the topology on $W$ is generated by the relations
\begin{equation*}
\begin{split}
&1^1 \prec 2^1 \prec \ldots \prec (n_1-1)^1 \prec n_1^1 = n_2^2 \succ (n_2-1)^2 \succ \ldots\succ 2^2 \succ 1^2 = 1^3,\\
&1^3 \prec 2^3 \prec \ldots \prec (n_3-1)^3 \prec n_3^3 = n_4^4 \succ (n_4-1)^4 \succ \ldots \succ 2^4 \succ 1^4 = 1^5,\\
&\qquad\qquad\qquad\qquad\qquad\qquad\vdots\\
&1^{m-1} \prec 2^{m-1} \prec \ldots \prec (n_{m-1}-1)^{m-1} \prec n_{m-1}^{m-1} = n_m^m,\\
&\qquad\qquad\qquad\qquad\qquad\qquad\qquad n_m^m \succ (n_m-1)^m \succ \ldots \succ 2^m \succ 1^m = 1^{m+1}.
\end{split}
\end{equation*}
Without loss of generality, we can assume that the numbers
$n_2,\ldots,n_{m-1}$ are larger than~1. This makes the
description of the space~$W$ by the parameters $m$ and $n_1,\ldots,n_m$
unique up to reversion of the order of the superscripts. The total
number of points in $W$ is $n\perdef\sum_{i=1}^m n_i-(m-1)$.

The specialisation order on the topological space $W$ corresponds to the directed graph
displayed in Figure~\ref{fig:genform}.
\begin{figure}[htbp]
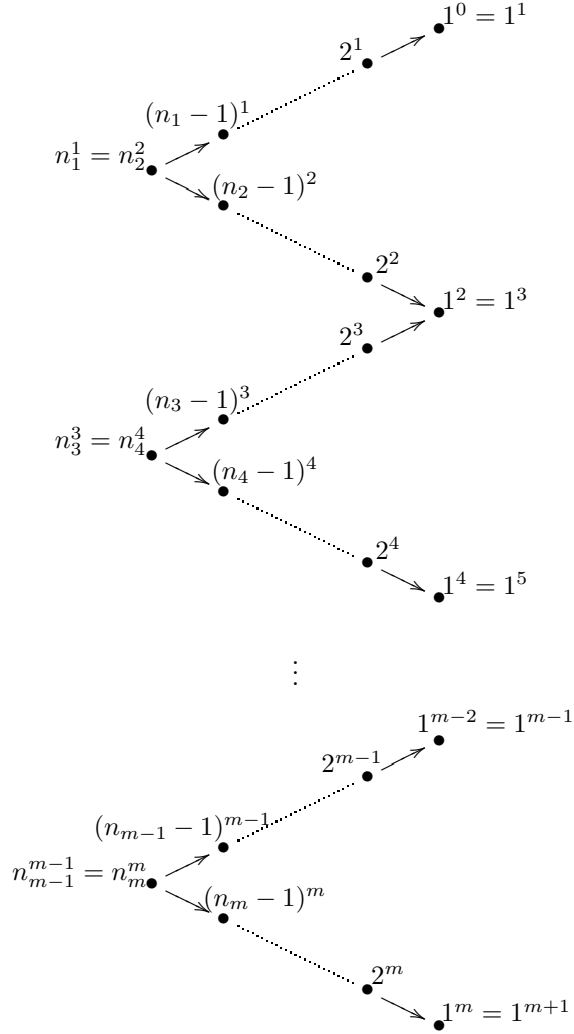

\begin{equation*}
\begin{split}
\xy
0;/r.16pc/:
		(-10,-25)*+{n_1^1=n_2^2}; (9,-17)*+{(n_1-1)^1}; (39,-4)*+{2^1}; (65,3)*+{1^0=1^1};
(56,0)*+{\bullet}="11"; (42,-7)*+{\bullet}="21"; (14,-21)*+{\bullet}="n1-"; (0,-28)*+{\bullet}="n1";
{\ar "n1"; "n1-"}; {\ar@{.} "n1-"; "21"}; {\ar "21"; "11"};
		(65,-53)*+{1^2=1^3}; (22,-31)*+{(n_2-1)^2}; (46,-46)*+{2^2};
(14,-35)*+{\bullet}="n2-"; (42,-49)*+{\bullet}="22"; (56,-56)*+{\bullet}="12";
{\ar "n1"; "n2-"}; {\ar@{.} "n2-"; "22"}; {\ar "22"; "12"};
		(9,-73)*+{(n_3-1)^3}; (39,-60)*+{2^3}; (65,-109)*+{1^4=1^5};
(42,-63)*+{\bullet}="23"; (14,-77)*+{\bullet}="n3-"; (0,-84)*+{\bullet}="n3";
{\ar "n3"; "n3-"}; {\ar@{.} "n3-"; "23"}; {\ar "23"; "12"};
		(-10,-81)*+{n_3^3=n_4^4}; (22,-87)*+{(n_4-1)^4}; (46,-102)*+{2^4};
(56,-112)*+{\bullet}="14"; (42,-105)*+{\bullet}="24"; (14,-91)*+{\bullet}="n4-";
{\ar "n3"; "n4-"}; {\ar@{.} "n4-"; "24"}; {\ar "24"; "14"};
(28,-125)*+{\vdots};
		(67,-136)*+{1^{m-2}=1^{m-1}}; (6,-157)*+{(n_{m-1}-1)^{m-1}}; (39,-144)*+{2^{m-1}}; (69,-193)*+{1^m=1^{m+1}};
(56,-140)*+{\bullet}="1m1"; (42,-147)*+{\bullet}="2m1"; (14,-161)*+{\bullet}="nm1-"; (0,-168)*+{\bullet}="nm1";
{\ar "nm1"; "nm1-"}; {\ar@{.} "nm1-"; "2m1"}; {\ar "2m1"; "1m1"};
		(-14,-166)*+{n_{m-1}^{m-1}=n_m^m}; (22,-171)*+{(n_m-1)^m}; (46,-186)*+{2^m};
(56,-196)*+{\bullet}="1m"; (42,-189)*+{\bullet}="2m"; (14,-175)*+{\bullet}="nm-";
{\ar "nm1"; "nm-"}; {\ar@{.} "nm-"; "2m"}; {\ar "2m"; "1m"};
\endxy
\end{split}
\end{equation*}
\caption{Directed graph corresponding to the type (A) space $W$}
\label{fig:genform}
\end{figure}

\subsection{Computation of the groups of natural transformations}
	\label{sec:ordcom2}
The order complex $\Ch(W)$ is a union of simplices $\Delta_k$, $k=1,\ldots,m$, of dimensions~$n_k-1$.
For $i<j$ the intersection $\Delta_i\cap \Delta_j$ is a point if $i+1=j$, and otherwise is empty.

The connected, locally closed subsets of $W$ are exactly the
``chain-like'' subsets. In order to define them, we introduce a
total order $\leq$ on $W$:
\[
a^i\leq b^j\perdeflongequi
\begin{cases}
\bigl\{ i < j\bigr\} & \textup{or} \\
\bigl\{ \textup{$i=j$ is odd and $a\preceq b$}\bigr\}  & \textup{or} \\
\bigl\{ \textup{$i=j$ is even and $a\succeq b$}\bigr\}.  &
\end{cases}
\]
This means, that $x\leq y$ exactly if $y$ is ``further down''
in Figure~\ref{fig:genform} than~$x$.

Now we define the chain-like subsets $\langle x, y\rangle$ for
$x,y\in W$ as the intervals with respect to the total
order~$\leq$:
\[
\langle x, y\rangle\perdef\left\lbrace z\in W\mid
	x\leq z\leq y\right\rbrace.
\]
Then
$\LC(W)^* = \bigl\{ \langle x,y\rangle \mid
	x,y\in W, x\leq y\bigr\}$.
Analogously, we define
\begin{align*}
\langle x,y\rlangle\perdef\left\lbrace z\in W\mid
	x\leq z< y\right\rbrace, \\
\lrangle x,y\rangle\perdef\left\lbrace z\in W\mid
	x< z\leq y\right\rbrace, \\
\intertext{and}
\lrangle x,y\rlangle\perdef\left\lbrace z\in W\mid
	x< z< y\right\rbrace.
\end{align*}
We observe that the number of elements in $\LC(W)^*$ is
$\frac{n(n+1)}{2}$.

The next step is to compute for a connected locally closed subset
$Y = \langle a^i, b^j\rangle$ the two closures $\widetilde Y$
and~$\overline Y$ and the corresponding boundaries defined in Definition~\ref{def:boundaries}.
For this computation we do a case differentiation with respect to the parity of
the numbers $i$ and~$j$. The result is given in Table~\ref{tab:closures2}.
\begin{table}[htbp]
\renewcommand{\arraystretch}{1.8}
\[
\begin{array}[t]{c|m{2.2cm}|m{2.6cm}|m{2.6cm}|m{2.1cm}}
\langle a^i, b^j\rangle &
\centering \text{$i$ and $j$ odd},\\ $a\neq 1$, $b\neq n_j$ &
\centering \text{$i$ odd, $j$ even,}\\ $a\neq 1$, $b\neq 1$ &
\centering \text{$i$ even, $j$ odd,}\\ $a\neq n_i$, $b\neq n_j$ &
\centering \text{$i$ and $j$ even,}\\ $a\neq n_i$, $b\neq 1$ \tabularnewline \hline
\overline{\langle a^i, b^j\rangle} &
\centering $\langle 1^i, b^j\rangle$ &
\centering $\langle 1^i, 1^j\rangle$  &
\centering $\langle a^i, b^j\rangle$ &
\centering $\langle a^i, 1^j\rangle$ \tabularnewline \hline
\overline\partial\langle a^i, b^j\rangle &
\centering $\langle 1^i, a^i\rlangle$ &
\centering $\langle 1^i, a^i\rlangle \cup \lrangle b^j, 1^j\rangle$ &
\centering $\emptyset$ &
\centering $\lrangle b^j, 1^j\rangle$ \tabularnewline \hline
\widetilde{\langle a^i, b^j\rangle} &
\centering $\langle a^i, n_j^j \rangle$ &
\centering $\langle a^i, b^j\rangle$  &
\centering $\langle n_i^i,n_j^j \rangle$  &
\centering $\langle n_i^i,b^j\rangle$ \tabularnewline \hline
\widetilde\partial\langle a^i, b^j\rangle &
\centering $\lrangle b^j, n_j^j \rangle$ &
\centering $\emptyset$ &
\centering $\langle n_i^i,a^i \rlangle \cup \lrangle b^j, n_j^j \rangle$ &
\centering $\langle n_i^i,a^i \rlangle$ \tabularnewline \hline
\end{array}
\]
\caption{Closures and boundaries of locally closed subsets of the space $W$}
\label{tab:closures2}
\end{table}

Now let $Y = \langle a_1^i, b_1^j\rangle$ and
$Z = \langle a_2^k, b_2^l\rangle$ be connected, locally
closed subsets of~$W$. We calculate
$S(Y,Z)=\Ch(\widetilde Y\cap\overline Z)\setminus\left( \Ch(\widetilde
	Y\cap\overline\partial Z)\cup\Ch(\widetilde\partial Y\cap\overline
	Z)\right)$
and the associated $\K$-groups (which describe the category $\NT$)
by distinguishing six cases concerning the
order of the points $a_1^i$, $b_1^j$, $a_2^k$ and $b_2^l$ with respect
to~$\leq$, and subcases concerning the parity of the numbers $i$, $j$, $k$
and~$l$.

The cases 1b, 2b, 3b are very similar to the cases 1a, 2a, and 3a,
respectively. Therefore, we only give the results for them without
repeating the arguments. For the sake of clarity, we provide small
sketches of the relative location of the sets~$Y$ and~$Z$.

\textbf{Case 1a} $a_1^i\leq a_2^k\leq b_1^j\leq b_2^l$
\begin{enumerate}[label=\textup{(\roman*)}]
\item
\emph{Let $j$ and $k$ be even, $b_1\neq 1$ and $a_2\neq n_k$.}
\[
\xy
(-8,8)*{}; (0,0)*{} **\dir{-};
(-2,3)*{}; (6,-5)*{} **\dir{-};
(-9,6)*+{\scriptstyle Y};
(7,-3)*+{\scriptstyle Z};
\endxy
\]
Then $S(Y,Z) = \Ch\left( \langle a_2^k, b_1^j\rangle\right)$
is contractible. Thus $\K^*\bigl(S(Y,Z)\bigr)\cong\Z[0]$.

\item
\emph{Let $j$ be even, $k$ odd, $b_1\neq 1$ and $a_2\neq 1$.}
\[
\xy
(-8,8)*{}; (0,0)*{} **\dir{-};
(-8,8)*{}; (-2,14)*{} **\dir{-};
(-7,8)*{}; (6,-5)*{} **\dir{-};
(-7,8)*{}; (-4,11)*{} **\dir{-};
(-4,14)*+{\scriptstyle Y};
(7,-3)*+{\scriptstyle Z};
\endxy
\]
\begin{itemize}
\item
For $a_1^i=a_2^k$ the space $S(Y,Z)=\Ch(Y)$ is contractible and thus
$\K^*\bigl(S(Y,Z)\bigr)\cong\Z[0]$.

\item
If $a_1^i< a_2^k$ then
\[
S(Y,Z)=\Ch\left( \langle 1^k, b_1^j\rangle \right)
	\setminus\Ch\left( \langle 1^k, a_2^k\rlangle \right)
\]
	for $a_1^i\leq 1^k$, and
\[
S(Y,Z)=\Ch\left( \langle a_1^i, b_1^j\rangle \right)
	\setminus\Ch\left( \langle a_1^i, a_2^k\rlangle \right)
\]
	otherwise.
This is the difference of a contractible compact pair and we have $\K^*\bigl(S(Y,Z)\bigr)=0$.
\end{itemize}

\item
\emph{Let $j$ be odd, $k$ even, $b_1\neq n_j$ and $a_2\neq n_k$.}
\[
\xy
(8,-8)*{}; (0,0)*{} **\dir{-};
(8,-8)*{}; (2,-14)*{} **\dir{-};
(7,-8)*{}; (-6,5)*{} **\dir{-};
(7,-8)*{}; (4,-11)*{} **\dir{-};
(-7,3)*+{\scriptstyle Y};
(4,-15)*+{\scriptstyle Z};
\endxy
\]
Analogously to (ii), we obtain $\K^*\bigl(S(Y,Z)\bigr)\cong\Z[0]$
for $b_1^j=b_2^l$, and $\K^*\bigl(S(Y,Z)\bigr)=0$ for
$b_1^j< b_2^l$.

\item
\emph{Let $j$ and $k$ be odd, $b_1\neq n_j$ and $a_2\neq 1$.}
\[
\xy
(-8,-8)*{}; (0,0)*{} **\dir{-};
(-2,-1)*{}; (6,7)*{} **\dir{-};
(4,7)*+{\scriptstyle Y};
(-6,-8)*+{\scriptstyle Z};
\endxy
\]
\begin{itemize}
\item
If $a_1^i=a_2^k$ and $b_1^j=b_2^l$ then $S(Y,Z)=\Ch(Y)$ is
contractible, so $\K^*\bigl(S(Y,Z)\bigr)\cong\Z[0]$.

\item
For $a_1^i< a_2^k$ and $b_1^j=b_2^l$ we have
\[ S(Y,Z)=\Ch\left( \langle 1^k,b_2^l\rangle \right)\setminus
	\Ch\left( \langle 1^k,a_2^k\rlangle \right)
	\]
for $a_1^i\leq 1^k$, and
\[
S(Y,Z)=\Ch\left( \langle a_1^i,b_2^l\rangle \right)\setminus
	\Ch\left( \langle a_1^i,a_2^k\rlangle \right)
	\]
	otherwise.
This is the difference of a contractible compact pair. Hence $\K^*\bigl(S(Y,Z)\bigr)=0$.
\item
Analogously, $\K^*\bigl(S(Y,Z)\bigr)=0$ for $a_1^i=a_2^k$ and
$b_1^j< b_2^l$.

\item
Finally, in the case $a_1^i< a_2^k$, $b_1^j< b_2^l$, the space
is the difference of a compact pair $(K,L)$ with $K$ contractible
and $L$ the disjoint union of two contractible subspaces.
Hence that $\K^*\bigl(S(Y,Z)\bigr)\cong\Z[1]$.
\end{itemize}
\end{enumerate}

\textbf{Case 1b} $a_2^k\leq a_1^i\leq b_2^l\leq b_1^j$

Proceeding as in case 1a we obtain the following results:
\begin{enumerate}[label=\textup{(\roman*)}]
\item
\emph{If $i$ and $l$ are odd, $a_1\neq 1$, and $b_2\neq n_l$,}
then $\K^*\bigl(S(Y,Z)\bigr)=\Z[0]$.

\item
\emph{If $i$ is odd, $l$ is even, $a_1\neq 1$, $b_2\neq 1$ and
$b_2^l=b_1^j$,} then $\K^*\bigl(S(Y,Z)\bigr)=\Z[0]$.

\item
\emph{If $i$ is even, $l$ is odd, $a_1\neq n_i$, $b_2\neq n_l$ and
$a_2^k=a_1^i$,} then $\K^*\bigl(S(Y,Z)\bigr)=\Z[0]$.

\item
\emph{If $i$ and $l$ are even, $a_1\neq n_i$, $b_2\neq 1$,} then
\begin{itemize}
\item $\K^*\bigl(S(Y,Z)\bigr)=\Z[0]$, when $a_2^k=a_1^i$ and
	$b_2^l=b_1^j$;
\item $\K^*\bigl(S(Y,Z)\bigr)=\Z[1]$, when $a_2^k< a_1^i$ and
	$b_2^l< b_1^j$.
\end{itemize}

\item
\emph{In all other cases} $\K^*\bigl(S(Y,Z)\bigr)=0$.
\end{enumerate}

\textbf{Case 2a} $a_1^i\leq b_1^j< a_2^k\leq b_2^l$
\begin{enumerate}[label=\textup{(\roman*)}]
\item
\emph{Let $j$ and $k$ be even, $b_1\neq 1$ and $a_2\neq n_k$.}
\[
\xy
(-8,8)*{}; (-2,2)*{} **\dir{-};
(2,-2)*{}; (8,-8)*{} **\dir{-};
(-9,6)*+{\scriptstyle Y};
(9,-6)*+{\scriptstyle Z};
\endxy
\]
Then $S(Y,Z)=\emptyset$ and we get $\K^*\bigl(S(Y,Z)\bigr)=0$.

\item
\emph{Let $j$ be even, $k$ odd, and $b_1\neq 1$.}
\[
\xy
(-8,8)*{}; (-2,2)*{} **\dir{-};
(-2,-2)*{}; (-8,-8)*{} **\dir{-};
(-9,6)*+{\scriptstyle Y};
(-6,-8)*+{\scriptstyle Z};
\endxy
\]
Then $S(Y,Z)$ is again empty and $\K^*\bigl(S(Y,Z)\bigr)=0$.

\item
\emph{Let $j$ be odd, $k$ even, and $a_2\neq n_k$.}
\[
\xy
(8,8)*{}; (2,2)*{} **\dir{-};
(2,-2)*{}; (8,-8)*{} **\dir{-};
(6,8)*+{\scriptstyle Y};
(9,-6)*+{\scriptstyle Z};
\endxy
\]
Once more, $S(Y,Z)=\emptyset$ and $\K^*\bigl(S(Y,Z)\bigr)=0$.

\item
\emph{Let $j$ and $k$ be odd.}
\[
\xy
(-8,-8)*{}; (-2,-2)*{} **\dir{-};
(2,2)*{}; (8,8)*{} **\dir{-};
(6,8)*+{\scriptstyle Y};
(-6,-8)*+{\scriptstyle Z};
\endxy
\]
\begin{itemize}
\item
If $j<k$ then $S(Y,Z)=\emptyset$ and thus $\K^*\bigl(S(Y,Z)\bigr)=0$.
\item
If $j=k$ and $b_1+1 < a_2$ then $S(Y,Z)$ is the difference of a
contractible compact pair and thus $\K^*\bigl(S(Y,Z)\bigr)=0$.
\item
However, if $j=k$ and $b_1+1 = a_2$ then $S(Y,Z)$ is the difference
of a compact pair $(K,L)$ as in Case 1a(iv), and we get
$\K^*\bigl(S(Y,Z)\bigr)=\Z[1]$.
\end{itemize}
\end{enumerate}

\textbf{Case 2b} $a_2^k\leq b_2^l< a_1^i\leq b_1^j$

Similarly to Case 2a we get:
\begin{enumerate}[label=\textup{(\roman*)}]
\item
\emph{If $l=i$ are even and $b_2^l+1=a_1^i$,}
then $\K^*\bigl(S(Y,Z)\bigr)=\Z[1]$.

\item
\emph{In all other cases} $\K^*\bigl(S(Y,Z)\bigr)=0$.
\end{enumerate}

\textbf{Case 3a} $a_2^k< a_1^i\leq b_1^j< b_2^l$

\begin{enumerate}[label=\textup{(\roman*)}]
\item
\emph{Let $i$ be odd, $j$ even, $a_1\neq 1$ and $b_1\neq 1$.}
\[
\xy
(0,0)*{}; (4,4)*{} **\dir{-};
(0,0)*{}; (4,-4)*{} **\dir{-};
(1,0)*{}; (8,7)*{} **\dir{-};
(1,0)*{}; (8,-7)*{} **\dir{-};
(0,2)*+{\scriptstyle Y};
(9,-5)*+{\scriptstyle Z};
\endxy
\]
Then $S(Y,Z)$ is contractible and thus $\K^*\bigl(S(Y,Z)\bigr)=\Z[0]$.

\item
\emph{Let $i$ and $j$ be even, $a_1\neq n_i$ and $b_1\neq 1$.}
\[
\xy
(8,8)*{}; (-2,-2)*{} **\dir{-};
(5,6)*{}; (1,2)*{} **\dir{-};
(2,5)*+{\scriptstyle Y};
(0,-2)*+{\scriptstyle Z};
\endxy
\]
Then $S(Y,Z)$ is the difference
of a contractible compact pair and therefore $\K^*\bigl(S(Y,Z)\bigr)=0$.

\item
\emph{Let $i$ and $j$ be odd, $a_1\neq 1$ and $b_1\neq n_j$.}
\[
\xy
(-9,9)*{}; (1,-1)*{} **\dir{-};
(-5,6)*{}; (-1,2)*{} **\dir{-};
(-10,7)*+{\scriptstyle Y};
(-1.5,5)*+{\scriptstyle Z};
\endxy
\]
Again, $S(Y,Z)$ is the difference
of a contractible compact pair and we get $\K^*\bigl(S(Y,Z)\bigr)=0$.

\item
\emph{Let $i$ be odd, $j$ even, $a_1\neq n_i$ and $b_1\neq n_j$.}
\[
\xy
(0,0)*{}; (-4,4)*{} **\dir{-};
(0,0)*{}; (-4,-4)*{} **\dir{-};
(1,0)*{}; (-7,8)*{} **\dir{-};
(1,0)*{}; (-8,-9)*{} **\dir{-};
(-3,0)*+{\scriptstyle Y};
(-6,-9)*+{\scriptstyle Z};
\endxy
\]
In this case $S(Y,Z)$ is the difference
of a compact pair $(K,L)$ as in Case 1a(iv) and thus
$\K^*\bigl(S(Y,Z)\bigr)=\Z[1]$.
\end{enumerate}

\textbf{Case 3b} $a_1^i< a_2^k\leq b_2^l< b_1^j$

For this constellation we find:
\begin{enumerate}[label=\textup{(\roman*)}]
\item
\emph{If $k$ is even, $l$ odd, $a_2\neq n_k$ and $b_2\neq n_l$,}
then $\K^*\bigl(S(Y,Z)\bigr)=\Z[0]$.

\item
\emph{If $k$ is odd, $l$ even, $a_2\neq 1$ and $b_2\neq 1$,}
then $\K^*\bigl(S(Y,Z)\bigr)=\Z[1]$.

\item
\emph{In all other cases} $\K^*\bigl(S(Y,Z)\bigr)=0$.
\end{enumerate}

\medskip
The computations concerning
$\NT_*(Y,Z)\cong\K^*\bigl(S(Y,Z)\bigr)$
carried out in this section are summarised in the
following observation; recall that we abbreviate $\NT(W)$ by $\NT$.

\begin{observation}
	\label{obs:trafos}
Let $Y,Z\in\LC(W)^*$.
\begin{enumerate}[label=\textup{(\roman*)}]
\item
$\NT_*(Y,Z)\cong\Z[0]$ if and only if
$Y\cap Z$ is non-empty, closed in~$Y$ and open in~$Z$.
\item
$\NT_*(Y,Z)\cong\Z[1]$ if and only if
\begin{itemize}
\item[\emph{either:}]
$Y\cup Z$ is connected, and $Y\cap Z$ is a proper open subset of~$Y$
and a proper closed subset of~$Z$;
\item[\emph{or:}]
$Z$ is a proper open
subset of~$Y$ and $Y\setminus Z$ has two connected components;
\item[\emph{or:}]
$Y$ is a proper closed
subset of~$Z$ and $Z\setminus Y$ has two connected components.
\end{itemize}
\item
$\NT_*(Y,Z)=0$ in all other cases.
\end{enumerate}
\end{observation}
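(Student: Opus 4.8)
The plan is to deduce the Observation directly from the six-case computation of~\S\ref{sec:ordcom2}: essentially all the work is already done, and what remains is to reorganise its output into the coordinate-free language of the statement. First I would record that Cases~1a--3b are exhaustive: writing $Y=\langle a_1^i,b_1^j\rangle$ and $Z=\langle a_2^k,b_2^l\rangle$, the position of the four endpoints with respect to the total order~$\leq$ falls, after possibly interchanging the two pairs, into exactly one of the patterns \emph{proper overlap} (Cases~1a/1b), \emph{disjoint} (Cases~2a/2b) and \emph{strict containment} (Cases~3a/3b). Hence it is enough to prove the equivalences in~(i) and~(ii) and to check that these two situations never occur simultaneously; assertion~(iii) then follows because every pair $(Y,Z)$ is covered by some case.

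For~(i) I would observe that the subcases that produced $\K^*\bigl(S(Y,Z)\bigr)\cong\Z[0]$ are precisely those in which $S(Y,Z)$ reduces to the order complex $\Ch(Y\cap Z)$, a connected chain of simplices glued along faces and therefore contractible; by Lemma~\ref{lem:intersection_decomposition} the corresponding generator is $i_{Y\cap Z}^Z\circ r_Y^{Y\cap Z}$. Reading the parity hypotheses defining those subcases against Table~\ref{tab:closures2} --- used in the form ``an interval $\langle a^r,b^s\rangle$ equals its own open hull, respectively its own closure, exactly under the displayed parity-and-endpoint conditions'' --- one identifies the common content of those hypotheses as: $Y\cap Z$ is non-empty, relatively closed in~$Y$ and relatively open in~$Z$. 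It is essential here that the condition is \emph{asymmetric} in $Y$ and~$Z$: $i_{Y\cap Z}^Z\circ r_Y^{Y\cap Z}$ is a transformation $\FK_Y\Rightarrow\FK_Z$ only when $Y\cap Z$ is closed in~$Y$ and open in~$Z$ --- the reversed configuration contributes to $\NT_*(Z,Y)$ instead.

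For~(ii) I would match the three alternatives with the remaining $\Z[1]$-subcases, in which the computation exhibits $S(Y,Z)$ as the difference of a compact pair $(K,L)$ with $K$ contractible and $L$ a disjoint union of two contractible pieces, so that the six-term sequence yields $\K^*\bigl(S(Y,Z)\bigr)\cong\Z[1]$ with a boundary-type generator. These fall into the overlapping configuration (Cases~1a(iv)/1b(iv)), where $Y\cup Z$ is a connected interval and $Y\cap Z$ is a \emph{proper} open subset of one of $Y,Z$ and a \emph{proper} closed subset of the other; the abutting configuration (Cases~2a(iv)/2b(i)), where the two intervals are disjoint but their union is still connected; and the strict-containment configurations (Cases~3a(iv)/3b(ii)), where the smaller interval is a proper closed, respectively proper open, subset of the larger and the complement has two connected components. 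The crucial point --- again parity-sensitive --- is to see for each of these that the odd transformation produced actually goes from $\FK_Y$ to~$\FK_Z$ and not the other way; this is exactly where Table~\ref{tab:closures2}, together with the extension triangle~\eqref{eq:exacttrianglerepobjects}, is used, and it is what makes, for instance, an abutting pair count for $\NT_*(Y,Z)$ in one parity and for $\NT_*(Z,Y)$ in the other. Conversely, each of the three alternatives of~(ii) pins down the parities of $i,j,k,l$ tightly enough to force exactly one of those subcases.

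Finally, (i)~and~(ii) are mutually exclusive: a subset that is clopen in the connected set $Y\cup Z$ (or in the connected~$Z$) must be trivial, which the properness and complement-counting clauses of~(ii) rule out once the condition of~(i) holds; so, the case analysis being exhaustive, every remaining pair has $\NT_*(Y,Z)=0$, proving~(iii). The main obstacle I expect is not conceptual but the sheer bookkeeping: for each of the roughly twenty subcases one must faithfully translate the ``$\leq$-order plus parities of $i,j,k,l$'' hypotheses into the ``open/closed/connected'' language and, above all, keep track of the direction of the transformations, being particularly careful at the accordion's turning points $1^r$ and~$n_r^r$, where the auxiliary distinctions (such as ``$a_1^i\leq 1^k$ or not'' in Case~1a(ii)) enter and where the columns of Table~\ref{tab:closures2} switch.
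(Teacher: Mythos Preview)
Your proposal is correct and follows the same approach as the paper: the Observation is explicitly presented there as a summary of the case computations in \S\ref{sec:ordcom2}, with no separate proof given, and your plan is precisely to carry out the translation from ``endpoint order plus parities'' to ``open/closed/connected'' that the paper leaves implicit. Your identification of which subcases feed into which clause of~(ii), and your caution about the direction of the transformation and the turning points, are exactly the bookkeeping the paper tacitly relies on.
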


\subsection{Generators and relations of natural transformations}
	\label{sec:genrel2}
We determine generators of the groups of natural transformations
listed in Observation~\ref{obs:trafos} and their relations in the
category $\NTC$.

In case (i), we have the grading-preserving natural transformation
$\mu_Y^Z\perdef i_{Y\cap Z}^Z\circ r_Y^{Y\cap Z}$ induced by the natural
non-zero $^*$-homomorphism
\[
A(Y)\twoheadrightarrow A(Y\cap Z)\rightarrowtail A(Z).
\]
In fact, by Corollary~\ref{cor:evengenerators}, the natural
transformation~$\mu_Y^Z$ generates the group
$\NT_0(Y,Z)\cong\Z$.

\begin{lemma}
	\label{lem:relations}
Let $Y,Z,V\in\LC(W)^*$ such that $V\cap Y$ is non-empty, closed in $V$ and open in $Y$,
and such that $Y\cap Z$ is non-empty, closed in $Y$ and open in $Z$.
With the above convention, we have $\mu_Y^Z\circ\mu_V^Y=\mu_V^Z$
if $V\cap Z$ is non-empty, closed in $V$ and open in $Z$. Otherwise,
we have $\mu_Y^Z\circ\mu_V^Y=0$.
\end{lemma}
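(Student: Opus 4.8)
The plan is to reduce $\mu_Y^Z\circ\mu_V^Y$ to a single canonical monomial $i_T^Z\circ r_V^T$ and then identify $T$ with $V\cap Z$ in the non-degenerate case. First I would unfold the definitions,
\[
\mu_Y^Z\circ\mu_V^Y
= i_{Y\cap Z}^Z\circ r_Y^{Y\cap Z}\circ i_{V\cap Y}^Y\circ r_V^{V\cap Y},
\]
and rewrite the inner composite $r_Y^{Y\cap Z}\circ i_{V\cap Y}^Y\colon\FK_{V\cap Y}\Rightarrow\FK_{Y\cap Z}$ by Proposition~\ref{pro:evenrels}(iii), applied to the relatively open subset $U\perdef V\cap Y$ and the relatively closed subset $C\perdef Y\cap Z$ of~$Y$ (both relative-position hypotheses are part of the standing hypotheses of the lemma). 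This yields
\[
r_Y^{Y\cap Z}\circ i_{V\cap Y}^Y
= \sum_{T\in\pi_0(V\cap Y\cap Z)} i_T^{Y\cap Z}\circ r_{V\cap Y}^T,
\]
and records that each such~$T$ is relatively open in $Y\cap Z$ and relatively closed in $V\cap Y$. Since $W$ is of type~(A), the sets $V,Y,Z$ are intervals for the total order~$\leq$ of \S\ref{sec:ordcom2}, hence so is $V\cap Y\cap Z$; thus it is connected or empty, and the sum collapses to the single term with $T=V\cap Y\cap Z$ (or is empty). Composing with the outer transformations and fusing $i_{Y\cap Z}^Z\circ i_T^{Y\cap Z}=i_T^Z$ and $r_{V\cap Y}^T\circ r_V^{V\cap Y}=r_V^T$ via Proposition~\ref{pro:evenrels}(i) and~(ii), I obtain
\[
\mu_Y^Z\circ\mu_V^Y = i_T^Z\circ r_V^T,\qquad T=V\cap Y\cap Z,
\]
where now $T$ is relatively open in~$Z$ and relatively closed in~$V$, and the right-hand side is read as~$0$ if $T=\emptyset$.

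For the degenerate case --- where $V\cap Z$ fails to be non-empty, closed in~$V$ and open in~$Z$ --- no further analysis is needed: $\mu_Y^Z\circ\mu_V^Y$ is a grading-preserving transformation, hence lies in $\NT_0(V,Z)$, and this group vanishes by Observation~\ref{obs:trafos}(iii) (the only non-zero value of $\NT_*(V,Z)$ allowed there in that case is $\Z[1]$, which has trivial even part). So the composite is~$0$.

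For the main case, where $V\cap Z$ is non-empty, closed in~$V$ and open in~$Z$, I would first check $T\neq\emptyset$: the three pairwise intersections $V\cap Y$, $Y\cap Z$, $V\cap Z$ are all non-empty by hypothesis, and three intervals on a line that pairwise intersect have a common point (the six endpoint inequalities for pairwise non-emptiness together with $a_1\le b_1$ etc.\ for non-emptiness of $V,Y,Z$ yield exactly the nine inequalities equivalent to $V\cap Y\cap Z\neq\emptyset$). Now $T$ is relatively open in~$Z$ and contained in $V\cap Z\subseteq Z$, so $T$ is relatively open in $V\cap Z$; likewise $T$ is relatively closed in~$V$ and contained in $V\cap Z\subseteq V$, so $T$ is relatively closed in $V\cap Z$. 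Hence $T$ is a non-empty clopen subset of the connected set $V\cap Z$, forcing $T=V\cap Z$; therefore $\mu_Y^Z\circ\mu_V^Y = i_{V\cap Z}^Z\circ r_V^{V\cap Z}=\mu_V^Z$, as claimed.

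The only step requiring genuine care is the bookkeeping in the first paragraph: one must verify at each application of Proposition~\ref{pro:evenrels}(i)--(iii) that the needed relative-openness and relative-closedness conditions actually hold, and one must use the interval description of $\LC(W)^*$ to know that $V\cap Y\cap Z$ is connected so that the sum reduces to one term. Given that, the clopen/connectedness argument and the one-dimensional Helly argument are both immediate.
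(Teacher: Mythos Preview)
Your argument is correct and follows essentially the same route as the paper: both reduce the inner composite via Proposition~\ref{pro:evenrels}(iii), use the interval description of $\LC(W)^*$ to see that $V\cap Y\cap Z$ is connected or empty, and finish with the clopen-in-a-connected-set argument to identify $T$ with $V\cap Z$. The only notable difference is your handling of the degenerate case: the paper argues directly that, under the standing hypotheses, the failure of the condition on $V\cap Z$ forces $V\cap Z=\emptyset$ (whence $T=\emptyset$ and the composite vanishes), whereas you shortcut this by invoking Observation~\ref{obs:trafos}(iii) to see $\NT_0(V,Z)=0$ outright. Both are valid; your version avoids the extra combinatorics, while the paper's version is self-contained at the level of the relations in Proposition~\ref{pro:evenrels}.
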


\begin{proof}
Proposition~\ref{pro:evenrels} yields the commutative
diagram in~$\NT$
\[
\xymatrix{
& V\cap Y\ar[rd]^r\ar[r]^i &
Y\ar[r]^r & Y\cap Z\ar[rd]^i & \\
V\ar[ru]^r\ar[rr]^r & & V\cap Y\cap Z\ar[ru]^i\ar[rr]^i &
& Z.
}
\]
Since $V\cap Y$ is closed in $V$ and $Y\cap Z$ is open in~$Z$
the subset $V\cap Y\cap Z$ is clopen in $V\cap Z$. Thus we have either
$V\cap Y\cap Z=\emptyset$ or $V\cap Y\cap Z=V\cap Z$ because $V\cap Z$
is connected---it is a specific property of the space~$W$ that the
intersection of two connected subsets is again connected.

In the case $V\cap Y\cap Z=\emptyset$, we get $\mu_Y^Z\circ\mu_V^Y=0$.
However, as $V\cap Y\neq\emptyset$ and $Y\cap Z\neq\emptyset$, the
constellation $V\cap Y\cap Z=\emptyset$ can only occur if
$V\cap Z=\emptyset$. This is because $V$, $Y$ and $Z$ are intervals
with respect to the total order~$\preceq$ on~$W$. Hence we are in
the second case, and the proclaimed relation for this case holds.

For $V\cap Y\cap Z=V\cap Z$ the above diagram shows that
$\mu_Y^Z\circ\mu_V^Y=\mu_V^Z$. Hence the desired relation for the
first case holds as well.
\end{proof}

\begin{corollary}
	\label{cor:universality_even}
The category $\NT_0$ of grading-preserving natural transformations
$\FK_Y\Rightarrow\FK_Z$ for $Y,Z\in\LC(W)^*$ is the pre-additive
category generated by natural transformations~$\mu_Y^Z$ for all
$Y,Z\in\LC(W)^*$ such that $Y\cap Z$ is non-empty, closed in $Y$ and open in $Z$,
whose relations are generated by the following: 
\begin{itemize}
\item
$\mu_Y^Z\circ\mu_V^Y=\mu_V^Z$
for $Y,Z,V\in\LC(W)^*$ such that $V\cap Y$ is non-empty, closed in $V$ and open in $Y$,
and such that $Y\cap Z$ is non-empty, closed in $Y$ and open in $Z$;
\item
$\mu_Y^Z\circ\mu_V^Y=0$ otherwise.
\end{itemize}
\end{corollary}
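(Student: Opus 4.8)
The plan is to exhibit the presented category explicitly and compare it with $\NT_0$ by a single comparison functor. Let $\Cat_0$ denote the pre-additive category with object set $\LC(W)^*$ generated by the symbols $\mu_Y^Z$, one for each pair $Y,Z$ with $Y\cap Z$ non-empty, closed in~$Y$ and open in~$Z$, subject to the two families of relations in the statement; note that $\mu_Y^Y=i_Y^Y\circ r_Y^Y=\id_Y$ by Proposition~\ref{pro:evenrels}, so these generators include the identities. By Lemma~\ref{lem:relations} the defining relations of $\Cat_0$ hold verbatim in $\NT_0$ among the corresponding natural transformations, so the assignment $\mu_Y^Z\mapsto\mu_Y^Z\in\NT_0(Y,Z)$ extends to an additive functor $F\colon\Cat_0\to\NT_0$ that is the identity on objects. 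It therefore suffices to prove that $F$ is fully faithful.

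First I would analyse $\Cat_0$. Every morphism of $\Cat_0$ from~$Y$ to~$Z$ is a $\Z$-linear combination of monomials $\mu_{Y_{k-1}}^{Y_k}\circ\dotsb\circ\mu_{Y_0}^{Y_1}$ with $Y_0=Y$ and $Y_k=Z$, and in such a monomial every $Y_i\cap Y_{i+1}$ automatically has the intersection property, since otherwise $\mu_{Y_i}^{Y_{i+1}}$ would not be a generator. Collapsing the leftmost adjacent pair by Lemma~\ref{lem:relations} replaces the monomial either by~$0$ or by a monomial of the same shape with one fewer factor; moreover, after a collapse $\mu_{Y_1}^{Y_2}\circ\mu_{Y_0}^{Y_1}\mapsto\mu_{Y_0}^{Y_2}$ the pair $Y_0\cap Y_2$ again has the intersection property (that is precisely the condition under which Lemma~\ref{lem:relations} produces $\mu_{Y_0}^{Y_2}$ rather than~$0$), so the procedure may be iterated. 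After finitely many steps every monomial equals either~$0$ or the single generator $\mu_Y^Z$, and in the latter case $Y\cap Z$ necessarily has the intersection property. Hence $\Cat_0(Y,Z)$ is a cyclic group generated by $\mu_Y^Z$ when $Y\cap Z$ is non-empty, closed in~$Y$ and open in~$Z$, and $\Cat_0(Y,Z)=0$ otherwise.

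On the other side, Observation~\ref{obs:trafos} gives $\NT_0(Y,Z)\cong\Z$ exactly when $Y\cap Z$ is non-empty, closed in~$Y$ and open in~$Z$, and $\NT_0(Y,Z)=0$ otherwise; in the former case $\NT_0(Y,Z)$ is generated by $\mu_Y^Z$, using that in the type~(A) space~$W$ the intersection of two connected locally closed subsets is again connected, so that $Y\cap Z$ has a single connected component and Corollary~\ref{cor:evengenerators} applies with $\K^0\bigb{S(Y,Z)}\cong\Z$. Comparing the two descriptions: when $Y\cap Z$ has the intersection property, $F$ restricts to a homomorphism from a non-zero cyclic group onto $\NT_0(Y,Z)\cong\Z$ carrying a generator to a generator, and any such homomorphism is an isomorphism; when $Y\cap Z$ does not have the property, both groups vanish. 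Thus $F$ is fully faithful and, being bijective on objects, an isomorphism of pre-additive categories, which is exactly the asserted presentation. The only delicate point — the main obstacle — is the termination and legitimacy of the collapsing procedure in the second paragraph, but this rests entirely on Lemma~\ref{lem:relations}, and the identification $\NT_0(Y,Z)=\Z\cdot\mu_Y^Z$ rests on the connectedness of intersections in~$W$ together with Corollary~\ref{cor:evengenerators}; both are already available, so the corollary is essentially a matter of assembling them.
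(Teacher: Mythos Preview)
Your proof is correct and follows essentially the same approach as the paper's: both construct the comparison functor from the universal pre-additive category into $\NT_0$ using Lemma~\ref{lem:relations}, then compare morphism groups against Observation~\ref{obs:trafos} and the generation statement from Corollary~\ref{cor:evengenerators}. The paper compresses the computation of the morphism groups in the universal category into a single sentence, whereas you spell out the collapsing procedure explicitly; your additional care in noting that a surjection from a cyclic group onto~$\Z$ must be an isomorphism is a useful detail the paper leaves implicit.
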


\begin{proof}
We have verified the relations above in Lemma~\ref{lem:relations}. Computing the morphism groups for the universal pre-additive category $\mathcal U$ with generators and relations as above yields precisely the groups $\NT_0(Y,Z)$ as in Observation~\ref{obs:trafos}. This shows that the canonical functor $\mathcal U\to\NT_0$ is an isomorphism.
\end{proof}

The list of generators can of course be shortened by restricting to
indecomposable transformations. These are discussed in the next section.

Now we incorporate the odd natural transformations into our
investigation. Observation~\ref{obs:trafos}(ii) describes the three
(disjoint) cases in which an odd transformation from~$Y$ to~$Z$ occurs.

In the first case, $Y\cup Z$ is connected, and $Y\cap Z$ is a proper
open subset of~$Y$ and a proper closed subset of~$Z$. Under these
assumptions, $Z$ is open in $Y\cup Z$ and we have the odd
transformation
\[
\delta_Y^Z\colon
\xymatrix{
Y\ar[r]^<<<<<r & Y\setminus(Y\cap Z)\ar[r]|>>>>>\circ & Z.
}
\]

In the second case, $Z$ is a proper open subset of~$Y$ and
$Y\setminus Z$ has two connected components. We define $Y^<$ to be
the smaller component with respect to~$\leq$, and $Y^>$ to be the
greater component. Then $Z$ is open in $Z\cup Y^<$ and in
$Z\cup Y^>$ and we have two odd transformations
\begin{align*}
(\delta_Y^Z)^< \colon\xymatrix{Y\ar[r]^r & Y^< \ar[r]|\circ & Z,}\\
(\delta_Y^Z)^> \colon\xymatrix{Y\ar[r]^r & Y^> \ar[r]|\circ & Z.}
\end{align*}
By Proposition~\ref{pro:vanishingsum}, we have
$(\delta_Y^Z)^< = -(\delta_Y^Z)^>$. We define
$\delta_Y^Z\perdef (\delta_Y^Z)^<$.

In the third case, we similarly define~$\delta_Y^Z$ as the composite
\[
 \delta_Y^Z\perdef (\delta_Y^Z)^< \colon\xymatrix{Y\ar[r]|\circ & Z^< \ar[r]^i & Z,}
\]
where~$Z^<$ is the component of $Z\setminus Y$ which is smaller with respect to the total order~$\leq$.

\begin{lemma}
	\label{lem:odd generators}
Let $Y,Z\in\LC(W)^*$ as in Observation~\textup{\ref{obs:trafos}(ii).} The natural transformation $\delta_Y^Z$
generates the group $\NT_1(Y,Z)\cong\Z$.
\end{lemma}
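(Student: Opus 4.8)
The plan is to deduce the statement from the addendum to Corollary~\ref{cor:oddgenerators}. Since Observation~\ref{obs:trafos}(ii) already tells us that $\NT_1(Y,Z)\cong\Z$ in each of the three listed situations, the only thing at stake is that $\delta_Y^Z$ is actually a \emph{generator} rather than a nonzero multiple of one; Warning~\ref{war:notgenerator} shows this cannot follow from formal nonsense but must use the accordion structure of~$W$. I would treat the three cases of Observation~\ref{obs:trafos}(ii) one at a time, in each case rewriting $\delta_Y^Z$ in the shape $i_U^{U'}\circ\delta_C^U\circ r_{C'}^C$ appearing in Corollary~\ref{cor:oddgenerators} with $C'=Y$ and $U'=Z$: in the first two cases one takes $U'=U=Z$ and reads off $\delta_Y^Z=\delta_C^U\circ r_Y^C$ from the composite descriptions given just above the lemma (with $C=Y\setminus(Y\cap Z)$, resp.\ $C=Y^{<}$), while in the third case one takes $C'=C=Y$ and reads off $\delta_Y^Z=i_U^Z\circ\delta_Y^U$ with $U=Z^{<}$. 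One then checks that $(U,C)$ is a boundary pair in $\NTC$ with $U$ open in $U'$ and $C$ closed in $C'$; each of these is immediate from the fact that all the sets in sight are intervals for the total order~$\leq$ on~$W$.

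The core of the proof is to verify the two vanishing hypotheses of the addendum, namely $\K^1\bigb{S(C,C)\cup S(C,U)}=0$ and $\K^1\bigb{S(C',U')\setminus S(C,U)}=0$ (the third hypothesis, $\K^0\bigb{S(C,C)}\cong\Z$, holds because $S(C,C)$ is contractible for connected~$C$, as recorded in the computations of~\S\ref{sec:ordcom2}). Using $S(P,Q)=m^{-1}(P)\cap M^{-1}(Q)$ and distributivity of preimages over unions and differences, I would identify these two spaces as $S(P,Q)$ for explicit locally closed pairs: one always has $S(C,C)\cup S(C,U)=S(C,C\cup U)$, while the set difference works out to $S(Y\cap Z,Z)$ in the first case, to $S(Z\cup Y^{>},Z)$ in the second, and to $S(Y,Y\cup Z^{>})$ in the third. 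In every instance the relevant pair $(P,Q)$ consists of a proper relatively closed (resp.\ open) subset of a \emph{connected} locally closed set whose complement is again \emph{connected}; by Observation~\ref{obs:trafos} such a pair falls into the ``all other cases'' of part~(iii), so $\NT_*(P,Q)\cong\K^*\bigb{S(P,Q)}=0$, and in particular the relevant $\K^1$ vanishes. Corollary~\ref{cor:oddgenerators} then yields that $\NT_1(Y,Z)=\NT_1(C',U')$ is generated by $i_U^{U'}\circ\delta_C^U\circ r_{C'}^C=\delta_Y^Z$.

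The main obstacle is the connectedness bookkeeping in the previous paragraph: one must be sure that, after deleting $S(C,U)$, the leftover configuration $(P,Q)$ has a \emph{connected} complement and not a two-component one---for otherwise $\K^1\bigb{S(C',U')\setminus S(C,U)}$ would be~$\Z$ and $\delta_Y^Z$ could be twice a generator. This is precisely where the interval geometry of~$W$ enters: $Y$, $Z$ and their unions, intersections and differences are all intervals for~$\leq$, and (as already exploited in Lemma~\ref{lem:relations}) intersections of connected subsets of~$W$ are connected, so the deleted piece is always a single end of an interval and the leftover is a single complementary interval. I would therefore carry out the case-by-case verification explicitly---including the degenerate sub-case $Y\cap Z=\emptyset$ of the first case, where the set difference is simply empty and the argument is vacuous---rather than leaving it to the reader.
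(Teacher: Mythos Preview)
Your proposal is correct and follows essentially the same route as the paper: both arguments reduce to the addendum of Corollary~\ref{cor:oddgenerators}, identify the same boundary pair $(U,C)$ in each of the three cases, and verify the two $\K^1$-vanishings for $S(C,C)\cup S(C,U)$ and $S(C',U')\setminus S(C,U)$. The only cosmetic difference is that the paper disposes of these vanishings by saying each space is ``the difference of a contractible compact pair'', whereas you rewrite them as $S(P,Q)$ for suitable $P,Q\in\LC(W)^*$ and invoke Observation~\ref{obs:trafos}(iii); since that observation was itself established by exactly the contractible-pair computations of \S\ref{sec:ordcom2}, the two arguments are the same computation in different packaging.
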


\begin{proof}
We begin with the first case. Then $Y\cup Z$ is connected, and $Y\cap Z$
is a proper open subset of~$Y$ and a proper closed subset of~$Z$.
Let $C\perdef Y\setminus(Y\cap Z)$.
By Corollary~\ref{cor:oddgenerators}, it suffices to check that
${\K^1\bigb{S(C,C)\cup S(C,Z)}=0}$ and
$\K^1\bigb{S(Y,Z)\setminus S(C,Z)}=0$.
These $\K^1$-groups vanish because both $S(C,C)\cup S(C,Z)$ and
$S(Y,Z)\setminus S(C,Z)$ are a difference of a contractible compact pair.

Now we turn to the second case. Then $Z$ is a proper open subset of~$Y$
and $Y\setminus Z$ has two connected components~$Y^<$ and~$Y^>$.
As in the first case the assertion follows from
$\K^0\bigb{S(Y^<,Y^<)}\cong\Z$ and
$\K^1\bigb{S(Y^<,Z)}\cong\Z$, together with
$\K^1\bigb{S(Y^<,Y^<)\cup S(Y^<,Z)}=0$ and
$\K^1\bigb{S(Y,Z)\setminus S(Y^<,Z)}=0$.
The proof for the third case in analogous.
\end{proof}

\begin{lemma}
	\label{lem:odd compositions}
The composition of any two odd natural transformations in $\NTC$
vanishes.
\end{lemma}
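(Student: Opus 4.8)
The plan is to reduce the assertion to a finite check on generators, and then to kill each composite using the canonical relations of \S\ref{sec:relationsNT}.

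First I would reduce to generators. By Observation~\ref{obs:trafos} together with Lemma~\ref{lem:odd generators}, for all $Y,Z\in\LC(W)^*$ the group $\NT_1(Y,Z)$ is either zero or infinite cyclic, generated in the latter case by $\delta_Y^Z$. Since the composition product $\NT_1(Y,Z)\times\NT_1(X,Y)\to\NT_0(X,Z)$, $(\beta,\alpha)\mapsto\beta\circ\alpha$, is biadditive in each variable, it suffices to prove $\delta_Y^Z\circ\delta_X^Y=0$ for all $X,Y,Z\in\LC(W)^*$ for which both $\delta_X^Y$ and $\delta_Y^Z$ are defined. Moreover, by Observation~\ref{obs:trafos}(i), if $X\cap Z$ fails to be non-empty, closed in~$X$ and open in~$Z$, then $\NT_0(X,Z)=0$ and there is nothing to prove; hence I may assume that $X\cap Z$ is non-empty, closed in~$X$ and open in~$Z$, so that $\NT_0(X,Z)$ is infinite cyclic on $\mu_X^Z$.

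Next I would put the two odd transformations into a normal form. Recall from \S\ref{sec:genrel2} that a non-zero odd transformation $\delta_X^Y$ occurs in one of three mutually exclusive configurations, and that in each of them it is, up to an immaterial sign, an explicit composite of canonical transformations: in the two configurations in which $X\cap Y$ is open in~$X$ one has $\delta_X^Y=\delta_C^Y\circ r_X^C$ for a subset~$C$ closed in~$X$ with $(Y,C)$ a boundary pair inside the interval $X\cup Y$, while in the remaining configuration one has $\delta_X^Y=i_U^Y\circ\delta_X^U$ for a subset~$U$ open in~$Y$ with $(U,X)$ a boundary pair inside $X\cup U$. (That these are genuine boundary pairs uses that the elements of $\LC(W)^*$ are intervals for the total order of \S\ref{sec:ordcom2}, so that connected unions and complements of such sets are again intervals, hence locally closed; this is where type~(A) enters, exactly as in Lemma~\ref{lem:relations}.) Writing $\delta_Y^Z$ in the same way yields four combined shapes for $\delta_Y^Z\circ\delta_X^Y$.

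Finally, each of these four composites would be shown to vanish by rewriting it with the canonical relations. Using Proposition~\ref{pro:evenrels}(iii) one moves the inner restriction past the outer extension transformation (the resulting sum over connected components of an intersection collapses to a single interval here), and using Proposition~\ref{pro:oddrels}(i)--(iv) one absorbs the flanking restriction and extension transformations into the two boundary transformations. After this rewriting each resulting monomial either contains a factor $i_U^{U\cup C}\circ\delta_C^U$ or $\delta_C^U\circ r_{U\cup C}^C$, which vanishes by Proposition~\ref{pro:oddrel2}, or else a composite $\delta_W^{Z'\setminus W}\circ\delta_{Y'\setminus W}^W$ of two boundary transformations fitting into a common length-three filtration, which vanishes by Corollary~\ref{pro:vanishingboundarycomp}. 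Passing to the opposite partial order on~$W$ (which interchanges open and closed subsets, extension and restriction transformations, reverses composition, and again produces a space of type~(A)) halves the number of configurations to inspect. The genuinely delicate point, and the main obstacle, is the bookkeeping in this last step: one must verify in each configuration that the subsets produced really are relatively open, respectively relatively closed, wherever the cited relations require it---occasionally one first has to replace a boundary pair by a suitable sub- or super-boundary pair, using the lemmas following Definition~\ref{def:boundarysubpair}, to bring the composite into the shape demanded by Corollary~\ref{pro:vanishingboundarycomp}---and this verification relies on the explicit interval description of $\LC(W)^*$ and of the closure operations recorded in \S\ref{sec:ordcom2}.
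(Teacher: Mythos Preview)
Your proposal is correct and follows the same underlying idea as the paper, but it is considerably more elaborate than necessary. The paper's proof consists of two sentences. First, since each generator~$\delta_X^Y$ is by construction a composite of even transformations with a single boundary transformation for a genuine boundary pair, Proposition~\ref{pro:oddrels}(i) and~(ii) absorb the inner even factors so that one is reduced to showing $\delta_{C_2}^{U_2}\circ\delta_{C_1}^{U_1}=0$ for two boundary pairs with $U_1=C_2$. Second, this vanishes by Corollary~\ref{pro:vanishingboundarycomp}, whose hypothesis---that $(U_1\cup C_1)\cup(U_2\cup C_2)$ be locally closed---is automatic in a type-(A) space because connected locally closed subsets are intervals for the total order~$\leq$, and two intervals with non-empty intersection union to an interval.

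Your four-case split, the commutation via Proposition~\ref{pro:evenrels}(iii), the appeal to Proposition~\ref{pro:oddrel2}, and especially the detour through sub- and super-boundary pairs are all avoidable; the paper never touches these. The only genuinely load-bearing ingredients are Proposition~\ref{pro:oddrels}(i)--(ii) for the reduction and the interval property of~$W$ for the hypothesis of Corollary~\ref{pro:vanishingboundarycomp}. Your remark that the interval structure is ``where type~(A) enters'' is exactly right, and is the one-line heart of the argument.
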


\begin{proof}
An application of Proposition~\ref{pro:oddrels}(i) and~(ii) shows
that it suffices to consider a composition of two boundary transformations
coming from boundary pairs. The assertion for this
special case follows from Corollary~\ref{pro:vanishingboundarycomp}
because the union of two connected, locally closed subsets of~$W$ with
non-empty intersection is again locally closed.
\end{proof}

Our investigations above show that all morphisms in the category $\NTC$
arise as compositions of six-term sequence transformations associated to
open inclusions of locally closed subsets of~$X$, that is, $\NTC=\NTCS$.

By Lemma~\ref{lem:odd compositions}, the category $\NTC$ is a split extension
of its even subcategory $\NT^\con_0$ by the bimodule $\NT^\con_1$ of odd transformations.
Propositions~\ref{pro:oddrels}, \ref{pro:oddrel2} and~\ref{pro:vanishingsum}
show that the bimodule structure is as follows: a product
$\mu_Y^Z\circ\delta_W^Y$ or $\delta_Y^Z\circ\mu_W^Y$ is equal to~$\delta_W^Z$
or~$-\delta_W^Z$ whenever all three natural transformations are defined, and
zero otherwise. The occurrence of the minus sign is due to the non-canonical
definition of~$\delta_W^Z$.

An argument as in the proof of
Corollary~\ref{cor:universality_even} now shows that the relations in $\NTC$
are generated by the canonical ones listed in Definition~\ref{def:canonical relations}.
The above description of $\NTC$ as a split extension  was given
in~\cite{meyernestCalgtopspacfiltrKtheory} for the category of natural
transformations corresponding to the totally ordered space.

We now apply our indecomposability criteria established in \S\ref{sec:relationsNT}.
Using that compositions of odd transformations vanish, we find that the characterisations
of even and odd indecomposable transformations in Proposition~\ref{pro:evenindecomp}
and Corollary~\ref{cor:oddindecomp} are valid in the category~$\NTC$.
We obtain a complete list of indecomposable natural transformations in the
category~$\NTC$ consisting of essentially only five different types.

\begin{observation}
	\label{list:indecomposabletransformations}
The category $\NTC$ is generated by the following indecomposable natural transformations:
\begin{enumerate}[label=\textup{(\arabic*)}]
\item
an extension
$\langle (a+1)^i,b^j\rangle \to\langle a^i,b^j\rangle$
whenever $i$ is odd, $a\neq 1$, $a\neq n_i$, and $a^i\neq b^j$;
\item
an extension
$\langle 2^{i+1},b^j\rangle \to\langle n_i^i,b^j\rangle$
whenever $i$ is even and $b^j> 1^{i+1}$;
\item
an extension
$\langle a^i,(b+1)^j\rangle \to\langle a^i,b^j\rangle$
whenever $j$ is even, $b\neq 1$, $b\neq n_j$, and $a^i\neq b^j$;
\item
an extension
$\langle a^i,2^{j-1}\rangle \to\langle a^i,n_j^j\rangle$
whenever $j$ is odd and $a^i< 1^{j-1}$;
\item
a restriction
$\langle (a+1)^i,b^j\rangle \to\langle a^i,b^j\rangle$
whenever $i$ is even, $a\neq n_i$ and $a\neq n_i-1$;
\item
a restriction
$\langle 1^{i-1},b^j\rangle \to\langle (n_i-1)^i,b^j\rangle$
whenever $i$ is even;
\item
a restriction
$\langle a^i,(b+1)^j\rangle\to\langle a^i,b^j\rangle$
whenever $j$ is odd, $b\neq n_j$ and $b\neq n_j-1$;
\item
a restriction
$\langle a^i,1^{j+1}\rangle\to\langle a^i,(n_j-1)^j\rangle$
whenever $j$ is odd;
\item
a boundary transformation
$\langle 1^i,(a-1)^i\rangle\to\langle a^i,n_i^i\rangle$
whenever $i$ is odd and $a\neq 1$;
\item
a boundary transformation
$\langle (b-1)^j,1^j\rangle\to\langle n_j^j,b^j\rangle$
whenever $j$ is even and $b\neq 1$.
\end{enumerate}
\end{observation}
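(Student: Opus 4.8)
By the discussion preceding the statement we already know that $\NTC=\NTCS$, that the relations in $\NTCS$ and $\NTCSE$ are generated by the canonical ones of Definition~\ref{def:canonical relations}, and --- using that any composite of two odd transformations in~$\NTC$ vanishes (Lemma~\ref{lem:odd compositions}) --- that the indecomposability criteria of Proposition~\ref{pro:evenindecomp} and Corollary~\ref{cor:oddindecomp} remain valid in the full category~$\NTC$. Since the canonical transformations form a finite generating set and every monomial in them either is indecomposable or decomposes strictly further, $\NTC$ is generated by its indecomposable morphisms; the plan is therefore to enumerate these by specialising the two criteria to the combinatorics of~$W$.

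For the extension transformations I would apply Proposition~\ref{pro:evenindecomp}(i): $i_U^Y$ is indecomposable precisely when $Y=U\opencup y$ for a maximal element~$y$ of~$\overline\partial U$, and dually for restrictions by part~(ii). Writing a connected open subset as $U=\langle a^i,b^j\rangle$ and reading $\overline\partial U$ (resp.\ $\widetilde\partial C$ for a connected closed subset~$C$) off Table~\ref{tab:closures2}, I would run through the parities of the endpoint superscript in question. When that superscript is odd for~$\overline\partial$, or even for~$\widetilde\partial$, and the endpoint index is interior to its arm, the relevant boundary lies on the same arm, its maximal (resp.\ minimal) element is the adjacent lattice point, and unwinding the definition of $U\opencup y$ (resp.\ $C\closedcup y$) adjoins exactly one further point of that arm; this produces the arm-internal transformations~(1),~(3),~(5),~(7). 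When instead the endpoint sits at a junction $n_k^k=n_{k+1}^{k+1}$ or $1^k=1^{k+1}$, the relevant boundary jumps into the neighbouring simplex and $U\opencup y$ (resp.\ $C\closedcup y$) reaches across the junction, producing the junction transformations~(2),~(4),~(6),~(8). In every case the stated side conditions ($a\neq 1$, $b\neq n_j$, $a^i\neq b^j$, and so on) turn out to be precisely the non-degeneracy constraints ensuring that the resulting pair yields an honest non-identity indecomposable.

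For the boundary transformations I would apply Corollary~\ref{cor:oddindecomp}: $\delta_C^U$ is indecomposable iff~$U$ is open in~$W$, $C$ is closed in~$W$, $\overline U\supset C$ and $\widetilde C\supset U$. Reading off Table~\ref{tab:closures2} which chain-like sets coincide with their closures $\overline{(\,\cdot\,)}$, respectively~$\widetilde{(\,\cdot\,)}$, one finds that such a boundary pair is confined to a single arm of the accordion: on an arm with odd superscript~$i$ it must take the form $U=\langle a^i,n_i^i\rangle$ (the upper, open part) and $C=\langle 1^i,(a-1)^i\rangle$ (the lower, closed part) with $a\neq 1$, yielding~(9); on an arm with even superscript~$j$ the mirror-image computation --- accounting for the order and sign conventions of~$\leq$ --- yields~(10).

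The step I expect to be the main obstacle is this last enumeration together with its bookkeeping. One must treat the exceptional rows of Table~\ref{tab:closures2} at $a=1$, $b=1$, $a=n_i$ and $b=n_j$ with care, so as to confirm that the maximal-element/minimal-element conditions and the operations~$\opencup$ and~$\closedcup$ really produce exactly the listed families and nothing more; and in the odd case one must rule out complete, reduced boundary pairs straddling a junction point --- these always decompose through the junction by Propositions~\ref{pro:oddrels} and~\ref{pro:oddrel2} --- so that the five families genuinely exhaust the indecomposable transformations and no transformation on the list is secretly a non-trivial composite.
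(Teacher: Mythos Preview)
Your proposal is correct and follows exactly the route the paper takes: the paper simply states that, since odd compositions vanish and the relations are the canonical ones, Proposition~\ref{pro:evenindecomp} and Corollary~\ref{cor:oddindecomp} apply, and then records the resulting list without further detail. Your outline of the case analysis via Table~\ref{tab:closures2} is precisely the computation that the paper leaves implicit.

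One small remark on your last paragraph: you speak of having to ``rule out complete, reduced boundary pairs straddling a junction point'' because they ``decompose through the junction''. This phrasing is slightly off. A complete, reduced boundary pair is, by Corollary~\ref{cor:oddindecomp}, automatically indecomposable; what actually happens is that the closure conditions $\overline U\supset C$ and $\widetilde C\supset U$ themselves, read off Table~\ref{tab:closures2}, force $U$ and $C$ to lie on a single arm --- so such multi-arm pairs simply do not exist, rather than existing and then decomposing. This is exactly the bookkeeping you flag as the main obstacle, and it goes through cleanly.
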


We make two further observations which are direct consequences
of Observation~\ref{list:indecomposabletransformations} and which
will be used in the next section.

\begin{observation}
	\label{obs:singularsubsets}
There are precisely $n+1$ sets $C\in\LC(W)^*$ with the
property that there is only one indecomposable transformation to~$C$,
namely:
\begin{itemize}
\item 
the singletons $\{1^1\}$, $\{1^m\}$ and $\{a^i\}$ with
$i\in\{1,\ldots,m\}$ and $a\not\in\{1,n_i\}$;
\item
the maximal totally ordered subsets $\{1^i,2^i,\ldots,n_i^i\}$ for
$i\in\{1,\ldots,m\}$.
\end{itemize}
Moreover, these are precisely the sets $C\in\LC(W)^*$ such
that there is only one indecomposable transformation \emph{out of}~$C$.
We call these sets \emph{singular subsets} of~$W$.

For all other subsets $D\in\LC(W)^*$ there are precisely
two indecomposable transformations to~$D$ and precisely two
indecomposable transformations out of~$D$. Altogether, the category
$\NTC$ is thus generated by $n^2-1$ indecomposable transformations.
\end{observation}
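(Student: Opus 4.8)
The statement is a purely combinatorial consequence of the complete list of indecomposable natural transformations in Observation~\ref{list:indecomposabletransformations}, so the plan is to run through that list case by case. Fix $Y=\langle a^i,b^j\rangle\in\LC(W)^*$. Each of the ten types listed there alters exactly one of the two endpoints of its target while keeping the other fixed: types~(1), (2), (5), (6) and~(9) keep the right endpoint~$b^j$, whereas types~(3), (4), (7), (8) and~(10) keep the left endpoint~$a^i$. Hence the indecomposable transformations with target~$Y$ fall into a ``left'' group and a ``right'' group, which I would enumerate separately, organising the bookkeeping by the parities of~$i$ and~$j$, by the position of~$a$ relative to $1$, $n_i-1$, $n_i$ (and symmetrically for~$b$), and by whether $a^i=b^j$, that is, whether~$Y$ is a singleton. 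Throughout I would use the phantom chains of length one at levels $0$ and $m+1$ implicit in the conventions $1^0=1^1$ and $1^{m+1}=1^m$ of \S\ref{sec:Proof_of_ungraded_iso}, under which types~(2) and~(4) also account for the transformations at the two tips.

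The heart of the matter is then the following dichotomy, to be checked type by type and against Observation~\ref{obs:trafos}. When both endpoints of~$Y$ are interior to their chains and $a^i\neq b^j$, exactly one left-group and one right-group transformation targets~$Y$, so there are two. The count drops to one precisely in these situations: (a)~if $a^i=b^j$, so $Y=\{a^i\}$ is a singleton, the ``extension-narrowing'' generator (type~(1) for odd~$i$, type~(3) for even~$j$, which carries the side condition $a^i\neq b^j$) disappears, while the complementary ``restriction-narrowing'' generator --- type~(7) or~(8), respectively type~(5) or~(6), depending on where~$a$ sits --- survives; (b)~if $Y=\langle1^i,n_i^i\rangle$ is a maximal totally ordered subset, the condition $a=1$ kills type~(1) while $b=n_i$ kills the entire right group, leaving only the single restriction (or, on a tip-side chain, extension) into the chain; (c)~for the two tips one endpoint degenerates. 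Conversely, for every other~$Y$ both a left- and a right-group generator survive, and two generators from the same group target a common~$Y$ only in the innocuous case $Y=\langle a^i,n_i^i\rangle$ with $i$ odd and $a\notin\{1,n_i\}$, where types~(1) and~(9) both appear but the right group is empty; so the total is never three. The count of outgoing indecomposable transformations from a fixed source obeys the mirror-image dichotomy --- as one also sees by passing to the order-reversed space~$W^{\op}$, which is again of type~(A), has the same underlying set, and has the same tips, interior singletons and maximal chains, hence the same distinguished subsets --- so the two families coincide.

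This identifies the subsets with a unique incoming (equivalently, outgoing) indecomposable transformation as exactly the two tips, the interior singletons $\{a^i\}$ with $a\notin\{1,n_i\}$, and the~$m$ maximal totally ordered subsets; that is, the $n+1$ singular subsets, the count being $2+\sum_{i=1}^m(n_i-2)+m=2+(n+m-1)-2m+m=n+1$ (using $\sum_i n_i=n+m-1$). Finally, since every indecomposable transformation has a unique target and $\lvert\LC(W)^*\rvert=\tfrac{n(n+1)}{2}$, summing the multiplicities over all targets gives
\[
(n+1)\cdot1+\Bigl(\tfrac{n(n+1)}{2}-(n+1)\Bigr)\cdot2=(n+1)(n-1)=n^2-1
\]
indecomposable transformations in~$\NTC$.

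The arithmetic itself is elementary; the one genuinely delicate point is the boundary bookkeeping --- at the tips, at the valley points $1^i=1^{i+1}$ and the peak points $n_i^i=n_{i+1}^{i+1}$, and at sets with an endpoint at $1$, $n_i-1$ or $n_i$ or with coinciding endpoints. There one must check that the side conditions attached to types~(1)--(10) in Observation~\ref{list:indecomposabletransformations} (such as $a\neq1$, $b^j>1^{i+1}$, $a^i<1^{j-1}$, and the parity constraints) carve out precisely the non-singular configurations, neither more nor less; in particular one must track that a valley or peak endpoint of~$Y$ is matched against the list at the ``other'' parity --- so that, for instance, a restriction into a chain $\langle1^i,n_i^i\rangle$ with $i$ odd arises from type~(5) or~(6) at the even level $i-1$ --- and that the two equivalent (``extension'' versus ``restriction'') descriptions available at a chain end, as already visible for the space $O_2\bigvee_{2=2}O_2$, are matched consistently by the phantom-chain convention.
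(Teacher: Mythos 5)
Your proposal is correct and takes essentially the same route as the paper, which offers no separate argument but presents the statement as a direct consequence of the list in Observation~\ref{list:indecomposabletransformations}: one reads off, for each $Y\in\LC(W)^*$, how many of the ten types (interpreted with the phantom-chain conventions $1^0=1^1$, $1^m=1^{m+1}$, exactly as you note) have target, respectively source, equal to~$Y$, and your identification of the singular subsets and the counts $n+1$ and $n^2-1$ are right. One small slip in your bookkeeping, harmless for the counts: the boundary types (9) and (10) fix \emph{neither} endpoint of their target (the source of (9) is $\langle 1^i,(a-1)^i\rangle$), and the ``same-group overlap'' also occurs in the mirror case where types (3) and (10) both hit $\langle n_j^j,b^j\rangle$ with $j$ even and $b\notin\{1,n_j\}$ --- in both situations the total is still exactly two, so the dichotomy and the final tallies stand.
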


As we have seen, every indecomposable natural transformations~$\mu$
in~$\NTC$ is a six-term sequence transformation. We therefore have
an associated \emph{subsequent} six-term sequence transformation~$\eta$.
That is, if~$\mu$ is an extension transformation~$i_U^Y$,
then $\eta=r_Y^{Y\setminus U}$;
similarly for restriction and boundary transformations.
\begin{observation}
	\label{obs:subsequent}
Let $Y$ be a singular subset and $\mu\colon Y\to Z$ an indecomposable
transformations. Let $\eta\colon Z\to V$ be the
subsequent six-term sequence transformation. Then~$\eta$ is
indecomposable and~$V$ is singular.
\end{observation}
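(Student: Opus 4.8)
The plan is a finite verification running through the $n+1$ singular subsets listed in Observation~\ref{obs:singularsubsets}, organised into three families: the two ``free endpoint'' singletons $\{1^1\}$ and $\{1^m\}$; the ``interior'' singletons $\{a^i\}$ with $1<a<n_i$; and the $m$ maximal totally ordered subsets $\langle 1^i,n_i^i\rangle$. For a singular subset~$Y$, Observation~\ref{obs:singularsubsets} guarantees a unique indecomposable transformation $\mu\colon Y\to Z$ out of~$Y$, and Observation~\ref{list:indecomposabletransformations} together with the closure data of Table~\ref{tab:closures2} pins down its shape, hence the boundary pair $(U,C)$ it belongs to, the third term~$V$ of the associated six-term sequence and the subsequent transformation $\eta\colon Z\to V$. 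It then remains to check that~$\eta$ is indecomposable and that~$V$ is again singular. One notes in passing that, since a singleton closed point admits no proper extension or restriction out of it, $\mu$ is a boundary transformation exactly for $Y\in\{\{1^1\},\{1^m\}\}$ and an extension otherwise, so that in every case~$\eta$ is an \emph{even} transformation and its indecomposability is decided by Proposition~\ref{pro:evenindecomp}.

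Carrying this out: for $Y=\{1^1\}$ the unique indecomposable out is the boundary transformation $\delta_{\{1^1\}}^{\langle 2^1,n_1^1\rangle}$, so $Z=\langle 2^1,n_1^1\rangle$ and the relevant six-term sequence is the one of the ideal $\langle 2^1,n_1^1\rangle\triangleleft\langle 1^1,n_1^1\rangle$; hence $V=\langle 1^1,n_1^1\rangle$ is the first maximal chain, $\eta=i_{\langle 2^1,n_1^1\rangle}^{\langle 1^1,n_1^1\rangle}$, and $\eta$ is indecomposable by Proposition~\ref{pro:evenindecomp}(i) because $\langle 1^1,n_1^1\rangle=\langle 2^1,n_1^1\rangle\opencup 1^1$ with~$1^1$ maximal in $\overline\partial\langle 2^1,n_1^1\rangle$; the case $Y=\{1^m\}$ is the mirror image. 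For an interior singleton $\{a^i\}$ with $i$ odd (the case $i$ even being dual), the unique indecomposable out is an extension, and a short computation with $\opencup$ gives $Z=\langle(a-1)^i,a^i\rangle$ when $a\geq 3$ and $Z=\langle n_{i-1}^{i-1},2^i\rangle$ when $a=2$ (read with the endpoint convention $n_0^0=1^0=1^1$ if $i=1$); thus $\eta=r_Z^{Z\setminus\{a^i\}}$ and $V=Z\setminus\{a^i\}$ is the singleton $\{(a-1)^i\}$ when $a\geq 3$ and the previous maximal chain (or $\{1^1\}$ itself, if $i=1$) when $a=2$, and Proposition~\ref{pro:evenindecomp}(ii) shows~$\eta$ indecomposable since $Z=V\closedcup a^i$ with~$a^i$ minimal in $\widetilde\partial V$. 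Finally, for a maximal chain $Y=\langle 1^i,n_i^i\rangle$ the unique indecomposable out is again an extension $i_Y^Z$, with $Z=Y\opencup w$ for the maximal element~$w$ of $\overline\partial Y$; then $\eta=r_Z^{Z\setminus Y}$ and $V=Z\setminus Y$ is the singleton $\{w\}$, which is singular, except in the degenerate situation where~$w$ is itself a gluing point — occurring precisely when the neighbouring chain has length~$2$ — in which case $V$ is the adjacent maximal chain and is again singular; indecomposability of~$\eta$ follows once more from Proposition~\ref{pro:evenindecomp}(ii).

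Throughout, the chain parities and the endpoint conventions ($1^0=1^1$, $1^m=1^{m+1}$, and degenerate chains with $n_1=1$ or $n_m=1$) have to be tracked with care, and the closures and boundaries recorded in Table~\ref{tab:closures2} are what make the individual computations mechanical. I expect the only genuine difficulty to be precisely this bookkeeping: ensuring, for interior singletons $\{a^i\}$ with $a=2$ and for maximal chains whose relevant boundary element lies on a gluing point, that the third term~$V$ of the six-term sequence is computed correctly and is still one of the subsets in Observation~\ref{obs:singularsubsets}. Once the three families have been handled, Observation~\ref{obs:subsequent} follows.
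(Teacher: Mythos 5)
Your verification is correct and is essentially the paper's own argument: the paper presents Observation~\ref{obs:subsequent} as a direct consequence of the explicit list in Observation~\ref{list:indecomposabletransformations}, which is exactly the finite case check you carry out, using the uniqueness from Observation~\ref{obs:singularsubsets} and the $\opencup$/$\closedcup$ criteria of Proposition~\ref{pro:evenindecomp} to identify $\eta$ and $V$ in each family. One cosmetic point: in the degenerate maximal-chain case (neighbouring chain of length~$2$, so the maximal boundary element is a gluing point) the set $V$ is the maximal chain of index $i+2$ rather than the literally adjacent chain $i+1$, but it is still a maximal totally ordered subset, hence singular, so your conclusion is unaffected.
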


\subsection[Construction of an isomorphism of categories]
	{Construction of an isomorphism of categories}
	\label{sec:ringtheory2}
In this section, we show that the category $\NTC$ essentially
depends only on the number~$n$, the total number of points
in~$W$. We illustrate our approach by an example first.

\begin{example}
	\label{exa:four_point_spaces}
As an example, we compare the two categories $\NTC(O_4)$
and $\NTC(W_4)$ for the topological spaces $O_4$
and $W_4$ which correspond to the partial orders
$1\prec 2\prec 3\prec 4$ and $1\prec 2\prec 3\succ 4$ on the set $\{1,2,3,4\}$, respectively.
The indecomposable transformations in these categories are
displayed in Figure~\ref{fig:indecomposables_ordered_4} and~\ref{fig:indecomposables_accordion_4},
where we use the abbreviation $234\perdef\{2,3,4\}$, and so on.
\begin{figure}[htbp]
\[
\xymatrix@-0.8pc{
1\ar[rd]|\circ^\delta & & 1234\ar[rd]^r & & 4\ar[rd]^i & \\
& 234\ar[ru]^i\ar[rd]^r & & 123\ar[ru]|\circ^\delta\ar[rd]^r & & 34 \\
34\ar[ru]^i\ar[rd]^r & & 23\ar[ru]^i\ar[rd]^r & & 12\ar[ru]|\circ^\delta\ar[rd]^r & \\
& 3\ar[ru]^i & & 2\ar[ru]^i & & 1
}
\]
\caption{Diagram of indecomposable natural transformations in $\NTC(O_4)$}
\label{fig:indecomposables_ordered_4}
\end{figure}
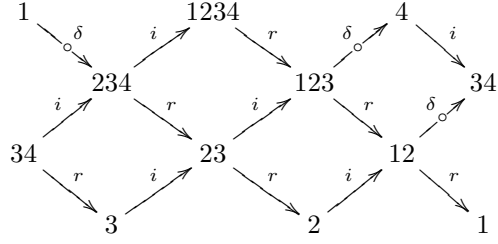
\begin{figure}[htbp]
\[
\xymatrix@-0.8pc{
1\ar[rd]|\circ^\delta & & 123\ar[rd]^i & & 4\ar[rd]|\circ^\delta & \\
& 23\ar[ru]^i\ar[rd]^i & & 1234\ar[ru]^r\ar[rd]^r\ar@{}[rr]|{-} & & 3\\
3\ar[ru]^i\ar[rd]^i & & 234\ar[ru]^i\ar[rd]^r & & 12\ar[ru]|\circ^\delta\ar[rd]^r & \\
& 34\ar[ru]^i & & 2\ar[ru]^i & & 1
}
\]
\caption{Diagram of indecomposable natural transformations in $\NTC(W_4)$}
\label{fig:indecomposables_accordion_4}
\end{figure}
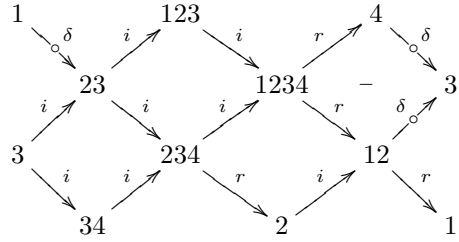
In Figure~\ref{fig:indecomposables_ordered_4} all squares are
commutative. This is also true for Figure~\ref{fig:indecomposables_accordion_4}, except for the single square
\[
\xymatrix@-0.8pc{
& 4\ar[rd]|\circ^\delta & \\
1234\ar[ru]^r\ar[rd]^r\ar@{}[rr]|{-} & & 3,\\
& 12\ar[ru]|\circ^\delta & 
}
\]
which anti-commutes. Moreover, the compositions of indecomposable transformations
of the form $S\to\sharp\to S'$ for singular subsets $S$, $S'$ all vanish as part
of exact six-term sequences. For a proof of these relations, see \S\ref{sec:relationsNT}.
Arguing as in the proof of Corollary~\ref{cor:universality_even}, we see that the relations
above generate all relations in the category $\NTC(W_4)$.

By replacing the generator $\delta_4^3$ with its
additive inverse, we can make all squares in Figure~\ref{fig:indecomposables_accordion_4} commute. Now it can be verified
by a direct check that the obvious bijection between the chosen sets of
generators of the two categories extends to an isomorphism of
categories. This isomorphism is not grading-preserving. However, it
has the following property: a subset $\{U,Y,C\}\subset\LC(O_4)^*$
consisting of a boundary pair $(U,C)$ and its union $Y=U\cup C$ is
mapped to a subset of $\LC(W_4)^*$ of the same kind,
though the roles of each particular set may be interchanged.
This shows that the isomorphism respects exactness of modules.
\end{example}

Now we generalise the observations in Example~\ref{exa:four_point_spaces} to the general situation. We begin with
describing certain chains of indecomposable natural transformations
connecting two singular subsets of~$W$. Every chain consists
of $n-1$ transformations.

Starting with the point $1^1$, we have the chain
\begin{align*}
\{1^1\}
\xrightarrow{\delta}\langle 2^1,n_1^1\rangle
\xrightarrow{i}\langle 2^1,(n_2-1)^2\rangle
\xrightarrow{i}&\cdots
\xrightarrow{i}\langle 2^1,2^2\rangle\\
\xrightarrow{i}\langle 2^1,n_3^3\rangle
\xrightarrow{i}\langle 2^1,(n_4-1)^4\rangle
\xrightarrow{i}&\cdots
\xrightarrow{i}\langle 2^1,2^4\rangle\\
\vdots\\
\xrightarrow{i}\langle 2^1,n_{m-1}^{m-1}\rangle
\xrightarrow{i}\langle 2^1,(n_{m}-1)^{m}\rangle
\xrightarrow{i}&\cdots
\xrightarrow{i}\langle 2^1,1^m\rangle\\
\xrightarrow{r}\langle 2^1,(n_{m-1}-1)^{m-1}\rangle
\xrightarrow{r}&\cdots
\xrightarrow{r}\langle 2^1,1^{m-1}\rangle\\
\vdots\\
\xrightarrow{r}\langle 2^1,(n_{1}-1)^{1}\rangle
\xrightarrow{r}&\cdots
\xrightarrow{r}\{2^1\}
\end{align*}
from $\{1^1\}$ to $\{2^1\}$, which we denote by
$\{1^1\}\Longrightarrow\{2^1\}$.

In the following, we make the underlying rule for this procedure precise.
Fix an indecomposable transformation $\nu\colon Y\to Z$. We distinguish
two cases:

If $Z$ is a singular subset, then there is precisely one indecomposable transformation
$S(\nu)$ out of~$Z$.
If, on the other hand, $Z$ is a non-singular subset, then there are precisely two indecomposable transformations
out of~$Z$ (cf.\ Observation~\ref{obs:singularsubsets}), and we want to choose the
``right'' one.

The following lemma describes the indecomposable transformations out of a non-singular
subset~$Z$ with respect to an indecomposable transformation into~$Z$.
It provides us with a way to define the successor of an indecomposable transformation
into a non-singular subset.

\begin{lemma}
	\label{obs:successor}
Let $Z$ is be non-singular subset. Let $\nu\colon Y\to Z$ be an indecomposable
transformation.
\begin{enumerate}[label=\textup{(\roman*)}]
\item
If $Y$ is singular, then there is precisely one of the two indecomposable
transformations out of $Z$---denoted by $S(\nu)$---which is not the subsequent
transformation in the six-term sequence of~$\nu$.
\item
If $Y$ is non-singular, then there is precisely one of the two indecomposable
transformations out of $Z$---denoted by $S(\nu)$---such that the composition
$S(\nu)\circ\nu$ cannot be factorised into a product of two other 
indecomposable transformations.
\end{enumerate}
\end{lemma}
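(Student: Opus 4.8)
The plan is to treat the two parts separately: part~(i) follows at once from the observations already in place, whereas part~(ii) requires a direct inspection of the list in Observation~\ref{list:indecomposabletransformations}.

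For part~(i): since $Y$ is singular, Observation~\ref{obs:singularsubsets} shows that $\nu$ is \emph{the} unique indecomposable transformation out of~$Y$; in particular it is a six-term sequence transformation and has a well-defined subsequent six-term transformation $\eta\colon Z\to V$. By Observation~\ref{obs:subsequent} this $\eta$ is itself indecomposable (and $V$ is again singular). Since $Z$ is non-singular, Observation~\ref{obs:singularsubsets} guarantees that there are exactly two indecomposable transformations out of~$Z$, and $\eta$ is one of them; hence there is precisely one other indecomposable transformation out of~$Z$, which we define to be $S(\nu)$. By construction it is the one that is not the subsequent transformation in the six-term sequence of~$\nu$.

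For part~(ii): now $Y$ and~$Z$ are both non-singular, which by Observation~\ref{list:indecomposabletransformations} forces $\nu$ to be one of the ``generic'' extension, restriction or boundary transformations listed there, and the argument proceeds by going through these finitely many shapes. First I would show that the subsequent six-term transformation $\eta_0$ of~$\nu$ factors as $\eta_0=\lambda\circ\eta$, where $\eta$ is one of the two indecomposable transformations out of~$Z$ and $\lambda$ is again indecomposable; this is read off from the list and verified using the canonical relations of Propositions~\ref{pro:evenrels} and~\ref{pro:oddrels} (for instance a boundary transformation of the shape occurring here factors through an extension out of~$Z$). This singles out~$\eta$, and we set $S(\nu)$ to be the \emph{other} indecomposable transformation out of~$Z$. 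It then remains to check two things. First, $\eta\circ\nu$ \emph{is} factorisable: $\eta$ and~$\nu$ form one side of one of the commuting squares of the grid of indecomposable transformations, so one of the canonical square relations (Proposition~\ref{pro:evenrels}(iii) together with Propositions~\ref{pro:oddrels} and~\ref{pro:oddrel2}) rewrites $\eta\circ\nu$ as the composite of the two indecomposable transformations forming the opposite side of that square, and these differ from~$\nu$ and~$\eta$. Second, $S(\nu)\circ\nu$ is \emph{not} factorisable into a product of two other indecomposable transformations: inspecting Observation~\ref{list:indecomposabletransformations} one checks that the target of~$S(\nu)$ admits an incoming indecomposable transformation only from~$Z$ itself, so $Z$ is the unique possible intermediate object and the only resulting length-two factorisation into indecomposables is $S(\nu)\circ\nu$. (Conceptually, in both parts $S(\nu)$ is the indecomposable transformation out of~$Z$ that is not the first factor of the subsequent six-term transformation of~$\nu$, so the two cases are really one.)

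The step I expect to be the main obstacle is exactly this last non-factorisability claim. It needs, for each type of~$\nu$, an enumeration of the objects lying ``between'' $Y$ and the target of~$S(\nu)$ that carry the relevant indecomposable transformations, together with the fact---already available from the paragraph preceding Observation~\ref{list:indecomposabletransformations}, where all relations in~$\NTC$ are shown to be the canonical ones of Definition~\ref{def:canonical relations}---that no relation can produce an alternative rewriting. A convenient bookkeeping device is that, by the computations of~\S\ref{sec:ordcom2}, the relevant group $\NT_*(Y,Z')\cong\Z$ (with $Z'$ the target of~$S(\nu)$) and $S(\nu)\circ\nu$ is a generator of it, so one only has to exclude each candidate intermediate object in turn.
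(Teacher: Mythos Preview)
Your overall plan matches the paper's proof: part~(i) follows from Observation~\ref{obs:subsequent}, and part~(ii) is a case-by-case inspection of the list in Observation~\ref{list:indecomposabletransformations} (the paper reduces by symmetry to cases (1), (2), (5), (6), (9) and writes out case~(1) explicitly). Your framing via the subsequent six-term transformation~$\eta_0$ is a pleasant unifying remark that the paper does not spell out.

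There is, however, a genuine error in your non-factorisability argument. You assert that ``the target of~$S(\nu)$ admits an incoming indecomposable transformation only from~$Z$ itself.'' This is false whenever that target is non-singular---which is the generic situation---since by Observation~\ref{obs:singularsubsets} a non-singular object has exactly \emph{two} incoming indecomposable transformations. Concretely, in case~(1) with $\nu=i_{\langle(a+1)^i,b^j\rangle}^{\langle a^i,b^j\rangle}$ and $S(\nu)=i_{\langle a^i,b^j\rangle}^{\langle(a-1)^i,b^j\rangle}$, the target $\langle(a-1)^i,b^j\rangle$ also receives an indecomposable transformation from an object obtained by moving the \emph{other} endpoint~$b^j$. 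The correct verification (and what the paper does) is to list the two indecomposable arrows into the target of~$S(\nu)$ and the two indecomposable arrows out of~$Y$, and check that the only compatible length-two path from~$Y$ to the target is $S(\nu)\circ\nu$ itself: the ``other'' outgoing arrow from~$Y$ and the ``other'' incoming arrow into the target land in and originate from different objects. This is still a finite check per case, but it is a two-ended comparison, not the one-ended one you describe. Your final paragraph anticipates that more work is needed here, but the specific mechanism you propose for it does not hold.

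A minor point: your claim that $\lambda$ in the factorisation $\eta_0=\lambda\circ\eta$ is itself indecomposable is generally false (the subsequent six-term transformation can be a long composite), but this is harmless---all you need is that $\eta_0$ factors through one of the two indecomposables out of~$Z$, which is true.
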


The underlying rule for our chains of indecomposable transformations is now
simply as follows. The well-definition of this rule is a consequence of Lemma~\ref{obs:successor}.

\begin{definition}
	\label{def:successor}
\emph{The successor of an indecomposable transformation $\nu$ is the
indecomposable transformation $S(\nu)$.}
\end{definition}

\begin{proof}[Proof of Lemma~\textup{\ref{obs:successor}}]
The first assertion is a consequence of Observation~\ref{obs:subsequent}. The second assertion can be checked by a case differentiation using the list in Observation~\ref{list:indecomposabletransformations}. By symmetry considerations, it suffices to check the cases~(1), (2), (5), (6), and~(9) from that list. As an example, we discuss case~(1) here. In the thus remaining four cases, the assertion can be verified in the straight-forward but lengthy manner outlined below.

Consider the indecomposable extension
$i_{\langle (a+1)^i,b^j\rangle}^{\langle a^i,b^j\rangle}$
with $i$ is odd, $a\neq 1$, $a\neq n_i$, and $a^i\neq b^j$.

%
%

The set $\langle (a+1)^i,b^j\rangle$ is non-singular if and only if $(a+1)^i\prec b^j$. In this case, the indecomposable transformations out of $\langle a^i,b^j\rangle$ are
$\begin{cases}
i_{\langle a^i,b^j\rangle}^{\langle (a-1)^i,b^j\rangle} & \textup{if $a>2$,} \\
i_{\langle a^i,b^j\rangle}^{\langle n_{i-1}^{i-1},b^j\rangle} & \textup{if $a=2$,} 
\end{cases}$
and
\[\begin{cases}
r_{\langle a^i,b^j\rangle}^{\langle a^i,(b-1)^j\rangle} & \textup{if $j$ odd, $b\neq 1$,} \\
r_{\langle a^i,b^j\rangle}^{\langle a^i,(n_{j-1}-1)^{j-1}\rangle} & \textup{if $j$ even, $b=1$,} \\
i_{\langle a^i,b^j\rangle}^{\langle a^i,(b-1)^j\rangle} & \textup{if $j$ even, $b\neq 1,2$,} \\
i_{\langle a^i,b^j\rangle}^{\langle a^i,n_{j+1}^{j+1}\rangle} & \textup{if $j$ even, $b=2$.} 
\end{cases}\]
While
$i_{\langle a^i,b^j\rangle}^{\langle (a-1)^i,b^j\rangle}\circ i_{\langle (a+1)^i,b^j\rangle}^{\langle a^i,b^j\rangle}=i_{\langle (a+1)^i,b^j\rangle}^{\langle (a-1)^i,b^j\rangle}$
and
$i_{\langle a^i,b^j\rangle}^{\langle n_{i-1}^{i-1},b^j\rangle}\circ i_{\langle (a+1)^i,b^j\rangle}^{\langle a^i,b^j\rangle}=i_{\langle (a+1)^i,b^j\rangle}^{\langle n_{i-1}^{i-1},b^j\rangle}$
do not factorise in a non-trivial way different from the given one
(which may also be read from the list in Observation~\ref{list:indecomposabletransformations} since
we know how these generators multiply), we have
\begin{align*}
r_{\langle a^i,b^j\rangle}^{\langle a^i,(b-1)^j\rangle}\circ i_{\langle (a+1)^i,b^j\rangle}^{\langle a^i,b^j\rangle}
&=i_{\langle (a+1)^i,(b-1)^j\rangle}^{\langle a^i,(b-1)^j\rangle}\circ r_{\langle (a+1)^i,b^j\rangle}^{\langle (a+1)^i,(b-1)^j\rangle},\\
r_{\langle a^i,b^j\rangle}^{\langle a^i,(n_{j-1}-1)^{j-1}\rangle}\circ i_{\langle (a+1)^i,b^j\rangle}^{\langle a^i,b^j\rangle}
&=i_{\langle (a+1)^i,(n_{j-1}-1)^{j-1}\rangle}^{\langle a^i,(n_{j-1}-1)^{j-1}\rangle}\circ r_{\langle (a+1)^i,b^j\rangle}^{\langle (a+1)^i,(n_{j-1}-1)^{j-1}\rangle},\\
i_{\langle a^i,b^j\rangle}^{\langle a^i,(b-1)^j\rangle}\circ i_{\langle (a+1)^i,b^j\rangle}^{\langle a^i,b^j\rangle}
&=i_{\langle (a+1)^i,(b-1)^j\rangle}^{\langle a^i,(b-1)^j\rangle}\circ i_{\langle (a+1)^i,b^j\rangle}^{\langle (a+1)^i,(b-1)^j\rangle},\\
i_{\langle a^i,b^j\rangle}^{\langle a^i,n_{j+1}^{j+1}\rangle}\circ i_{\langle (a+1)^i,b^j\rangle}^{\langle a^i,b^j\rangle}
&=i_{\langle (a+1)^i,n_{j+1}^{j+1}\rangle}^{\langle a^i,n_{j+1}^{j+1}\rangle}\circ i_{\langle (a+1)^i,b^j\rangle}^{\langle (a+1)^i,n_{j+1}^{j+1}\rangle},
\end{align*}
providing factorisations into products of two other 
indecomposable transformations, respectively.
\end{proof}

In addition to the previously described chain of indecomposable transformations
from $\{1^1\}$ to $\{2^1\}$, we obtain the following chains of indecomposable
transformations between singular subsets when applying the
rule from Definition~\ref{def:successor}:

If $n_1>2$, we have a chain $\{2^1\}\Longrightarrow\{3^1\}$,
namely
\begin{align*}
\{2^1\}
\xrightarrow{i}\langle 2^1,1^1\rangle
\xrightarrow{\delta}\langle 3^1,n_1^1\rangle
\xrightarrow{i}\langle 3^1,(n_2-1)^2\rangle
\xrightarrow{i}&\cdots
\xrightarrow{i}\langle 3^1,2^2\rangle\\
\xrightarrow{i}\langle 3^1,n_3^3\rangle
\xrightarrow{i}\langle 3^1,(n_4-1)^4\rangle
\xrightarrow{i}&\cdots
\xrightarrow{i}\langle 3^1,2^4\rangle\\
\vdots\\
\xrightarrow{i}\langle 3^1,n_{m-1}^{m-1}\rangle
\xrightarrow{i}\langle 3^1,(n_{m}-1)^{m}\rangle
\xrightarrow{i}&\cdots
\xrightarrow{i}\langle 3^1,1^m\rangle\\
\xrightarrow{r}\langle 3^1,(n_{m-1}-1)^{m-1}\rangle
\xrightarrow{r}&\cdots
\xrightarrow{r}\langle 3^1,1^{m-2}\rangle\\
\vdots\\
\xrightarrow{r}\langle 3^1,(n_{1}-1)^{1}\rangle
\xrightarrow{r}&\cdots
\xrightarrow{r}\{3^1\}.
\end{align*}
In the same way, we obtain chains of indecomposable transformations
$\{3^1\}\Longrightarrow\{4^1\}\Longrightarrow\cdots\Longrightarrow\{(n_1-1)^1\}$.
This is followed by the chains
\begin{align*}
\{(n_1-1)^1\}
\xrightarrow{i}\langle (n_1-2)^1,(n_1-1)^1\rangle
\xrightarrow{i}&\cdots
\xrightarrow{i}\langle 1^1,(n_1-1)^1\rangle\\
\xrightarrow{\delta}\{n_1^1\}
\xrightarrow{i}\langle n_1^1,(n_2-1)^2\rangle
\xrightarrow{i}&\cdots
\xrightarrow{i}\langle n_1^1,2^2\rangle\\
\xrightarrow{i}\langle n_1^1,n_3^3\rangle
\xrightarrow{i}\langle n_1^1,(n_4-1)^4\rangle
\xrightarrow{i}&\cdots
\xrightarrow{i}\langle n_1^1,2^4\rangle\\
\vdots\\
\xrightarrow{i}\langle n_1^1,n_{m-1}^{m-1}\rangle
\xrightarrow{i}\langle n_1^1,(n_{m}-1)^{m}\rangle
\xrightarrow{i}&\cdots
\xrightarrow{i}\langle n_1^1,1^m\rangle\\
\xrightarrow{r}\langle n_1^1,(n_{m-1}-1)^{m-1}\rangle
\xrightarrow{r}&\cdots
\xrightarrow{r}\langle n_1^1,1^{m-2}\rangle\\
\vdots\\
\xrightarrow{r}\langle n_1^1,(n_{3}-1)^{3}\rangle
\xrightarrow{r}&\cdots
\xrightarrow{r}\langle n_1^1,1^2\rangle
\end{align*}
from $\{(n_1-1)^1\}$ to $\langle n_1^1,1^2\rangle$, and
\begin{align*}
\langle n_1^1,1^2\rangle
\xrightarrow{i}\langle (n_1-1)^1,1^2\rangle
\xrightarrow{i}&\cdots
\xrightarrow{i}\langle 1^1,1^2\rangle\\
\xrightarrow{r}\langle (n_2-1)^2,1^2\rangle
\xrightarrow{r}&\cdots
\xrightarrow{r}\{1^2\}\\
\xrightarrow{\delta}\langle 2^3,n_3^3\rangle
\xrightarrow{i}\langle 2^3,(n_4-1)^4\rangle
\xrightarrow{i}&\cdots\;\cdots
\xrightarrow{i}\langle 2^3,1^m\rangle\\
\xrightarrow{r}\langle 2^3,(n_{m-1}-1)^{m-1}\rangle
\xrightarrow{r}&\cdots\;\cdots
\xrightarrow{r}\{2^3\}
\end{align*}
from $\langle n_1^1,1^2\rangle$ to $\{2^3\}$.
Continuing this procedure, we obtain the following long chain of
indecomposable natural transformations:
\begin{align}
  \label{eq:longchain}
\begin{split}
\{1^1\}
\Longrightarrow\{2^1\}
\Longrightarrow &\cdots
\Longrightarrow\{(n_1-1)^1\}
\Longrightarrow\langle n_1^1,1^2\rangle\\
\Longrightarrow\{2^3\}
\Longrightarrow &\cdots
\Longrightarrow\{(n_3-1)^3\}
\Longrightarrow\langle n_3^3,1^4\rangle\\
\vdots\\
\Longrightarrow\{2^{m-1}\}
\Longrightarrow &\cdots
\Longrightarrow\{(n_{m-1}-1)^{m-1}\}
\Longrightarrow\langle n_{m-1}^{m-1},1^m\rangle\\
\Longrightarrow\{1^m\}
\Longrightarrow\{2^m\}
\Longrightarrow &\cdots
\Longrightarrow\{(n_m-1)^m\}
\Longrightarrow\langle 1^{m-1},n_m^m\rangle\\
\Longrightarrow\{2^{m-2}\}
\Longrightarrow &\cdots
\Longrightarrow\{(n_{m-2}-1)^{m-2}\}
\Longrightarrow\langle 1^{m-3},n_{m-2}^{m-2}\rangle\\
\vdots\\
\Longrightarrow\{2^2\}
\Longrightarrow &\cdots
\Longrightarrow\{(n_2-1)^2\}
\Longrightarrow\langle 1^1,n_1^1\rangle
\Longrightarrow\{1^1\}.
\end{split}
\end{align}
This long chain is the composition of $n+1$ of the previously described
chains, each of them connecting two singular subsets of~$W$.
We obtain an enumeration (without repetitions) of the singular
subsets of~$W$. We denote the so enumerated singular subsets
of~$W$ by $S_i$ with $i\in\{1,\ldots,n+1\}$.

In fact, each of the $n^2-1$ indecomposable transformations in $\NTC$
occurs precisely once in the above long cyclic chain. This is simply
because this long chain consists of $(n+1)(n-1)=n^2-1$
indecomposable transformations and non of them occurs more than once.
To see this, observe that we return to the singular subset $\{1^1\}$
only after $n^2-1$ steps, and that the successing transformations is
well-defined. Hence, we obtain an enumeration of the indecomposable
natural transformations in $\NTC$ as well.

Each non-singular subset of~$W$ is listed
precisely twice in~\eqref{eq:longchain}.
Figures~\ref{fig:indecomposables_odd} and~\ref{fig:indecomposables_even}
indicate a way in which the long chain~\eqref{eq:longchain} can be
entangled in order to list each element of $\LC(W)^*$ only
once. The singular subsets of $W$ and the chains of indecomposable
transformations between them are indicated explicitly. At each
intersection point of two chains, a non-singular subset of~$W$
is situated. The diagram is periodic in the horizontal
direction; the dashed arrows indicate that the vertical order of the
repeating objects is reversed after one period.

\begin{figure}[htbp]
\[
\xymatrix@-0.8pc{
\ar@{-->}[dddddd] &
S_1\ar@{=>}[rdrdrdrdrdrd] & &
S_{n+1}\ar@{=>}[rdrdrdrdrdrd] & &
\cdots & &
S_\frac{n+5}{2}\ar@{=>}[rdrd] & &
S_\frac{n+3}{2} & \\
\\
& \ar@{=>}[ruru] & & & & & & & &  & \\
\\
& \ar@{=>}[rdrd] & & & & & & & &  & \\
\\
& S_\frac{n+3}{2}\ar@{=>}[rurururururu] & &
S_\frac{n+1}{2}\ar@{=>}[rurururururu] & &
\cdots & &
S_2\ar@{=>}[ruru] & &
S_1 & \ar@{-->}[uuuuuu]
}
\]
\caption{Diagram of indecomposable natural transformations in $\NTC$
for the space $W$ with odd number of points}
\label{fig:indecomposables_odd}
\end{figure}
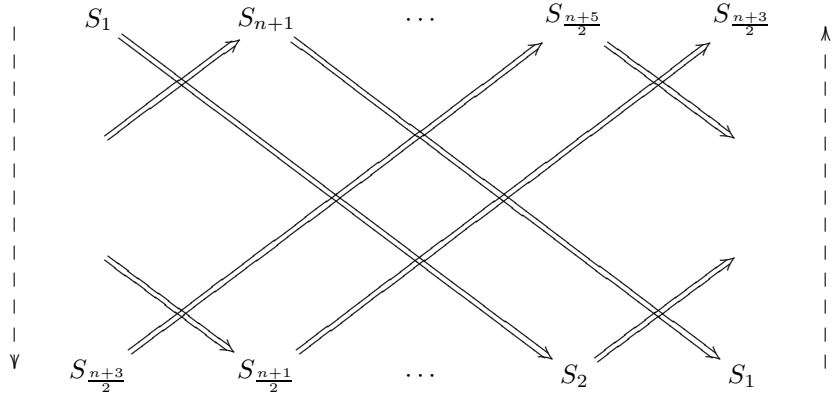

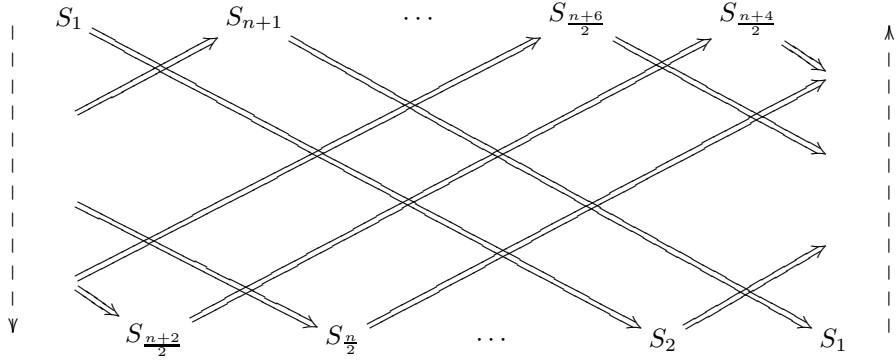
\begin{figure}[htbp]
\[
\xymatrix@-1.2pc{
\ar@{-->}[ddddddd] &
S_1\ar@{=>}[rdrdrdrdrdrdrd] & &
S_{n+1}\ar@{=>}[rdrdrdrdrdrdrd] & &
\cdots & &
S_\frac{n+6}{2}\ar@{=>}[rdrdrd] & &
S_\frac{n+4}{2}\ar@{=>}[rd] & & \\
& & & & & & & & & & & \\
& \ar@{=>}[ruru] & & & & & & & & & & \\
& & & & & & & & & & & \\
& \ar@{=>}[rdrdrd] & & & & & & & & & & \\
& & & & & & & & & & & \\
& \ar@{=>}[rd]\ar@{=>}[urururururur] & & & & & & & & & & \\
& & S_\frac{n+2}{2}\ar@{=>}[ururururururur] & &
S_\frac{n}{2}\ar@{=>}[urururururur] & &
\cdots & &
S_2\ar@{=>}[ruru] & &
S_1 & \ar@{-->}[uuuuuuu]
}
\]
\caption{Diagram of indecomposable natural transformations in $\NTC$
for the space $W$ with even number of points}
\label{fig:indecomposables_even}
\end{figure}

As remarked in the previous subsection, the relations in the category~$\NTC(W)$ are generated by the canonical ones from Definition~\ref{def:canonical relations}. In the following, we shall spell these out more explicitly for the case at hand.

The canonical relations translate to the vanishing of all compositions of consecutive six-term sequence transformations together with commutativity relations for all squares in the above diagram; more precisely, all squares either commute or anti-commute. The only squares that anti-commute are those of the form
\begin{equation}
	\label{eq:anti-commutativity relations}
\begin{split}
\xymatrix{
& U_1\ar[rd]^i & & \\
C\ar[ru]^\delta|\circ\ar[rd]^\delta|\circ & & Z,\ar@{}[ll]|{-} & \text{or} \\
& U_2\ar[ru]^i & &
}
\qquad\qquad
\xymatrix{
& C_1\ar[rd]^\delta|\circ & \\
Y\ar[ru]^r\ar[rd]^r & & U.\ar@{}[ll]|{-} \\
& C_2\ar[ru]^\delta|\circ &
}
\end{split}
\end{equation}
To see that the sum-relations in Propositions~\ref{pro:evenrels} and~\ref{pro:vanishingsum} do not contribute anything beyond the anti-commutativity relations~\eqref{eq:anti-commutativity relations}, notice that intersections of connected subsets of~$W$ are again connected, and that complements of connected subsets of~$W$ have at most \emph{two} connected components.

We can make all the anti-commuting squares in~\eqref{eq:anti-commutativity relations} commute by replacing the boundary transformations $\delta_C^U$ for all boundary pairs $(U,C)$ with $C< U$ by their additive inverses. Recall that $\leq$ denotes the total order on $W$ defined in \S\ref{sec:ordcom2}. This change in the choice of generators does not affect the
commutativity of the remaining squares because each of them contains either no boundary transformations or two boundary transformations with the same orientation concerning the order~$\leq$.

By Observation~\ref{obs:subsequent}, the above relations imply that the compositions
\begin{equation}
	\label{eq:singular_composition}
\begin{split}
\xymatrix{
S_i\ar[rd] & & S_{i-1}\\
& \sharp\ar[ru] &
}
\end{split}
\end{equation}
vanish for all $i\in\{1,\ldots,n+1\}$; here we set $S_0\perdef S_{n+1}$,
and~$\sharp$ denotes the unique object sitting between $S_i$ and $S_{i-1}$
in the above diagram of indecomposable transformations.

\begin{lemma}
	\label{lem:generationbysingulars}
The relations in $\NTC(W)$ are generated by the commutativity relations for
all squares and the vanishing of the compositions~\eqref{eq:singular_composition}.
\end{lemma}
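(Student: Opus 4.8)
The plan is to prove the reverse inclusion of what was noted just before the lemma: that the commutativity relations for all squares, together with the relations \eqref{eq:singular_composition}, generate every relation of $\NTC(W)$. Since we already know that the relations of $\NTC(W)$ are generated by the commutativity relations for squares together with the vanishing of every composition of two consecutive transformations in a six-term exact sequence (each such transformation first expanded into a product of indecomposables, using $\NTC(W)=\NTCS$), it suffices to derive each of these latter relations from the commutativity relations and \eqref{eq:singular_composition}.

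The mechanism is to ``slide the corner'' in the diagram of indecomposable transformations (Figure~\ref{fig:indecomposables_odd}, resp.\ \ref{fig:indecomposables_even}). Let $\nu\colon P\to Q$ be an indecomposable transformation with subsequent six-term sequence transformation $\eta\colon Q\to V$; the relation to be derived is $\eta\circ\nu=0$. If $P$ is singular, then $\nu$ is the unique indecomposable transformation leaving $P$ and, by Observation~\ref{obs:subsequent}, $\eta$ is itself indecomposable with $V$ singular, so $\eta\circ\nu$ is already one of the compositions \eqref{eq:singular_composition}. If $P$ is not singular, write $\eta=\eta_1\circ\dots\circ\eta_l$ with indecomposable factors, $\eta_l$ leaving $Q$; then $P$ carries a second outgoing indecomposable transformation $\nu'$ (Observation~\ref{obs:singularsubsets}), and the ``corner'' $\eta_l\circ\nu$ may be rewritten, by a single commutativity relation, as $a\circ\nu'$ with $a$ indecomposable---a factorisation of exactly the type occurring in Lemma~\ref{obs:successor}(ii) and in Proposition~\ref{pro:evenrels}. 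Substituting gives $\eta\circ\nu=\eta_1\circ\dots\circ\eta_{l-1}\circ a\circ\nu'$, a monomial again beginning with an indecomposable transformation leaving $P$; repeating such moves (now applied to the various corners of the growing monomial, including those near $V$) brings it, after finitely many steps, to a monomial containing a factor of the form \eqref{eq:singular_composition}, which vanishes. One may equally well package the whole argument as a computation of the morphism groups of the pre-additive category presented by the indecomposable transformations subject only to the commutativity relations and \eqref{eq:singular_composition}; these groups turn out to agree with the groups $\NT_*(Y,Z)$ of Observation~\ref{obs:trafos}, and then, exactly as in the proof of Corollary~\ref{cor:universality_even}, the canonical functor from this category to $\NTC(W)$ must be an isomorphism.

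The real work lies in the single sliding step---verifying that $\eta_l\circ\nu$ is indeed one of our commutative squares $a\circ\nu'$ (this must also cover the case where $Q$ is singular, in which $\eta_l=S(\nu)$)---and in the bookkeeping that the process terminates with the corner at a singular subset rather than looping. Both are settled by a finite case analysis against the ten families of indecomposable transformations listed in Observation~\ref{list:indecomposabletransformations}, entirely parallel to the case analysis in the proof of Lemma~\ref{obs:successor}; by the symmetries of the space $W$ only a small number of representative cases need to be treated explicitly.
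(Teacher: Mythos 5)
Your reduction and your base case are fine: if the source of $\nu$ is singular, Observation~\ref{obs:subsequent} does identify $\eta\circ\nu$ with one of the compositions~\eqref{eq:singular_composition}. But the substance of the lemma is exactly the part you defer, and as framed it has a real gap. First, the relations that must be derived are the vanishing of $r_{U\cup C}^C\circ i_U^{U\cup C}$, $\delta_C^U\circ r_{U\cup C}^C$ and $i_U^{U\cup C}\circ\delta_C^U$ for \emph{arbitrary} boundary pairs $(U,C)$, where both factors are usually composites of many indecomposables; setting up the induction only for ``$\nu$ indecomposable with subsequent transformation $\eta$'' does not cover these, because the partial corner $\eta_l\circ\nu$ formed with the last indecomposable factor of a composite first transformation is in general non-zero (e.g.\ $r_Y^C\circ i_{U'}^Y=i_{U'\cap C}^C\circ r_{U'}^{U'\cap C}\neq 0$), so the rewriting has to be performed on the whole word. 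Your parenthetical about ``various corners of the growing monomial'' is where the actual proof would have to live, and neither the choice of corner, nor the absence of cycles, nor the claim that the procedure necessarily produces a factor of the form~\eqref{eq:singular_composition} is established; the same normal-form problem is hidden in the alternative packaging (``these groups turn out to agree'' with Observation~\ref{obs:trafos} is precisely what would need to be computed). Second, even the single sliding step is not automatic: you must show that the first indecomposable factor of the subsequent transformation of $\nu$ always forms, together with $\nu$, half of one of the commuting squares (never a ``straight'' corner admitting no completing square), and this is a global statement about the entangled diagram, not literally parallel to Lemma~\ref{obs:successor}(ii), which only analyses products of \emph{two} indecomposables.

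The paper's proof sidesteps all of this rewriting with one concrete device you are missing. For an arbitrary boundary pair $(U,C)$ it takes the unique adjacent points $u\in U$, $c\in C$ with $\{u,c\}$ connected and forms $F(u)=\overline{\{u\}}\setminus\overline{\{c\}}$ and $F(c)=\widetilde{\{c\}}\setminus\widetilde{\{u\}}$; these are singular subsets (singletons that are neither open nor closed, or maximal totally ordered subsets), the transformations $F(u)\to F(u)\cup F(c)\to F(c)$ are indecomposable, and the composition $U\to U\cup C\to C$ factors as $U\to F(u)\to F(u)\cup F(c)\to F(c)\to C$, where the outer maps are extensions or restrictions. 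Hence the vanishing follows immediately from the corresponding relation in~\eqref{eq:singular_composition}, and the other two consecutive compositions are handled in the same way. If you want to salvage your approach, you would either have to carry out the termination analysis in full, or, better, prove this factorisation through the singular pair $\bigl(F(u),F(c)\bigr)$, which replaces the entire sliding argument.
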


\begin{proof}
By the list of generating relations given above, it suffices to show that,
for an arbitrary boundary pair~$(U,C)$ in~$W$, all three compositions of
two successive transformations in the diagram
\[
\xymatrix{
U\ar[r]^i & U\cup C\ar[d]^r \\
& C,\ar[lu]^\delta|\circ
}
\]
vanish.
Let $u\in U$ and $c\in C$ be the unique elements such that $\{u,c\}$ is
connected. Let $F(u)$ denote $\overline{\{u\}}\setminus\overline{\{c\}}$
and let $F(c)$ denote $\widetilde{\{c\}}\setminus\widetilde{\{u\}}$.
Then $F(u)$ and $F(c)$ are either non-closed and non-open singletons or
maximal totally ordered subsets of~$W$. In either case, $F(u)$
and $F(c)$ are singular subsets of~$W$, and the composition
$U\xrightarrow{i} U\cup C\xrightarrow{r}C$ factors as
\[
U\xrightarrow{}F(u)\xrightarrow{i}F(u)\cup F(c)
\xrightarrow{r}F(c)\xrightarrow{}C.
\]
The transformations $U\to F(u)$ and $F(c)\to C$ are either extensions
or restrictions, depending on the form of $F(u)$ and $F(c)$.
Notice that the transformations $F(u)\xrightarrow{i}F(u)\cup F(c)$ and
$F(u)\cup F(c)\xrightarrow{r}F(c)$ are indecomposable.
We have thus verified that the vanishing of the first composition
follows from the given relations;
the other two compositions can be proven to vanish similarly.
\end{proof}

The above description shows that the ungraded isomorphism class of the
category~$\NTC(W)$ depends only on the number~$n$. More precisely,
let~$O_n$ denote the totally ordered space with~$n$ points. Forming
the long chains~\eqref{eq:longchain} for both~$W$
and~$O_n$, we obtain a bijection between a set of generators of
$\NTC(W)$ and a set of generators of $\NTC(O_n)$. The
foregoing arguments on relations in the two categories
show that this bijection extends to an isomorphism
$\Phi$ between the (ungraded) categories $\NTC(W)$ and $\NTC(O_n)$.

Finally, we convince ourselves that $\Phi$ is compatible with the
notion of \emph{exactness of modules.} Of course it is in general not
true that~$\Phi$ maps boundary pairs to boundary pairs.

\begin{lemma}
	\label{lem:universality_of_successor}
Let $V\xrightarrow{\mu} Y$ be a natural transformation in $\NT_*(V,Y)$
coming from a six-term exact sequence, and let $Y\xrightarrow{\eta} Z$
be the subsequent natural transformation in this six-term exact sequence.
Then every natural transformation $Y\xrightarrow{\eta'} Z'$ with
$\eta'\circ\mu=0$ factors through~$\eta$.
\end{lemma}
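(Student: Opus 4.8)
The plan is to realise $\eta$ as a weak cokernel of $\mu$ by transporting the defining exact triangle of the representing objects through Yoneda's lemma. First I would spell out the hypothesis that $\mu$ comes from a six-term exact sequence: there are a locally closed subset $Y_0\subseteq W$, an open subset $U\subseteq Y_0$ and its complement $C\perdef Y_0\setminus U$, such that $\mu$ is one of the three canonical transformations $i_U^{Y_0}$, $r_{Y_0}^C$, $\delta_C^U$, and $\eta$ is the one that follows it cyclically in
\[
\cdots\to\FK_U\xrightarrow{i_U^{Y_0}}\FK_{Y_0}\xrightarrow{r_{Y_0}^C}\FK_C\xrightarrow{\delta_C^U}\FK_U[1]\to\cdots.
\]
The degenerate cases $U=Y_0$ and $U=\emptyset$ are trivial (in the first, $\mu$ is an identity and $\eta=0$, forcing $\eta'=0$; in the second, $\mu=0$ and $\eta$ is an identity), so I assume neither holds.

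Next, fix the target object $Z'$ and recall the exact triangle~\eqref{eq:exacttrianglerepobjects}, $\Sigma\Rep_U\to\Rep_C\to\Rep_{Y_0}\to\Rep_U$, in $\kk(W)$. By Lemma~\ref{lem:correspondences}, its three structure maps correspond, under the identifications $\NT_*(P,Q)\cong\KK_*(W;\Rep_Q,\Rep_P)$ coming from the Representability Theorem~\ref{thm:representability} and Yoneda's lemma, precisely to $\delta_C^U$, $r_{Y_0}^C$ and $i_U^{Y_0}$. Applying the homological functor $\KK_*(W;\Rep_{Z'},\blank)$ to this triangle yields a periodic long exact sequence. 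Translating it back via $\KK_*(W;\Rep_{Z'},\Rep_P)\cong\NT_*(P,Z')$ and noting that, since $P\mapsto\Rep_P$ reverses arrows and the Yoneda identification reverses them once more, postcomposition with a $\kk(W)$-morphism $\Rep_P\to\Rep_Q$ corresponds to \emph{pre}composition with the associated transformation $\FK_Q\Rightarrow\FK_P$, we obtain the exact sequence
\[
\cdots\to\NT_*(U,Z')\xrightarrow{\delta_C^U}\NT_*(C,Z')\xrightarrow{r_{Y_0}^C}\NT_*(Y_0,Z')\xrightarrow{i_U^{Y_0}}\NT_*(U,Z')\to\cdots,
\]
in which each arrow denotes precomposition with the indicated canonical transformation.

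Finally I would read off the assertion. In each of the three cases for $\mu$, with $Y$ its codomain ($Y_0$, $C$ or $U$ respectively), exactness of the displayed sequence at the node $\NT_*(Y,Z')$ says exactly that the kernel of precomposition with $\mu$ equals the image of precomposition with its successor $\eta$; that is, a transformation $\eta'\colon Y\to Z'$ satisfies $\eta'\circ\mu=0$ if and only if $\eta'=\eta''\circ\eta$ for some $\eta''$, which is precisely the statement that $\eta'$ factors through $\eta$. The only point requiring care is the bookkeeping: tracking the two variance reversals so that $\KK(W)$-postcomposition becomes $\NT$-precomposition, and keeping the $\Z/2$-degree shifts in the six-term sequence consistent. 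I do not expect any substantial obstacle beyond this.
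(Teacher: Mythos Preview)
Your proposal is correct and takes essentially the same approach as the paper: both arguments hinge on the exactness of the sequence $\NT_*(Z,Z')\xrightarrow{\eta^*}\NT_*(Y,Z')\xrightarrow{\mu^*}\NT_*(V,Z')$, from which the factorisation is read off immediately. The paper's proof simply asserts this exact sequence in one line, whereas you spell out its provenance (the exact triangle~\eqref{eq:exacttrianglerepobjects} of representing objects together with the Yoneda identifications from Lemma~\ref{lem:correspondences}); your additional case analysis and the discussion of variance are correct but more than the paper deems necessary.
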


\begin{proof}
Consider the exact sequence
$
\NT_*(Z,Z')\xrightarrow{\eta^*}\NT_*(Y,Z')\xrightarrow{\mu^*}\NT_*(V,Z').
$
We have $\mu^*(\eta')=\eta'\circ\mu=0$ and thus $\eta'\in\im(\eta^*)$.
\end{proof}

In other words, the transformation~$\eta$ is the universal
transformation out of~$Y$ with $\eta\circ\mu=0$. It is uniquely
determined up to sign by this property. This is because the only
isomorphisms in the category $\NTC(W)$ are automorphisms of the
form $\pm\id_Z^Z$ for some object $Z$.

\begin{corollary}
	\label{cor:universality_of_successor}
A composite $V\xrightarrow{\mu} Y\xrightarrow{\eta} Z$ in $\NTC$ is
\textup{(}up to sign\textup{)} part of a six-term exact sequence
\textup{(}in the sense of two successive transformations\textup{)}
if and only if $\mu$ is a six-term exact sequence transformation,
$\eta\circ\mu=0$, and every transformation $\eta'$ out of $Y$ with
$\eta'\circ\mu=0$ factors through $\eta$.
\end{corollary}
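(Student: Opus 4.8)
The plan is to deduce Corollary~\ref{cor:universality_of_successor} from Lemma~\ref{lem:universality_of_successor} together with the rigidity of the category $\NTC(W)$, by treating the two implications separately. For the forward implication, suppose $V\xrightarrow{\mu}Y\xrightarrow{\eta}Z$ is, up to sign, a pair of consecutive arrows in a six-term exact sequence of natural transformations. Then $\mu$ is a six-term exact sequence transformation by definition, and the fact that the sequence is a complex forces $\eta\circ\mu=0$. Writing $\eta_0\colon Y\to Z$ for the genuine subsequent transformation (so $\eta=\pm\eta_0$), Lemma~\ref{lem:universality_of_successor} says that every $\eta'$ out of $Y$ with $\eta'\circ\mu=0$ factors through $\eta_0$, hence through $\eta$. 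Since passing from $\eta$ to $-\eta$ preserves all three of these properties, the forward implication follows.

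For the backward implication, I would assume the three listed conditions. Because $\mu$ is a six-term exact sequence transformation, it occurs in some six-term exact sequence; let $\eta_0\colon Y\to Z_0$ be the subsequent transformation there. By the forward implication already proved, $\eta_0$ satisfies exactly the same three properties as $\eta$. Applying the factorisation property of $\eta$ to $\eta'=\eta_0$ and that of $\eta_0$ to $\eta'=\eta$ produces $g\colon Z\to Z_0$ and $h\colon Z_0\to Z$ with $\eta_0=g\circ\eta$ and $\eta=h\circ\eta_0$, whence $\eta_0=(g\circ h)\circ\eta_0$ and $\eta=(h\circ g)\circ\eta$. Now $\NT_*(Z,Z)$ and $\NT_*(Z_0,Z_0)$ are free Abelian of rank one, generated by the respective identities (Observation~\ref{obs:trafos}(i), applied with the two arguments equal), so $g\circ h=p\cdot\id_{Z_0}$ and $h\circ g=q\cdot\id_{Z}$ for integers $p,q$; and since $\eta,\eta_0$ are nonzero elements of the free Abelian groups $\NT_*(Y,Z)$, $\NT_*(Y,Z_0)$ (Property~\ref{ass:free}; in the only potentially degenerate case $\mu=\pm\id$, one has $\eta=\eta_0=0$ and the claim is trivial), the relations $\eta_0=p\eta_0$, $\eta=q\eta$ give $p=q=1$. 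Hence $g$ is an isomorphism, so $Z\cong Z_0$, and as the only isomorphisms in $\NTC(W)$ are of the form $\pm\id$, this identifies $\eta$ with $\pm\eta_0$. Therefore $V\xrightarrow{\mu}Y\xrightarrow{\eta}Z$ is, up to sign, a consecutive pair in a six-term exact sequence, as required.

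The step I expect to be the real obstacle is the last part of the backward implication: upgrading the \emph{weak}, non-unique factorisation property to an actual isomorphism $Z\cong Z_0$. This is precisely the ``uniqueness up to sign'' of the universal transformation recorded in the discussion following Lemma~\ref{lem:universality_of_successor}, and it cannot come from a formal categorical argument alone, since $\NTC(W)$ is merely preadditive and what is being asked of $\eta$ is only a weak cokernel property of $\mu$. Instead it rests on the concrete structure established earlier, namely that endomorphism rings of objects in $\NTC(W)$ are just $\Z\cdot\id$ and that morphism groups are torsion-free, so that a mutual factorisation between two universal transformations is automatically an isomorphism. Everything else in the argument is bookkeeping around the definition of ``part of a six-term exact sequence'' and the sign ambiguity.
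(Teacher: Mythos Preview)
Your proof is correct and follows the same approach as the paper: the forward direction is Lemma~\ref{lem:universality_of_successor}, and the backward direction is the uniqueness-up-to-sign asserted in the paragraph preceding the corollary, which hinges on the fact that the only isomorphisms in $\NTC(W)$ are $\pm\id$. You have in fact been more careful than the paper, correctly noting that the factorisation property of Lemma~\ref{lem:universality_of_successor} is only a \emph{weak} cokernel condition, so the mutual factorisation does not yield an isomorphism by abstract nonsense alone; your use of $\NT_*(Z,Z)\cong\Z\cdot\id$ and torsion-freeness of $\NT_*(Y,Z)$ to force $gh=\id$ and $hg=\id$ is exactly the missing step the paper leaves implicit.
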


The characterisation in Corollary~\ref{cor:universality_of_successor}
shows that the isomorphism $\Phi$ and its inverse respect the property
``being part of a six-term exact sequence'' for pairs of composable
natural transformations.
Hence an ungraded $\NTC(O_n)$-module $M$ is exact if and only if $\Phi^*(M)$
is an exact ungraded $\NTC(W)$-module.

The obtained facts are summarised in Theorem~\ref{thm:ungraded_iso}.
The last assertion in this theorem follows from the fact that $\Phi$
maps identities to identities and morphisms between different objects
to morphisms between different objects.

\section{Counterexamples}
	\label{sec:counterexamples}
In this section we discuss several examples of finite $T_0$-spaces for which $\neg UCT(X)$ holds. First we describe a general approach to obtain counterexamples. The ideas are due to Meyer and Nest~\cite{meyernestCalgtopspacfiltrKtheory}.

If our method for finding resolutions of length 1 described in~\S\ref{sec:positive results} fails, we would like to find counterexamples to Lemmas~\ref{lem:projmodules} and~\ref{lem:projresolutions}, and to the hypothesis that $\Cst$-algebras
over~$X$ in the bootstrap class are classified up to $\KK(X)$-equivalence
by filtrated $\K$-theory.

The general procedure is as follows. If, while trying to show that a given space~$X$
has Property~\ref{ass:kernel}, we encounter a subset $Y\in\LC(X)^*$ for which this
is impossible, then we consider the $\NTC$-module homomorphism
\[
j\colon P_Y\to P^0\perdef\bigoplus \left\lbrace P_Z\mid
\text{there is an indecomposable transformation $Z\to Y$}\right\rbrace
\]
induced by all indecomposable transformations $Z\to Y$ in $\NTC$.

If this homomorphism happens to be injective, then the module
$M\perdef P^0/j(P_Y)$ has the projective resolution
\[
0\to P_Y\xrightarrow{j} P^0\twoheadrightarrow M.
\]
If, moreover, this resolution does not split---for instance, when there
is no non-zero homomorphism from $P^0$ to $P_Y$---then the module $M$
is not projective. However, it is always exact by the two-out-of-three
property, and in all cases we will consider it happens to have
free entries. In this situation the module $M$ yields a counterexample
to Lemma~\ref{lem:projmodules}.

We then go on and define the $\NTC$-module $M_k\perdef M/k\cdot M$ for
some natural number $k\in\N_{\geq 2}$. This module is exact and has the
following projective resolution of length~2:
\[
0\to P_Y\xrightarrow{(-k,j)} P_Y\oplus P^0\xrightarrow{(j,k)} P^0
\twoheadrightarrow M_k.
\]
Under the above assumption that there is no non-zero homomorphism from
$P^0$ to $P_Y$ we can therefore compute
\begin{align*}
\Ext_\NTC^2(M_k,P_Y)
&\cong\Hom_\NTC(P_Y,P_Y)
	/(-k,j)^*\bigb{\Hom_\NTC(P_Y\oplus P^0,P_Y)}\\
&\cong\Hom_\NTC(P_Y,P_Y)/k\cdot\Hom_\NTC(P_Y,P_Y)\\
&\neq 0,
\end{align*}
which shows that $M_k$ has projective dimension~2 and provides a
counterexample to Lemma~\ref{lem:projresolutions}. The above term
never vanishes because
$\Hom_\NTC(P_Y,P_Y)\cong\NT_*(Y,Y)\cong\K^*\bigb{\Ch(Y)}$ is a finitely
generated Abelian group containing at least one free summand.

By Lemma~\ref{lem:projresolutions} there is a $\Cst$-algebra~$A$
for which $\FK^\con(A)$ is isomorphic to~$M$. The K\"unneth
Theorem for the $\K$-theory of tensor products~\cite{Blackadar:Op_Algs}*{V.1.5.10} shows that the filtrated $\K$-theory
of the tensor product $A_k\perdef A\otimes\Cuntz_{k+1}$ with the Cuntz
algebra~$\Cuntz_{k+1}$ is isomorphic to~$M_k$. This is because
$\FK^\con(A)\cong M$ is torsion-free.

\begin{theorem}
	\label{thm:nonprojective}
In the above situation, the $\Cst$-algebra $A_k$ is not
$\ker(\FK^\con)^2$-projective.
\end{theorem}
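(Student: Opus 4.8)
The statement asserts that $A_k$ is not $\ker(\FK^\con)^2$-projective, where $\ker(\FK^\con)$ denotes the homological ideal in $\KKcat(X)$ consisting of morphisms killed by filtrated $\K$-theory. The plan is to argue contrapositively through the dimension of projective resolutions in the target Abelian category. First I would recall the standard dictionary from the homological algebra of~\cite{meyernesthomalgintricat}: an object $A_k$ is $\ker(\FK^\con)$-projective relative resolution-wise — more precisely, the length of a $\ker(\FK^\con)$-projective resolution of $A_k$ is governed by the $\ker(\FK^\con)$-projective dimension, and since $A_k\inOb\Bootstrap(X)$ and $\FK^\con$ has enough projectives, this length equals the length of a projective resolution of the module $\FK^\con(A_k)$ in $\Modc{\NTC(W)}$. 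Concretely, $A_k$ being $\ker(\FK^\con)^2$-projective would force $\FK^\con(A_k)$ to admit a projective resolution of length at most~$1$ in $\Modc{\NTC}$.

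The key steps, in order, are: (1) identify $\FK^\con(A_k)\cong M_k$ using the Künneth computation sketched immediately before the theorem — the tensor product with $\Cuntz_{k+1}$ multiplies $\FK^\con$ by $\Z/k$ because $\FK^\con(A)\cong M$ is torsion-free, so the universal coefficient spectral sequence degenerates and yields $M_k = M/k\cdot M$; (2) invoke the explicit projective resolution of length~$2$ for $M_k$ displayed above, namely $0\to P_Y\xrightarrow{(-k,j)} P_Y\oplus P^0\xrightarrow{(j,k)} P^0\twoheadrightarrow M_k$, which is valid because $j\colon P_Y\to P^0$ is injective; (3) compute $\Ext^2_\NTC(M_k,P_Y)\cong\Hom_\NTC(P_Y,P_Y)/k\cdot\Hom_\NTC(P_Y,P_Y)$ using the hypothesis that there is no non-zero homomorphism $P^0\to P_Y$, and observe this is non-zero since $\Hom_\NTC(P_Y,P_Y)\cong\NT_*(Y,Y)\cong\K^*\bigb{\Ch(Y)}$ has a free summand; (4) conclude that $M_k$ has projective dimension exactly~$2$, hence admits no projective resolution of length~$1$, contradicting $\ker(\FK^\con)^2$-projectivity of $A_k$ by the dimension dictionary in step~(1).

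The main obstacle is step~(1): making precise the translation between $\ker(\FK^\con)^2$-projectivity of the $\Cst$-algebra $A_k$ and the projective dimension of its filtrated $\K$-module. This requires the machinery of~\cite{meyernesthomalgintricat} — that $\FK^\con$ restricted to $\Bootstrap(W)$ reflects and detects the relevant homological data, and that the $\ker(\FK^\con)$-projective dimension of an object in the bootstrap class coincides with the projective dimension of its image under $\FK^\con$. One must be slightly careful that the statement is about $\ker(\FK^\con)^2$ (the square of the ideal) rather than about resolutions directly; the point is that $n$-step $\ker(\FK^\con)$-projective resolutions and the vanishing of the $(n{+}1)$-fold composition ideal $\ker(\FK^\con)^{n+1}$ on $A_k$ are equivalent, so $\ker(\FK^\con)^2$-projectivity of $A_k$ is the same as a two-term projective resolution, i.e.\ projective dimension $\le 1$ of $\FK^\con(A_k)$. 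Once this identification is in place, steps (2)--(4) are the routine computation already carried out in the preceding paragraphs, so the theorem follows.
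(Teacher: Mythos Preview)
Your proposal is correct and follows the same route as the paper, which simply cites \cite{meyernestCalgtopspacfiltrKtheory}*{Theorem~5.5}; you have essentially unfolded that cited argument. The only minor wobble is the indexing in your general statement relating ``$n$-step resolutions'' to $\ker(\FK^\con)^{n+1}$-projectivity, but the specific instance you use---that $\ker(\FK^\con)^2$-projectivity of an object in the bootstrap class is equivalent to its filtrated $\K$-module having projective dimension at most~$1$---is the correct one, and the contradiction with $\Ext^2_{\NTC}(M_k,P_Y)\neq 0$ is exactly the mechanism at work.
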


\begin{proof}
The above assumptions are precisely what is used in the proof of~\cite{meyernestCalgtopspacfiltrKtheory}*{Theorem~5.5}.
\end{proof}

For clarity we list all assumptions made above once again:
\begin{itemize}
\item
the module homomorphism $j\colon P_Y\to P^0$ is injective;
\item
there is no non-zero homomorphism $P^0\to P_Y$;
\item
the module $M=\coker(j\colon P_Y\rightarrowtail P^0)$ has free entries.
\end{itemize}
These assumptions have to be checked in each particular case.
The following theorem from~\cite{meyernestCalgtopspacfiltrKtheory}
then provides two non-isomorphic $\Cst$-algebras
over~$X$ in the bootstrap class with isomorphic filtrated $\K$-theory.

\begin{theorem}[\cite{meyernestCalgtopspacfiltrKtheory}*{Theorem~4.10}]
	\label{thm:I2counterexample}
Let $\catI$ be a homological ideal in a triangulated category~$\Tri$
with enough projective objects. Let $F\colon\Tri\to\AIT$ be a universal
$\catI$-exact stable homological functor. Suppose that $\catI^2\neq 0$.
Then there exist non-isomorphic objects $B,D\inin\Tri$ for which
$F(B)\cong F(D)$ in~$\AIT$.

The objects $B$ and $D$ can be obtained as follows:
Choose a non-$\catI^2$-projective object $A\inin\Tri$ and embed it into
an exact triangle
\[
\xymatrix{
\Sigma N_2\ar[r] & \tilde{A_2}\ar[r] & A\ar[r]^{\iota_2} & N_2
}
\]
with $\iota_2\in\catI^2$ and an $\catI^2$-projective
object~$\tilde{A_2}$. Then $F(\tilde{A_2})\cong F(A)\oplus F(N_2)[1]$
whereas $\tilde{A_2}\not\cong A\oplus N_2[1]$.
\end{theorem}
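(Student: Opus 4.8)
This statement is quoted verbatim from~\cite{meyernestCalgtopspacfiltrKtheory}*{Theorem~4.10}, and the proof is the one given there; I indicate its structure. Write $\catI^2$ for the product ideal, i.e.\ the ideal of morphisms that are finite sums of composites of two morphisms lying in~$\catI$; this is again a homological ideal, and since~$\Tri$ has enough $\catI$-projective objects it also has enough $\catI^2$-projective objects (both facts belong to the homological machinery of~\cites{MR2193334,meyernesthomalgintricat}). The plan is to exhibit the pair of objects as $B\perdef\tilde A_2$ and $D\perdef A\oplus N_2[1]$ for an appropriate choice of~$A$ and of an exact triangle as in the statement.

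The first point to establish is that~$\Tri$ contains an object that is \emph{not} $\catI^2$-projective. Indeed, suppose every object of~$\Tri$ were $\catI^2$-projective. Then every $\catI^2$-epimorphism $p\colon E\to A$ would split, since the lift of~$\id_A$ along the surjection $\Tri(A,p)\colon\Tri(A,E)\to\Tri(A,A)$ is a section of~$p$. But for an arbitrary morphism $f\colon A\to B$, rotating the triangle on~$f$ shows that the map $\Sigma^{-1}\bigb{\mathrm{cone}(f)}\to A$ is an $\catI^2$-epimorphism precisely when $f\in\catI^2$, and if that map splits then $f=0$, because consecutive composites in an exact triangle vanish. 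Hence every object being $\catI^2$-projective would force $\catI^2=0$, against the hypothesis $\catI^2\neq 0$. Fix, therefore, an object~$A\inin\Tri$ that is not $\catI^2$-projective.

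Using that~$\Tri$ has enough $\catI^2$-projective objects, the next step is to choose an $\catI^2$-epimorphism from an $\catI^2$-projective object onto~$A$, complete it to an exact triangle and rotate, obtaining an exact triangle $\Sigma N_2\to\tilde A_2\to A\xrightarrow{\iota_2}N_2$ with~$\tilde A_2$ being $\catI^2$-projective and $\iota_2\in\catI^2$. Now apply the stable homological functor~$F$: since $\iota_2\in\catI^2\subseteq\catI=\ker F$ and~$\catI$ is stable under the translation, all maps $F\bigb{\Sigma^k\iota_2}$ vanish, so the long exact sequence of the triangle breaks into short exact sequences; the relevant one is
\[
0\longrightarrow F(N_2)[1]\longrightarrow F(\tilde A_2)\longrightarrow F(A)\longrightarrow 0
\]
in~$\AIT$, where stability of~$F$ was used to identify $F(\Sigma N_2)$ with $F(N_2)[1]$. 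The crucial step—and the one where the $\catI^2$-projectivity of~$\tilde A_2$ is genuinely needed—is that this short exact sequence \emph{splits}, so that $F(\tilde A_2)\cong F(A)\oplus F(N_2)[1]$; this is precisely the analysis of the universal $\catI$-exact functor carried out in~\cite{meyernestCalgtopspacfiltrKtheory}*{proof of Theorem~4.10}, building on~\cite{meyernesthomalgintricat}. Granting it, additivity and stability of~$F$ give $F(B)=F(\tilde A_2)\cong F(A)\oplus F(N_2)[1]\cong F\bigb{A\oplus N_2[1]}=F(D)$.

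It remains to see $B\not\cong D$, which is the easy half. The object~$\tilde A_2$ is $\catI^2$-projective, and $\catI^2$-projective objects are closed under retracts; hence an isomorphism $\tilde A_2\cong A\oplus N_2[1]$ would make its direct summand~$A$ an $\catI^2$-projective object, contradicting the choice of~$A$. Thus $B=\tilde A_2$ and $D=A\oplus N_2[1]$ are non-isomorphic with $F(B)\cong F(D)$, as required. I expect the main obstacle in a fully self-contained account to be the splitting of the displayed short exact sequence: it is not a formal consequence of $\catI^2\subseteq\ker F$ but rests on the structure theory of $\catI^n$-projective objects and on comparing morphisms in~$\Tri$ with morphisms in~$\AIT$ through~$F$; everything else is a routine manipulation with exact triangles.
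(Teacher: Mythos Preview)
Your proposal is appropriate: the paper does not give its own proof of this theorem but simply quotes it from~\cite{meyernestCalgtopspacfiltrKtheory}*{Theorem~4.10}, and you correctly identify this and sketch the argument from that reference. Your outline of the proof---existence of a non-$\catI^2$-projective object from $\catI^2\neq 0$, the $\catI^2$-projective approximation triangle, the splitting of the induced short exact sequence in~$\AIT$, and the retract argument for $\tilde A_2\not\cong A\oplus N_2[1]$---matches the structure of the original Meyer--Nest proof.
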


Now we apply the above procedure to certain explicit examples which will be used in the next chapter.
If $X$ is a space, let $X^{\op}$ denote its dual space, that is, $X^{\op} =X$ as a set and 
the open sets in $X^{\op}$ are exactly the closed sets in $X$.
Let us define the following spaces:
\begin{enumerate}[label=\textup{(\arabic*)}]
\item $X_1=\{1,2,3,4\}$, a basis of the topology~$\tau_{X_1}$ is given by $$\bigl\{ \emptyset, X_1, \{1\},\{2\},\{3\}\bigr\};$$
\item $X_2=X_1^{\op}$;
\item $X_3=\{1,2,3,4\}, \  \tau_{X_3}=\bigl\{ \emptyset, \{1\},\{2\},\{1,2\},\{1,2,3\}, X_3\bigr\}$;
\item $X_4=X_3^{\op}$;
\item $S=\{1,2,3,4\} , \ \tau_{S} = \bigl\{ \emptyset, S, \{1\},\{1,2\},\{1,3\}, \{1,2,3\}\bigr\}$
\item ${C_n}=\{1,2\}\times\Z_n$, $n \geq 2$; a subbasis of the topology~$\tau_{{C_n}}$ is given by
\[\bigl\{(2,k),(1,k)(2,k+[1])\bigr\}_{k \in\Z_n}.\]
\end{enumerate}
Here $\Z_n$ denotes the set $\{0,1,2,\ldots,n-1\}$. In the following we write elements of ${C_n}=\{1,2\}\times\Z_n$ in the
form $a^k$ instead of $(a,k)$.
The directed graphs corresponding to these topological spaces are displayed in Figure~\ref{fig:counterexamples}.
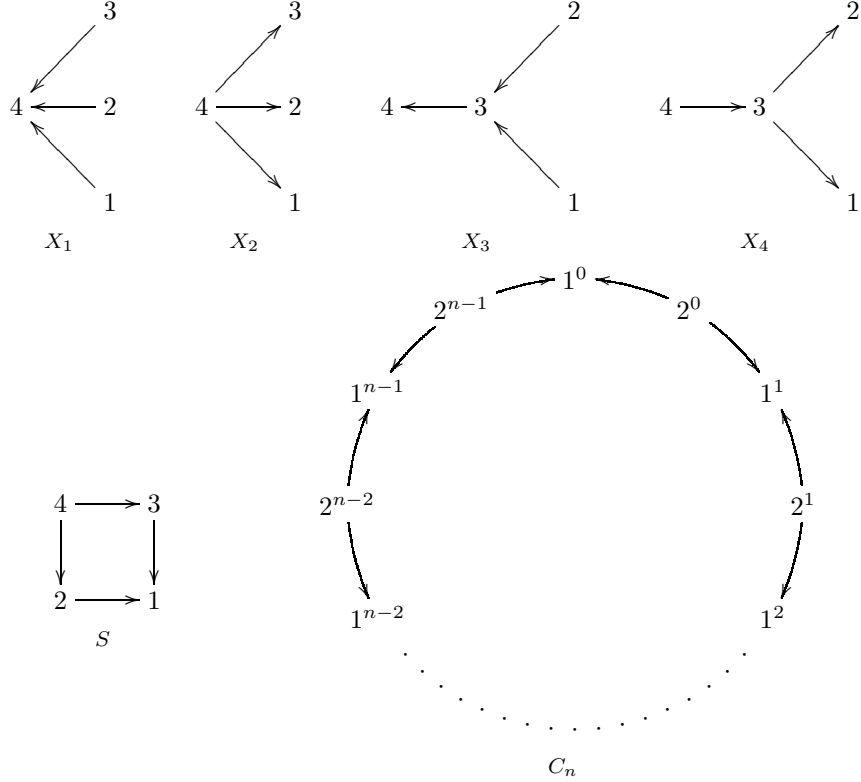
\begin{figure}
\centering
\subfloat[$X_1$]{\label{fig:X1}
\xymatrix{
& 3\ar[dl] \\
4 & 2\ar[l] \\
& 1\ar[ul]
}
}
\qquad
\subfloat[$X_2$]{\label{fig:X2}
\xymatrix{
& 3 \\
4\ar[rd]\ar[ru]\ar[r] & 2 \\
& 1
}
}
\qquad 
\subfloat[$X_3$]{\label{fig:X3}
\xymatrix{
& & 2\ar[ld] \\
4 & 3\ar[l] & \\
& & 1\ar[lu]
}
}
\qquad 
\subfloat[$X_4$]{\label{fig:X4}
\xymatrix{
& & 2 \\
4\ar[r] & 3\ar[ru]\ar[rd] & \\
& & 1
}
}
\\
\subfloat[$S$]{\label{fig:pseudosquare}
\xymatrix{
4\ar[r]\ar[d] & 3\ar[d] \\
2\ar[r] & 1
}
}
\qquad\qquad\quad
\subfloat[${C_n}$]{\label{fig:pseudocircle}
\xy
(0,30)*+{1^0}="10";
(15,25.9808)*+{2^0}="20"; (-15,25.9808)*+{2^{n-1}}="2n-1";
(25.9808,15)*+{1^1}="11"; (-25.9808,15)*+{1^{n-1}}="1n-1";
(30,0)*+{2^1}="21"; (-30,0)*+{2^{n-2}}="2n-2";
(25.9808,-15)*+{1^2}="12"; (-25.9808,-15)*+{1^{n-2}}="1n-2";
{\ar@/_0.2pc/ "20"; "10"}; {\ar@/^0.2pc/ "20"; "11"};
{\ar@/_0.2pc/ "21"; "11"}; {\ar@/^0.2pc/ "21"; "12"};
{\ar@/_0.2pc/ "2n-1"; "1n-1"}; {\ar@/^0.2pc/ "2n-1"; "10"};
{\ar@/_0.2pc/ "2n-2"; "1n-2"}; {\ar@/^0.2pc/ "2n-2"; "1n-1"};
(0,-30)*+{\cdot};
(3.1356,-29.8357)*+{\cdot}; (-3.1356,-29.8357)*+{\cdot};
(6.2374,-29.3444)*+{\cdot}; (-6.2374,-29.3444)*+{\cdot};
(9.2705,-28.5317)*+{\cdot}; (-9.2705,-28.5317)*+{\cdot};
(12.2021,-27.4064)*+{\cdot}; (-12.2021,-27.4064)*+{\cdot};
(15,-25.9808)*+{\cdot}; (-15,-25.9808)*+{\cdot};
(17.6336,-24.2705)*+{\cdot}; (-17.6336,-24.2705)*+{\cdot};
(20.0739,-22.2943)*+{\cdot}; (-20.0739,-22.2943)*+{\cdot};
(22.2943,-20.0739)*+{\cdot}; (-22.2943,-20.0739)*+{\cdot};
\endxy
}
\caption{Directed graphs corresponding to the finite spaces under consideration}
\label{fig:counterexamples}
\end{figure}

\begin{theorem}
\label{counterexamples}
If $X \in \{X_1, X_2 , X_3,X_4,S\} \cup \{{C_n} \mid n \geq 2\}$, then $\neg UCT(X)$ holds.
\end{theorem}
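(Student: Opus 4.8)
The plan is to verify, for each space $X$ in the list, the three hypotheses boxed just before Theorem~\ref{thm:I2counterexample}: namely that the homomorphism $j\colon P_Y\to P^0$ (induced by the indecomposable transformations into a suitably chosen $Y\in\LC(X)^*$) is injective, that $\Hom_\NTC(P^0,P_Y)=0$, and that $M\perdef\coker(j)$ has free entries. Once these are checked, Theorem~\ref{thm:nonprojective} furnishes a $\Cst$-algebra $A_k=A\otimes\Cuntz_{k+1}$ which is not $\ker(\FK^\con)^2$-projective, and Theorem~\ref{thm:I2counterexample} (applied to the triangulated category $\kk(X)$, the ideal $\catI=\ker(\FK^\con)$, and the universal $\catI$-exact functor $\FK^\con$, using $\catI^2\neq0$) produces the two non-isomorphic bootstrap objects $B,D\inin\Bootstrap(X)$ with $\FK^\con(B)\cong\FK^\con(D)$. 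Since the equivalence $\Upsilon$ of Remark~\ref{rem:connected} identifies $\FK^\con$ with $\FK$ up to isomorphism, this is exactly $\neg UCT(X)$.

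The first reduction is that it suffices to treat one representative from each pair of mutually dual spaces. Indeed, passing to the opposite space $X^\op$ interchanges the roles of open and closed subsets and hence induces an (ungraded, grading-reversing) isomorphism $\NTC(X)\cong\NTC(X^\op)$ carrying the diagram of indecomposable transformations to its formal opposite; this swaps extension and restriction transformations and reverses boundary transformations, so a counterexample module for $X$ pulls back to one for $X^\op$. Hence it is enough to handle $X_1$, $X_3$, $S$, and $C_n$ explicitly, the cases $X_2=X_1^\op$ and $X_4=X_3^\op$ following formally. (The space $S$ is self-dual, so no extra work is needed there.) In each case one must first compute, by the methods of \S\ref{sec:relationsNT} and \S\ref{sec:representability}, the category $\NTC(X)$: its objects $\LC(X)^*$, the groups $\NT_*(Y,Z)\cong\K^*\bigb{S(Y,Z)}$ via Lemma~\ref{lem:correspondences} and the Representability Theorem~\ref{thm:representability}, and the generators and relations. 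For the spaces $X_1,X_2,X_3,X_4$ these computations are already available in~\cite{Rasmus}; for $S$ they are in~\cite{Rasmus}*{\S6.2}; the circle-type spaces $C_n$ require a separate, slightly longer, computation.

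For each space the concrete work is: (i) identify the ``bad'' object $Y$ --- the one whose associated kernel cannot be realised as in Property~\ref{ass:kernel}; (ii) write down $P^0=\bigoplus P_Z$ over the indecomposable transformations $Z\to Y$, compute $j$ on the Yoneda generators, and check injectivity by inspecting the induced maps on each entry $P_Y(V)\to\bigoplus P_Z(V)$; (iii) check $\Hom_\NTC(P_Z,P_Y)\cong\NT_*(Y,Z)=0$ for each such $Z$ (this is where the explicit tables of $\NT$-groups are used, and it is typically where one must be careful, since this group can be nonzero for the ``wrong'' choice of $Y$, cf.\ Warning~\ref{war:notgenerator}); (iv) verify $M(V)=\coker\bigb{P_Y(V)\to P^0(V)}$ is free for all $V$, which amounts to checking that each component map $P_Y(V)\to P^0(V)$ is a split injection of free Abelian groups or a multiplication by $\pm1$. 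I expect the main obstacle to be case~(iii) together with the underlying category computation for $C_n$: unlike the finite accordion spaces, $C_n$ is not of type~(A), its order complex $\Ch(C_n)$ is (up to homotopy) a circle, so some of the groups $\K^*\bigb{S(Y,Z)}$ pick up the extra $\Z$ coming from $\K^1(\Sphere^1)$, and one has to argue that precisely this phenomenon forces the failure of Property~\ref{ass:kernel} at the relevant $Y$ while still leaving $\Hom_\NTC(P^0,P_Y)=0$; keeping the signs and the orientation of the cyclic diagram straight is the delicate point.
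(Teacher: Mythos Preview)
Your overall plan---choose a problematic $Y\in\LC(X)^*$, verify that $j\colon P_Y\to P^0$ is injective, that $\Hom_\NTC(P^0,P_Y)=0$, and that $M=\coker(j)$ is entry-free, then invoke Theorems~\ref{thm:nonprojective} and~\ref{thm:I2counterexample}---is exactly the paper's procedure, and your diagnosis of where the work lies (the computation of $\NTC$ via $\K^*\bigb{S(Y,Z)}$, the extra $\Z$ coming from the circle in $\Ch(C_n)$, the care needed in checking $\NT_*(Y,Z)=0$ for the relevant $Z$) is accurate.

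The one genuine gap is your duality reduction. Passage to $X^\op$ gives $S_{X^\op}(Y,Z)=S_X(Z,Y)$, since $m$ and $M$ swap roles on the common order complex $\Ch(X)=\Ch(X^\op)$; hence one obtains a \emph{contravariant} isomorphism $\NTC(X^\op)\cong\NTC(X)^\op$, not a covariant one, and it is grading-\emph{preserving}, not grading-reversing. Under this identification, left $\NTC(X^\op)$-modules correspond to \emph{right} $\NTC(X)$-modules, and indecomposable transformations \emph{into} $Y$ become indecomposable transformations \emph{out of} $Y$. The specific cokernel module $M$ therefore does not ``pull back''; the dual construction is formally different, and there is no functor $\kk(X)\to\kk(X^\op)$ available to transport the actual $\Cst$-algebraic counterexample. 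The paper does not attempt this shortcut: it cites \cite{meyernestCalgtopspacfiltrKtheory} for $X_2$ and \cite{Rasmus} for $X_4$, $S$, and $C_n$, and carries out $X_1$, $X_3$ (and reproduces the $C_n$ argument) in full. The remark following the theorem does record that all five four-point categories $\NTC(X_1),\dots,\NTC(S)$ are mutually isomorphic as ungraded categories, but this is observed \emph{a posteriori} from the explicit tables, in the spirit of Theorem~\ref{thm:ungraded_iso}, not deduced from an abstract duality principle. If you want to save the reduction, you must either establish such a covariant isomorphism independently or argue carefully that the three hypotheses, together with the passage through Theorem~\ref{thm:I2counterexample}, are invariant under $\NTC\rightsquigarrow\NTC^\op$; neither is automatic.
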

More precisely, our procedure provides the desired counterexamples for all spaces in the above list. For the space $X_2$ this was shown in~\cite{meyernestCalgtopspacfiltrKtheory}, and for $X_4$, $S$ and all ${C_n}$ it was verified in~\cite{Rasmus}. Hence we investigate the spaces $X_1$ and $X_3$ here. We also include the discussion of ${C_n}$ from~\cite{Rasmus}.

\begin{remark}
The investigations cited above and those that follow at this place show that the categories $\NT(X)$ for $X\in\{X_1, X_2, X_3, X_4, S\}$ are all isomorphic in the same sense as described in Theorem~\ref{thm:ungraded_iso}.
\end{remark}

\subsection{Discussion of the space \texorpdfstring{$X_3$}{}}
We begin with the case of the space $X_3$ which we describe in most detail.
The specialisation order on $X_3 =\{1,2,3,4\}$ is generated by the relations
$1\succ 3$, $2\succ 3$, and $3\succ 4$.
We use abbreviatory notation like $134\perdef\{1,3,4\}$, and similarly.
By~\cite{MR2545613}*{Lemma~2.35},
a $\Cst$-algebra over $X_3$ is a $\Cst$-algebra $A$ with three
distinguished ideals
\[
I_1\perdef A(1),\quad I_2\perdef A(2),\quad I_3\perdef A(123),
\]
subject to the conditions $I_1\cup I_2\subset I_3$ and $I_1\cap I_2=\{0\}$.

The connected, nonempty, locally closed subsets of $X_3 $ are
\[
\LC(X_3 )^*=\{4,34,134,234,3,1234,13,23,123,1,2\}.
\]
%
The top-dimensional simplices of the geometric realisation of the order complex $\Ch(X_3)$ are the two 2-simplices corresponding to the triples~$(1,3,4)$ and~$(2,3,4)$. Their intersection is the 1-simplex corresponding to the pair~$(3,4)$.

Table~\ref{tab:groups3} contains the isomorphism classes of the groups
$\KSYZ\cong\NT(Y,Z)$ for $Y,Z\in\LC(X_3)^*$. The determination of the
spaces $S(Y,Z)$ is straight-forward from their definition, and the
computation of the $\K$-groups is elementary as well.

\begin{table}[htbp]
\renewcommand{\arraystretch}{1.3}
\[
\begin{array}[t]{c|l|l|l|l|l|l|l|l|l|l|l|l|}
    Y\backslash Z & 4     & 34      & 134   & 234   & 3     & 1234  & 13 & 23 & 123 & 1  & 2  \\ \hline
    4			  & \Z & 0 & 0 & 0 & \Z[1] & 0 & \Z[1]& \Z[1]& \Z[1] & 0  & 0 \\ \hline
    34			  & \Z & \Z & 0 & 0 & 0 & \Z[1] & \Z[1] & \Z[1] & \Z[1]^2 & \Z[1] & \Z[1]  \\ \hline
    134		      & \Z & \Z & \Z & 0 & 0 & 0 & 0 & \Z[1] & \Z[1] &  0 & \Z[1]  \\ \hline
    234			  & \Z & \Z &  0 & \Z & 0 & 0 & \Z[1] & 0 &  \Z[1] &\Z[1] & 0 \\ \hline
    3			  & 0 & \Z & 0 & 0 &\Z & \Z[1] & 0 & 0 &  \Z[1] & \Z[1] &\Z[1] \\ \hline
    1234		  &  \Z & \Z & \Z & \Z &0 &\Z & 0 & 0 & 0 & 0 &0 \\ \hline
    13			  & 0 &\Z &\Z & 0 &\Z & 0 &\Z & 0 & 0 &\Z & 0  \\ \hline
    23			  & 0 &\Z & 0 &\Z &\Z & 0 & 0 &\Z & 0 & 0 &\Z  \\ \hline
    123			  & 0 &\Z &\Z &\Z &\Z &\Z &\Z &\Z &\Z &\Z &\Z  \\ \hline
    1  			  & 0 & 0 &\Z & 0 & 0 &\Z &\Z & 0 &\Z &\Z & 0  \\ \hline
    2   		  & 0 & 0 & 0 &\Z & 0 &\Z & 0 &\Z &\Z & 0 &\Z  \\ \hline
\end{array}
\]
\caption{Groups $\NT(Y,Z)$ of natural transformations for $X_3$}
\label{tab:groups3}
\end{table}

Using the general results from \S\ref{sec:representability} one can simply
determine generators of the groups $\NT_*(Y,Z)\cong\KSYZ$ computed above.

For instance, for all pairs $(Y,Z)$ of subsets $Y,Z\in\LC(X_3)^*$ with
$\NT_*(Y,Z)\cong\Z[0]$, the intersection $Y\cap Z$ is non-empty,
closed in~$Y$ and open in~$Z$. Thus, by Corollary~\ref{cor:evengenerators},
$\NT_*(Y,Z)$ is generated by $\mu_Y^Z\perdef i_{Y\cap Z}^Z\circ r_Y^{Y\cap Z}$.

Similarly, all odd natural transformations arise by composing the transformations
$\mu_Y^Z$ induced by natural \Star homomorphisms with boundary transformations
in $\K$-theory exact six-term sequences. For example, the group $\NT_1(34,123)\cong\Z^2$
is generated by the two transformations $i_1^{123}\circ\delta_{34}^1$ and
$i_2^{123}\circ\delta_{34}^2$. The corresponding generators in
$\K^*\bigb{S(34,123)}=\K^*\bigb{\Ch(X_3)\setminus\{1,2,4\}}$ can be written as
$f^*(\upsilon)$ and $g^*(\upsilon)$, where $\upsilon$ is a generator
of $\K^*\bigb{(0,1)}$, and $f,g\colon\Ch(X_3 )\rightrightarrows \left[0,1\right]$
are continuous maps defined similarly to the map in Lemma~\ref{lem:correspondences}(iii)
with the property that
\[
f^{-1}(0)=\{1\},\quad f^{-1}(1)=\{4\}
\]
and
\[
g^{-1}(0)=\{2\},\quad g^{-1}(1)=\{4\}.
\]
We have now shown that the category $\NTC$ is generated by
transformations coming from natural six-term sequences. With respect to
the canonical relations established in \S\ref{sec:relationsNT} we
obtain the indecomposable transformations indicated in the following
diagram:
\begin{equation}
	\label{eq:indecomp3}
\begin{split}
\xymatrix{
										 & 13\ar[r]^i\ar[rd]^<<<<r    & 134\ar[rd]^r                    &                               & 2\ar[rd]^i  & \\
123\ar[ru]^r\ar[r]^i\ar[rd]^r & 1234\ar[ru]|\hole^<<<r\ar[rd]|\hole^<<<r & 3\ar[r]^i & 34\ar[ru]|\circ^\delta\ar[r]^r\ar[rd]|\circ^\delta & 4\ar[r]|\circ^\delta & 123 \\
										 & 23\ar[ru]^<<<<r\ar[r]^i    & 234\ar[ru]^r                    &                               & 1\ar[ru]^i   & }
\end{split}								 
\end{equation}
The canonical relations for these indecomposable transformations are the following:
\begin{itemize}
\item
all squares within the cube with vertices $123$, $13$, $\ldots$, $34$
commute;
\item
$i_1^{123}\circ\delta_{34}^1+i_2^{123}\circ\delta_{34}^2=\delta_4^{123}\circ r_{34}^4$;
\item the following compositions vanish (all of them are part of
six-term exact sequences):
\begin{multline*}
134\xrightarrow{r} 34\xrightarrow{\delta} 1,\quad
234\xrightarrow{r} 34\xrightarrow{\delta} 2,\quad
3\xrightarrow{i} 34\xrightarrow{r} 4,\\
1\xrightarrow{i} 123\xrightarrow{r} 23,\quad
2\xrightarrow{i} 123\xrightarrow{r} 13,\quad
4\xrightarrow{\delta} 123\xrightarrow{i} 1234.
\end{multline*}
\end{itemize}
Proceeding as in the proof of Corollary~\ref{cor:universality_even}, we find that $\NT(X_3)$ is the universal pre-additive category with these generators and relations, because the morphism groups of the universal pre-additive category are precisely those in Table~\ref{tab:groups3}.

%

\begin{lemma}
	\label{lem:3ass1}
The ideal $\NTnil$ is nilpotent and the category $\NTC$ decomposes as
the se\-mi-di\-rect product $\NTnil\rtimes\NTss$.
\end{lemma}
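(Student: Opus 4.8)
The plan is to deduce both assertions from the explicit presentation of $\NTC=\NTC(X_3)$ by the indecomposable generators of~\eqref{eq:indecomp3} and their relations obtained just above. The first step is to reduce everything to the single statement that $\NTnil(Y,Y)=0$ for each $Y\in\LC(X_3)^*$. Since $\NTC$ is generated as a ring by the identity transformations $\id_Y$ together with the indecomposable transformations of~\eqref{eq:indecomp3}, all of which go between distinct objects and hence lie in $\NTnil$, one has $\NTC(Y,Z)=\NTnil(Y,Z)$ whenever $Y\neq Z$, and $\NTC(Y,Y)=\Z\cdot\id_Y+\NTnil(Y,Y)$ in general. Thus $\NTnil(Y,Y)=0$ at once yields the internal direct sum $\NTC=\NTnil\oplus\NTss$ with $\NTss=\bigoplus_Y\Z\cdot\id_Y$. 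Nilpotency follows as well: $\NTnil^{k}$ is spanned by composites of at least $k$ indecomposable transformations, and since $\LC(X_3)^*$ has $11$ elements, any composite of $11$ of them revisits some object $W$ and so factors through an element of $\NTnil(W,W)=0$; hence $\NTnil^{11}=0$.

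It remains to show $\NTnil(Y,Y)=0$, that is, that every composite of indecomposable transformations from~\eqref{eq:indecomp3} of positive length whose source and target coincide (call it a \emph{loop}) is zero. The $\Z/2$\nobreakdash-grading handles part of this: a loop involving an odd number of boundary transformations is an odd endomorphism of $Y$, and $\NT_1(Y,Y)=0$ by Table~\ref{tab:groups3}. For the remaining loops I would exploit three structural features of~\eqref{eq:indecomp3}: deleting the three boundary transformations $\delta_{34}^1,\delta_{34}^2,\delta_4^{123}$ leaves an acyclic directed graph (e.g.\ $1$ and $2$ become its only sources and $4$ its unique sink), so every loop uses at least one boundary transformation; the only transformation out of $1$ (resp.\ $2$) is $i_1^{123}$ (resp.\ $i_2^{123}$), and the only transformation into $4$ is $r_{34}^4$; and the sum relation $\delta_4^{123}\circ r_{34}^4=i_1^{123}\circ\delta_{34}^1+i_2^{123}\circ\delta_{34}^2$ allows one to rewrite any loop as a $\Z$\nobreakdash-linear combination of loops each of which contains a sub-composite $i_a^{123}\circ\delta_{34}^a$ with $a\in\{1,2\}$ arriving at $123$. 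Following such a loop one or two further edges out of $123$ and invoking the vanishing of $r_Y^C\circ i_U^Y$ for disjoint $U,C$ from Proposition~\ref{pro:evenrels}(iii) together with the listed vanishing relations — for instance $r_{123}^{23}\circ i_1^{123}=0$ and $r_{134}^{34}\circ i_1^{134}=0$ — then exhibits a vanishing sub-composite in every case, so all loops vanish.

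Combining the two steps gives $\NTnil(Y,Y)=0$ for every $Y$, hence $\NTC=\NTnil\oplus\NTss$ as abelian groups with $\NTnil$ an ideal and $\NTss=\bigoplus_Y\Z\cdot\id_Y$ the (semisimple) subring, together with $\NTnil^{11}=0$; this is exactly the assertion $\NTC=\NTnil\rtimes\NTss$. The main obstacle is the loop-vanishing step of the second paragraph: although the acyclicity observation and the forced successors of the boundary transformations cut the analysis down to a handful of short configurations emanating from $123$, carrying it out completely — in particular keeping track of where a loop may be ``based'' and at which point it is allowed to terminate — is a routine but somewhat tedious enumeration, entirely parallel to the argument of Meyer and Nest for the totally ordered space.
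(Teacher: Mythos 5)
Your overall architecture is the same as the paper's: the paper also deduces the splitting from the facts that, by Table~\ref{tab:groups3}, $\NT_*(Y,Y)\cong\Z$ is spanned by $\id_Y$, and that the composition is completely described by the generators of~\eqref{eq:indecomp3} and their relations, so that the ideal generated by off-diagonal transformations is exactly $\bigoplus_{Y\neq Z}\NT_*(Y,Z)$, with nilpotency coming from the vanishing of long composites. Your reduction of everything to $\NTnil(Y,Y)=0$, the resulting abelian-group decomposition, and the pigeonhole bound $\NTnil^{11}=0$ are all correct, and your treatment of loops containing an odd number of boundary maps (they are odd endomorphisms, hence zero since $\NT_1(Y,Y)=0$) is fine.

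The gap is that the even-loop case is not actually carried out: you rewrite via the sum relation and then assert that ``following one or two further edges out of $123$'' always produces a vanishing sub-composite, leaving the enumeration undone. Moreover, as stated the sketch misses a case: if the loop's \emph{last} arrow is $i_a^{123}$ (or $r_{34}^4$ for a loop based at $4$), there is no subsequent edge inside the composite with which to form $r_{123}^{23}\circ i_1^{123}$ or its analogues. Both issues can be removed by exploiting the grading of the diagram~\eqref{eq:indecomp3}: every arrow goes from one of the five ``columns'' $\{123\}$, $\{13,1234,23\}$, $\{134,3,234\}$, $\{34\}$, $\{1,2,4\}$ to the next one cyclically, so every directed cycle passes through the singleton columns $\{123\}$ and $\{34\}$ exactly once per traversal and uses exactly one boundary transformation per traversal. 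Hence a single-traversal loop is odd and vanishes by $\NT_1(Y,Y)=0$, while a loop with an even number of boundary maps makes at least two traversals and therefore contains, between two consecutive passages through $123$, a single-traversal loop based at $123$ as a sub-composite, which is zero for the same reason. With this observation no relation-chasing is needed at all beyond the generation of $\NTC$ by the indecomposables (and Proposition~\ref{pro:evenrels} is not needed for the vanishing either); the rest of your argument then goes through and yields exactly the statement of the lemma, in essentially the same way as the paper's (admittedly very terse) proof.
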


\begin{proof}
By the computations above, we have
$\NTss=\bigoplus_{Y\in\LC(X_3)^*}\NT_*(Y,Y)$ and
\[
\NTnil=\bigoplus_{Y\neq Z\in\LC(X_3)^*}\NT_*(Y,Z).
\]
Hence $\NTC=\NTnil\oplus\NTss$ as Abelian groups. This implies the
se\-mi-di\-rect product decomposition. The fact that $\NTnil$ is
nilpotent follows immediately from the characterisation of the
composition in $\NTC$ provided in the previous section.
\end{proof}

Therefore, Properties~\ref{ass:semi} and~\ref{ass:free} are fulfilled.
However, Property~\ref{ass:kernel} does not hold for the locally closed
subset $34\in\LC(X)^*$: we have
\begin{multline*}
(\NTnil\cdot M)(34)=\range(r_{134}^{34})+\range(r_{234}^{34})+\range(i_{3}^{34})\\
=\ker(i_1^{1234}\circ\delta_{34}^{1})+\range(i_{3}^{34}).
\end{multline*}
For a further simplification we would need an exact sequence containing
the map~$\delta_{3}^{1234}\perdef i_1^{1234}\circ\delta_{3}^{1}$
which does not exist.

\subsection{The counterexamples for \texorpdfstring{$X_3$}{}}
	\label{subsec:results3}
In the previous section our classification method broke down because
there is no exact sequence with connecting map
$\delta_{3}^{1234}=i_1^{1234}\circ\delta_{3}^1$.
In fact, the desired classification is wrong.
In this section we exhibit
\begin{enumerate}
\item 
an exact, entry-free module~$M$ which is not projective,
\item
an exact module that has no projective resolution of length one,
\item
two non-isomorphic objects in the bootstrap class~$\Boot(X_3 )$
with isomorphic filtrated $\K$-theory.
\end{enumerate}

\paragraph*{The non-projective, exact, entry-free module}
For $Y\in\LC(X_3 )^*$ we have defined the free $\NTC$-module on~$Y$
in Definition~\ref{def:freemodule}. The three transformations
$134,3,234\to 34$ in~\eqref{eq:indecomp3} induce a module
homomorphism
\[
j\colon P_{34}\to P^0\perdef P_{134}\oplus P_{3}\oplus P_{234}.
\]

\begin{lemma}
The homomorphism $j$ is injective.
\end{lemma}

\begin{proof}
The longest transformations out of 34 are those to 13, 1234 and~23.
With this we mean that every transformation out of 34 is a sum of
transformations each factoring one of the three transformations
above and that the list of these three transformations is minimal
with this property.
Therefore, it suffices to check that the maps
\[
P_{34}(13)\to P^0(13),\quad P_{34}(1234)\to P^0(1234),
\quad\text{and}\quad P_{34}(23)\to P^0(23)
\]
are injective. This is true because the maps
\begin{multline*}
\NT_*(34,13)\to\NT_*(234,13),\quad \NT_*(34,1234)\to\NT_*(3,1234),\\
\text{and}\quad \NT_*(34,23)\to\NT_*(134,23)
\end{multline*}
are isomorphisms of free cyclic Abelian groups. This, in turn, follows
from the exactness of free modules and the vanishing of the groups
$\NT_*(2,13)$, $\NT_*(4,1234)$, and $\NT_*(1,23)$.
\end{proof}

Since~$j$ is a monomorphism, we can easily compute the cokernel
\[
M\perdef\coker(j\colon P_{34}\rightarrowtail P^0).
\]
We get the following values $M(Y)$ for $Y\in\LC(X_3 )^*$:
\begin{equation}
\begin{split}
\xymatrix{
										 & 0\ar[r]^i\ar[rd]^<<<<r    & \Z\ar[rd]^r                    &                               & \Z[1]\ar@{=}[rd]^i  & \\
\Z[1]\ar[ru]^r\ar[r]^i\ar[rd]^r & 0\ar[ru]|\hole^<<<r\ar[rd]|\hole^<<<r & \Z\ar[r]^i & \Z\ar[ru]|\circ^\delta\ar[r]^r\ar[rd]|\circ^\delta & \Z\ar@{=}[r]|\circ^\delta & \Z[1]. \\
										 & 0\ar[ru]^<<<<r\ar[r]^i    & \Z\ar[ru]^r                    &                               & \Z[1]\ar@{=}[ru]^i   & }\\						 
\end{split}
\end{equation}
As a quotient of two exact modules, the module~$M$ is exact by the
two-out-of-three property. Therefore, the extension maps $i_1^{123}$
and $i_2^{123}$, and the boundary map $\delta_4^{123}$ act by
isomorphisms on~$M$. The other maps can be described in the following
way: write $M(34)$ as $\Z^3/\langle (1,1,1)\rangle$ and
$M(4)$, $M(2)$, $M(1)$ as $\Z^2/\langle (1,1)\rangle$. Then the three
maps $\Z\to\Z^2$ correspond to the three coordinate embeddings
$\Z\hookrightarrow\Z^3$, and the maps $\Z^2\to\Z$ correspond to the
three projections $\Z^3\twoheadrightarrow\Z^2$ onto coordinate
hyperplanes.

\begin{proposition}
	\label{pro:exactnonprojective}
The module $M$ is exact and entry-free, but it is not projective.
\end{proposition}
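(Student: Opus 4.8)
The plan is to verify the three assertions of Proposition~\ref{pro:exactnonprojective} separately, using the projective resolution $0\to P_{34}\xrightarrow{j}P^0\twoheadrightarrow M\to 0$ together with the explicit description of~$M$ given above. Exactness of~$M$ is immediate: both $P_{34}$ and $P^0=P_{134}\oplus P_3\oplus P_{234}$ are free and hence exact $\NTC(X_3)$-modules (free modules are exact, as shown in the proof of Lemma~\ref{lem:projmodules}), and since the injectivity of~$j$ has been established, $M$ is the cokernel of a monomorphism of exact modules, so exactness follows from the two-out-of-three property. Entry-freeness is also immediate from the displayed values $M(Y)$, each of which is visibly a free Abelian group (the relevant quotients $\Z^3/\langle(1,1,1)\rangle$ and $\Z^2/\langle(1,1)\rangle$ are free of ranks~$2$ and~$1$ respectively).

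The real content is non-projectivity, and the strategy I would use is the one sketched in the general discussion preceding Theorem~\ref{thm:nonprojective}: I would show that the projective resolution $0\to P_{34}\xrightarrow{j}P^0\twoheadrightarrow M$ does \emph{not} split. If $M$ were projective, this short exact sequence would split, giving in particular a nonzero homomorphism $P^0\to P_{34}$ (a splitting of~$j$, which is a monomorphism into a nonzero module). So it suffices to prove that $\Hom_{\NTC}(P^0,P_{34})=0$, equivalently, by Yoneda's Lemma, that $P_{134}(34)\oplus P_3(34)\oplus P_{234}(34)=\NT_*(134,34)\oplus\NT_*(3,34)\oplus\NT_*(234,34)=0$. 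Reading these entries off Table~\ref{tab:groups3}: the row $Y=134$ at column $Z=34$ gives~$0$; likewise $Y=3$ at $Z=34$ gives~$0$; and $Y=234$ at $Z=34$ gives~$0$. Hence there is no nonzero homomorphism $P^0\to P_{34}$, the resolution cannot split, and~$M$ is not projective.

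Alternatively, and perhaps more in the spirit of the surrounding development, one can invoke Lemma~\ref{lem:projmodules}: since Property~\ref{ass:kernel} \emph{fails} for $X_3$ precisely at the subset~$34$ (as recorded just before this subsection, where it is shown that $(\NTnil\cdot M')(34)$ for an exact module~$M'$ cannot be described as the kernel of a single natural transformation out of~$34$ because there is no six-term sequence with connecting map $\delta_3^{1234}=i_1^{1234}\circ\delta_3^1$), the equivalence ``projective $\iff$ free $\iff$ ($\Mss$ entry-free and $\Tor_1(\NTss,M)=0$)'' is no longer guaranteed. Concretely, I would compute $\Tor_1^{\NTC}(\NTss,M)$ directly from the resolution: it is the kernel of the induced map $(\NTss\otimes_{\NTC}P_{34})\to(\NTss\otimes_{\NTC}P^0)$, i.e.\ of $(P_{34})_\sss(34)\to(P^0)_\sss(34)=(P_{134})_\sss(34)\oplus(P_3)_\sss(34)\oplus(P_{234})_\sss(34)$. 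The right-hand side vanishes (it is a subquotient of $\NT_*(134,34)\oplus\NT_*(3,34)\oplus\NT_*(234,34)=0$), while $(P_{34})_\sss(34)=\NTss\cdot\id_{34}\cong\Z\neq0$ survives because $\id_{34}$ is not in $\NTnil\cdot P_{34}$; hence $\Tor_1^{\NTC}(\NTss,M)\cong\Z\neq0$, and $M$ is not projective by Lemma~\ref{lem:projmodules}.

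The main obstacle is purely bookkeeping: one has to be careful that $j$, although a monomorphism of $\NTC$-modules, need \emph{not} remain a monomorphism after applying $\NTss\otimes_{\NTC}-$ (this right-exact functor can create kernels), and that is exactly the point --- the failure of left-exactness here is what detects non-projectivity and ultimately traces back to the absence of the exact sequence realising $\delta_3^{1234}$. I would double-check the claimed vanishing $\NT_*(134,34)=\NT_*(3,34)=\NT_*(234,34)=0$ against both Table~\ref{tab:groups3} and Observation~\ref{obs:trafos} (noting $34\cap 134=34$ is not open in~$34$-from-the-right, etc., so no even transformation survives, and no odd transformation is available either), since the whole argument hinges on $\Hom_{\NTC}(P^0,P_{34})=0$.
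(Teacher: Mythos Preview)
Your overall strategy matches the paper's exactly (show the resolution $P_{34}\rightarrowtail P^0\twoheadrightarrow M$ cannot split by proving $\Hom_{\NTC}(P^0,P_{34})=0$), but you have reversed the Yoneda isomorphism. Yoneda gives $\Hom_{\NTC}(P_Z,N)\cong N(Z)$, so
\[
\Hom_{\NTC}(P_{134},P_{34})\cong P_{34}(134)=\NT_*(34,134),
\]
not $P_{134}(34)=\NT_*(134,34)$ as you wrote. This matters: $\NT_*(134,34)\cong\Z$ (row $Y=134$, column $Z=34$ in Table~\ref{tab:groups3}), generated by $r_{134}^{34}$; likewise $\NT_*(3,34)\cong\Z$ and $\NT_*(234,34)\cong\Z$. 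These three nonzero groups are precisely the sources of the three indecomposable transformations that \emph{define}~$j$, so they certainly do not vanish. What does vanish is $\NT_*(34,134)$, $\NT_*(34,3)$, $\NT_*(34,234)$ (row $Y=34$ in the table), and these are the entries the paper invokes.

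The same slip recurs in your alternative $\Tor_1$ computation: you claim $(P^0)_\sss(34)=0$ because it is ``a subquotient of $\NT_*(134,34)\oplus\NT_*(3,34)\oplus\NT_*(234,34)=0$'', but that direct sum is $\Z^3$. The conclusion $(P_Z)_\sss(34)=0$ for $Z\in\{134,3,234\}$ is nevertheless correct, for a different reason: since $Z\neq 34$, every element of $P_Z(34)=\NT_*(Z,34)$ already lies in $\NTnil\cdot P_Z$ (write $f=f\circ\id_Z$ with $f\in\NTnil$), so the quotient vanishes. With the Yoneda direction corrected, both of your arguments go through and agree with the paper's.
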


\begin{proof}
We have already seen that $M$ is exact and entry-free. The projective resolution
$P_{34}\rightarrowtail P^0\twoheadrightarrow M$
does not split because there is no non-zero module homomorphism
$P^0\to P_{34}$ since
$\K^*\bigb{S(34,134)}\cong\K^*\bigb{S(34,3)}\cong\K^*\bigb{S(34,234)}\cong 0$
by Table~\ref{tab:groups3}. This shows that $M$ is not projective.
\end{proof}

\paragraph*{The exact module with projective dimension $2$}
For $k\in\N_{\geq 2}$ we define $M_k\perdef M/k\cdot M$. This module
is exact by the two-out-of-three property and it
has the following projective resolution of length~2:
\begin{equation}
0\to P_{123}\xrightarrow{(-k,j)} P_{123}\oplus P^0
\xrightarrow{(j,k)} P^0\twoheadrightarrow M_k.
\end{equation}
From this resolution we compute
\begin{align*}
\Ext_\NTC^2(M_k,P_{123})
&\cong\Hom_\NTC(P_{123},P_{123})
	/(-k,j)^*\bigb{\Hom_\NTC(P_{123}\oplus P^0,P_{123})}\\
&\cong\Z/k\cdot\Z
\end{align*}
because $\Hom_\NTC(P_{123},P_{123})\cong\Z$ and
$\Hom_\NTC(P^0,P_{123})=0$. This shows that the projective dimension
of~$M_k$ is~2.

\paragraph*{Non-isomorphic objects in \texorpdfstring{$\Boot(X_3)$}{B(X_3)}
					with isomorphic filtrated \texorpdfstring{$\K$}{K}-theory}
As desribed in in the beginning of this section we can find a $\Cst$-algebra $A_k$
with $\FK^\con(A_k)\cong M_k$. Theorem~\ref{thm:nonprojective} shows that
$A_k$ is not $\catI^2$-projective, and Theorem~\ref{thm:I2counterexample} yields the desired counterexample:

\begin{theorem}
There exist $\Cst$-algebras $B$ and $D$ in the bootstrap
class $\Boot(X_3 )$ that are not $\KK(X_3 )$-equivalent but
have isomorphic filtrated $\K$-theory.
\end{theorem}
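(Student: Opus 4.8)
Everything needed is in place: by Theorem~\ref{thm:nonprojective} the $\Cst$-algebra $A_k$ (which satisfies $\FK^\con(A_k)\cong M_k$ by the Künneth theorem, using that $\FK^\con(A)\cong M$ is entry-free and hence torsion-free) is not $\ker(\FK^\con)^2$-projective, and I simply intend to feed this object into Theorem~\ref{thm:I2counterexample}. Concretely, I would take the triangulated category $\Tri\perdef\Boot(X_3)$, the homological ideal $\catI\perdef\ker\FK^\con$, and the universal $\catI$-exact stable homological functor $F\perdef\FK^\con$ with values in $\AIT$; the required properties---that $\catI$ is a homological ideal, that $\Boot(X_3)$ has enough $\catI$-projective objects, and that $\FK^\con$ restricted to $\Boot(X_3)$ is universal among $\catI$-exact stable homological functors---are part of the Meyer--Nest framework developed in~\cite{meyernestCalgtopspacfiltrKtheory}. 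Since $A_k$ is not $\catI^2$-projective, in particular $\catI^2\neq0$.

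Next I would run the construction in Theorem~\ref{thm:I2counterexample}: embed $A_k$ into an exact triangle $\Sigma N_2\to\tilde A_2\to A_k\xrightarrow{\iota_2}N_2$ with $\iota_2\in\catI^2$ and $\tilde A_2$ an $\catI^2$-projective object, and set $B\perdef\tilde A_2$ and $D\perdef A_k\oplus N_2[1]$. Since $\Boot(X_3)$ is a triangulated subcategory of $\KKcat(X_3)$ containing the $\catI$-projective objects, both $B$ and $D$ lie in $\Boot(X_3)$. The theorem then guarantees $B\not\cong D$ in $\Boot(X_3)$---hence $B$ and $D$ are not $\KK(X_3)$-equivalent---while $\FK^\con(B)\cong\FK^\con(A_k)\oplus\FK^\con(N_2)[1]\cong\FK^\con(D)$; by the equivalence $\Upsilon$ of Remark~\ref{rem:connected} this is the same as $\FK(X_3)(B)\cong\FK(X_3)(D)$.

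I expect no genuine obstacle at this stage: the substantive work was carried out in the preceding subsections (the injectivity of $j\colon P_{34}\to P^0$, the exactness and entry-freeness of $M$, the non-splitting of $P_{34}\rightarrowtail P^0\twoheadrightarrow M$, the length-$2$ resolution of $M_k$ together with the computation $\Ext^2_{\NTC}(M_k,P_{123})\cong\Z/k$, and Theorem~\ref{thm:nonprojective}). The only point that still demands attention is the bookkeeping ensuring that the abstract hypotheses of Theorem~\ref{thm:I2counterexample} are literally satisfied for $\Tri=\Boot(X_3)$ and $\catI=\ker\FK^\con$---that the bootstrap class has enough $\catI$-projective objects, all of which again lie in the bootstrap class, and that $\FK^\con$ restricted to it enjoys the claimed universal property---which amounts to quoting the relevant statements from~\cite{meyernestCalgtopspacfiltrKtheory}.
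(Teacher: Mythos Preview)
Your proposal is correct and follows essentially the same route as the paper: invoke Theorem~\ref{thm:nonprojective} to see that $A_k$ is not $\catI^2$-projective, and then apply Theorem~\ref{thm:I2counterexample} to produce $B=\tilde A_2$ and $D=A_k\oplus N_2[1]$. You spell out more of the bookkeeping (the choice of $\Tri$, $\catI$, $F$, and why the hypotheses of Theorem~\ref{thm:I2counterexample} hold) than the paper does, but the argument is the same.
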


\subsection{Counterexamples for \texorpdfstring{$X_1$}{}}
The computations for the space $X_1$ are very similar to those for $X_3$.
We will therefore only state the key results used for the construction
of counterexamples on the different levels. The non-empty, connected,
locally closed subsets are
\[
\LC(X_1)^*=\{1234,124,134,234,34,24,14,4,1,2,3,1234\}.
\]
The computation of the groups $\NT(Y,Z)\cong\K^*\bigb{S(Y,Z)}$ is summarised in Table~\ref{tab:groups1}.
\begin{table}[htbp]
\renewcommand{\arraystretch}{1.3}
\[
\begin{array}[t]{c|l|l|l|l|l|l|l|l|l|l|l|l|}
    Y\backslash Z & 1234    & 124   & 134   & 234   & 34 & 24 & 14 & 4  & 1     & 2     & 3     \\ \hline
    1234          & \Z      & \Z    & \Z    & \Z    & \Z & \Z & \Z & \Z & 0     & 0     & 0     \\ \hline
    124           & 0       & \Z    & 0     & 0     & 0  & \Z & \Z & \Z & 0     & 0     & \Z[1] \\ \hline
    134           & 0       & 0     & \Z    & 0     & \Z & 0  & \Z & \Z & 0     & \Z[1] & 0     \\ \hline
    234           & 0       & 0     & 0     & \Z    & \Z & \Z & 0  & \Z & \Z[1] & 0     & 0     \\ \hline
    34            & \Z[1]   & \Z[1] & 0     & 0     & \Z & 0  & 0  & \Z & \Z[1] & \Z[1] & 0     \\ \hline
    24            & \Z[1]   & 0     & \Z[1] & 0     & 0  & \Z & 0  & \Z & \Z[1] & 0     & \Z[1] \\ \hline
    14            & \Z[1]   & 0     & 0     & \Z[1] & 0  & 0  & \Z & \Z & 0     & \Z[1] & \Z[1] \\ \hline
    4             & \Z[1]^2 & \Z[1] & \Z[1] & \Z[1] & 0  & 0  & 0  & \Z & \Z[1] & \Z[1] & \Z[1] \\ \hline
    1             & \Z      & \Z    & \Z    & 0     & 0  & 0  & \Z & 0  & \Z    & 0     & 0     \\ \hline
    2             & \Z      & \Z    & 0     & \Z    & 0  & \Z & 0  & 0  & 0     & \Z    & 0     \\ \hline
    3             & \Z      & 0     & \Z    & \Z    & \Z & 0  & 0  & 0  & 0     & 0     & \Z    \\ \hline
\end{array}
\]
\caption{Groups $\NT(Y,Z)$ of natural transformations for $X_1$}
\label{tab:groups1}
\end{table}

Again, it turns out that the category $\NT$ is generated by the canonical
morphisms and relations discussed in \S\ref{sec:relationsNT}.
The indecomposable morphisms in $\NT$ are displayed in the following diagram.
\[
\xymatrix{
                               & 234\ar[r]^r\ar[rd]^<<<<r  & 34\ar[rd]^r &                       & 1\ar[rd]^i & \\
1234\ar[r]^r\ar[ru]^r\ar[rd]^r & 134\ar[ru]|\hole^<<<r\ar[rd]|\hole^<<<r & 24\ar[r]^r  & 4\ar[ru]|\circ^\delta\ar[r]|\circ^\delta\ar[rd]|\circ^\delta & 2\ar[r]^i & 1234 \\
                               & 124\ar[r]^r\ar[ru]^<<<<r  & 14\ar[ru]^r &                       & 3\ar[ru]^i & \\
}
\]
As in the previous example, we construct a non-projective, exact, entry-free module
\[
M\perdef\coker(P_4\rightarrowtail P_{14}\oplus P_{24}\oplus P_{34}),
\]
given by the cokernel of the monomorphisms induced by the natural transformations
$r_{14}^4$, $r_{24}^4$ and $r_{34}^4$. The remaining counterexamples---the exact module
with projective dimension~2 and the non-isomorphic objects in the bootstrap class
$\Boot(X_1)$ with isomorphic filtrated $\K$-theory---can now be obtained as described
in the beginning of \S\ref{sec:counterexamples}.

\subsection{Counterexamples for the space \texorpdfstring{$C_n$}{}}
We apply our method described above for constructing counterexamples
for the space ${C_n}$. We adopt the notation
\[
{C_n}=\{1^0,2^0,1^1,2^1,1^2,\ldots,1^{n-1},2^{n-1},1^n=1^0\}
\]
with the partial order given by the relations
\[
1^0\prec 2^0\succ 1^1\prec 2^1\succ 1^2\prec \ldots \succ 1^{n-1}\prec 2^{n-1}\succ 1^0.
\]
We define
\[
F\perdef{C_n}\setminus\{2^{n-1},1^0,2^0\}
=\{1^1,2^1,1^2\ldots,2^{n-2},1^{n-1}\}.
\]
\begin{definition}
In the following proofs we will say that a topological space is of
\emph{type H} if it is the difference of a contractible compact pair.
We will say that it is of
\emph{type O} if it is the difference of a compact pair $(Z,W)$, where
$Z$ is a contractible space and $W$ is the (topologically) disjoint union
of two contractible subspaces.
\end{definition}

\begin{lemma}
The indecomposable natural transformations in $\NTC$ into $F$ are the two
restriction transformations from $F^0\perdef\{1^0,2^0\}\cup F$ and
$F^n\perdef F\cup\{2^{n-1},1^0\}$ to~$F$.
\end{lemma}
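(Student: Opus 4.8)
The plan is to identify the indecomposable transformations into $F={C_n}\setminus\{2^{n-1},1^0,2^0\}$ by a direct application of the indecomposability criteria established in \S\ref{sec:relationsNT}, just as was done for the type~(A) spaces and for $X_3$. First I would record the relevant closure data. Since $F=\{1^1,2^1,1^2,\ldots,2^{n-2},1^{n-1}\}$ is a maximal ``zig-zag'' chain inside ${C_n}$ missing exactly three consecutive points, one checks directly from the partial order $1^0\prec 2^0\succ 1^1\prec 2^1\succ\cdots\succ 1^{n-1}\prec 2^{n-1}\succ 1^0$ that $F$ is locally closed (it is convex with respect to $\preceq$), and that it is \emph{neither open nor closed} in ${C_n}$: the point $1^1$ has $2^0\succ 1^1$ with $2^0\notin F$, witnessing $2^0\in\widetilde\partial F$, and the point $1^{n-1}$ has $2^{n-1}\succ 1^{n-1}$ with $2^{n-1}\notin F$, witnessing $2^{n-1}\in\overline\partial F$. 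So Proposition~\ref{pro:evenindecomp} tells us $F$ is not of the form $U\opencup y$ or $C\closedcup y$; hence there is no indecomposable \emph{extension} or \emph{restriction} transformation whose target is $F$ arising from a single even six-term map into $F$ — rather, any even transformation into $F$ must be built from a composite. I would spell out $\overline\partial F=\{2^{n-1}\}$ and $\widetilde\partial F=\{2^0\}$ and note that $F^0\perdef\{1^0,2^0\}\cup F$ equals $F\closedcup 2^0$ (with $2^0$ a minimal element of $\widetilde\partial F$, using that ${C_n}\setminus\{2^{n-1}\}$ is the smallest open set containing $F^0$ — here we crucially need $n\geq 2$ so that these sets do not wrap around) while $F^n\perdef F\cup\{2^{n-1},1^0\}$ equals $F\opencup 2^{n-1}$.

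Next I would argue that the two candidate transformations are indeed indecomposable and that nothing else maps indecomposably into $F$. For $r_{F^0}^F$: by Proposition~\ref{pro:evenindecomp}(ii), $r_{F^0}^F$ is indecomposable precisely because $F^0=F\closedcup 2^0$ for the minimal element $2^0$ of $\widetilde\partial F$. Symmetrically, Proposition~\ref{pro:evenindecomp}(i) (or duality) gives indecomposability of the restriction $r_{F^n}^F$ — note $F$ is relatively closed in $F^n$, so this map is a \emph{restriction} transformation; one checks $F=U\opencup 2^{n-1}$ with $U=F^n$ viewed appropriately, or more cleanly just applies the dual of part~(i). Then I would enumerate \emph{all} morphisms into $F$ using the groups $\NT_*(Y,F)$, which by the representability machinery of \S\ref{sec:representability} are $\K^*\bigb{S(Y,F)}$. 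Exactly as in the $X_3$ and type~(A) computations, the spaces $S(Y,F)$ are of ``type H'' or ``type O'' in the terminology just introduced, so each $\NT_*(Y,F)$ is $0$, $\Z[0]$, or $\Z[1]$; the nonzero ones correspond (by Observation~\ref{obs:trafos}, whose proof transfers since intersections of connected subsets of ${C_n}$ behave as required, and complements have at most two components) to $Y$ such that $Y\cap F$ is closed in $Y$ and open in $F$, or to the two boundary-pair configurations. Since no boundary pair $(U,C)$ has $C=F$ with $F$ a component of a complement (because $F$ is not clopen in any larger connected set in the requisite way), the only nonzero $\NT_*(Y,F)$ come from the even case $Y\cap F$ closed in $Y$, open in $F$; and a monomial $i^F_{Y\cap F}\circ r_Y^{Y\cap F}$ factors through the appropriate one of $F^0$, $F^n$ unless $Y\cap F=F$, i.e.\ $F\subseteq Y$, in which case since $F$ is neither open nor closed in ${C_n}$ and $Y$ is connected locally closed, $Y$ is an extension of $F$ by at most one point on each side, forcing $Y\in\{F^0,F^n\}$ (or $Y=F^0\cup F^n={C_n}$, but then the restriction to $F$ factors through either $F^0$ or $F^n$). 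This exhausts the possibilities.

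The step I expect to be the main obstacle is the careful verification that \emph{nothing else} is an indecomposable transformation into $F$ — in particular ruling out any odd (boundary) transformation with target $F$, and showing that the even transformation $r_{{C_n}}^F$ from the whole space (or from any $Y$ properly containing $F^0$ or $F^n$) genuinely factors through $r_{F^0}^F$ or $r_{F^n}^F$ and is therefore decomposable. This requires knowing the relevant pieces of the multiplication table for $\NTC({C_n})$, which in turn rests on the (standard but somewhat tedious) identification of all the spaces $S(Y,F)$ using Lemma~\ref{lem:correspondences} and the closure formulas for ${C_n}$; once those are in hand, the argument is the same bookkeeping exercise as for $X_3$. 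I would present the $S(Y,F)$ computations compactly, grouped by whether $Y$ lies ``inside'', ``overlapping'', or ``outside'' $F$ with respect to the cyclic order, invoking the type~H / type~O dichotomy to read off the $\K$-groups, and then conclude by matching the two surviving generators with $r_{F^0}^F$ and $r_{F^n}^F$.
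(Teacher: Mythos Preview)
Your approach has a genuine gap and several factual errors about the set~$F$ that undermine the argument.

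First, the closure data: $F=\{1^1,2^1,\ldots,2^{n-2},1^{n-1}\}$ is \emph{closed} in ${C_n}$, not merely locally closed. Every $2^j\in F$ has only $1^j,1^{j+1}$ below it, both in~$F$, and the $1^j$ are minimal; hence $\overline F=F$ and $\overline\partial F=\emptyset$. On the open side, $\widetilde\partial F=\{2^0,2^{n-1}\}$, not just $\{2^0\}$. Consequently both $F^0$ and $F^n$ arise via the $\closedcup$ construction of Proposition~\ref{pro:evenindecomp}(ii)---one for each minimal element of $\widetilde\partial F$---and neither is of the form $F=U\opencup y$. Your identification $F^n=F\opencup 2^{n-1}$ is incorrect (indeed $2^{n-1}\succ 1^{n-1}\in F$, so the $\opencup$ operation adds nothing).

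Second, and more seriously, your assertion that ``the only nonzero $\NT_*(Y,F)$ come from the even case'' is false. The paper's proof explicitly computes $\NT_*(Y,F)\cong\Z[1]$ for proper subsets $Y=\{1^k,\ldots,1^l\}\subsetneq F$ with $1<k\leq l<n-1$, generated by $i_D^F\circ\delta_Y^D$ for $D$ a component of $F\setminus Y$. These odd transformations exist; the point is that they factor through $r_{F^n}^F$.

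Third, the indecomposability criteria of Proposition~\ref{pro:evenindecomp} and Corollary~\ref{cor:oddindecomp} describe indecomposables in $\NTCSE$ and $\NTCS$, and the latter explicitly assumes that the relations are spanned by the canonical ones. For ${C_n}$ this has not been established beforehand, and the paper does not claim it. The paper's proof instead computes every group $\NT_*(Y,F)\cong\K^*\bigb{S(Y,F)}$ directly from the representability theorem, identifies explicit generators via Corollaries~\ref{cor:evengenerators} and~\ref{cor:oddgenerators}, and checks that each one factors through $r_{F^0}^F$ or $r_{F^n}^F$. This simultaneously shows that all transformations into~$F$ are canonical \emph{and} that these two restrictions are the indecomposable ones---something your shortcut cannot deliver without that direct computation.
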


\begin{proof}
For a start, $S(F,F)=\Ch(F)$ and $\NT_*(F,F)\cong\Z$ is generated by
the identity transformation.
We have $S(F^0,F)=S(F^n,F)=\Ch(F)$, so that $\NT_*(F^0,F)\cong\Z$
and $\NT_*(F^n,F)\cong\Z$. Corollary~\ref{cor:evengenerators} implies
that these groups are generated by the natural transformations
$r_{F^0}^F$ and $r_{F^n}^F$, respectively. In the following we will
determine generators of all further groups $\NT_*(Y,F)$ with
$Y\in\LC({C_n})^*$, $Y\neq F$, and verify that each of them
factors through one of the two transformations $r_{F^0}^F$ and
$r_{F^n}^F$.

We begin with supersets of $F$.
Since $S({C_n},F)=\Ch(F)$ is contractible, we have
$\NT_*({C_n},F)\cong\Z$. By Corollary~\ref{cor:evengenerators},
this group is generated by
$r_{{C_n}}^F=r_{F^0}^F\circ r_{{C_n}}^{F^0}$.
Similarly, $S(F\cup\{2^0\},F)=\Ch(F)$, so that
$\NT_*(F\cup\{2^0\},F)\cong\Z$ is generated by the transformation
$r_{F\cup\{2^0\}}^F=r_{F^0}^F\circ i_{F\cup\{2^0\}}^{F^0}$.
The same reasoning applies to the set $F\cup\{2^{n-1}\}$.

Now we consider proper subsets of $F$.
Let $Y=\{1^k, 2^k, \ldots, 1^l\}$ with $1<k\leq l<n-1$.
Then $S(Y,F)$ is of type O and hence $\NT_*(Y,F)\cong\Z[1]$.
We claim that this group is generated by the transformation
$i_D^F\circ\delta_Y^D$, where $D=\{2^l,1^{l+1} \ldots, 1^{n-1}\}$ is
one of the two connected components of $F\setminus Y$. This follows
from Corollary~\ref{cor:oddgenerators} because the spaces
$S(Y,D)\cup S(Y,Y)$ and $S(Y,F)\setminus S(Y,D)$ have trivial
$\K$-theory. We have
$i_D^F\circ\delta_Y^D=r_{F^n}^F\circ i_D^{F^n}\circ\delta_Y^D$.

Let $Y$ be of one of the forms
\[
\{2^k,1^{k+1},\ldots,2^l\},\quad
\{1^1,2^1,\ldots,2^l\},\quad
\{2^k,1^{k+1},\ldots,1^{n-1}\}
\]
for $1\leq k<l<n-1$. Then $S(Y,F)=\Ch(Y)$ and $\NT_*(Y,F)\cong\Z$ is
generated by the transformation $i_Y^F$ which can either be written as
$r_{F^0}^F\circ i_Y^{F^0}$ or as $r_{F^n}^F\circ i_Y^{F^n}$.

For $Y=\{1^k,2^k,\ldots 2^l\}$ with $1<k\leq l<n-1$ we have $\NT_*(Y,Z)=0$
because $S(Y,F)$ is of type~H. The same holds for
$Y=\{2^k,1^{k+1},\ldots, 1^l\}$ with $1\leq k<l<n-1$.

Finally, we investigate the sets $Y\in\LC({C_n})^*$ that are
neither supersets nor subsets of~$F$. For $Y=\{1^k,2^k,\ldots,b\}$
with $k>1$ and $b\in\{2^{n-1},1^0,2^0\}$ the space $S(Y,F)$ is of
type H, so that $\NT_*(Y,F)=0$.

However, if $Y=\{2^k,1^{k+1},\ldots b\}$ with $k\geq 1$ and
$b\in\{2^{n-1},1^0,2^0\}$, we get $S(Y,F)=\Ch(Y\cap F)$ and find that
$\NT_*(Y,F)\cong\Z$ is generated by the natural transformation
$r_{Y\cup F}^F\circ i_Y^{Y\cup F}$. We have already seen that the
transformation $r_{Y\cup F}^F$ factors through $r_{F^0}^F$.
Analogous reasonings can be performed, respectively, for sets of the
form $\{a,\ldots,1^k\}$ or $\{a,\ldots,2^k\}$ with $k\leq n-2$ and
$a\in\{2^{n-1},1^0,2^0\}$.

The last remaining kind of connected, locally closed subsets of
${C_n}$ are those with non-connected intersection with~$F$.
Let
\begin{equation}
	\label{eq:formY}
Y=\{2^k,1^{k+1},\ldots,2^{n-1},1^0,2^0,\ldots,2^l\}
\end{equation}
with $1\leq l<k<n-1$. Then $S(Y,F)$ is the disjoint union of two
contractible sets. Thus $\NT_*(Y,F)\cong\Z^2$. Two generators of this
group are given by the transformations $i_{D_i}^F\circ r_Y^{D_i}$,
where $D_i$ with $i\in\{1,2\}$ denote the two connected components of
$Y\cap F$. Notice that $i_{D_i}^F$ factors through one of the two
transformations $r_{F^0}^F$ and $r_{F^n}^F$.

Given the form~\eqref{eq:formY} for $Y$ with $l<k-1$, adding each of
the points $1^k$ and $1^{l+1}$ turns one of the components of $S(Y,F)$
into a type H space whose $\K$-theory vanishes, and thus removes one of
the above generators. The description of the respective remaining one
does not change.

This completes the list of locally closed, connected subsets
of~${C_n}$.
\end{proof}

\begin{lemma}
	\label{lem:longesttrafosoutofC}
The longest natural transformations in $\NTC$ out of $F$ are the
boundary transformations
$\delta_F^{\{1^0,2^0\}}$, $\delta_F^{\{2^{n-1},1^0\}}$ and
$\delta_F^{C_n}\perdef i_{\{2^0\}}^{C_n}\circ\delta_F^{\{2^0\}}$.
\end{lemma}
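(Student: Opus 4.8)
The plan is to compute, in complete parallel with the preceding lemma (which dealt with transformations \emph{into}~$F$), all groups $\NT_*(F,Z)$ for $Z\in\LC(C_n)^*$, and then to exhibit for each of them a system of generators factoring through one of the three transformations in the statement. Since $F$ is closed in $C_n$ with $\widetilde F=F\cup\{2^0,2^{n-1}\}$ and $\widetilde\partial F=\{2^0,2^{n-1}\}$, the formula of Lemma~\ref{lem:correspondences} together with Lemma~2.14 gives
\[
S(F,Z)=\Ch(\widetilde F\cap\overline Z)\setminus\bigl(\Ch(\widetilde F\cap\overline\partial Z)\cup\Ch(\{2^0,2^{n-1}\}\cap\overline Z)\bigr),
\]
so that the homotopy type of $S(F,Z)$ is controlled by the position of $\overline Z$ relative to $F$ and to the two open points $2^0,2^{n-1}$ lying immediately above~$F$. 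Dualising the case distinction of the previous proof, I would treat the cases $\overline Z\subseteq\widetilde F$; $Z$ a proper subset of~$F$; $Z$ meeting $F$ but escaping $\widetilde F$ on one or both sides; and the sets $Z$ whose closure meets~$F$ in two components. In each case $S(F,Z)$ turns out to be of type~H (so $\NT_*(F,Z)=0$), of type~O (so $\NT_*(F,Z)\cong\Z[1]$), contractible (so $\cong\Z[0]$), or a disjoint union of two contractible pieces (so $\cong\Z^2$); the two features of $C_n$ used throughout \S\ref{sec:counterexamples} --- intersections of connected subsets are connected, and complements of connected subsets have at most two components --- remain available.

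Having the groups, I would name their generators by the methods of \S\ref{sec:representability}: in the even cases Corollary~\ref{cor:evengenerators} gives the generator $i_{F\cap Z}^Z\circ r_F^{F\cap Z}$, and in the odd cases Corollary~\ref{cor:oddgenerators} gives a generator of the form $i_D^Z\circ\delta_F^D$ (or $i_D^Z\circ\delta_F^D\circ r_C^{C'}$), where $D$ is the appropriate connected component of a complement inside $\widetilde F$. For each such generator I would write down, using Propositions~\ref{pro:oddrels}, \ref{pro:oddrel2} and~\ref{pro:vanishingsum}, a factorisation through $\delta_F^{\{1^0,2^0\}}$, through $\delta_F^{\{2^{n-1},1^0\}}$, or through $\delta_F^{C_n}=i_{\{2^0\}}^{C_n}\circ\delta_F^{\{2^0\}}$. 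The mechanism is that any transformation leaving the closed set~$F$ is seen by the boundary map of $\Rep_{\widetilde\partial F}\mono\Rep_{\widetilde F}\epi\Rep_F$, i.e.\ by $\delta_F^{\{2^0,2^{n-1}\}}$, whose two components --- once composed with the relevant extensions --- produce exactly $\delta_F^{\{1^0,2^0\}}$, $\delta_F^{\{2^{n-1},1^0\}}$ and $\delta_F^{C_n}$; all other composites vanish as parts of exact six-term sequences. Here I would use the reflection symmetry $\sigma_0\colon 1^k\mapsto 1^{-k}$, $2^k\mapsto 2^{-k-1}$ of~$C_n$: it fixes~$F$, interchanges the two ``arms'', and via Proposition~\ref{pro:vanishingsum} identifies $i_{\{2^0\}}^{C_n}\circ\delta_F^{\{2^0\}}$ with $-\,i_{\{2^{n-1}\}}^{C_n}\circ\delta_F^{\{2^{n-1}\}}$, so that $\delta_F^{C_n}$ is independent of the chosen arm and the case analysis is halved. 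Finally, minimality follows by checking that none of the three transformations factors through (a composite ending in) either of the other two, which reduces to the vanishing, or structural smallness, of the groups $\NT_*(\{1^0,2^0\},\{2^{n-1},1^0\})$, $\NT_*(\{2^{n-1},1^0\},\{1^0,2^0\})$, $\NT_*(C_n,\{1^0,2^0\})$ and $\NT_*(C_n,\{2^{n-1},1^0\})$ --- again small explicit $S(\blank,\blank)$ and $\K$\nb-group computations.

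The main obstacle is the relation bookkeeping rather than anything conceptual. The delicate point is that the boundary transformations out of~$F$ are tightly linked by the pull-back/push-out relations of Proposition~\ref{pro:oddrels}; for instance $\delta_F^{\{1^0,2^0\}}=i_{\{2^0\}}^{\{1^0,2^0\}}\circ\delta_F^{\{2^0\}}$ and $\delta_F^{C_n}=i_{\{2^0\}}^{C_n}\circ\delta_F^{\{2^0\}}$ both factor through $\delta_F^{\{2^0\}}$, which in turn re-factors through $\delta_F^{C_n}$, so one must keep precise track of which transformation each generator is ultimately routed through and confirm that no fourth transformation is forced into the list. This same care is needed in the disconnected-intersection case, where $\NT_*(F,Z)\cong\Z^2$ and one basis element factors through an ``$F^0$-side'' transformation while the other factors through an ``$F^n$-side'' transformation (rather than a single one); that is exactly the case where the previous proof also needed most attention. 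Beyond the phenomenon recorded in Warning~\ref{war:notgenerator} there are no surprises, since $\NT_*(F,Z)$ is cyclic or free of rank~$2$ in every relevant case and its generators are the evident composites.
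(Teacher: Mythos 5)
Your framework (compute $\NT_*(F,Z)\cong\K^*\bigb{S(F,Z)}$ for all $Z\in\LC(C_n)^*$ by a case analysis into types H/O/contractible, name generators via Corollaries~\ref{cor:evengenerators} and~\ref{cor:oddgenerators}, exploit the reflection symmetry) matches the paper's, but the key step is reversed, and as stated it cannot work. In this paper ``the longest transformations out of $F$'' means --- see the explanation given for $X_3$ and the way the lemma is used to prove injectivity of $j\colon P_F\to P^0$ --- that every transformation out of $F$ is a sum of transformations each of which is a \emph{factor of} one of the three, i.e.\ each generator $\eta\colon\FK_F\Rightarrow\FK_Z$ admits a post-composition $\beta\colon Z\to L$ with $\beta\circ\eta=\pm\delta_F^L$ for one of the three targets $L$ (in the paper's proof, in fact $\beta\circ\eta=\pm\delta_F^{C_n}$ in all cases except the two transformations $\delta_F^{\{1^0,2^0\}}$ and $\delta_F^{\{2^{n-1},1^0\}}$ themselves). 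You instead propose to ``exhibit generators factoring through one of the three'', i.e.\ to write $\eta=\alpha\circ\delta_F^L$. That is the opposite direction; it is not what is needed for the later argument (there one wants: if $\eta\circ r_{F^0}^F=\eta\circ r_{F^n}^F=0$ then, post-composing, a generator of $\NT_*(F,L)\cong\Z$ lies in the kernel at~$L$), and it is simply false: for an interval $Z=\{1^k,\dots,1^l\}\subseteq F$ the group $\NT_*(F,Z)\cong\Z$ is generated by the \emph{even} restriction $r_F^Z$, while $\NT_*(\{1^0,2^0\},Z)=\NT_*(\{2^{n-1},1^0\},Z)=0$ and $\NT_*(C_n,Z)\cong\Z[0]$ is purely even, so no non-zero composite $\alpha\circ\delta_F^L$ can equal $r_F^Z$. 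The correct handling of this case is the other way round: $\delta_Z^{C_n}\circ r_F^Z=\pm\delta_F^{C_n}$.

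The same reversal appears in your ``mechanism'' paragraph: it is not true that every transformation leaving~$F$ passes through the boundary map of $\Rep_{\widetilde\partial F}\mono\Rep_{\widetilde F}\epi\Rep_F$ (again the even restrictions do not), and the claim that $\delta_F^{\{2^0\}}$ ``re-factors through $\delta_F^{C_n}$'' is impossible on parity grounds, since $\NT_*(C_n,\{2^0\})\cong\Z[1]$, so any composite through $C_n$ into $\{2^0\}$ is even while $\delta_F^{\{2^0\}}$ is odd. (Your symmetry relation $i_{\{2^0\}}^{C_n}\circ\delta_F^{\{2^0\}}=-i_{\{2^{n-1}\}}^{C_n}\circ\delta_F^{\{2^{n-1}\}}$ is fine, via Proposition~\ref{pro:vanishingsum}(ii) applied to $\widetilde F$ and then extending to $C_n$.) So the proof, as proposed, would stall on all even generators and on the odd ones whose target is not contained in $\widetilde F$; to repair it, keep your case list and generators but, for each generator $\eta\in\NT_*(F,Z)$, produce the post-composition $\beta$ with $\beta\circ\eta=\pm\delta_F^{C_n}$ (or one of the other two), which is exactly what the paper does.
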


\begin{proof}
The space $S(F,\{1^0,2^0\})$ is homeomorphic to the open interval.
This shows that $\NT_*(F,\{1^0,2^0\})\cong\Z[1]$. By
Corollary~\ref{cor:oddgenerators}, this group is generated by the
natural transformation $\delta_F^{\{1^0,2^0\}}$ because the
$\K^1$-group of $S(F,F)\cup S(F,\{1^0,2^0\})$ is trivial.
Symmetrically, $\NT_*(F,\{2^{n-1},1^0\})\cong\Z[1]$ is generated by
the transformation $\delta_F^{\{2^{n-1},1^0\}}$.

The space $S(F,{C_n})$
is of type~O as well, and, by Corollary~\ref{cor:oddgenerators},
we find that $\NT_*(F,{C_n})\cong\Z[1]$ is generated by
$\delta_F^{C_n}$ as defined above because the spaces
$S(F,F)\cup S(F,\{2^0\})$ and $S(F,{C_n})\setminus S(F,\{2^0\})$
have vanishing $\K$-theory. In the following we will determine
generators of all further groups $\NT_*(F,Z)$ with $Z\in\LC({C_n})^*$,
$Z\neq F$, and verify that each of them factors one of the three
transformations $\delta_F^{\{1^0,2^0\}}$, $\delta_F^{\{2^{n-1},1^0\}}$
and $\delta_F^{C_n}$. In fact, we will find that all transformations
out of $F$ factor the transformation $\delta_F^{C_n}$ (except for
$\delta_F^{\{1^0,2^0\}}$ and $\delta_F^{\{2^{n-1},1^0\}}$, of course).
We will not explicitly cite the theorems used for this each time.

We begin with the supersets of $F$ again. Since $S(F,F^0)$ is of type~H,
we get $\NT_*(F,F^0)=0$ and, symmetrically, $\NT_*(F,F^n)=0$. The same
holds for the sets $F\cup\{2^0\}$ and $F\cup\{2^{n-1}\}$. For
$Z=F\cup\{2^0,2^{n-1}\}$, however, the space $S(F,Z)$ is of type~O
so that $\NT_*(F,Z)\cong\Z[1]$. A generator of this group is given by
the composition $i_{\{2^0\}}^Z\circ\delta_F^{\{2^0\}}$. We have
$i_Z^{C_n}\circ\left( i_{\{2^0\}}^Z\circ\delta_F^{\{2^0\}}\right)
=\delta_F^{C_n}$,
which proves that $i_{\{2^0\}}^Z\circ\delta_F^{\{2^0\}}$
factors the transformation $\delta_F^{C_n}$.

Now we examine proper subsets of $F$.
Let $Z=\{1^k, 2^k, \ldots, 1^l\}$ with $1\leq k\leq l\leq n-1$.
Then $S(F,Z)$ is contractible and $\NT_*(Y,F)\cong\Z$ is generated
by the restriction $r_F^Z$. We have
$\delta_Z^{C_n}\circ r_F^Z=\pm\delta_F^{C_n}$, where
$\delta_Z^{C_n}$ denotes the composition
$i_{\{2^{k-1}\}}^{C_n}\circ\delta_Z^{\{2^{k-1}\}}$.
Replacing $Z$ as above by $Z\setminus\{1^k\}$ or $Z\setminus\{1^l\}$
yields a trivial group of natural transformations. For
$Z'=\{2^k,1^{k+1},\ldots, 2^{l-1}\}$ with $1\leq k<l\leq n-1$
we get $\NT_*(F,Z')\cong\Z[1]$ and find the generator
$\delta_D^{Z'}\circ r_F^D$, where $D=\{1^l,2^l,\ldots,1^{n-1}\}$ is
one of the two components of $F\setminus Z'$. We have 
$i_{Z'}^{C_n}\circ(\delta_D^{Z'}\circ r_F^D)=\delta_F^{C_n}$.

For $Z=\{1^k,2^k,\ldots,b\}$ with $k>1$ and $b\in\{2^{n-1},1^0\}$
the space $S(F,Z)$ is of type H, so that $\NT_*(F,Z)=0$.
Yet if $b=2^0$, then $S(F,Z)$ is of type $\text H\sqcup \text O$, that
is, it is the disjoint union of a space of type H and a space of type O,
and $\NT_*(F,Z)\cong\Z[1]$ is generated by
$\delta_{\{1^1\}}^Z\circ r_F^{\{1^1\}}$. Notice that
$i_Z^{C_n}\circ\left( \delta_{\{1^1\}}^Z\circ r_F^{\{1^1\}}\right) 
=\delta_F^{C_n}$. Symmetrical results hold if $Z$ is of the form
$\{a,\ldots,1^k\}$ with $k<n-1$ and $a\in\{2^{n-1},1^0,2^0\}$.

Now let $Z=\{2^k,1^{k+1},\ldots,b\}$ with $k\geq 1$ and
$b\in\{2^{n-1},1^0\}$. Then $S(F,Z)$ is of type O and
$\NT_*(F,Z)\cong\Z[1]$ is generated by
$\delta_{\{1^k\}}^Z\circ r_F^{\{1^k\}}$ and we have
$i_Z^{C_n}\circ\left( \delta_{\{1^k\}}^Z\circ r_F^{\{1^k\}}\right) 
=\pm\delta_F^{C_n}$. For $Z$ as above, but with $b=2^0$, the space
$S(F,Z)$ is of type $\text{O}\sqcup\text{O}$. Hence
$\NT_*(F,Z)\cong\Z[1]^2$. Two generators are given by
$\delta_{\{1^k\}}^Z\circ r_F^{\{1^k\}}$ and
$\delta_{\{1^1\}}^Z\circ r_F^{\{1^1\}}$ if $k>1$, and by
$i_{\{2^0\}}^Z\circ\delta_{\{1^1\}}^{\{2^0\}}\circ r_F^{\{1^1\}}$
and $i_{\{2^1\}}^Z\circ\delta_{\{1^1\}}^{\{2^1\}}\circ r_F^{\{1^1\}}$
for $k=1$. These can be seen to factor
the transformation $\delta_F^{C_n}$ as before.
Again, symmetrical arguments apply to sets $Z$ of the form
$\{a,\ldots,2^k\}$ with $k<n-1$ and $a\in\{2^{n-1},1^0,2^0\}$.

Finally, let
\begin{equation}
	\label{eq:formZ}
Z=\{2^k,1^{k+1},\ldots,2^{n-1},1^0,2^0,\ldots,2^l\}
\end{equation}
with $1\leq l<k<n-1$. Generators of the group $\NT_*(F,Z)\cong\Z[1]$
can be described as in the previous paragraph, including the
factorisation of $\delta_F^{C_n}$. Adding the points $1^k$ and
$1^{l+1}$ to $Z$ as in~\eqref{eq:formZ} with $l<k-1$ removes
respectively one of the afore-stated generators, not violating the
desired characterisation.
\end{proof}

%

The two restriction transformations from~$F^0$ and~$F^n$ to~$F$ induce a module homomorphism
\[
j\colon P_F\to P^0\perdef P_{F^0}\oplus P_{F^n}.
\]

\begin{lemma}
The homomorphism $j$ is injective.
\end{lemma}

\begin{proof}
By Lemma~\ref{lem:longesttrafosoutofC} if suffices to show that the maps
\begin{align*}
P_F(\{1^0,2^0\})&\to P^0(\{1^0,2^0\}),\\
P_F(\{2^{n-1},1^0\})&\to P^0(\{2^{n-1},1^0\}),\\
P_F({C_n})&\to P^0({C_n})
\end{align*}
are injective. This follows from the injectivity of the maps
\begin{align*}
\NT(F,\{1^0,2^0\})&\to\NT(F^n,\{1^0,2^0\}),\\
\NT(F,\{2^{n-1},1^0\})&\to\NT(F^0,\{2^{n-1},1^0\}),\\
\NT(F,{C_n})&\to\NT(F^0,{C_n}),
\end{align*}
which we obtain from the vanishing of the groups
\[
\NT(\{2^{n-1},1^0\},\{1^0,2^0\}),\; \NT(\{1^0,2^0\},\{2^{n-1},1^0\})
\]
and $\NT(\{1^0,2^0\},{C_n})$.
\end{proof}


\begin{proposition}
The module $M\perdef\coker(j\colon P_F\rightarrowtail P^0)$ is exact and entry-free, but it is not projective.
\end{proposition}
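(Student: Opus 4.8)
This is the ${C_n}$-analogue of Proposition~\ref{pro:exactnonprojective} (and of the corresponding construction carried out for $X_1$), and the plan is to follow the same template: prove separately that $M$ is exact, that $M$ is entry-free, and that $M$ is not projective. The real content lies in exactness and non-projectivity; entry-freeness is a routine but case-heavy verification.

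For exactness, recall that the preceding lemma gives that $j\colon P_F\to P^0$ is a monomorphism, so that $0\to P_F\xrightarrow{j}P^0\to M\to 0$ is a short exact sequence of $\NTC({C_n})$-modules. The free modules $P_F$ and $P^0=P_{F^0}\oplus P_{F^n}$ are projective, by Yoneda's lemma, and also exact: the argument for exactness of free modules given in the proof of Lemma~\ref{lem:projmodules} — deducing it from the exact triangle~\eqref{eq:exacttrianglerepobjects}, the Representability Theorem~\ref{thm:representability} and Yoneda's lemma — uses no special feature of the underlying space and therefore applies to ${C_n}$ as well. Applying the two-out-of-three property for exact modules to the above short exact sequence, we obtain that $M$ is exact.

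For non-projectivity, I would observe that the same short exact sequence is a projective resolution of $M$ of length one. Were $M$ projective, this resolution would split and we would obtain a retraction $\rho\colon P^0\to P_F$ with $\rho\circ j=\id_{P_F}$; since $P_F(F)\cong\NT_*(F,F)\cong\Z\neq 0$, the module $P_F$ is non-zero and hence $\rho\neq 0$. On the other hand, Yoneda's lemma gives $\Hom_{\NTC}(P^0,P_F)\cong P_F(F^0)\oplus P_F(F^n)\cong\NT_*(F,F^0)\oplus\NT_*(F,F^n)$, and both summands vanish because $S(F,F^0)$ and $S(F,F^n)$ are of type~H, as noted in the proof of Lemma~\ref{lem:longesttrafosoutofC}. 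This contradiction shows that $M$ is not projective.

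Finally, entry-freeness. For $Y\in\LC({C_n})^*$ we have $M(Y)=\bigl(\NT_*(F^0,Y)\oplus\NT_*(F^n,Y)\bigr)/j(Y)\bigl(\NT_*(F,Y)\bigr)$, where $j(Y)$ sends $\tau$ to $(\tau\circ r_{F^0}^F,\tau\circ r_{F^n}^F)$. Since $\NT_*(F^0,Y)\oplus\NT_*(F^n,Y)$ is a finitely generated free Abelian group, $M(Y)$ is free precisely when $j(Y)$ is a split monomorphism. I would check this by running through the same partition of $\LC({C_n})^*$ into subfamilies used in the proof of Lemma~\ref{lem:longesttrafosoutofC} (supersets of $F$, proper subsets of $F$, sets meeting $F$ in a connected proper subset, sets whose intersection with $F$ is disconnected, and so on) and, in each subfamily, reading off $M(Y)$ from the generators of $\NT_*(F,Y)$, $\NT_*(F^0,Y)$ and $\NT_*(F^n,Y)$ described there; in every case $j(Y)$ turns out to be either zero or to identify $\NT_*(F,Y)$ with a direct summand, so that $M(Y)$ is free. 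This last step is the main obstacle: it involves no new idea but is lengthy, being done uniformly in the parameter $n$ and in complete parallel with the corresponding finite computations for $X_1$ and $X_3$.
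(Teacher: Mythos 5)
Your proposal is correct and follows essentially the same route as the paper: exactness via the two-out-of-three property applied to $0\to P_F\to P^0\to M\to 0$, non-projectivity from the vanishing of $\Hom_{\NTC}(P^0,P_F)$ (i.e.\ $\NT_*(F,F^0)=\NT_*(F,F^n)=0$), and entry-freeness by a case-by-case inspection of $j(Y)$ via the generators found in Lemma~\ref{lem:longesttrafosoutofC}. The only difference is one of detail: the paper actually carries out a sample computation (for $Z={C_n}\setminus F$, where $j(Z)$ becomes $(a,b)\mapsto(a,b,a,b)$), whereas you only describe the scheme, so to be complete you should exhibit at least the higher-rank cases explicitly.
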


\begin{proof}
The module~$M$ is exact by the two-out-of-three property.
The fact that~$M$ is entry-free follows from a direct investigation
of the map~$j$ via generators of the Abelian groups involved. This
is particularly easy when $P_F(Z)$ is of rank~1. As an example for one
of the more complicated cases,
we discuss the case $Z=\{2^{n-1},1^0,2^0\}={C_n}\setminus F$.
In the proof of Lemma~\ref{lem:longesttrafosoutofC}, we found that
$P_F(Z)=\NT_*(F,Z)\cong\Z[1]^2$ is generated by
$\delta_{\{1^1\}}^Z\circ r_F^{\{1^1\}}$ and
$\delta_{\{1^k\}}^Z\circ r_F^{\{1^k\}}$. Similarly,
$P_{F^n}(Z)=\NT_*(F^n,Z)$ is generated by
$\delta_{\{1^1\}}^Z\circ r_{F^n}^{\{1^1\}}
=\delta_{\{1^1\}}^Z\circ r_F^{\{1^1\}}\circ r_{F^n}^F$ and
$\delta_{\{1^k\}}^Z\circ r_{F^n}^{\{1^k\}}
=\delta_{\{1^k\}}^Z\circ r_F^{\{1^k\}}\circ r_{F^n}^F$,
and $P_{F^0}(Z)=\NT_*(F^0,Z)$ is generated by
$\delta_{\{1^1\}}^Z\circ r_{F^0}^{\{1^1\}}
=\delta_{\{1^1\}}^Z\circ r_F^{\{1^1\}}\circ r_{F^0}^F$ and
$\delta_{\{1^k\}}^Z\circ r_{F^0}^{\{1^k\}}
=\delta_{\{1^k\}}^Z\circ r_F^{\{1^k\}}\circ r_{F^0}^F$.
Hence the map $j(Z)\colon P_F(Z)\to P^0(Z)$ can be identified with
the map
\[
\Z^2\to\Z^4,\quad (a,b)\mapsto (a,b,a,b)
\]
whose cokernel is entry-free. The computations for all other
subsets~$Z$ in $\LC({C_n})^*$ are similar.
The projective resolution
$
0\to P_F\to P^0\twoheadrightarrow M
$
does not split because there is no non-zero homomorphism from $P^0$ to
$P_F$. This follows from $\NT(F,F^0)=0$ and $\NT(F,F^n)=0$.
\end{proof}

This provides the counterexample on the level of projective modules.
The counterexamples on the two deeper levels now follow as described in
the beginning of this section. The three assumptions listed at the
beginning of \S\ref{sec:counterexamples}
have all been verified and we obtain the desired result.

\begin{theorem}
There exist $\Cst$-algebras $B$ and $D$ in the bootstrap
class $\Boot({C_n})$ that are not $\KK({C_n})$-equivalent but
have isomorphic filtrated $\K$-theory.
\end{theorem}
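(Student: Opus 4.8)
The plan is to run the general machine from the beginning of \S\ref{sec:counterexamples} with the module $M=\coker(j\colon P_F\rightarrowtail P^0)$ constructed above. First I would record that the three standing assumptions of that procedure are now in place for~${C_n}$: injectivity of~$j$ (the lemma preceding this theorem), the vanishing $\Hom_\NTC(P^0,P_F)=0$, which holds because $\NT(F,F^0)=0$ and $\NT(F,F^n)=0$, and entry-freeness of~$M$ (the proposition preceding this theorem). In particular the short sequence $0\to P_F\xrightarrow{j} P^0\twoheadrightarrow M$ is a \emph{free} resolution of length~$1$, so Lemma~\ref{lem:projresolutions} (via the realization theorem \cite{meyernestCalgtopspacfiltrKtheory}*{Theorem~4.11}) yields a $\Cst$-algebra $A$ in $\Boot({C_n})$ with $\FK^\con(A)\cong M$; moreover $M$ is torsion-free, all of its entries being free Abelian groups.

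Next I would pass to the ``deeper'' levels exactly as in the template. For $k\in\N_{\geq 2}$ put $M_k\perdef M/k\cdot M$; it is exact by the two-out-of-three property and admits the length-$2$ projective resolution
\[
0\to P_F\xrightarrow{(-k,j)} P_F\oplus P^0\xrightarrow{(j,k)} P^0\twoheadrightarrow M_k.
\]
Since $\Hom_\NTC(P_F,P_F)\cong\NT_*(F,F)\cong\Z$ and $\Hom_\NTC(P^0,P_F)=0$, this computes
\[
\Ext^2_\NTC(M_k,P_F)\cong\Hom_\NTC(P_F,P_F)/k\cdot\Hom_\NTC(P_F,P_F)\cong\Z/k\Z\neq 0,
\]
so $M_k$ has projective dimension~$2$. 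By the K\"unneth theorem for the $\K$-theory of tensor products \cite{Blackadar:Op_Algs}*{V.1.5.10}, torsion-freeness of $M\cong\FK^\con(A)$ forces $\FK^\con(A_k)\cong M_k$, where $A_k\perdef A\otimes\Cuntz_{k+1}$.

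Then I would feed this into the two abstract theorems. Theorem~\ref{thm:nonprojective}, whose hypotheses are precisely the three assumptions above together with the construction just carried out, shows that $A_k$ is not $\ker(\FK^\con)^2$-projective; in particular the homological ideal $\catI\perdef\ker(\FK^\con)$ in $\KKcat({C_n})$ satisfies $\catI^2\neq 0$. Theorem~\ref{thm:I2counterexample} then produces non-isomorphic objects $B,D\inin\Boot({C_n})$ with $\FK^\con(B)\cong\FK^\con(D)$, and hence $\FK(B)\cong\FK(D)$ because $\Upsilon$ is an equivalence of categories (see Remark~\ref{rem:connected}); concretely $B$ and $D$ are the two algebras attached to an exact triangle through $A_k$ as in that theorem, both of which lie in the bootstrap class since $\Boot({C_n})$ is triangulated and closed under the operations involved.

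Essentially the argument is an exercise in chaining together the established results, and the only space-specific input is the verification of the three assumptions for~${C_n}$, which has already been accomplished above. The point I would be most careful about is that the realizing algebra $A$ can genuinely be chosen inside $\Boot({C_n})$ and that the K\"unneth formula applies to $\FK^\con$ entry by entry---both covered by the cited work of Meyer--Nest and Blackadar---so that $A_k$ really lives in the bootstrap class and Theorem~\ref{thm:I2counterexample} is applicable.
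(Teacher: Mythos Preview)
Your proposal is correct and follows precisely the same approach as the paper: the paper's proof consists of the single sentence that the three assumptions listed at the beginning of \S\ref{sec:counterexamples} have been verified, so the general procedure described there applies, and you have simply spelled out that procedure in detail for the case of~${C_n}$. Your care in noting that $B$ and~$D$ land in $\Boot({C_n})$ because the bootstrap class is a triangulated subcategory is a welcome clarification not made explicit in the paper.
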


\section{The complete description}
\label{sec: the complete description}

We already know that, if $X$ is of type~(A), then $UCT(X)$ holds. The aim of this section is to prove the converse implication. We want to show that, if $X$ is not of type~(A), then we can ``embed'' one of the counterexamples from \S\ref{sec:counterexamples} into~$X$. Knowing that $\neg UCT$ holds for the counterexample, we will use the results from \S\ref{The UCT criterion} to conclude that $\neg UCT(X)$ holds.

\begin{definition}
A topological subspace $X'$ of a finite $T_0$-space $X$ is \emph{tight} if
\[ y \rightarrow x \text{ in } X' \Longleftrightarrow  y \rightarrow x \text{ in } X,\]
that is, there is a directed edge from~$y$ to~$x$ in $\Gamma(X')$ if and only if there is a directed edge from~$y$ to~$x$ in $\Gamma(X)$ (see Definition~\ref{def:Hasse}).
\end{definition}

So, if $X'$ is a topological subspace of $X$, then $X'$ is tight in $X$ if and only if $\Gamma(X')$ is a subgraph of $\Gamma(X)$. If  $Y$ is another finite $T_0$-space such that there exists an embedding $\Gamma(Y) \hookrightarrow \Gamma(X)$ as directed graphs, then $Y$ may be viewed as a tight subspace of $X$.

\begin{lemma}
Let $X$ be a finite $T_0$-space such that $\Gamma(X)$ contains either $\Gamma(X_1)$ or $\Gamma(X_2)$ as a subgraph. Then $\neg UCT(X)$ holds.
\end{lemma}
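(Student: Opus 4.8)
The plan is to exhibit, inside~$X$, a \emph{locally closed} subset that is homeomorphic to~$X_1$ or to~$X_2$, and then to invoke Theorem~\ref{counterexamples} together with Proposition~\ref{embedding}(i). Thus the whole argument reduces to one combinatorial observation about the shape of $\Gamma(X_1)$ and $\Gamma(X_2)$; the rest is bookkeeping.

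Suppose first that $\Gamma(X)$ contains $\Gamma(X_1)$ as a subgraph, on a four-element vertex set $V'=\{v_1,v_2,v_3,v_4\}\subseteq X$, where I number the points of~$V'$ so that the induced subgraph carries precisely the edges $v_1\to v_4$, $v_2\to v_4$, $v_3\to v_4$ and no others. By Definition~\ref{def:Hasse} each $v_i\to v_4$ is then a Hasse edge of~$X$, that is, $v_4\prec v_i$ with no point of~$X$ lying strictly between~$v_4$ and~$v_i$. The first thing I would record is that $v_1$, $v_2$, $v_3$ are pairwise incomparable in~$X$: a relation $v_i\prec v_j$ would place~$v_i$ strictly between~$v_4$ and~$v_j$ (since always $v_4\prec v_i$), contradicting that $v_4\to v_j$ is a Hasse edge. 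Consequently the restriction of the specialisation order of~$X$ to~$V'$ is exactly the order of~$X_1$ ($v_4$ below the three incomparable elements $v_1,v_2,v_3$), so that $V'$ with the subspace topology is homeomorphic to~$X_1$ by \cite{MR2545613}*{Corollary~2.33}.

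Next I would verify that $V'$ is convex with respect to~$\preceq$, hence locally closed in~$X$: given $x\preceq y\preceq z$ with $x,z\in V'$, either $y\in\{x,z\}$, or else $x\prec y\prec z$ with all three distinct; but in the latter case the only strict pair $x\prec z$ with $x,z\in V'$ is $x=v_4$, $z=v_i$ (the $v_1,v_2,v_3$ being pairwise incomparable and all strictly above~$v_4$), and $v_4\prec y\prec v_i$ is impossible because $v_4\to v_i$ is a Hasse edge. Hence $V'\in\Loclo(X)$. Since $\neg UCT(X_1)$ holds by Theorem~\ref{counterexamples}, $\neg UCT(V')$ holds as well, and Proposition~\ref{embedding}(i) yields $\neg UCT(X)$.

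The case in which $\Gamma(X)$ contains $\Gamma(X_2)$ is handled in exactly the same way with all arrows reversed: the four vertices then carry the edges $v_4\to v_1$, $v_4\to v_2$, $v_4\to v_3$, forcing $v_i\prec v_4$ with no intermediate point and $v_1,v_2,v_3$ pairwise incomparable, so that $V'$ is locally closed and homeomorphic to~$X_2$; one concludes via $\neg UCT(X_2)$ (Theorem~\ref{counterexamples}) and Proposition~\ref{embedding}(i). I expect the only genuine subtlety to be the local-closedness step: a tight subspace need not in general be locally closed (a tight copy of~$S$, say, can fail to be convex inside a larger space), and it is precisely the rigidity of the graphs $\Gamma(X_1)$, $\Gamma(X_2)$---whose Hasse-edge constraints force incomparability and hence convexity---that makes Proposition~\ref{embedding}(i) directly applicable and spares us from having to produce a continuous retraction in the sense of Proposition~\ref{embedding}(ii).
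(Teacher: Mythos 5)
Your proof is correct and follows essentially the same route as the paper: both arguments show that the tight copy of $X_1$ (resp.\ $X_2$) is convex, hence locally closed in~$X$, because a point strictly between $v_4$ and $v_i$ would contradict the corresponding Hasse edge, and then conclude via Theorem~\ref{counterexamples} and Proposition~\ref{embedding}. Your explicit check that $v_1,v_2,v_3$ are pairwise incomparable (so the subspace is indeed homeomorphic to $X_1$) and your appeal to part~(i) of Proposition~\ref{embedding} are, if anything, slightly more careful than the paper's wording, which leaves the incomparability implicit in the notion of tight subspace and cites part~(ii) where part~(i) is what is actually used.
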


\begin{proof}
$\Gamma(X_1) \subseteq \Gamma(X)$ allows us to view $X_1$ as a tight subspace of $X$.
Let $y \in LC(X_1)$ then there are $x_1, x_2 \in X_2$ such that $x_1 \succeq y \succeq x_2$. Without loss of generality we may assume that $x_1 =1$ and $x_2 =4$. Since $1 \rightarrow 4$ we have $y =1$ or $y =4$ by Lemma~\ref{restriction}. Therefore $X_1$ is locally closed in $X$, similarly we see that $X_2$ is locally closed in $X$ if $\Gamma(X_2) \subseteq \Gamma(X)$. Therefore $\neg UCT(X)$ holds by Theorem~\ref{counterexamples} and Proposition~\ref{embedding}(ii).
\end{proof}

\begin{proposition}
\label{auxiliary 1}
Let $X$ be a finite $T_0$-space such that $\Gamma(X)$ contains $\Gamma(X_3)$ as a subgraph. Define
\[ \pi_3\colon LC(X_3) \rightarrow X_3 , \ \ \ \pi_3 (x) = \begin{cases} x & \text{if } x \in X_3, \\   3 & \text{else.}
\end{cases}\]
Then $ \pi_3$ is continuous.
\end{proposition}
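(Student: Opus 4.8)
The plan is to invoke the standard characterisation recalled in the excerpt: a map between finite $T_0$-spaces is continuous if and only if it is monotone with respect to the specialisation orders. So I would take $x,y\in LC(X_3)$ with $x\preceq y$ (the order on $LC(X_3)$ being the restriction of the order on $X$) and show $\pi_3(x)\preceq\pi_3(y)$ in $X_3$, where the order on $X_3$ is $4\prec 3\prec 1$ and $4\prec 3\prec 2$, with $1$ and $2$ incomparable.

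Before the case check I would record two auxiliary facts. First, since $X_3$ is realised as a \emph{tight} subspace of $X$ (the edge set of $\Gamma(X_3)$ is exactly the induced subgraph of $\Gamma(X)$ on the vertices of $X_3$), the subspace specialisation order on the copy of $X_3$ inside $X$ has the same Hasse diagram as, and hence coincides with, the intrinsic order on $X_3$; in particular $p\preceq_X q$ implies $p\preceq_{X_3} q$ for $p,q\in X_3$ (the other implication is not needed). Second, by the definition of the locally closed hull, every $w\in LC(X_3)$ satisfies $y_1\preceq_X w\preceq_X y_2$ for suitable $y_1,y_2\in X_3$.

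Then I would split into four cases. If $x,y\in X_3$, the first auxiliary fact gives $\pi_3(x)=x\preceq_{X_3}y=\pi_3(y)$. If neither $x$ nor $y$ lies in $X_3$, then $\pi_3(x)=3=\pi_3(y)$ and there is nothing to prove. If $x\notin X_3$ but $y\in X_3$, I would argue $y\neq 4$: pick $y_1\in X_3$ with $y_1\preceq_X x$; were $y=4$, then $y_1\preceq_X x\preceq_X 4$, hence $y_1\preceq_{X_3}4$, hence $y_1=4$ because $4$ is the least element of $X_3$, so $4\preceq_X x\preceq_X 4$ forces $x=4\in X_3$, a contradiction. Thus $y\in\{1,2,3\}$ and $\pi_3(x)=3\preceq_{X_3}y=\pi_3(y)$. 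Symmetrically, if $x\in X_3$ but $y\notin X_3$, pick $y_2\in X_3$ with $y\preceq_X y_2$; were $x\in\{1,2\}$, then $x\preceq_X y\preceq_X y_2$, hence $x\preceq_{X_3}y_2$, hence $y_2=x$ because $1$ and $2$ are maximal in $X_3$, so $y=x\in X_3$, a contradiction. Thus $x\in\{3,4\}$ and $\pi_3(x)=x\preceq_{X_3}3=\pi_3(y)$. In every case $\pi_3(x)\preceq_{X_3}\pi_3(y)$, so $\pi_3$ is monotone, hence continuous.

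The only mildly delicate ingredient is the first auxiliary fact, namely identifying the subspace order on the tight copy of $X_3$ with its intrinsic order; but this is precisely what "tight subspace" encodes, via the fact that $\Gamma$ determines the specialisation order. Everything else is a short verification using only that elements of $LC(X_3)$ are sandwiched between elements of $X_3$ and that in $X_3$ the element $4$ is least while $1$ and $2$ are maximal.
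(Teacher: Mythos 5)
Your overall strategy (continuity equals monotonicity, the sandwiching property of the locally closed hull, and a four-case check according to whether $x$ and $y$ lie in $X_3$) is the same as the paper's, and your Cases 2 and 3 go through. The genuine gap is the justification of your first auxiliary fact. The hypothesis only says that $\Gamma(X_3)$ is the induced subgraph of $\Gamma(X)$ on $\{1,2,3,4\}$, and this does \emph{not} in general force the subspace order to coincide with the intrinsic order: extra comparabilities can be created by directed paths of $\Gamma(X)$ that leave the subset. For instance, in the three-point chain $a\prec c\prec b$ the induced subgraph on $\{a,b\}$ has no edges, so the two-point discrete space ``embeds'' in the sense you use, yet in the subspace one has $a\prec b$. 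So ``this is precisely what tight subspace encodes'' is not a proof; the implication $p\preceq_X q\Rightarrow p\preceq_{X_3}q$ has to be argued, and the only dangerous pair is $\{1,2\}$ (all other potential violations die by antisymmetry, since the edges $1\to 3$, $2\to 3$, $3\to 4$ of $\Gamma(X_3)$ already force the intrinsic relations inside $X$).

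This matters in your Case 1 when $\{x,y\}=\{1,2\}$ and, more seriously, in your Case 4, where $y_2$ could a priori be the other element of $\{1,2\}$ and your contradiction rests on converting $x\preceq_X y_2$ into $x\preceq_{X_3}y_2$. The missing ingredient is exactly the paper's Lemma~\ref{restriction} (transitive reducedness of $\Gamma(X)$): if, say, $2\preceq_X 1$, there is a directed path in $\Gamma(X)$ from $1$ to $2$; it cannot pass through $3$ (else $2\preceq_X 3$, contradicting $3\prec_X 2$ and antisymmetry), so concatenating it with the edge $2\to 3$ yields a directed path from $1$ to $3$ of length at least $2$, distinct from the length-one edge $1\to 3$, contradicting Lemma~\ref{restriction}; symmetrically $1\preceq_X 2$ is impossible, and the same concatenation trick applied to the path through your $y\notin X_3$ repairs Case 4 directly. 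Once this argument is supplied, your proof is correct and is essentially the paper's, which packages the same use of Lemma~\ref{restriction} into a claim describing how points of $LC(X_3)\setminus X_3$ sit relative to $1,2,3,4$.
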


\begin{proof}
Let us first show the following claim:
\[\text{If $x \in LC(X_3)\setminus X_3$, then $x \succ 4$, $x \npreceq 3$, $x \nsucceq 3$, $x \nsucceq 1$, $x \nsucceq 2 $.}\]
Let $x \in LC(X_3)\setminus X_3$. Then there are $x_1, \ x_2 \in X_3$ such that $x_1 \prec x \prec x_2$. Since $1\rightarrow 3, \ 2\rightarrow 3, \ 3\rightarrow 4$ Lemma~\ref{restriction} shows that $x_1 = 4$ and $x_2 \in \{1,2\}$. Without loss of generality we may assume that $x_2 =1$. This implies of course that $x \nsucceq 1$ and $x \succ 4$. Assume $x \succeq 2$, then $1 \succ x \succ 2 \succ 3$ this is a contradiction to $1 \rightarrow 3$. By the same argument $x \succeq 3$ leads to a contradiction. Assume $x \preceq 3$, then $4 \prec x \prec 3$. This is a contradiction to $3 \rightarrow 4$. This shows the claim.

To check that $ \pi_3 $ is continuous, we have to check that it is monotone. Let $x,y \in LC(X_3) $, if $x,y \in X_3 $ then $x \preceq y$ clearly implies $ \pi_3(x) \preceq  \pi_3(y)$. If  $x,y \in LC(X_3)\setminus X_3 $ then $ \pi_3(x) =3 = \pi_3(y)$. If $x \in   LC(X_3)\setminus X_3 $, $y \in X_3$ and $y \prec x$, then $y =4$ by Claim \#1. Therefore $ \pi_3(4) =4 \prec 3 =  \pi_3(x)$. If $y \in  X_3 $, $x \in LC(X_3)\setminus X_3 $ and $y \succ x$, then either $y =1$ or $y=2$ by Claim \#1, and in both cases $  \pi_3(y) = y \succ 3 = \pi_3(x)$. This shows that $ \pi_3 $ is continuous. 
\end{proof}

\begin{proposition}
\label{auxiliary 2}
Let $X$ be a finite $T_0$-space such that $\Gamma(X)$ contains $\Gamma(X_4)$ as a subgraph. Define
\[ \pi_4\colon LC(X_4) \rightarrow X_4 , \ \ \  \pi_4 (x) = \begin{cases} x & \text{if } x \in X_4, \\   3 & \text{else.}
\end{cases}\]
Then $ \pi_4$ is continuous.
\end{proposition}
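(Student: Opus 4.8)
The plan is to repeat the proof of Proposition~\ref{auxiliary 1} with all inequalities reversed, exploiting that $X_4 = X_3^{\op}$. Reading off Figure~\ref{fig:X4}, the specialisation order on $X_4$ is generated by $1 \prec 3 \prec 4$ and $2 \prec 3 \prec 4$, and the edges of $\Gamma(X_4)$---which by hypothesis are also edges of $\Gamma(X)$---encode exactly the three covering relations $1\prec 3$, $2\prec 3$, $3\prec 4$. Every step below is the corresponding step from the $X_3$-argument, dualised.

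First I would prove the dual of the auxiliary claim used in Proposition~\ref{auxiliary 1}: \emph{if $x \in LC(X_4)\setminus X_4$, then $x \prec 4$, $x \nsucceq 3$, $x \npreceq 3$, $x \npreceq 1$ and $x \npreceq 2$.} Given such an $x$, pick $x_1, x_2 \in X_4$ with $x_1 \prec x \prec x_2$. Since $\Gamma(X)$ contains the edges $4\to 3$, $3\to 2$, $3\to 1$, Lemma~\ref{restriction} forbids $(x_1,x_2)$ from being one of the covering pairs $(3,4)$, $(2,3)$, $(1,3)$, which forces $x_1 \in \{1,2\}$ and $x_2 = 4$. Assuming without loss of generality $x_1 = 1$, we get $1 \prec x \prec 4$, so $x \prec 4$ and $x \npreceq 1$ follow at once; and each of the hypothetical relations $x \preceq 2$, $x \preceq 3$, $x \succeq 3$ would---using that the inequality is strict because $x \notin X_4$, together with one of the covering relations---yield a second directed path joining a covering pair, contradicting Lemma~\ref{restriction} just as in the $X_3$ case.

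Then I would check that $\pi_4$ is monotone (equivalently, continuous) by the same four-case split as in Proposition~\ref{auxiliary 1}, applied to a relation $x \preceq y$ in $LC(X_4)$: if $x,y \in X_4$ or $x,y \notin X_4$ there is nothing to prove; if $x \notin X_4$ and $y \in X_4$ with $y \succ x$, the claim forces $y = 4$, hence $\pi_4(x) = 3 \prec 4 = \pi_4(y)$; and if $x \in X_4$ and $y \notin X_4$ with $x \prec y$, the claim forces $x \in \{1,2\}$, hence $\pi_4(x) = x \prec 3 = \pi_4(y)$. I do not expect a genuine obstacle here; the only care needed is bookkeeping---reading the orientations of $\Gamma(X_4)$ correctly off Figure~\ref{fig:X4} and invoking the strictness $x \notin X_4$ at each place where Lemma~\ref{restriction} is used. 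Alternatively, the proposition follows formally from Proposition~\ref{auxiliary 1}, since $\Gamma(X^{\op})$ then contains $\Gamma(X_3)$, local closedness and continuity of maps are preserved by passing to dual spaces, and $\pi_4$ is nothing but the map $\pi_3$ for $X^{\op}$.
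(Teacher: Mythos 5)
Your proposal is correct and matches the paper's proof, which simply says to repeat the argument of Proposition~\ref{auxiliary 1} with $\prec$ and $\succ$ interchanged; your detailed dualised claim and four-case monotonicity check (and the alternative remark via $X^{\op}$) are exactly that argument spelled out.
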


\begin{proof}
This is proven completely analogously to Proposition~\ref{auxiliary 1}---just switch $\prec$ and~$\succ$ in the proof.
\end{proof}

\begin{corollary}
Let $X$ be a finite $T_0$-space such that $\Gamma(X)$ contains either $\Gamma(X_3)$ or $\Gamma(X_4)$ as a subgraph. Then $\neg UCT(X)$ holds.
\end{corollary}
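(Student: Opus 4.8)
The plan is to deduce the corollary from the two auxiliary propositions together with the stability properties of $\neg UCT$ established in \S\ref{The UCT criterion}. I would first note that the two cases are symmetric: it suffices to treat the situation where $\Gamma(X)$ contains $\Gamma(X_3)$ as a subgraph, the case of $\Gamma(X_4)$ being verbatim the same with Proposition~\ref{auxiliary 2} replacing Proposition~\ref{auxiliary 1} and with $\neg UCT(X_4)$ (which also holds by Theorem~\ref{counterexamples}) replacing $\neg UCT(X_3)$.

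So assume $\Gamma(X_3)\subseteq\Gamma(X)$, so that $X_3$ may be regarded as a tight subspace of $X$; in particular the subspace topology it inherits from $X$ is the topology of the space $X_3$ of \S\ref{sec:counterexamples}. Let $Y\defeq LC(X_3)$ be its locally closed hull inside $X$; this is a locally closed subset of $X$. The inclusion $\iota\colon X_3\hookrightarrow Y$ is continuous, and Proposition~\ref{auxiliary 1} provides a continuous map $\pi_3\colon Y\to X_3$ which, directly from its definition, satisfies $\pi_3\circ\iota=\ID_{X_3}$. Since $\neg UCT(X_3)$ holds by Theorem~\ref{counterexamples}, an application of Proposition~\ref{embedding}(ii) with $\pi_3$ and $\iota$ in the roles of $f$ and $g$ shows that $\neg UCT(Y)$ holds. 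As $Y$ is locally closed in $X$, Proposition~\ref{embedding}(i) then gives $\neg UCT(X)$, as desired.

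I do not expect a genuine obstacle here: all the real work has already been done, namely the continuity of $\pi_3$ and $\pi_4$ in Propositions~\ref{auxiliary 1} and~\ref{auxiliary 2} and the construction of the counterexamples in Theorem~\ref{counterexamples}. The only points that deserve an explicit (but routine) remark are that tightness of the embedding $\Gamma(X_3)\subseteq\Gamma(X)$ is precisely what makes the subspace topology on $X_3$ agree with its intrinsic topology, so that $\neg UCT(X_3)$ may legitimately be invoked, and that $\pi_3$ really is a retraction of $Y$ onto $X_3$. Everything else is the formal push-forward and restriction machinery underlying Proposition~\ref{embedding}.
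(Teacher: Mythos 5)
Your proof is correct and follows essentially the same route as the paper: take $Y=LC(X_3)$ (resp.\ $LC(X_4)$), use the retraction $\pi_3\circ\iota_3=\ID_{X_3}$ from Proposition~\ref{auxiliary 1} together with Theorem~\ref{counterexamples} and Proposition~\ref{embedding}(ii) to get $\neg UCT(Y)$, and then Proposition~\ref{embedding}(i) to conclude $\neg UCT(X)$. Your explicit remarks on tightness and on $Y$ being locally closed are exactly the implicit points in the paper's argument, so nothing is missing.
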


\begin{proof}
Assume $\Gamma(X_3) \subseteq \Gamma(X)$ and let $Y = LC(X_3)$. There is an inclusion
$ \iota_3\colon X_3 \hookrightarrow LC(X_3)$ and $ \pi_3\colon LC(X_3) \rightarrow X_3$ from Proposition~\ref{auxiliary 1}. We clearly have $ \pi_3 \circ  \iota_3 = \ID_{X_3}$. This shows that $\neg UCT(LC(X_3))$ holds by Proposition~\ref{embedding}(ii) and therefore $\neg UCT(X)$ holds by 
Proposition~\ref{embedding}(i). The same arguments using $\iota_4\colon X_4 \hookrightarrow LC(X_4)$ and $\pi_4$ from Proposition~\ref{auxiliary 2} show the corresponding statement for $X_4$.
\end{proof}

\begin{corollary}
\label{coro: degree >2}
Let $X$ be a finite $T_0$-space such that $\Gamma(X)$ has a vertex of unoriented degree at least~$3$. Then $\neg UCT(X)$ holds.
\end{corollary}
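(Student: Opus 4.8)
The plan is to deduce the statement from the two results immediately above: the Lemma asserting $\neg UCT(X)$ whenever $\Gamma(X)$ contains $\Gamma(X_1)$ or $\Gamma(X_2)$ as a subgraph, and the Corollary asserting the same whenever it contains $\Gamma(X_3)$ or $\Gamma(X_4)$. So it suffices to show that if $\Gamma(X)$ has a vertex~$v$ of unoriented degree at least~$3$, then $\Gamma(X)$ contains (an induced copy of) one of $\Gamma(X_1),\Gamma(X_2),\Gamma(X_3),\Gamma(X_4)$.

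Fix such a~$v$, and write $I_v\perdef\{u\mid u\to v\text{ in }\Gamma(X)\}$ and $O_v\perdef\{w\mid v\to w\text{ in }\Gamma(X)\}$. Since $\Gamma(X)$ is acyclic (Lemma~\ref{restriction}), $I_v$ and $O_v$ are disjoint and $d(v)=\lvert I_v\rvert+\lvert O_v\rvert\geq 3$, so $\lvert I_v\rvert\geq 2$ or $\lvert O_v\rvert\geq 2$. The case $\lvert O_v\rvert\geq 2$ is obtained from the case $\lvert I_v\rvert\geq 2$ by reversing all arrows, which replaces $\Gamma(X_1)$ by $\Gamma(X_2)$ and $\Gamma(X_3)$ by $\Gamma(X_4)$; so I may assume $\lvert I_v\rvert\geq 2$. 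Pick distinct $u_1,u_2\in I_v$ and one further edge incident to~$v$: either a third element $u_3\in I_v\setminus\{u_1,u_2\}$, or an element $w\in O_v$.

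In the first subcase I claim the subgraph of $\Gamma(X)$ induced on $\{u_1,u_2,u_3,v\}$ is isomorphic to $\Gamma(X_1)$ via $u_i\mapsto i$, $v\mapsto 4$; in the second subcase the subgraph induced on $\{u_1,u_2,w,v\}$ is isomorphic to $\Gamma(X_3)$ via $u_1\mapsto 1$, $u_2\mapsto 2$, $v\mapsto 3$, $w\mapsto 4$ (the four vertices are distinct, since $u_i\succ v\succ w$). The required edges are present by construction, so the only thing to verify is that there are no \emph{other} edges among the chosen vertices. An edge $v\to u_i$ would be a $2$-cycle, contradicting acyclicity; an edge $u_i\to u_j$ ($i\neq j$) would produce the two distinct directed paths $u_i\to u_j\to v$ and $u_i\to v$ from $u_i$ to~$v$, the latter of length~$1$, contradicting Lemma~\ref{restriction}; in the second subcase an edge $u_i\to w$ would give the paths $u_i\to v\to w$ and $u_i\to w$, and $w\to u_i$ or $w\to v$ would create a cycle, all impossible. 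This proves the claimed isomorphisms.

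Thus $\Gamma(X)$ contains one of $\Gamma(X_1),\Gamma(X_2),\Gamma(X_3),\Gamma(X_4)$ as a subgraph, and the Lemma and Corollary recalled above give $\neg UCT(X)$. The only substantive point is the ``no extra edges'' check in the previous paragraph; this is exactly where one uses that $\Gamma(X)$ is a Hasse diagram, i.e.\ transitively reduced (Lemma~\ref{restriction}), and it is really the crux of an otherwise purely combinatorial argument. The rest is bookkeeping of the four possible local configurations at~$v$.
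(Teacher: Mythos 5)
Your proof is correct and takes essentially the same route as the paper: the paper's own proof is the one-line assertion that $\Gamma(X)$ must contain one of $\Gamma(X_1),\Gamma(X_2),\Gamma(X_3),\Gamma(X_4)$ as a subgraph, and your case analysis at the high-degree vertex, together with the ``no extra edges'' check via acyclicity and transitive reducedness, is precisely the verification the paper leaves implicit (note that the paper's notion of subgraph is the induced one, so this check is indeed required and you supply it correctly).
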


\begin{proof}
$\Gamma(X)$ must contain either $\Gamma(X_1)$, $\Gamma(X_2)$, $\Gamma(X_3)$ or $\Gamma(X_4) $ as a subgraph.
\end{proof}

\begin{proposition}
\label{circle}
Let $X$ be a finite connected $T_0$-space such that every vertex of $\Gamma(X)$ has \textup{(}unoriented\textup{)} unoriented degree~$2$. Then $\neg UCT(X)$ holds.
\end{proposition}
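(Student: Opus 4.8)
The plan is to put $\Gamma(X)$ into a graph-theoretic normal form and then exhibit one of the ``circle-type'' counterexamples $C_k$ or~$S$ as a retract of~$X$, so that $\neg UCT(X)$ follows from Theorem~\ref{counterexamples} together with Proposition~\ref{embedding}(ii).

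First I would establish the normal form. By Lemma~\ref{lem: graphs + connectedness} the graph $\Gamma(X)$ is connected as an undirected graph, and a connected graph all of whose vertices have unoriented degree~$2$ is a single cycle. Since $X$ is~$T_0$, the specialisation relation is a partial order and $\Gamma(X)$ is acyclic (Lemma~\ref{restriction}); hence, traversing the cycle, the edge orientation reverses a positive even number of times. Equivalently, $\Gamma(X)$ has $k\geq 1$ \emph{sources} (two outgoing edges) and $k$ \emph{sinks} (two incoming edges), these alternate around the cycle, and between consecutive extremal vertices lie \emph{pass-through} vertices (one incoming, one outgoing edge). I would then record two elementary consequences of this picture and of the description of~$\preceq$ by reachability: every source is a $\preceq$-maximal and every sink a $\preceq$-minimal element of~$X$ (they have in-, resp. out-degree~$0$ in~$\Gamma(X)$); and a pass-through vertex~$p$ is $\preceq$-comparable only to the vertices lying on the unique maximal directed path (``arc'') of~$\Gamma(X)$ through~$p$, the source of that arc lying above~$p$ and its sink below~$p$ (any directed path reaching~$p$ must enter through $p$'s unique incoming edge, and dually for leaving~$p$).

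Next I would treat the case $k\geq 2$. Let $X''\subseteq X$ be the subspace whose points are all sources and sinks. For $x,y\in X''$ with $x\prec y$, $x$ must be a sink and $y$ a source, and the elements of~$X$ strictly between them are exactly the pass-through vertices of the arc from~$y$ to~$x$, all removed; so $\Gamma(X'')$ is the cycle on $2k$ vertices with sources and sinks strictly alternating. This graph is transitively reduced and isomorphic to $\Gamma(C_k)$, whence $X''\cong C_k$. I would then define $r\colon X\to X''$ to be the identity on~$X''$ and to send each pass-through vertex to the source of its arc; using the comparability description above, $r$ is monotone and hence continuous, and $r\circ\iota=\ID_{X''}$ for the inclusion $\iota\colon X''\hookrightarrow X$. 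Since $\neg UCT(C_k)$ holds by Theorem~\ref{counterexamples}, Proposition~\ref{embedding}(ii) gives $\neg UCT(X)$. For $k=1$, $\Gamma(X)$ has a single source~$s$ and a single sink~$t$ joined by two arcs, each of length at least~$2$ by Lemma~\ref{restriction} and hence containing a pass-through vertex; choosing one such vertex~$p$ on the first arc and~$q$ on the second and setting $Y=\{s,p,t,q\}$, the relations $t\prec p\prec s$, $t\prec q\prec s$ with $p,q$ incomparable show that $\Gamma(Y)$ is the $4$-cycle realising~$S$, so $Y\cong S$. I would define $r\colon X\to Y$ as the identity on~$Y$, sending a pass-through vertex on the first arc to~$s$ if it lies above~$p$ and to~$t$ if it lies below~$p$, and likewise on the second arc with~$q$ in place of~$p$; again $r$ is monotone with $r\circ\iota=\ID_Y$, and $\neg UCT(S)$ plus Proposition~\ref{embedding}(ii) give $\neg UCT(X)$.

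The only part requiring care is the verification that the retractions~$r$ are monotone, and this rests entirely on the two structural observations of the first paragraph; I expect the main (though not deep) obstacle to be justifying cleanly that a pass-through vertex sees only its own arc, which must be deduced from the fact that $\Gamma(X)$ is a cycle and that $x\preceq y$ means there is a directed path from~$y$ to~$x$. Beyond this bookkeeping no genuinely hard step is anticipated.
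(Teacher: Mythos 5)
Your proposal is correct and takes essentially the same approach as the paper: both arguments count the sources and sinks of the cycle, exhibit $S$ (one source) or $C_k$ ($k\geq 2$ sources) as a retract of~$X$ via continuous (monotone) maps $f\circ g=\ID$, and then invoke Theorem~\ref{counterexamples} together with Proposition~\ref{embedding}(ii). The only difference is cosmetic: you realise the counterexample as an actual subspace with a retraction sending pass-through vertices to the source (resp.\ to $s$ or $t$) of their arc, whereas the paper writes down the collapsing map $f\colon X\to S$ or $C_n$ and a section $g$ directly.
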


\begin{proof}
The assumption means that $\Gamma(X)$ as an undirected graph consists of a cycle.
By the definition of the oriented degree $d_o$ from \S\ref{elementary notions of graph theory}, we have $d_o(x) \in \{-2,0,2\}$ for every $x \in X$ and \[\sum_{x \in X}d_o(x) = 0.\]
This means that there are as many vertices with oriented degree $2$ as vertices with oriented degree $-2$. Let $n$ be the number of vertices with oriented degree $2$.
Since $\Gamma(X)$ cannot be a directed circle, $n$ is at least~$1$.

We first consider the case $n=1$. Then there is exactly one vertex $a$ with oriented degree $2$, one vertex $b$ with oriented degree $-2$ and two directed paths $\rho = (v_i)_{i =0,\ldots,n}$ and $\sigma = (w_i)_{i =0,\ldots,m}$ from $a$ to $b$ such that 
\[\rho \cap \sigma= \{a,b\}, \ \ \  \rho \cup \sigma =X. \] 
Define maps  $f\colon X \rightarrow S$ and $g\colon S \rightarrow X$ via
\[ f(x) =  \begin{cases} 1 &\text{ if } x= a, \\  2 &\text{ if } x= v_i  \text { for } i =1, \ldots ,n-1, \\ 3  &\text{ if } x= w_i  \text { for } i =1, \ldots,m-1,\\ 4  &\text{ if } x= b,\end{cases} \text{ and } g(s) =  \begin{cases} a &\text{ if } s= 1, \\  v_1 &\text{ if } s= 2,   \\ w_1  &\text{ if } s= 3, \\ b  &\text{ if } s= 4.\end{cases}\]
These maps are continuous since they are monotone. It is clear that $f \circ g = \ID_{S}$. Therefore $\neg UCT(X)$ holds by Theorem~\ref{counterexamples} and Proposition~\ref{embedding}(ii).

Now we investigate the case $n>1$.
We will basically proceed as in the previous case; only notation becomes a bit more complicated. Let $\cyc n$ denote the cyclic group of order~$n$. Ordering the vertices of oriented degree $2$ and $-2$ clockwise, we obtain sequences $(a_k)_{k \in \cyc n}$ and $(b_k)_{k \in \cyc n}$ in $X$ such that $d_o(a_k) =2$ and $d_o(b_k) =-2$ for all $k \in \cyc n$.
Analogously to the previous case, there is a sequence of directed paths $\left(\rho^k = (v_i^k)_{i=1,\ldots,n_k}\right)_{k \in\cyc n}$ from $a_k$ to $b_k$ and a sequence of directed paths $\left(\sigma^k = (w_i^k)_{i=1,\ldots,m_k}\right)_{k \in\cyc n}$ from $a_k$ to $b_{k-[1]}$ such that 
\[\rho^k \cap \rho^l =\sigma^k \cap \sigma^l = \varnothing \text{ if } k \neq l, \ \ \ \rho^k \cap \sigma^l = \begin{cases}  a_k & \text{ if } k=l,\\ b_k & \text{ if } k =l -[1],\\ \varnothing  & \text{else,}\end{cases}\]
and
\[ \bigcup_{k\in\cyc k}\rho^k \cup  \bigcup_{k\in\cyc k}\sigma^k = X.\]
Define maps  $f\colon X \rightarrow C_n$ and $g\colon C_n \rightarrow X$ via
\[ f(x) =  \begin{cases} (k,a) &\text{ if } x= a_k, \\  (k,b) &\text{ if } x= v^k_i  \text { for } i =1, \ldots ,n_k, \\ (k-[1],b)  &\text{ if } x= w^k_i  \text { for } i =1, \ldots,m_k-1,\end{cases}
\]
and
\[ g((k,y)) =  \begin{cases} a_k &\text{ if } y= a, \\  b_k &\text{ if } y= b.  \end{cases}\] 
The maps $f$ and $g$ are monotone and thus continuous. Clearly, $f \circ g = \ID_{C_n}$. Therefore $\neg UCT(X)$ holds by Theorem~\ref{counterexamples} and Proposition~\ref{embedding}(ii).
\end{proof}

The following lemma provides an alternative characterization of type~(A) spaces.

\begin{lemma}
\label{lem: Char of type A}
Let $X$ be a finite connected $T_0$-space with more than one point. The following statements are equivalent:
\begin{enumerate}[label=\textup{(\arabic*)}]
\item $X$ is of type \textup{(A)};
\item there are exactly two vertices in $X$ with unoriented degree~$1$, all other vertices have unoriented degree~$2$.
\end{enumerate}
\end{lemma}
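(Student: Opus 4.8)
\textbf{Proof plan for Lemma~\ref{lem: Char of type A}.}

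The plan is to prove both implications by translating the combinatorial conditions on $\Gamma(X)$ into structural statements about the undirected graph underlying $\Gamma(X)$. The starting point is the observation (from Lemma~\ref{lem: graphs + connectedness}) that $X$ is connected if and only if $\Gamma(X)$ is connected as an undirected graph. A finite connected undirected graph in which every vertex has degree~$2$ except for exactly two vertices of degree~$1$ is precisely a simple path; this is a standard fact, and I would either cite it or give the one-line argument (sum of degrees is $2(|E|)$, so there are $|V|-1$ edges, and connected plus $|V|-1$ edges forces a tree, and a tree with exactly two leaves is a path). Conversely, if $X$ is of type~(A), the description in Definition~\ref{def:typeA} together with the picture of $\Gamma$ for a type~(A) space displayed right after Definition~\ref{def:Hasse} shows directly that the undirected graph of $\Gamma(X)$ is a path: the wedge construction $O_{n_1}\bigvee_{n_1=n_2}O_{n_2}\bigvee\cdots$ glues totally ordered chains end to end along single vertices, and each $\Gamma(O_{n_i})$ is a directed path, so the underlying undirected graph is a concatenation of paths, hence a path. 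Its two endpoints are the extreme vertices $1^0=1^1$ and $1^m=1^{m+1}$, which have unoriented degree~$1$, and all interior vertices have unoriented degree~$2$ (here the assumption $n_i>1$ for $2\le i\le m-1$ guarantees the glued chains genuinely extend the path rather than collapsing, and $m\in 2\N_{>0}$ fixes the alternation of orientations). This gives (1)$\Rightarrow$(2).

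For (2)$\Rightarrow$(1), suppose the undirected graph of $\Gamma(X)$ is a path $x_0 - x_1 - \cdots - x_N$. I would read off the orientation pattern of the directed edges along this path. Each edge $x_{k}\,x_{k+1}$ is oriented either as $x_k\leftarrow x_{k+1}$ or $x_k\rightarrow x_{k+1}$; the sequence of orientations decomposes the path into maximal \emph{monotone} stretches where all arrows point the same way along the path. A maximal monotone stretch is exactly (the Hasse diagram of) a totally ordered subspace $O_{n_i}$, and consecutive stretches meet in a single vertex which is a local maximum or local minimum of the order. The crucial point is that the orientations must \emph{alternate} between "pointing right" and "pointing left" as one passes from one stretch to the next — otherwise two consecutive stretches with the same orientation would really be a single monotone stretch. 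Hence the joints alternate between sources (oriented degree $2$) and sinks (oriented degree $-2$), which forces the number $m$ of stretches to be even precisely when the two endpoints are of the same type; a short case analysis of the endpoint orientations (each endpoint $x_0$, $x_N$ has a unique incident edge, pointing in or out) shows one can always normalise so that $m$ is even, adding trivial factors $O_1$ at the ends if necessary. Matching this decomposition against Definition~\ref{def:typeA} — stretch $i$ has $n_i$ vertices, glued to stretch $i\pm1$ along an endpoint — exhibits $X$ as $O_{n_1}\bigvee_{n_1=n_2}O_{n_2}\bigvee\cdots\bigvee O_{n_m}$, with $n_i>1$ for the interior factors because those stretches have at least one edge on each side of their joints.

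The main obstacle I anticipate is purely bookkeeping: handling the endpoints and the parity condition $m\in2\N_{>0}$ cleanly, and checking that the two "halves" of the wedge description in Definition~\ref{def:typeA} (the pattern $n_1=n_2$, then $1=1$, then $n_3=n_4$, $\ldots$) correctly records which joints are sources and which are sinks. This is essentially the same alternation that appears in the explicit numbering of $W$ given in \S\ref{sec:Proof_of_ungraded_iso} and in Figure~\ref{fig:genform}, so I would lean on that normal form: once the path and its orientation pattern are in hand, one simply reads the parameters $m,n_1,\ldots,n_m$ off the picture, and the hypotheses $n_i>1$ ($2\le i\le m-1$) and $m$ even drop out of maximality of the monotone stretches and of the alternation of joint types, respectively. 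Everything else is immediate from the definitions of unoriented and oriented degree and from the fact that $\Gamma(X)$ is always acyclic and transitively reduced (Lemma~\ref{restriction}), which prevents any "shortcut" edges that would spoil the path structure.
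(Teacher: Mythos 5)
Your plan is correct and follows essentially the same route as the paper: (1)$\Rightarrow$(2) is read off the accordion picture, and for (2)$\Rightarrow$(1) one observes that the degree conditions force the undirected graph underlying $\Gamma(X)$ to be a path, so that $\Gamma(X)$ is the Hasse diagram of some type~(A) space and hence, $X$ being $T_0$, $X$ is homeomorphic to it. Your extra details (the handshake/tree argument for the path structure, the decomposition into maximal monotone stretches, and the $O_1$-padding to arrange $m$ even) simply make explicit what the paper leaves as an assertion.
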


\begin{proof}
The direction (1)$\Longrightarrow$(2) is obvious (see Figure~\ref{fig:genform} on page~\pageref{fig:genform}). For the converse direction, notice that (2) implies that the graph $\Gamma(X)$ corresponding to the specialisation preorder on $X$ is isomorphic as an undirected graph to the graph corresponding to the specialisation preorder on the totally ordered space with the same number of points. This shows that $\Gamma(X)$ is isomorphic as a directed graph to the graph corresponding to the specialisation preorder on some type~(A) space as displayed in Figure~\ref{fig:genform}. Since we are dealing with $T_0$-spaces this implies that $X$ is homeomorphic to that type~(A) space.
\end{proof}

\begin{theorem}
	\label{thm:final}
Let $X$ be a finite $T_0$-space.
Then $UCT(X)$ holds if and only if~$X$ is a disjoint union of spaces of type \textup{(A)}.
Otherwise, $\neg UCT(X)$ holds.
\end{theorem}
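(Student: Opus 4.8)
The plan is to combine all the pieces already assembled in the paper. The forward implication is essentially finished: Theorem~\ref{theorem: positive} (equivalently Theorem~\ref{thm:ungraded_iso} together with Lemma~\ref{lem:projresolutions} and Theorem~\ref{the:UCT_conditional}) shows that $UCT(W)$ holds for every space~$W$ of type~(A), and Lemma~\ref{lem: UCT + connectedness} shows that $UCT$ is preserved under disjoint unions. So if $X=\bigsqcup_i X_i$ with each $X_i$ of type~(A), then $UCT(X)$ holds. The real content is the converse, and the strategy is a case analysis on the graph $\Gamma(X)$ driven by the unoriented degree function.

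First I would reduce to the connected case using Lemma~\ref{lem: UCT + connectedness}: it suffices to show that a connected finite $T_0$-space~$X$ which is not of type~(A) satisfies $\neg UCT(X)$, since then a disjoint union containing such a component fails $UCT$, and conversely $\neg UCT(X_i)$ for some component forces $\neg UCT(X)$. A one-point space is of type~(A) (take $O_1$), so assume $X$ is connected with at least two points and not of type~(A). By Lemma~\ref{lem: Char of type A}, failing to be of type~(A) means that $\Gamma(X)$ does \emph{not} have exactly two vertices of unoriented degree~$1$ with all others of degree~$2$. Since $\Gamma(X)$ is connected (Lemma~\ref{lem: graphs + connectedness}) and finite, this forces one of two situations: either some vertex has unoriented degree $\geq 3$, or every vertex has unoriented degree exactly~$2$. (A connected graph on $\geq 2$ vertices with all degrees $\leq 2$ and not exactly two degree-$1$ vertices must be a cycle, i.e.\ all degrees~$2$; the only remaining alternative is a path, which is precisely the type~(A) shape by Lemma~\ref{lem: Char of type A}.)

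Next I would dispose of the two cases using the machinery of \S\ref{sec: the complete description}. If $\Gamma(X)$ has a vertex of unoriented degree $\geq 3$, then by Corollary~\ref{coro: degree >2} we get $\neg UCT(X)$ directly (this corollary in turn rests on the embedding results Proposition~\ref{embedding}, the continuity of the retractions $\pi_3,\pi_4$ in Propositions~\ref{auxiliary 1} and~\ref{auxiliary 2}, and the counterexample Theorem~\ref{counterexamples} for $X_1,X_2,X_3,X_4$). If instead every vertex of $\Gamma(X)$ has unoriented degree exactly~$2$, then $\Gamma(X)$ is an (undirected) cycle, and Proposition~\ref{circle} gives $\neg UCT(X)$ via the retractions onto the space~$S$ (when there is a single directed source-sink pair) or onto~$C_n$ (in general), again using Theorem~\ref{counterexamples} and Proposition~\ref{embedding}(ii). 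Assembling: $X$ connected and not type~(A) $\Rightarrow$ $\neg UCT(X)$; hence a general finite $T_0$-space is either a disjoint union of type~(A) spaces (and then $UCT(X)$ holds) or has a connected component that is not of type~(A) (and then $\neg UCT(X)$ holds). Since $UCT(X)$ and $\neg UCT(X)$ are mutually exclusive, this proves the theorem.

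The main obstacle is not in this final assembly, which is purely combinatorial bookkeeping, but in the inputs it quotes: the positive direction depends on Theorem~\ref{thm:ungraded_iso} (the ungraded isomorphism $\NTC(W)\cong\NTC(O_n)$ and its compatibility with exactness), whose proof occupies \S\ref{sec:Proof_of_ungraded_iso}, and the negative direction depends on the explicit counterexamples of \S\ref{sec:counterexamples}. Within this section itself, the only genuine point requiring care is the graph-theoretic dichotomy invoked above --- verifying that a connected finite graph with all vertex degrees~$\leq 2$ is either a path or a cycle, and that the path case is exactly type~(A) by Lemma~\ref{lem: Char of type A} --- but this is elementary. I would therefore present the proof as a short chain of citations to Theorem~\ref{theorem: positive}, Lemma~\ref{lem: UCT + connectedness}, Lemma~\ref{lem: Char of type A}, Corollary~\ref{coro: degree >2} and Proposition~\ref{circle}.

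\begin{proof}[Proof of Theorem~\textup{\ref{thm:final}}]
If $X$ is a disjoint union of spaces of type~(A), then $UCT(X)$ holds by Theorem~\ref{theorem: positive} and Lemma~\ref{lem: UCT + connectedness}.

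Conversely, suppose $X$ is not a disjoint union of spaces of type~(A). By Lemma~\ref{lem: UCT + connectedness} it suffices to find a connected component~$X'$ of~$X$ with $\neg UCT(X')$; such a component must fail to be of type~(A). So let~$X'$ be connected, finite $T_0$, not of type~(A); since a one-point space is of type~(A), $X'$ has at least two points. By Lemma~\ref{lem: graphs + connectedness} the graph $\Gamma(X')$ is connected, and by Lemma~\ref{lem: Char of type A} it is not the case that $\Gamma(X')$ has exactly two vertices of unoriented degree~$1$ and all others of unoriented degree~$2$.

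If some vertex of $\Gamma(X')$ has unoriented degree at least~$3$, then $\neg UCT(X')$ by Corollary~\ref{coro: degree >2}. Otherwise every vertex has unoriented degree at most~$2$; since $\Gamma(X')$ is connected with at least two vertices, it is either a path or a cycle as an undirected graph. A path has exactly two vertices of unoriented degree~$1$ and all others of degree~$2$, which by Lemma~\ref{lem: Char of type A} would make $X'$ of type~(A), contrary to assumption. Hence $\Gamma(X')$ is a cycle, i.e.\ every vertex has unoriented degree exactly~$2$, and $\neg UCT(X')$ by Proposition~\ref{circle}.

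In either case $\neg UCT(X')$ holds, and therefore $\neg UCT(X)$ holds. Since no finite $T_0$-space satisfies both $UCT(X)$ and $\neg UCT(X)$, this proves that $UCT(X)$ holds if and only if~$X$ is a disjoint union of spaces of type~(A), and that $\neg UCT(X)$ holds otherwise.
\end{proof}
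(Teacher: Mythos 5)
Your proof is correct and follows essentially the same route as the paper: the positive direction via Theorem~\ref{theorem: positive} and Lemma~\ref{lem: UCT + connectedness}, and the converse by reducing to the connected case and combining the degree dichotomy for $\Gamma(X)$ with Corollary~\ref{coro: degree >2}, Proposition~\ref{circle} and Lemma~\ref{lem: Char of type A}. The only difference is cosmetic — you argue the contrapositive directly and treat the one-point space explicitly, which the paper's proof leaves implicit.
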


\begin{proof}
That $UCT(X)$ holds if $X$ is  a disjoint union of spaces of type (A) follows from Theorem~\ref{theorem: positive} and Lemma~\ref{lem: UCT + connectedness}. Now let~$X$ be a space such that $\neg UCT(X)$ does \emph{not} hold. By Lemma~\ref{lem: UCT + connectedness}, it suffices to show that $X$ is of type (A) under the assumption that~$X$ is connected (and hence, by Lemma~\ref{lem: graphs + connectedness}, that $\Gamma(X)$ is connected as an undirected graph). By Corollary~\ref{coro: degree >2}, all vertices~$x$ of $\Gamma(X)$ have unoriented degree less than~$3$. By the last remark and Proposition~\ref{circle} there is at least one vertex of unoriented degree less than~$2$. Since $\Gamma(X)$ is connected as an undirected graph and finite, there are exactly two vertices of unoriented degree~$1$ and all other vertices have unoriented degree~$2$. Thus~$X$ is of type~(A) by Lemma~\ref{lem: Char of type A} as claimed.
\end{proof}

\subsection*{Acknowledgement}
This paper emerged from the first-named author's Diplom thesis~\cite{Rasmus} and the second-named author's dissertation~\cite{Manuel} which were supervised by Ralf Meyer at the University of G\"ottingen. The first-named author thanks Takeshi Katsura for insightful discussions.

\begin{bibdiv}
  \begin{biblist}
\bib{AhoGareyUllman}{article}{
   author={Aho, Alfred V.},
   author={Garey, Michael R.},
   author={Ullman, Jeffrey D.},
   title={The transitive reduction of a directed graph},
   journal={SIAM J. Comput.},
   volume={1},
   date={1972},
   number={2},
   pages={131--137},
   issn={0097-5397},
   review={\MRref{0306032}{46 \#5159}},
}

\bib{Rasmus}{article}{
      author={Bentmann, Rasmus},
       title={Filtrated {K}-theory and classification of {$C^*$}-algebras},
        date={University of {G}\"ottingen, 2010},
        note={Diplom thesis, available online at: \href{http://www.math.ku.dk/~bentmann/thesis.pdf}{http://www.math.ku.dk/\textasciitilde bentmann/thesis.pdf}},
}

\bib{Blackadar:Op_Algs}{book}{
      author={Blackadar, Bruce},
       title={Operator algebras},
      series={Encyclopaedia of Mathematical Sciences},
   publisher={Springer-Verlag},
     address={Berlin},
        date={2006},
      volume={122},
        ISBN={978-3-540-28486-4; 3-540-28486-9},
        note={Theory of $C{^{*}}$-algebras and von Neumann algebras, Operator
  Algebras and Non-commutative Geometry, III},
      review={\MRref{2188261}{2006k:46082}},
}

\bib{MR1796912}{incollection}{
      author={Kirchberg, Eberhard},
       title={Das nicht-kommutative {M}ichael-{A}uswahlprinzip und die
  {K}lassifikation nicht-einfacher {A}lgebren},
        date={2000},
   booktitle={{$C^*$}-algebras ({M}\"unster, 1999)},
   publisher={Springer},
     address={Berlin},
       pages={92\ndash 141},
      review={\MRref{1796912}{2001m:46161}},
}

\bib{Manuel}{article}{
      author={K\"ohler, Manuel},
       title={Universal coefficient theorems in equivariant {KK}-theory},
        date={University of {G}\"ot\-tin\-gen, 2010},
        note={Doctoral thesis, available online at:\\ \href{http://resolver.sub.uni-goettingen.de/purl/?webdoc-2828}{http://resolver.sub.uni-goettingen.de/purl/?webdoc-2828}
},
}

\bib{MR2193334}{article}{
      author={Meyer, Ralf},
      author={Nest, Ryszard},
       title={The {B}aum-{C}onnes conjecture via localisation of categories},
        date={2006},
        ISSN={0040-9383},
     journal={Topology},
      volume={45},
      number={2},
       pages={209\ndash 259},
         url={http://dx.doi.org/10.1016/j.top.2005.07.001},
      review={\MRref{2193334}{2006k:19013}},
}

\bib{meyernestCalgtopspacfiltrKtheory}{misc}{
      author={Meyer, Ralf},
      author={Nest, Ryszard},
       title={{$C^*$}-algebras over topological spaces: filtrated {K}-theory},
        date={2008},
        note={Preprint, available online at: \href{http://arxiv.org/abs/0810.0096}{http://arxiv.org/abs/0810.0096}},
}

\bib{MR2545613}{article}{
      author={Meyer, Ralf},
      author={Nest, Ryszard},
       title={{$C^*$}-algebras over topological spaces: the bootstrap class},
        date={2009},
        ISSN={1867-5778},
     journal={M\"unster J. Math.},
      volume={2},
       pages={215\ndash 252},
      review={\MRref{2545613}{}},
}

\bib{meyernesthomalgintricat}{article}{
   author={Meyer, Ralf},
   author={Nest, Ryszard},
   title={Homological algebra in bivariant $K$-theory and other triangulated
   categories. I},
   conference={
      title={Triangulated categories},
   },
   book={
      series={London Math. Soc. Lecture Note Ser.},
      volume={375},
      publisher={Cambridge Univ. Press},
      place={Cambridge},
   },
   date={2010},
   pages={236--289},
   review={\MRref{2681710}{}},
}

\bib{MR1745197}{article}{
      author={Phillips, N.~Christopher},
       title={A classification theorem for nuclear purely infinite simple
  {$C^*$}-algebras},
        date={2000},
        ISSN={1431-0635},
     journal={Doc. Math.},
      volume={5},
       pages={49\ndash 114 (electronic)},
      review={\MRref{1745197}{2001d:46086b}},
}

\bib{MR894590}{article}{
      author={Rosenberg, Jonathan},
      author={Schochet, Claude},
       title={The {K}\"unneth theorem and the universal coefficient theorem for
  {K}asparov's generalized {$K$}-functor},
        date={1987},
        ISSN={0012-7094},
     journal={Duke Math. J.},
      volume={55},
      number={2},
       pages={431\ndash 474},
         url={http://dx.doi.org/10.1215/S0012-7094-87-05524-4},
      review={\MRref{894590}{88i:46091}},
}
  \end{biblist}
\end{bibdiv}

\end{document}